\theoremstyle{plain}
\newtheorem{theorem}{Theorem}[section]
\newtheorem{corollary}[theorem]{Corollary}
\newtheorem{lemma}[theorem]{Lemma}
\newtheorem{claim}{Claim}[section]
\newtheorem{proposition}[theorem]{Proposition}
\theoremstyle{definition}
\newtheorem{definition}{Definition}[chapter]
\newtheorem{remark}{Remark}[chapter]
\newtheorem{example}{Example}[chapter]
\numberwithin{equation}{section}
\newcommand{\bfa}{\mathbf{a}}
\newcommand{\bfp}{\mathbf{p}}
\newcommand{\bfe}{\mathbf{e}}
\newcommand{\Hu}{\mathcal{H}u}
\newcommand{\D}{\mathcal{D}}
\newcommand{\dd}{\mathrm{\,d}}
\newcommand{\hx}{\hat{x}}
\newcommand{\oll}{\overline{\lambda}}
\newcommand{\ola}{\overline{\alpha}}
\newcommand{\ols}{\overline{S}}
\newcommand{\uls}{\underline{S}}
\DeclareMathOperator{\tr}{tr}
\DeclareMathOperator{\diag}{diag}
\DeclareMathOperator{\dom}{dom}
\DeclareMathOperator{\dist}{dist}
\DeclareMathOperator{\con}{conv}
\DeclareMathOperator{\ext}{ext}
\begin{document}
\frontmatter

\author{Karl K. Brustad\\ {\small Norwegian University of Science and Technology}\\[1cm] \centering\includegraphics[width=5cm]{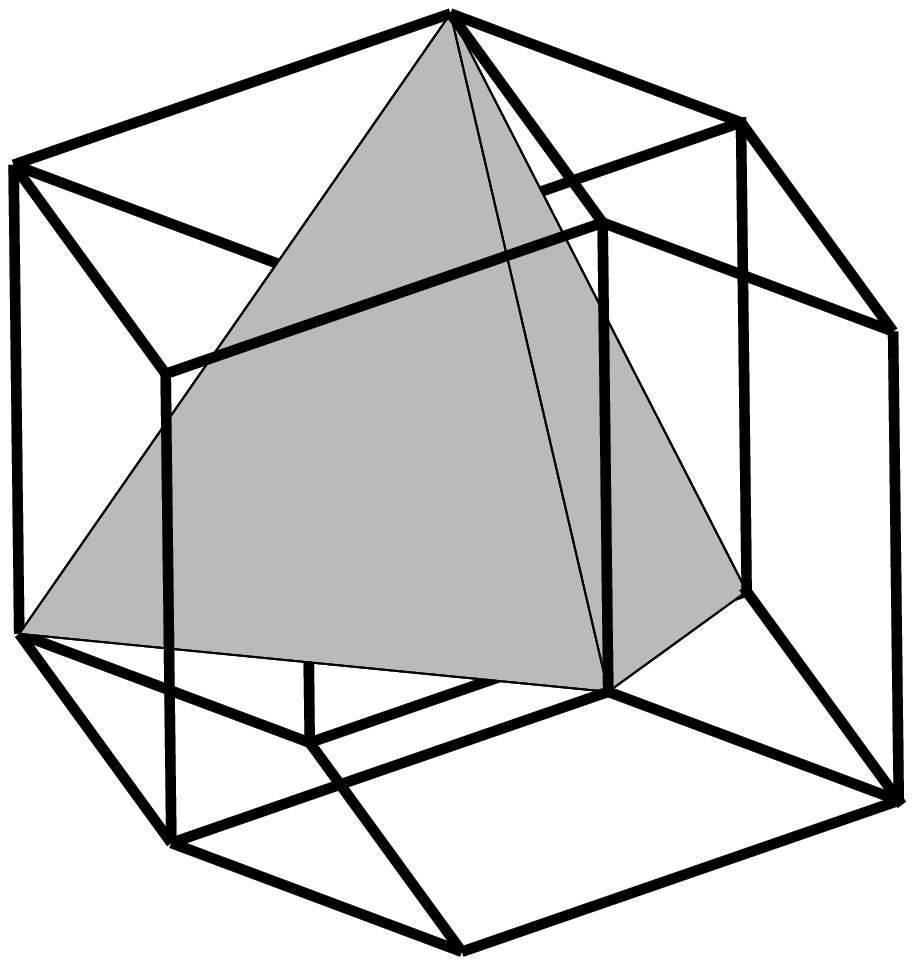}}
\title{Sublinear Elliptic Operators}

\maketitle

\tableofcontents

\chapter{Abstract}
We investigate second order elliptic equations
\[F(\mathcal{H}u) = 0\]
where the function \(F\colon S(n)\to\mathbb{R}\) on the space of symmetric \(n\times n\) matrices is assumed to be \emph{sublinear}.

There is very little to be found in the literature devoted particularly to sublinear elliptic operators. When examples of such operators occur, they are often merely treated as members of the larger class of \emph{convex} operators.
That class has been thoroughly investigated and many of its aspects are well understood.
The main reference is the book \cite{MR1351007} by Caffarelli and Cabré.

There is, however, something to be said about sublinear operators that do not, in general, apply to convex operators.

\chapter{Notation}

\begin{tabularx}{\textwidth}{ X X }
		\(\mathbb{R}^{n\times m}\) & The set of \(n\times m\) matrices with real entries. (\(n\) rows, \(m\) columns.)\\
		\(\mathbb{R}^n := \mathbb{R}^{n\times 1}\) & Euclidean \(n\)-dimensional space. \\
		\(X^T\) & The transposed of a matrix \(X\).\\
		\(S(n) := \{X\in\mathbb{R}^{n\times n}\;|\;X^T=X\}\) & The set of symmetric \(n\times n\) matrices.\\
		\(O(n) := \{Q\in\mathbb{R}^{n\times n}\;|\;Q^TQ=I\}\) & The set of orthogonal \(n\times n\) matrices.\\
		\(\bfe_i\) & The \(i\)'th standard basis vector of \(\mathbb{R}^n\).\\
		\((\bfe_{\pi(1)},\dots,\bfe_{\pi(n)})^T\) & The permutation matrix for a given permutation \(\pi\colon\{1,\dots,n\}\to\{1,\dots,n\}\).\\
		\(\mathcal{P}(n)\subseteq O(n)\) & The set of all \(n\times n\) permutation matrices.\\
		\(Pr(n) := \{P\in S(n)\;|\; PP = P\}\) & The set of \(n\times n\) orthogonal projection matrices.\\
		\(\diag x := \sum_{i=1}^n x_i \bfe_i\bfe_i^T\in S(n)\) & The diagonal matrix consisting of the entries \(x_i := x^T\bfe_i\) of \(x\in\mathbb{R}^n\).\\
		\(\diag X := \sum_{i=1}^n X_{ii}\bfe_i\in\mathbb{R}^n\) & The vector consisting of the diagonal elements \(X_{ii} := \bfe_i^TX\bfe_i\) of \(X\in\mathbb{R}^{n\times n}\).\\
		\(\lambda_1(X)\leq\cdots\leq\lambda_n(X)\) & The eigenvalues of a matrix \(X\in S(n)\). We also write \(\lambda_{\min}(X) :=\lambda_1(X)\), \(\lambda_{\max}(X) :=\lambda_n(X)\).\\
		\(\oll(X) := (\lambda_1(X),\dots,\lambda_n(X))^T\) & The vector in \(\mathbb{R}^n\) containing the eigenvalues of \(X\in S(n)\).\\
		\(\Lambda(X) := \diag\oll(X)\in S(n)\) & A diagonalization of \(X\in S(n)\).\\
		\(\tr X := \sum_{i=1}^n\bfe_i^T X\bfe_i\) & The trace of a matrix \(X\in\mathbb{R}^{n\times n}\).\\ 
		\(\langle Y,X\rangle := \tr (YX)\) & Inner product on \(S(n)\).\\
		\(\|X\| := \sqrt{\langle X,X\rangle}\) & Norm on \(S(n)\).\\
		\(|X| := \sup_{0\neq\xi\in\mathbb{R}^n}\frac{|X\xi|}{|\xi|}\) & The operator norm on \(\mathbb{R}^{n\times n}\).
\end{tabularx}

\begin{tabularx}{\textwidth}{ X X }
		\(X\leq Y\) & Partial ordering in \(S(n)\). It means \(\xi^TX\xi\leq \xi^TY\xi\) for all \(\xi\in\mathbb{R}^n\).\\
		\(X<Y\) & Means \(\xi^TX\xi < \xi^TY\xi\) for all \(\xi\in\mathbb{R}^n\) with \(|\xi|=1\).\\
		\(x^\uparrow\in\mathbb{R}^n\) & The vector obtained by permuting the components of \(x\in\mathbb{R}^n\) so that they are in increasing order.\\
		\(\mathbbm{1} := (1,\dots,1)^T\in\mathbb{R}^n\) &\\
		\(\mathcal{K}(n)\) & The set of convex bodies in \(S(n)\).\\
		\(\kappa(n)\) & The set of convex bodies in \(\mathbb{R}^n\).\\
\end{tabularx}

\mainmatter

%

\chapter{Introduction}

\section{Summary}
The object of study is second order elliptic equations \(F(\mathcal{H}u) = 0\) depending only on the Hessian matrix
\[\mathcal{H}u := \left(\frac{\partial^2 u}{\partial x_i\partial x_j}\right)_{i,j=1}^n.\]
We say that the equation, or operator, is \emph{sublinear} if the function \[F\colon S(n)\to\mathbb{R}\] on the space of symmetric \(n\times n\) matrices is sublinear. That is, for all \(X,Y\in S(n)\) and all numbers \(t\geq 0\), the following two conditions hold:
\begin{align}
F(X+Y) &\leq F(X) + F(Y),&&\text{(subadditive)}\label{subadd}\\
F(tX) &= tF(X),&&\text{(positive homogeneous).}\label{poshom}
\end{align}
One may easily check that
\begin{align*}
\text{sublinear}\qquad&\Rightarrow\qquad \text{convex,}\\
\text{convex + pos.hom.}\qquad&\Rightarrow\qquad \text{sublinear.}
\end{align*}
By Theorem 6.6 in \cite{MR1351007}, this implies in particular that
\begin{framed}\noindent
A viscosity solution to a sublinear and uniformly elliptic equation is \(C^{2,\alpha}\).
\end{framed}

Though \eqref{subadd} and \eqref{poshom} are restrictive, the family of sublinear operators is still numerous: The Laplacian, \(F(\mathcal{H}u) = \Delta u\), as well as any other homogeneous linear equation with constant coefficients, \(F(\mathcal{H}u) = \tr\left(A\mathcal{H}u\right)\), are obviously members. Nonlinear examples includes the well-known \emph{Pucci} operator
\begin{equation}
\D_{\lambda,\Lambda}u := P_{\lambda,\Lambda}(\mathcal{H}u) := \max\left\{\tr\left(A\mathcal{H}u\right)\;|\; \lambda I\leq A\leq \Lambda I\right\},\qquad 0<\lambda\leq \Lambda,
\label{puccidef}
\end{equation}
and the \emph{dominative \(p\)-Laplacian}
\begin{equation}
\D_p u := F_p(\mathcal{H}u) :=
\begin{cases}
(p-2)\lambda_{\min}(\mathcal{H}u) + \Delta u,\qquad &1\leq p\leq 2,\\
(p-2)\lambda_{\max}(\mathcal{H}u) + \Delta u, &2\leq p< \infty,\\
\lambda_{\max}(\mathcal{H}u), &p = \infty
\end{cases}
\label{Dpdefex}
\end{equation}
which was introduced in \cite{bru18}.
In fact, we shall see that there are as many sublinear operators as there are non-empty compact and convex subsets of \(S(n)\). That is, every sublinear operator is associated with a unique \emph{convex body} in \(S(n)\) (Theorem \ref{repsupthm}).
This is the expedient observation on which the theory is built.

In this survey, various properties of the sub- and supersolutions to sublinear elliptic equations \(F(\mathcal{H}u) = 0\) will be derived.
It is important to bear in mind that \emph{the subsolutions and the supersolutions are two qualitatively different classes of functions.} For example, if \(u\) is a supersolution, then \(-u\) is a subsolution (Theorem \ref{subsupsub}) but the converse is not true in general.
This is in contrast to linear operators -- and even the \(p\)-Laplacian --
where the sub- and supersolutions only differ by the sign.

We have chosen to divide the exposition into three main chapters. Chapter \ref{ch:deg} contains the most general theory for sublinear elliptic operators, while some natural special cases are investigated in Chapter \ref{ch:rot} and Chapter \ref{chp:unell}.
We start off, in Section \ref{sec:example_bodies}, by studying the convex bodies of the two nonlinear examples above; the Pucci operator and the dominative \(p\)-Laplacian.
In Section \ref{sec:compprin}, an optimal \emph{non-degeneracy} condition is provided in order to achieve the \emph{comparison principle}. In Section \ref{sec:suppos} we are concerned with properties of sums and differences of sub- and supersolutions. Any such property will be called a \emph{superposition principle}. In particular, we show that a sum of supersolutions is again a supersolution. The result is an important generalization of part 2 of Proposition 5 in \cite{bru18} as it completes the picture of the superposition principle in the \(p\)-Laplace equation.

Chapter \ref{ch:rot} is devoted to operators possessing a \emph{rotational invariance}. This symmetry condition enables an identification of sublinear operators with convex bodies in \(\mathbb{R}^n\), rather than in \(S(n)\). For example, the Pucci operator \(\D_{\lambda,\Lambda}\) corresponds to the cube \(\left\{(x_1,\dots,x_n)^T\;|\;\lambda\leq x_i\leq\Lambda\right\}\) in \(\mathbb{R}^n\).
Considering its definition \eqref{puccidef}, that is no surprise. It is less obvious, perhaps, that the dominative operator corresponds to a certain \((n-1)\)-\emph{simplex}. Some work is required in order to prove that the desired properties are preserved by this correspondence between convex bodies of \(S(n)\) and \(\mathbb{R}^n\).

With these preliminary results at hand, it is demonstrated that every rotationally invariant sublinear elliptic operator is bounded \emph{below} by the dominative \(p\)-Laplacian (for some \(p\in[2,\infty]\) that can be explicitly calculated.) This shows that \(\D_p\) plays the special rôle as the \emph{minimal operator} of its class. The nesting property (Proposition 5 (4) \cite{bru18}) yields an immediate but surprising corollary: Every supersolution to a rotationally invariant sublinear elliptic equation is \emph{superharmonic}.

The stronger notion of \emph{uniform} ellipticity is assumed in Chapter \ref{chp:unell}.
It turns out that a function is a supersolution to a uniformly elliptic sublinear equation if and only if every member of a class of functions -- obtained by a precomposition from a certain set of linear deformations of the domain -- is superharmonic. This again leads to the \emph{mean value} property in Section \ref{sec:meanvalue} and a result regarding convolutions in Section \ref{sec:convpos}. The latter is in some ways the continuous generalization of the superposition principle. It states that a convolution of a supersolution with a non-negative Radon measure is still a supersolution. Of course, some convergence conditions have to be imposed but we have made an effort in order to find the best ones possible.

Section \ref{sec:strongcomp} is based on the results of \cite{MR1351007}. We derive the \emph{strong comparison principle} in uniformly elliptic equations. Although this property is not exclusive to sublinear equations, it is included for the sake of completeness. Another issue is that \cite{MR1351007} \emph{defines} sub- and supersolutions to be \emph{continuous}. In contrast, we only require semi-continuity and our results are thus, in this sense, slightly generalized versions.

Finally, we list some notational conventions used in this survey. By \(\Omega\) we always mean a nonempty open subset of \(\mathbb{R}^n\) where \(n\geq 2\). Vectors are \emph{column} vectors, except gradients which are to be read as row vectors. e.g. \(\xi^T\xi\) is a number while \(\xi\xi^T\) is an \(n\times n\) matrix.
We refer to Appendix \ref{ch:visc_ell} for the definitions of viscosity solutions and the two kinds of ellipticity. Please be aware of that we mean \emph{degenerate} elliptic when we talk about ``elliptic'' equations. The stronger \emph{uniform} ellipticity will always be explicitly stated.
Also, the word ``viscosity'' will often be omitted since it is our sole notion of a (sub/super)-solution to an equation.

%
%

\section{The convex body}\label{sec:conv_body}
By introducing the \emph{inner product}
\begin{equation}
\langle X,Y\rangle := \tr (XY),
\label{inprod}
\end{equation}
we turn \(S(n)\) into an \(N := \frac{n(n+1)}{2}\)-dimensional Hilbert space.
The induced norm is
\[\|X\| = \sqrt{\langle X,X\rangle} = \sqrt{\tr X^2} = \sqrt{\lambda_1^2 + \cdots + \lambda_n^2}\]
where \(\lambda_1,\dots,\lambda_n\) are the eigenvalues of \(X\).

Let \(\mathcal{A}\subseteq S(n)\) be a nonempty set of symmetric \(n\times n\) matrices. The \textbf{support function} of \(\mathcal{A}\) is the function \( F_{\mathcal{A}}\colon S(n)\to(-\infty,\infty]\) defined by
\begin{equation}
 F_{\mathcal{A}}(X) := \sup_{Y\in\mathcal{A}}\langle Y, X\rangle.
\label{suppdef}
\end{equation}
Usually, we will only consider \emph{bounded} subsets \(\mathcal{A}\). That ensures that \( F_{\mathcal{A}}\) does not take infinite values for some finite \(X\). Similarly, the nonemptyness makes \( F_{\mathcal{A}}>-\infty\). Note that it is immediate from the definition that \( F_{\mathcal{A}}\) is a sublinear function on \(S(n)\).

To gain some intuition on how support functions work, we observe first that the positive homogenicity uniquely determines \( F_{\mathcal{A}}\) by its values on the unit sphere in \(S(n)\): \( F_{\mathcal{A}}(X) = \|X\| F_{\mathcal{A}}\left(\frac{X}{\|X\|}\right)\). Consider therefore a unit length matrix \(X\). Imagine the line \(\{tX\;|\;t\in\mathbb{R}\}\) and the hyperplanes in \(S(n)\) which intersect it perpendicularly. They are naturally parametrized by \(t\). Now, let a hyperplane come in from \(t=\infty\) and move it towards \(t=-\infty\) until it hits \(\mathcal{A}\). The value of \( F_{\mathcal{A}}\) at \(X\) is thus the corresponding \(t\) -- the signed distance from the origin to the hyperplane.

Our indulgence to these matters is justified by the following classical result: \emph{Every sublinear function is a support function}.
See \cite{MR3155183}, Theorem 1.7.1 and the foregoing discussion.\footnote{This theorem in \cite{MR3155183}, as well as the other results we refer to from this reference, is stated in terms of subsets (and functions) in \(\mathbb{R}^n\). This is, however, insignificant since the inner product \eqref{inprod} makes \(S(n)\cong\mathbb{R}^N\).}

\begin{theorem}\label{repsupthm}
Let \(F\colon S(n)\to\mathbb{R}\) be sublinear. Then there exists a compact convex set \(\emptyset\neq\mathcal{K}\subseteq S(n)\) such that
\[F(X) = \sup_{Y\in\mathcal{K}}\langle Y,X\rangle = \max_{Y\in\mathcal{K}}\langle Y,X\rangle.\]
Moreover, the convex set is unique and is given by
\[\mathcal{K} = \left\{Z\in S(n)\;|\; \langle Z,X\rangle  \leq F(X)\;\forall X\in S(n)\right\}.\]
\end{theorem}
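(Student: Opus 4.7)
The plan is to take the candidate set
\[\mathcal{K} := \{Z \in S(n) \mid \langle Z, X\rangle \leq F(X) \text{ for all } X \in S(n)\}\]
offered by the theorem, verify it has the right geometric properties, show that the sup over $\mathcal{K}$ recovers $F$ and is in fact attained, and finally deduce uniqueness from a standard separation argument.

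First I would establish that $\mathcal{K}$ is closed, convex, and bounded. Closedness and convexity are free, since $\mathcal{K}$ is an intersection of the closed half-spaces $H_X := \{Z \mid \langle Z,X\rangle \leq F(X)\}$ indexed by $X \in S(n)$. For boundedness I use that a finite sublinear function on a finite-dimensional space is continuous and therefore attains a maximum $M$ on the unit sphere of $S(n)$. For any $Z \in \mathcal{K}$, plugging in $X = Z$ (or $X = Z/\|Z\|$) in the defining inequality gives $\|Z\|^2 = \langle Z, Z\rangle \leq F(Z) \leq M\|Z\|$, hence $\|Z\| \leq M$.

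The main step — and the one I expect to be the real obstacle — is showing both that $\mathcal{K}$ is nonempty and that $F(X_0) = \max_{Z \in \mathcal{K}} \langle Z, X_0\rangle$ for every $X_0 \in S(n)$. Fix $X_0$. Since $F$ is sublinear it is convex and continuous on the Hilbert space $S(n)$, so by Hahn--Banach (applied to the epigraph, or equivalently by the existence of subgradients for finite convex functions) there is $Z_0 \in S(n)$ with
\[F(X) \geq F(X_0) + \langle Z_0, X - X_0\rangle\qquad\text{for all } X\in S(n).\]
I would then exploit positive homogeneity twice. Putting $X = 0$ yields $\langle Z_0, X_0\rangle \geq F(X_0)$, while putting $X = tX_0$ for $t > 0$ and rearranging gives $(t-1)F(X_0) \geq (t-1)\langle Z_0, X_0\rangle$, so letting $t \to 0^+$ yields $\langle Z_0, X_0\rangle \leq F(X_0)$. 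Hence $\langle Z_0, X_0\rangle = F(X_0)$. To see that $Z_0 \in \mathcal{K}$, take any $X \in S(n)$ and use the subgradient inequality at $X$ replaced by $tX$ with $t \to \infty$: positive homogeneity gives $tF(X) \geq F(X_0) + \langle Z_0, tX - X_0\rangle$, and dividing by $t$ and letting $t\to\infty$ produces $F(X) \geq \langle Z_0, X\rangle$. Thus $Z_0 \in \mathcal{K}$ and it realizes $F(X_0)$, which also settles nonemptyness and shows the sup is a max.

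For uniqueness, suppose $\mathcal{K}'$ is another nonempty compact convex set with $F(X) = \sup_{Y \in \mathcal{K}'}\langle Y, X\rangle$. Every $Z \in \mathcal{K}'$ trivially satisfies $\langle Z, X\rangle \leq F(X)$ for all $X$, so $\mathcal{K}' \subseteq \mathcal{K}$. Conversely, if some $Z \in \mathcal{K} \setminus \mathcal{K}'$, then by the Hahn--Banach separation theorem applied to the compact convex set $\mathcal{K}'$ and the point $Z$ in $S(n)$, there exists $X \in S(n)$ such that
\[\langle Z, X\rangle > \sup_{Y\in\mathcal{K}'}\langle Y, X\rangle = F(X),\]
contradicting $Z \in \mathcal{K}$. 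Hence $\mathcal{K}' = \mathcal{K}$, completing the proof.
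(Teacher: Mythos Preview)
Your argument is sound and self-contained; the paper itself does not prove this theorem but simply cites Schneider's book (Theorem~1.7.1 in \cite{MR3155183}), so you are supplying what the paper outsources. The strategy via subgradients and separation is the standard one and matches what one finds in the reference.

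One slip to fix: in the step where you put \(X=tX_0\) and obtain \((t-1)F(X_0)\geq (t-1)\langle Z_0,X_0\rangle\), sending \(t\to 0^+\) makes \(t-1<0\), so dividing flips the inequality and you recover \(\langle Z_0,X_0\rangle\geq F(X_0)\) again, not the reverse. You want \(t>1\) (any such \(t\), or \(t\to\infty\)) to get \(\langle Z_0,X_0\rangle\leq F(X_0)\). Alternatively, simply drop this intermediate step: once you have shown \(Z_0\in\mathcal{K}\) via the \(t\to\infty\) argument, the inequality \(\langle Z_0,X_0\rangle\leq F(X_0)\) is just the defining condition of \(\mathcal{K}\) at \(X=X_0\), and combined with the \(X=0\) step you are done.
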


We shall call the set \(\mathcal{K}\) from Theorem \ref{repsupthm} above for the \emph{the associated convex body} to the operator \(F\), and sometimes refer to it as \(\mathcal{K}_F\). In general, we say that a subset \(\mathcal{K}\subseteq S(n)\) is a \emph{convex body} if it is nonempty, convex and compact. Observe that a sublinear function is \emph{linear} if and only if \(\mathcal{K}\) is a singleton.

Since we are only interested in elliptic equations, we have to know what restrictions this inflicts on \(\mathcal{K}\):

\begin{proposition}\label{definite}
A sublinear operator \(F\colon S(n)\to\mathbb{R}\) is
\begin{itemize}
	\item degenerate elliptic if and only if each element in \(\mathcal{K}_F\) is positive semidefinite.
	\item uniformly elliptic if and only if each element in \(\mathcal{K}_F\) is positive definite.
\end{itemize}
\end{proposition}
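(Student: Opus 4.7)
The plan is to translate both equivalences to the support-function representation from Theorem \ref{repsupthm}, $F(X)=\max_{Y\in\mathcal{K}_F}\langle Y,X\rangle$, and then exploit that positive semidefiniteness of a symmetric matrix $Y$ is detected by rank-one test matrices $P=\xi\xi^T$. Throughout I will use $F(0)=0$, which is immediate from positive homogeneity, and the identity $\langle Y,\xi\xi^T\rangle=\xi^T Y\xi$.

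For the first equivalence, recall that degenerate ellipticity of $F$ means $X_1\leq X_2\Rightarrow F(X_1)\leq F(X_2)$. If every $Y\in\mathcal{K}_F$ is PSD and $P:=X_2-X_1\geq 0$, then $\langle Y,P\rangle=\tr(P^{1/2}YP^{1/2})\geq 0$, so $\langle Y,X_1\rangle\leq\langle Y,X_2\rangle$; taking the supremum over $Y\in\mathcal{K}_F$ yields $F(X_1)\leq F(X_2)$. Conversely, if $F$ is degenerate elliptic, then for any $P\geq 0$ we have $F(-P)\leq F(0)=0$, i.e.\ $\sup_{Y\in\mathcal{K}_F}\langle Y,-P\rangle\leq 0$. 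This forces $\langle Y,P\rangle\geq 0$ for \emph{every} $Y\in\mathcal{K}_F$, and choosing $P=\xi\xi^T$ gives $\xi^T Y\xi\geq 0$ for all $\xi\in\mathbb{R}^n$, so each $Y$ is PSD.

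For the second equivalence, take uniform ellipticity in the standard form $\lambda\tr P\leq F(X+P)-F(X)\leq \Lambda\tr P$ for all $X\in S(n)$ and $P\geq 0$ (any of the usual equivalent norms gives the same conclusion). Specializing the lower bound to $X=-P$ and using $F(0)=0$ sharpens it to $F(-P)\leq-\lambda\tr P$, which via the support-function representation reads $\langle Y,P\rangle\geq\lambda\tr P=\lambda\langle I,P\rangle$ for every $Y\in\mathcal{K}_F$. Testing with $P=\xi\xi^T$ gives $\xi^T(Y-\lambda I)\xi\geq 0$ for every $\xi$, hence $Y\geq\lambda I>0$, so every $Y\in\mathcal{K}_F$ is positive definite. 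Conversely, if each $Y\in\mathcal{K}_F$ is PD, compactness of $\mathcal{K}_F$ together with continuity of the extreme eigenvalues produces $0<\lambda:=\min_{Y\in\mathcal{K}_F}\lambda_{\min}(Y)\leq\max_{Y\in\mathcal{K}_F}\lambda_{\max}(Y)=:\Lambda<\infty$, so $\lambda I\leq Y\leq\Lambda I$ throughout $\mathcal{K}_F$. This sandwiches $\langle Y,P\rangle$ between $\lambda\tr P$ and $\Lambda\tr P$ for every $P\geq 0$; combined with the subadditivity bounds $F(X+P)-F(X)\leq F(P)$ and $F(X+P)-F(X)\geq -F(-P)$, it delivers the two-sided estimate defining uniform ellipticity.

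The only delicate step is the passage from the pointwise scalar inequality $F(-P)\leq-\lambda\tr P$ (which is a statement about a \emph{supremum}) to the pointwise matrix conclusion $Y\geq\lambda I$ for \emph{each} individual $Y\in\mathcal{K}_F$. It works because $\sup_{Y\in\mathcal{K}_F}\langle Y,-P\rangle\leq-\lambda\tr P$ automatically implies $\langle Y,-P\rangle\leq-\lambda\tr P$ for every single $Y$, after which the rank-one test $P=\xi\xi^T$ promotes this scalar information to the matrix inequality. Once this observation is in hand, the rest is bookkeeping inside the convex-body correspondence.
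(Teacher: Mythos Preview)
Your proof is correct. The degenerate-elliptic part is essentially the paper's argument: the paper also tests with rank-one matrices $\xi\xi^T$ to force $\xi^TY\xi\geq 0$, and in the reverse direction uses $\langle Y,A\rangle\leq 0$ for $Y\geq 0$, $A\leq 0$ (it cites von Neumann's trace inequality for this, whereas your $\tr(P^{1/2}YP^{1/2})\geq 0$ is more elementary).

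For uniform ellipticity you take a genuinely more direct route. The paper passes through the Pucci characterization (Proposition~\ref{altelldef}): $F$ is uniformly elliptic with constants $\lambda,\Lambda$ iff $F\leq P_{\lambda,\Lambda}$, which by the uniqueness in Theorem~\ref{repsupthm} is equivalent to $\mathcal{K}_F\subseteq\mathcal{K}_{\lambda,\Lambda}=\{\lambda I\leq Y\leq\Lambda I\}$; positive definiteness then falls out of this inclusion. You instead specialize the lower ellipticity bound at $X=-P$ to get $F(-P)\leq-\lambda\tr P$, read this as $\langle Y,P\rangle\geq\lambda\tr P$ for each $Y$, and test with $P=\xi\xi^T$. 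For the converse you use compactness to manufacture $\lambda,\Lambda$ and then close with the subadditivity bounds $F(X+P)-F(X)\leq F(P)$ and $\geq -F(-P)$. Your argument is self-contained and avoids the Pucci machinery; the paper's argument has the advantage of identifying the ellipticity constants with the inclusion $\mathcal{K}_F\subseteq\mathcal{K}_{\lambda,\Lambda}$, which it reuses elsewhere.
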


See Appendix \ref{ch:visc_ell} for the proof.

\section{Basic convexity}
The \textbf{convex hull} of a subset \(\mathcal{A}\subseteq S(n)\) is denoted \(\con\mathcal{A}\). It is the smallest convex set containing \(\mathcal{A}\). Equivalently, it is the set of \emph{convex combinations} of points in \(\mathcal{A}\):
\begin{equation}
\con\mathcal{A} := \left\{\sum_{i=1}^M\alpha_i Y_i\;\middle|\; M\in\mathbb{N},\,\alpha_i\geq 0,\,\sum_{i=1}^M\alpha_i = 1,\,Y_i\in \mathcal{A}\right\}.
\label{hulldef}
\end{equation}

As, perhaps, is intuitively clear, a support function does not distinguish a subset from its convex hull:
\begin{proposition}\label{convprop}
Let \(\mathcal{A}\subseteq S(n)\) be nonempty. Then
\[ F_{\mathcal{A}} =  F_{\con\mathcal{A}}.\]
\end{proposition}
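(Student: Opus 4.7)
The plan is to prove equality of the two support functions by showing both inequalities, exploiting the linearity of the inner product in each slot.

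First I would observe the easy direction: since $\mathcal{A} \subseteq \con \mathcal{A}$, the supremum in \eqref{suppdef} taken over the larger set dominates, giving $F_{\mathcal{A}}(X) \leq F_{\con \mathcal{A}}(X)$ for every $X \in S(n)$.

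For the nontrivial direction, I would fix $X \in S(n)$ and take an arbitrary element $Y \in \con \mathcal{A}$. By the characterization \eqref{hulldef}, we can write $Y = \sum_{i=1}^M \alpha_i Y_i$ with $Y_i \in \mathcal{A}$, $\alpha_i \geq 0$, and $\sum_i \alpha_i = 1$. Using the bilinearity of $\langle \cdot,\cdot\rangle$, compute
\[
\langle Y, X\rangle = \sum_{i=1}^M \alpha_i \langle Y_i, X\rangle \leq \sum_{i=1}^M \alpha_i F_{\mathcal{A}}(X) = F_{\mathcal{A}}(X),
\]
since each $\langle Y_i, X\rangle \leq \sup_{Z\in\mathcal{A}}\langle Z,X\rangle = F_{\mathcal{A}}(X)$ and the coefficients sum to one. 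Taking the supremum over $Y \in \con \mathcal{A}$ yields $F_{\con \mathcal{A}}(X) \leq F_{\mathcal{A}}(X)$, completing the equality.

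There is no real obstacle here: the argument uses only the linearity of the trace inner product and the definition of a convex combination. One minor point worth noting (but requiring no extra work) is that the identity holds in $(-\infty,\infty]$, so the argument remains valid even without boundedness of $\mathcal{A}$; the bound $F_{\mathcal{A}}(X)$ may be $+\infty$, in which case the inequality is trivial.
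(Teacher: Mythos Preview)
Your proof is correct and is essentially the same as the paper's: both rely on writing an element of $\con\mathcal{A}$ as a convex combination and using linearity of the inner product to bound $\langle Y,X\rangle$ by $F_{\mathcal{A}}(X)$. The paper phrases the argument through the auxiliary set $\mathcal{A}' := \{Z\mid \langle Z,X\rangle\leq F_{\mathcal{A}}(X)\ \forall X\}$ and the chain $F_{\mathcal{A}}\leq F_{\con\mathcal{A}}\leq F_{\mathcal{A}'}\leq F_{\mathcal{A}}$, but your direct version is slightly cleaner and avoids that detour.
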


\begin{proof}
Define the set
\[\mathcal{A}' := \left\{Z\in S(n)\;|\; \langle Z,X\rangle \leq  F_{\mathcal{A}}(X)\quad\forall X\in S(n)\right\}.\]
Let \(Z = \sum_{i=1}^M\alpha_i Z_i \in \con\mathcal{A}\) and let \(X\in S(n)\). Then
\begin{align*}
\langle Z,X\rangle &= \sum_{i=1}^M\alpha_i\langle Z_i,X\rangle\\
                   &\leq \sum_{i=1}^M\alpha_i\sup_{Y\in\mathcal{A}}\langle Y,X\rangle\\
									 &= \sum_{i=1}^M\alpha_i  F_{\mathcal{A}}(X) =  F_{\mathcal{A}}(X)
\end{align*}
and \(\con \mathcal{A}\subseteq \mathcal{A}'\). Thus
\begin{align*}
 F_{\mathcal{A}}(X) = \sup_{Y\in\mathcal{A}}\langle Y,X\rangle \leq \sup_{Y\in\con\mathcal{A}}&\langle Y,X\rangle\leq \sup_{Y\in\mathcal{A}'}\langle Y,X\rangle \leq  F_{\mathcal{A}}(X)\\
	||&\\
 F_{\con\mathcal{A}}&(X).
\end{align*}
\end{proof}

In light of Proposition \ref{convprop} one may wonder if, given a sublinear operator \(F(X) = \max_{Y\in\mathcal{K}}\langle Y,X\rangle\), there exists a \emph{smallest} set \(\mathcal{A}\subseteq\mathcal{K}\) such that we still have \(\max_{Y\in\mathcal{A}}\langle Y,X\rangle = F(X)\) for all \(X\in S(n)\).

\begin{definition}\label{extremedef}
Let \(\mathcal{K}\) be a convex body. A matrix \(E\in\mathcal{K}\) is an \textbf{extreme point} of \(\mathcal{K}\) if \(\mathcal{K}\setminus\{E\}\) is convex. The set of extreme points of \(\mathcal{K}\) is denoted \(\ext\mathcal{K}\).
\end{definition}

If we define the set
\begin{equation}
\mathcal{K}^\circ := \left\{tY + (1-t)Z\;|\; 0<t<1,\, Y,Z\in\mathcal{K},\, Y\neq Z\right\},
\label{Kcircdef}
\end{equation}
we also have the alternative definition
\[\ext\mathcal{K} = \mathcal{K}\setminus\mathcal{K}^\circ.\]
See Section 1.4 in \cite{MR3155183}.

Part 2 and 3 from the list below of elementary results in convex geometry, shows that the extreme points indeed is this smallest subset of \(\mathcal{K}\) that still determines the operator. Knowing the extreme points is important because they constitute a minimal set of data needed to evaluate \(F\) at a given Hessian.


\begin{proposition}\label{extconvbody}
Let \(\mathcal{K}\) be a convex body and let \(\mathcal{A}\) be compact. Then
\begin{enumerate}
	\item \(\con\mathcal{A}\) is compact.
	\item \(\con\ext\mathcal{K} = \mathcal{K}\)\;(Minkowski's theorem)
	\item If \(\con\mathcal{A} = \mathcal{K}\), then \(\ext\mathcal{K}\subseteq\mathcal{A}\).
	\item (Separating hyperplane) If \(X\notin\mathcal{K}\), then there exists a unique point \(Y_0\) on the boundary of \(\mathcal{K}\) such that \(\dist(X,\mathcal{K}) = \|X-Y_0\|\) and
	\[\langle Y-Y_0, X-Y_0\rangle \leq 0\qquad \forall Y\in\mathcal{K}.\]
\end{enumerate}
\end{proposition}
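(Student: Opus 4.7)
The four parts are standard results of finite-dimensional convex geometry, and I would handle them in order, keeping in mind that $S(n)$ is just an $N$-dimensional Hilbert space with the inner product \eqref{inprod}.

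For part 1, the plan is to invoke Carathéodory's theorem: every element of $\con\mathcal{A}$ in an $N$-dimensional space is a convex combination of at most $N+1$ points of $\mathcal{A}$. Thus $\con\mathcal{A}$ is the continuous image of the compact set $\Delta\times\mathcal{A}^{N+1}$ (where $\Delta$ is the standard simplex in $\mathbb{R}^{N+1}$) under the map $(\alpha,Y_1,\dots,Y_{N+1})\mapsto\sum_{i=1}^{N+1}\alpha_i Y_i$, and continuous images of compact sets are compact. Carathéodory's theorem itself can be quoted from \cite{MR3155183}.

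For part 2, I would use induction on $d:=\dim(\operatorname{aff}\mathcal{K})$. The base case $d=0$ is trivial. For the induction step, pick any $X\in\mathcal{K}$; if $X\in\ext\mathcal{K}$ there is nothing to show, otherwise $X$ lies in the relative interior of some segment contained in $\mathcal{K}$ which, by compactness, extends to a maximal chord with endpoints $X_1,X_2$ on the relative boundary of $\mathcal{K}$. Each $X_i$ lies in a proper exposed face $F_i:=\mathcal{K}\cap H_i$ cut out by a supporting hyperplane $H_i$, hence $\dim(\operatorname{aff}F_i)<d$. Using that any extreme point of $F_i$ is automatically an extreme point of $\mathcal{K}$, the inductive hypothesis gives $X_i\in\con\ext F_i\subseteq\con\ext\mathcal{K}$, and then $X\in\con\ext\mathcal{K}$ follows since $X$ lies on the segment $[X_1,X_2]$.

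Part 3 is the easiest: suppose $E\in\ext\mathcal{K}$ but $E\notin\mathcal{A}$. Since $E\in\mathcal{K}=\con\mathcal{A}$, write $E=\sum_{i=1}^M\alpha_iY_i$ with $\alpha_i>0$, $\sum\alpha_i=1$, $Y_i\in\mathcal{A}$. If $M=1$, then $E=Y_1\in\mathcal{A}$, a contradiction. If $M\geq 2$, group $E=\alpha_1Y_1+(1-\alpha_1)Z$ with $Z:=(1-\alpha_1)^{-1}\sum_{i\geq 2}\alpha_iY_i\in\mathcal{K}$; since $E$ is extreme (so $\mathcal{K}\setminus\{E\}$ is convex), we must have $Y_1=E$ or $Z=E$, and iterating forces $Y_i=E$ for some $i$, contradicting $E\notin\mathcal{A}$.

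Part 4 is the projection theorem on a compact convex subset of a Hilbert space. Existence of the minimizer $Y_0$ follows from continuity of $Y\mapsto\|X-Y\|$ on the compact set $\mathcal{K}$. For the variational inequality, for $Y\in\mathcal{K}$ and $t\in(0,1]$ the point $Y_t:=(1-t)Y_0+tY$ lies in $\mathcal{K}$, so $\|X-Y_t\|^2\geq\|X-Y_0\|^2$; expanding $\|X-Y_0-t(Y-Y_0)\|^2$ and letting $t\downarrow 0$ yields $\langle Y-Y_0,X-Y_0\rangle\leq 0$. Uniqueness then follows: if $Y_0'$ also achieves the minimum, apply the inequality twice to obtain $\langle Y_0'-Y_0,X-Y_0\rangle\leq 0$ and $\langle Y_0-Y_0',X-Y_0'\rangle\leq 0$; adding gives $\|Y_0-Y_0'\|^2\leq 0$. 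Finally $Y_0\in\partial\mathcal{K}$ because $X\notin\mathcal{K}$ forces the distance to be positive, ruling out $Y_0$ being interior to $\mathcal{K}$.

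The only nontrivial step is the induction in part 2, where one needs the lemma that extreme points of a face are extreme in the whole body; everything else is essentially bookkeeping. Since all of these results are explicitly catalogued in \cite{MR3155183}, my inclination would be to keep the proof compact by citing Carathéodory and Minkowski where appropriate and to write out parts 3 and 4 in full.
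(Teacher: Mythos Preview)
Your proposal is correct, and your final paragraph matches the paper's strategy closely: the paper simply cites \cite{MR3155183} for parts 1 and 2 (Theorem 1.1.11 and Corollary 1.4.5) and dismisses part 4 as ``standard theory'' about the metric projection, without writing out the variational inequality you supply.

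The one place your argument differs substantively is part 3. The paper's proof is a single line exploiting Definition \ref{extremedef} directly: if $E\in\ext\mathcal{K}$ and $E\notin\mathcal{A}$, then $\mathcal{A}\subseteq\mathcal{K}\setminus\{E\}$; since $\mathcal{K}\setminus\{E\}$ is convex by definition of an extreme point, $\con\mathcal{A}\subseteq\mathcal{K}\setminus\{E\}$, contradicting $\con\mathcal{A}=\mathcal{K}$. Your iterative splitting of a convex combination reaches the same conclusion but is longer; the paper's version has the advantage that it never needs to unpack the formula \eqref{hulldef} at all.
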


\begin{proof}
Part 1 and part 2 is Theorem 1.1.11 and Corollary 1.4.5 in \cite{MR3155183}, respectively.
For part 3, let \(\con\mathcal{A} = \mathcal{K}\) but assume that there is a matrix \(E\in\ext\mathcal{K}\) not in \(\mathcal{A}\). Then \(\mathcal{A}\subseteq\mathcal{K}\setminus\{E\}\) and we get the contradiction \(\con\mathcal{A}\subseteq\mathcal{K}\setminus\{E\}\) since \(\mathcal{K}\setminus\{E\}\) is convex.

Part 4 is standard theory: The point \(Y_0\) is the \emph{metric projection} of \(\mathcal{K}\) at \(X\). The inequality then states that the convex body lies entirely in the half-space defined by the hyperplane through \(Y_0\) orthogonal to \(X-Y_0\).
\end{proof}

We now present a way to make new sublinear operators from existing ones.

Let \(A\) and \(B\) be subsets of a vector space. The \textbf{Minkowski sum} of \(A\) and \(B\) is the set
\[A+B := \left\{a+b \;|\; a\in A,\, b\in B\right\}.\]
A set may also be scaled in an obvious way:
\[cA := \left\{ca \;|\; a\in A\right\},\qquad c\in\mathbb{R}.\]

\begin{proposition}\label{mincsum}
Let \(F\) and \(G\) be sublinear operators and let \(\alpha\) and \(\beta\) be non-negative constants. Then \(H := \alpha F + \beta G\) is a sublinear operator with convex body
\[\mathcal{K}_H = \alpha\mathcal{K}_F + \beta\mathcal{K}_G.\]
Furthermore, the function \(H(X) := F(-X)\) is sublinear with convex body
\[\mathcal{K}_H = -\mathcal{K}_F := -1\cdot\mathcal{K}_F.\]
\end{proposition}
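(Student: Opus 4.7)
The plan is to verify sublinearity of the two constructions by direct computation, then show that the asserted sets are convex bodies with the correct support function; uniqueness from Theorem \ref{repsupthm} then finishes the proof.

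For the first claim, subadditivity and positive homogeneity of $H = \alpha F + \beta G$ are immediate from the corresponding properties of $F$ and $G$ together with $\alpha, \beta \geq 0$; this is the routine check. The heart of the argument is identifying $\mathcal{K}_H$. I would first verify that $\alpha \mathcal{K}_F + \beta \mathcal{K}_G$ is a convex body: nonemptiness is obvious; convexity follows from the fact that Minkowski sums and positive scalings preserve convexity; compactness follows because the set is the image of the compact set $\mathcal{K}_F \times \mathcal{K}_G$ under the continuous map $(Y_1, Y_2) \mapsto \alpha Y_1 + \beta Y_2$. Then I would compute its support function directly: every element of $\alpha \mathcal{K}_F + \beta \mathcal{K}_G$ has the form $\alpha Y_1 + \beta Y_2$ with $Y_i \in \mathcal{K}_{F/G}$, so by linearity of the inner product
\[
\sup_{Y \in \alpha\mathcal{K}_F + \beta\mathcal{K}_G}\langle Y, X\rangle = \alpha\sup_{Y_1 \in \mathcal{K}_F}\langle Y_1, X\rangle + \beta\sup_{Y_2 \in \mathcal{K}_G}\langle Y_2, X\rangle = \alpha F(X) + \beta G(X) = H(X),
\]
where the separation of the two suprema is valid precisely because the two factors are independent and $\alpha, \beta \geq 0$. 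By the uniqueness assertion in Theorem \ref{repsupthm}, $\mathcal{K}_H = \alpha \mathcal{K}_F + \beta \mathcal{K}_G$.

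For the second claim, sublinearity of $H(X) := F(-X)$ is again an easy direct check: $H(X+Y) = F(-X + (-Y)) \leq F(-X) + F(-Y) = H(X) + H(Y)$, and $H(tX) = F(-tX) = tF(-X) = tH(X)$ for $t \geq 0$. The set $-\mathcal{K}_F$ is nonempty, convex, and compact as the image of $\mathcal{K}_F$ under $Y \mapsto -Y$. Its support function is
\[
\sup_{Y \in -\mathcal{K}_F}\langle Y, X\rangle = \sup_{Z \in \mathcal{K}_F}\langle -Z, X\rangle = \sup_{Z \in \mathcal{K}_F}\langle Z, -X\rangle = F(-X) = H(X),
\]
and once more Theorem \ref{repsupthm} gives $\mathcal{K}_H = -\mathcal{K}_F$.

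There is no real obstacle here; the only subtlety worth being careful about is ensuring $\alpha, \beta \geq 0$ when pulling the scalars out of the suprema (a negative scalar would flip the sup into an inf and break the identification). The compactness of the Minkowski sum is the one step that is not a pure algebraic manipulation, but it follows from a standard continuity argument.
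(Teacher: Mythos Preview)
Your proof is correct and follows essentially the same line as the paper. The only difference is that the paper cites Schneider's Theorem 1.7.5(a) for the identity $F_{\alpha\mathcal{K}_F+\beta\mathcal{K}_G}=\alpha F+\beta G$, whereas you compute the support function of the Minkowski sum directly; your version is therefore more self-contained, but the argument is the same in spirit.
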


\begin{proof}
It is easily checked that the functions \(\alpha F\) and \(\beta G\) are sublinear with convex bodies \(\alpha\mathcal{K}_F\), \(\beta\mathcal{K}_G\) when \(\alpha,\beta\geq 0\). The first claim then follows from Theorem 1.7.5 (a) in \cite{MR3155183}. Also, \(-\mathcal{K}_F\) is a convex body and
\[H(X) = F(-X) = \max_{Y\in\mathcal{K}_F}\langle Y,-X\rangle = \max_{Y\in\mathcal{K}_F}\langle -Y,X\rangle = \max_{Y\in-\mathcal{K}_F}\langle Y,X\rangle.\]
\end{proof}

The extreme points are invariant under scaling and shifts.

\begin{proposition}\label{extscaleshiftinv}
Let \(\mathcal{K}\) be a convex body and let \(A\in S(n)\) and \(\alpha\in\mathbb{R}\). Then
\[\ext\Big(\{A\} + \alpha\mathcal{K}\Big) = \{A\} + \alpha\ext\mathcal{K}.\]
\end{proposition}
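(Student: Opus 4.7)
The plan is to exploit the fact that the translation-and-scaling map $\phi\colon S(n)\to S(n)$ given by $\phi(X) := A + \alpha X$ is an affine map that preserves convex combinations, together with the characterisation $\ext\mathcal{K} = \mathcal{K}\setminus\mathcal{K}^\circ$ recalled right after Definition \ref{extremedef}.

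First I would dispose of the degenerate case $\alpha = 0$. Then $\{A\} + \alpha\mathcal{K} = \{A\}$, which is a singleton convex body whose only extreme point is $A$, and the right hand side $\{A\} + 0\cdot\ext\mathcal{K} = \{A\}$ agrees (since $\mathcal{K}$, and hence $\ext\mathcal{K}$, is nonempty). So the claim holds trivially in this case.

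For $\alpha\neq 0$, the map $\phi$ is a bijection of $S(n)$ with inverse $\phi^{-1}(X) = (X-A)/\alpha$, and it is affine: for all $t\in\mathbb{R}$ and $Y,Z\in S(n)$,
\[\phi(tY + (1-t)Z) = t\phi(Y) + (1-t)\phi(Z).\]
Since $\phi(\mathcal{K}) = \{A\} + \alpha\mathcal{K}$, it is enough to show $\ext\phi(\mathcal{K}) = \phi(\ext\mathcal{K})$. Using the alternative characterisation, this reduces to proving
\[\phi(\mathcal{K}^\circ) = \bigl(\phi(\mathcal{K})\bigr)^\circ,\]
because then $\phi(\ext\mathcal{K}) = \phi(\mathcal{K}\setminus\mathcal{K}^\circ) = \phi(\mathcal{K})\setminus\phi(\mathcal{K}^\circ) = \phi(\mathcal{K})\setminus\phi(\mathcal{K})^\circ = \ext\phi(\mathcal{K})$, where the second equality uses injectivity of $\phi$.

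The equality $\phi(\mathcal{K}^\circ) = \phi(\mathcal{K})^\circ$ is then a direct verification from the definition \eqref{Kcircdef}. If $X = tY + (1-t)Z \in \mathcal{K}^\circ$ with $Y\neq Z$ in $\mathcal{K}$ and $0<t<1$, then by affinity $\phi(X) = t\phi(Y) + (1-t)\phi(Z)$ with $\phi(Y)\neq\phi(Z)$ both in $\phi(\mathcal{K})$, so $\phi(X)\in\phi(\mathcal{K})^\circ$. Conversely, every element of $\phi(\mathcal{K})^\circ$ has the form $tY' + (1-t)Z'$ with $Y'\neq Z'$ in $\phi(\mathcal{K})$, and applying $\phi^{-1}$ (which is also affine and injective) yields a representative of $\mathcal{K}^\circ$ whose image is the original element. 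No genuine obstacle arises here; the only thing to be careful about is the injectivity step (to make sure $Y\neq Z$ is preserved in both directions), which is immediate because $\phi$ is a bijection when $\alpha\neq 0$.
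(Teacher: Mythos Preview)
Your proof is correct and follows essentially the same approach as the paper: both treat $\alpha=0$ separately, then for $\alpha\neq 0$ verify directly from the definition that $(\{A\}+\alpha\mathcal{K})^\circ = \{A\}+\alpha\mathcal{K}^\circ$ and deduce the result via $\ext\mathcal{K}=\mathcal{K}\setminus\mathcal{K}^\circ$. Your phrasing in terms of the affine bijection $\phi$ is a slightly cleaner packaging of the same computation the paper carries out by explicit set manipulations.
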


\begin{proof}
The claim is obviously true for \(\alpha=0\) since \(0\cdot\mathcal{K} = 0\cdot\ext\mathcal{K} = \{0\}\) and \(\{A\} + \{0\} = \{A\}\).
The set \(\mathcal{K}' := \{A\}+\alpha\mathcal{K}\) is a convex body by Proposition \ref{mincsum} and
\begin{align*}
(\mathcal{K}')^\circ
	&= \left\{tY + (1-t)Z\;|\; t\in(0,1),\, Y\neq Z\in\mathcal{K}'\right\}\\
	&= \left\{t(A + \alpha Y) + (1-t)(A + \alpha Z)\;|\; t\in(0,1),\, Y\neq Z\in\mathcal{K}\right\}\\
	&= \left\{A + \alpha(tY + (1-t)Z)\;|\; t\in(0,1),\, Y\neq Z\in\mathcal{K}\right\}\\
	&= \{A\} + \alpha\mathcal{K}^\circ.
\end{align*}
It follows that
\begin{align*}
\ext\mathcal{K}' &= \mathcal{K}'\setminus(\mathcal{K}')^\circ\\
                 &= \left(\{A\} + \alpha\mathcal{K}\right)\setminus\left(\{A\} + \alpha\mathcal{K}^\circ\right)\\
								 &= \left\{A+\alpha Y\;|\; Y\in\mathcal{K}\right\}\setminus\left\{A+\alpha Z\;|\; Z\in\mathcal{K}^\circ\right\}\\
								 &= \left\{A+\alpha Y\;|\; Y\in\mathcal{K},\, A+\alpha Y \neq A+\alpha Z\,\forall Z\in\mathcal{K}^\circ\right\}\\
								 &= \left\{A+\alpha Y\;|\; Y\in\mathcal{K},\, Y \notin\mathcal{K}^\circ\right\}\\
								 &= \{A\} + \alpha\left(\mathcal{K}\setminus\mathcal{K}^\circ\right) = \{A\} + \alpha\ext\mathcal{K}.
\end{align*}

\end{proof}

\chapter{Degenerate ellipticity}\label{ch:deg}

\section{The bodies of Pucci and dominative p-Laplace}\label{sec:example_bodies}
For constants \(0<\lambda\leq\Lambda\), define
\begin{equation}
\mathcal{K}_{\lambda,\Lambda} := \left\{Y\in S(n)\;|\; \lambda I\leq Y\leq \Lambda I\right\}.
\label{ex_pucci_body}
\end{equation}
The Pucci operator is then \(\D_{\lambda,\Lambda}u := P_{\lambda,\Lambda}(\mathcal{H}u)\) where
\[P_{\lambda,\Lambda}(X) = \max_{Y\in\mathcal{K}_{\lambda,\Lambda}}\langle Y,X\rangle.\]
As \(\lambda_{\min}(A+B) \geq \lambda_{\min}(A) + \lambda_{\min}(B)\) and \(\lambda_{\max}(A+B) \leq \lambda_{\max}(A) + \lambda_{\max}(B)\), we get that \(\mathcal{K}_{\lambda,\Lambda}\) is a convex subset of \(S(n)\). Since it obviously also is nonempty and compact, it follows that \(\mathcal{K}_{\lambda,\Lambda}\) is the associated convex body of \(P_{\lambda,\Lambda}\). A short survey of the Pucci operator is found in Appendix \ref{ch:visc_ell}.

Now let \(p\in[1,\infty]\) and let \(\mathcal{K}_p\) denote the associated convex body to the dominative \(p\)-Laplacian, \(F_p(\mathcal{H}u) = \D_pu\). If we define the compact, but non-convex, subset of \(S(n)\),
\begin{align*}
\mathcal{E}_p &:= \left\{I + (p-2)\xi\xi^T\;|\; \xi\in\mathbb{R}^n,\; |\xi| = 1\right\},\qquad 1\leq p<\infty,\\
\mathcal{E}_\infty &:= \left\{\xi\xi^T\;|\; \xi\in\mathbb{R}^n,\; |\xi| = 1\right\},
\end{align*}
we know that\footnote{See Remark 1 in \cite{bru18}.}
\[F_p(X) = \max_{Y\in\mathcal{E}_p}\langle Y,X\rangle\]
and thus \(\mathcal{K}_p = \con\mathcal{E}_p\) by Proposition \ref{extconvbody} part 1, Proposition \ref{convprop}, and the uniqueness of the convex body. We shall see later (Remark \ref{remarkgenerated}) that \(\mathcal{E}_p\) is, in fact, the extreme points of \(\mathcal{K}_p\).

We claim that
\[\mathcal{K}_\infty = \left\{Y\in S(n)\;|\; 0\leq Y\leq I,\; \tr Y = 1\right\}.\]
Indeed, let \(Y\in\con\mathcal{E}_\infty\). That is, \(Y\) is a convex combination on the form \(Y = \sum_{i=1}^M\alpha_i\xi_i\xi_i^T\) where \(|\xi_i|=1\). Then
\[\tr Y = \tr\left(\sum_{i=1}^M\alpha_i\xi_i\xi_i^T\right) = \sum_{i=1}^M\alpha_i\tr\left(\xi_i\xi_i^T\right) = \sum_{i=1}^M\alpha_i = 1,\]
and if \(|z|=1\), then
\[z^TYz = \sum_{i=1}^M\alpha_i(z^T\xi_i)^2
\begin{cases}
\leq 1,\\
\geq 0.
\end{cases}\]
For the other inclusion, assume that \(Y\in S(n)\) satisfies \(0\leq Y\leq I\) and \(\tr Y = 1\). The symmetric matrix \(Y\) can be written in terms of its eigenvalues and unit length eigenvectors as \(Y=\sum_{i=1}^n\lambda_i\xi_i\xi_i^T\). This means that \(0\leq\lambda_i\leq 1\), \(\sum_{i=1}^n\lambda_i = 1\), and \(Y\) is surely a convex combination of points in \(\mathcal{E}_\infty\).

We now compute \(\mathcal{K}_p\) for \(1\leq p<\infty\). Assume first that \(p\geq 2\). Then
\[F_p(X) = \tr X + (p-2)\lambda_{\max}(X) = F_2(X) + (p-2)F_\infty(X)\]
and
\[\mathcal{K}_p = \mathcal{K}_2 + (p-2)\mathcal{K}_\infty\]
by Propostion \ref{mincsum}.
If \(1\leq p \leq 2\), then
\[F_p(X) = \tr X + (p-2)\lambda_{\min}(X) = F_2(X) + (2-p)F_\infty(-X)\]
and
\(\mathcal{K}_p = \mathcal{K}_2 + (2-p)(-\mathcal{K}_\infty) = \mathcal{K}_2 + (p-2)\mathcal{K}_\infty\) aslo in this case.

Since \(\mathcal{K}_2\) is just the singleton \(\{I\}\) it follows that \(\mathcal{K}_p\) is nothing but a scaling and a shift of \(\mathcal{K}_\infty\):
\begin{equation}
\mathcal{K}_p = \{I\} + (p-2)\mathcal{K}_\infty, \qquad 1\leq p<\infty.
\label{Kpscaleshift}
\end{equation}

Doing the addition, we find that the associated convex body to the dominative \(p\)-Laplace operator is
\begin{equation}
\mathcal{K}_p =
\begin{cases}
\left\{Y\;|\; (p-1)I\leq Y\leq I,\; \tr Y = n+p-2\right\},\qquad &\text{if \(1\leq p\leq 2\),}\\
\left\{Y\;|\; I\leq Y\leq (p-1)I,\; \tr Y = n+p-2\right\},&\text{if \(2\leq p<\infty\),}\\
\left\{Y\;|\; 0\leq Y\leq I,\; \tr Y = 1\right\},&\text{if \(p=\infty\).}
\end{cases}
\label{dombody}
\end{equation}
We know that \(\lim_{p\to\infty}\frac{1}{p}F_p = F_\infty\), and, indeed, we see that \(\lim_{p\to\infty}\frac{1}{p}\mathcal{K}_p = \mathcal{K}_\infty\) (in any reasonable sense).

It is also useful to define a sort of \(\infty\)-Pucci operator in the same manner: Allowing \(\lambda\) to go all the way down to zero in \eqref{ex_pucci_body}, we get
\begin{equation}
\lim_{\Lambda\to\infty}\frac{1}{\Lambda}P_{\lambda,\Lambda} = P_{0,1}
\label{infinitypucci}
\end{equation}
since \(c\mathcal{K}_{\lambda,\Lambda} = \mathcal{K}_{c\lambda,c\Lambda}\) for non-negative constants \(c\).
It is defined by the convex body
\[\mathcal{K}_{0,1} = \left\{Y\in S(n)\;|\; 0\leq Y\leq I\right\}\]
and the operator is only degenerate elliptic. Like in the dominative case, the Pucci body can now be decomposed into a scaling and a shift of its \(\infty\)-body: Using that \(\mathcal{K}_{a,b} + \mathcal{K}_{\lambda,\Lambda} = \mathcal{K}_{a+\lambda,b+\Lambda}\) and \(\mathcal{K}_{1,1} = \{I\}\), yields
\begin{equation}
\mathcal{K}_{\lambda,\Lambda} = \lambda\{I\} + (\Lambda-\lambda)\mathcal{K}_{0,1},\qquad 0\leq\lambda\leq\Lambda.
\label{pucscaleshift}
\end{equation}

Notice that \(\mathcal{K}_p\) is the intersection between a Pucci body and a hyperplane in \(S(n)\). For example, when \(2\leq p<\infty\),
\[\mathcal{K}_p = \mathcal{K}_{1,p-1}\cap \left\{\tr Y = n+p-2\right\} = \{I\} + (p-2)\Big(\mathcal{K}_{0,1}\cap \left\{\tr Y = 1\right\}\Big)\]
and the operators may be written in a way that make them appear to be very similar:
\begin{align*}
\D_{1,p-1}u &= \Delta u + (p-2)\max_{Y\in\mathcal{K}_{0,1}}\tr\left(Y\mathcal{H}u\right),\\
\D_p u &= \Delta u + (p-2)\max_{\substack{Y\in\mathcal{K}_{0,1}\\ \tr Y = 1}}\tr\left(Y\mathcal{H}u\right),\qquad 2\leq p<\infty.
\end{align*}

\section{Body cones and solution cones}
Assume that \(F\) and \(G\) are two sublinear operators with associated convex bodies \(\mathcal{K}_F\) and \(\mathcal{K}_G\). It is obvious from the definition that \(\mathcal{K}_F\subseteq\mathcal{K}_G\) implies \(F(X)\leq G(X)\) and thus a supersolution of \(G(\mathcal{H}u)=0\) is also a supersolution of \(F(\mathcal{H}u)=0\). But as we shall see, inclusion of the convex bodies is not a \emph{necessary} condition to ensure this nesting property of supersolutions: It is the ``width'' of the convex body as seen from the origin that matters.

\begin{definition}\label{bodycone}
A subset \(C\) of a vector space is a \textbf{convex cone} if \(X,Y\in C\) and \(\alpha,\beta\geq 0\) implies
\[\alpha X + \beta Y \in C.\]
The \textbf{body cone} of a convex body \(\mathcal{K}\) is the closed convex cone
\[C_{\mathcal{K}} := \left\{tZ\;|\; Z\in\mathcal{K},\;t\geq 0\right\}.\]
Given a convex cone \(C\), we define the \textbf{dual cone} as
\[C^\circ := \{X\in S(n)\;|\; \langle Y,X\rangle\leq 0\quad\forall Y\in C\}.\]
\end{definition}
Note that for a sublinear operator, \(F(X) = \max_{Y\in\mathcal{K}}\langle Y,X\rangle\), we have
\begin{equation}
F(X)\leq 0\qquad\text{if and only if}\qquad X\in C_{\mathcal{K}}^\circ.
\label{equivsolcone}
\end{equation}
We shall therefore call \(C_{\mathcal{K}}^\circ\) for the \textbf{solution cone} of the operator \(F\). See Figure \ref{fig:bodycones}

\begin{figure}[ht]%
\center
\includegraphics{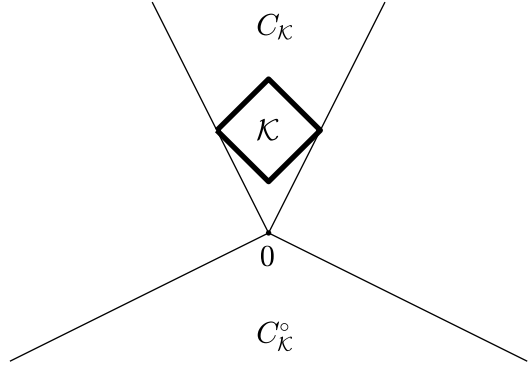}%
\caption{The body cone \(C_{\mathcal{K}}\) and the solution cone \(C_{\mathcal{K}}^\circ\) to the operator \(F\) with associated convex body \(\mathcal{K}\). On the boundary of \(C_{\mathcal{K}}^\circ\) we have \(F=0\).}%
\label{fig:bodycones}%
\end{figure}

\begin{lemma}\label{duallem}
Let \(C_1, C_2\) be closed convex cones. Then
\[C_1\subseteq C_2\qquad\iff\qquad C_2^\circ\subseteq C_1^\circ.\]
\end{lemma}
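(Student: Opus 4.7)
The plan is to prove both directions separately, with the forward direction being essentially tautological and the reverse direction requiring a separation argument. This is a standard biduality-type statement for cones, so the main conceptual work is simply to make sure the separating hyperplane can be chosen to pass through the origin, exploiting the cone structure.

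For the forward direction, suppose $C_1\subseteq C_2$ and let $X\in C_2^\circ$. Then $\langle Y,X\rangle\leq 0$ holds for every $Y\in C_2$, hence in particular for every $Y\in C_1$, which gives $X\in C_1^\circ$. No use of closedness or convexity is required here.

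For the contrapositive of the reverse direction, assume $C_1\not\subseteq C_2$ and pick $X\in C_1\setminus C_2$. Since $C_2$ is a nonempty closed convex set (it contains $0$) and $X\notin C_2$, apply the separating hyperplane result of Proposition \ref{extconvbody} part 4 (or the classical Hahn--Banach separation in $S(n)$) to obtain $Z\in S(n)$ and $\alpha\in\mathbb{R}$ with
\[
\langle Z,X\rangle > \alpha \quad\text{and}\quad \langle Z,Y\rangle \leq \alpha\;\forall Y\in C_2.
\]
Taking $Y=0\in C_2$ gives $\alpha\geq 0$. The step where the cone structure enters: because $C_2$ is closed under nonnegative scaling, the inequality $\langle Z, tY\rangle\leq\alpha$ must hold for all $t\geq 0$ and all $Y\in C_2$, which forces $\langle Z,Y\rangle\leq 0$ for every $Y\in C_2$, i.e.\ $Z\in C_2^\circ$. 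But then $\langle Z,X\rangle>\alpha\geq 0$, while the hypothesis $C_2^\circ\subseteq C_1^\circ$ together with $X\in C_1$ would give $\langle Z,X\rangle\leq 0$, a contradiction.

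The main obstacle, if there is one, is ensuring that the separating functional lies in the dual cone rather than merely in some half-space supporting $C_2$; the cone homogeneity trick in the previous paragraph takes care of this. Note also that closedness of $C_2$ is essential, since otherwise the separation need not be strict; for $C_1$ neither closedness nor convexity is actually used.
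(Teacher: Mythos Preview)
Your proof is correct. The forward direction matches the paper exactly. For the reverse direction, the paper takes a shorter route: it invokes the bipolar theorem \(C^{\circ\circ}=C\) for closed convex cones (citing \cite{MR3155183}) and then applies the already-proved forward implication to the inclusion \(C_2^\circ\subseteq C_1^\circ\) to obtain \(C_1=C_1^{\circ\circ}\subseteq C_2^{\circ\circ}=C_2\). Your argument instead unwinds this: you separate a point \(X\in C_1\setminus C_2\) from \(C_2\) and use cone homogeneity to force the separating functional into \(C_2^\circ\), which is essentially the key step in the proof of the bipolar theorem carried out in place. Your route is more self-contained; the paper's is one line once the bipolar result is taken as given. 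One small caveat: Proposition~\ref{extconvbody}(4) as stated in the paper is for convex \emph{bodies} (compact sets), not closed cones, so your parenthetical appeal to Hahn--Banach separation (or metric projection onto a closed convex set) is the correct justification here.
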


\begin{proof}
Assume that \(C_1\subseteq C_2\) and that \(X\in C_2^\circ\). Then \(\langle Y,X\rangle\leq 0\) for all \(Y\in C_2\). Thus \(\langle Y,X\rangle\leq 0\) for all \(Y\in C_1\) and \(X\in C_1^\circ\).

The leftward implication then follows from the fact that \(C^{\circ\circ} = C\) for closed convex cones \(C\). See section 1.6 in \cite{MR3155183}.
\end{proof}

\begin{theorem}[Nesting property of sub- and supersolutions]\label{nesting}
Consider two sublinear and degenerate elliptic equations \(F(\mathcal{H}w)=0\) and \(G(\mathcal{H}w)=0\) in \(\Omega\subseteq\mathbb{R}^n\) and let \(\mathcal{K}_F\), \(\mathcal{K}_G\) be the associated convex bodies. Then
\[
\begin{gathered}
C_{\mathcal{K}_F}\subseteq C_{\mathcal{K}_G}\\
\iff\\
G(\mathcal{H}u)\leq 0 \qquad\text{implies}\qquad F(\mathcal{H}u)\leq 0\\
\text{in the viscosity sense in \(\Omega\).}\\
\iff\\
F(\mathcal{H}v)\geq 0 \qquad\text{implies}\qquad G(\mathcal{H}v)\geq 0\\
\text{in the viscosity sense in \(\Omega\).}
\end{gathered}
\]
\end{theorem}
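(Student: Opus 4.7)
The plan is to reduce each equivalence to a pointwise condition on $S(n)$ and then lift it to the viscosity setting via elementary quadratic test functions, with Lemma~\ref{duallem} and \eqref{equivsolcone} serving as the bridge back to the cone inclusion.

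For the super equivalence, Lemma~\ref{duallem} turns $C_{\mathcal{K}_F}\subseteq C_{\mathcal{K}_G}$ into $C_{\mathcal{K}_G}^\circ\subseteq C_{\mathcal{K}_F}^\circ$, and \eqref{equivsolcone} identifies this with the pointwise implication $G(X)\leq 0\Rightarrow F(X)\leq 0$ on $S(n)$. The viscosity direction is immediate from the pointwise implication applied to the Hessians of test functions. For the converse, given any $X\in S(n)$ with $G(X)\leq 0$ and any $x_0\in\Omega$, the smooth quadratic $u(x):=\tfrac{1}{2}(x-x_0)^T X(x-x_0)$ has constant Hessian $X$ and is therefore a classical (hence viscosity) supersolution of $G$. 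By hypothesis $u$ is then a viscosity supersolution of $F$, so testing $\varphi=u$ at $x_0$ gives $F(X)\leq 0$.

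For the sub equivalence I would proceed in parallel, with intermediate pointwise statement $F(X)\geq 0\Rightarrow G(X)\geq 0$. The quadratic $u(x):=\tfrac{1}{2}x^T X x$ is a classical subsolution of $F$ precisely when $F(X)\geq 0$, which lets the viscosity and pointwise statements pass to each other in both directions. To derive the pointwise implication from $C_{\mathcal{K}_F}\subseteq C_{\mathcal{K}_G}$, compactness of $\mathcal{K}_F$ produces a maximizer $Y_F\in\mathcal{K}_F$ with $\langle Y_F,X\rangle = F(X)\geq 0$; since $Y_F\in C_{\mathcal{K}_F}\subseteq C_{\mathcal{K}_G}$, I may write $Y_F = tY_G$ for some $t\geq 0$ and $Y_G\in\mathcal{K}_G$, and the case $t>0$ at once yields $G(X)\geq\langle Y_G,X\rangle\geq 0$.

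The main obstacle I anticipate is the borderline case $t=0$, where the maximizer $Y_F$ reduces to the zero matrix and the cone inclusion alone transmits no information to $\mathcal{K}_G$; this situation is genuinely possible under mere degenerate ellipticity, where $0\in\mathcal{K}_F$ is permitted. Handling it will likely require selecting a nonzero representative within the exposed face $\{Y\in\mathcal{K}_F : \langle Y,X\rangle = F(X)\}$ whenever one exists, or otherwise an approximation argument: perturbing $X$ to $X+\epsilon I$, exploiting that every nonzero $Y\in\mathcal{K}_F$ is PSD with $\tr Y>0$ so the perturbed problem picks up a nondegenerate maximizer, and then passing to the limit $\epsilon\to 0^+$ by continuity of $G$. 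This is the step I expect to be the most delicate.
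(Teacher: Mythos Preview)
Your treatment of the super-equivalence is correct and is essentially the paper's own argument: the paper also passes through Lemma~\ref{duallem} and \eqref{equivsolcone} to the pointwise implication $G(X)\le 0\Rightarrow F(X)\le 0$, and for the converse exhibits the quadratic $\phi(x)=\tfrac12 x^TX_0x$ as a counterexample when the cone inclusion fails.

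For the sub-equivalence, your worry about the case $t=0$ is not a technicality to be patched but a genuine obstruction: the equivalence between the cone inclusion and the sub-statement is \emph{false} as written once $0$ is allowed in either body, and your approximation cannot salvage it. Take $\mathcal{K}_F=\{0\}$ and $\mathcal{K}_G=\{I\}$. Then $C_{\mathcal{K}_F}=\{0\}\subseteq C_{\mathcal{K}_G}$, yet $F\equiv 0$ makes every u.s.c.\ function a subsolution of $F$, while $v(x)=-\tfrac12|x|^2$ has $G(\mathcal{H}v)=-n<0$ and is not a subsolution of $G$. Swapping the bodies breaks the converse direction: with $\mathcal{K}_G=\{0\}$ every function is a subsolution of $G$, so the sub-statement holds trivially, but $C_{\mathcal{K}_F}=\{tI:t\ge 0\}\not\subseteq\{0\}=C_{\mathcal{K}_G}$. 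Your perturbation $X\mapsto X+\epsilon I$ does nothing in the first example, since the only element of $\mathcal{K}_F$ is $0$ and no nondegenerate maximizer ever appears. The paper's ``analogous'' remark glosses over this. The correct hypothesis for the sub-equivalence is the non-degeneracy condition $0\notin\mathcal{K}_F$ and $0\notin\mathcal{K}_G$ (equivalently $F(-I)<0$ and $G(-I)<0$; see \eqref{Fneg}, \eqref{zerocond}). Under it, $\{F<0\}$ is exactly the interior of the closed convex cone $C_{\mathcal{K}_F}^\circ$ (if $F(X)=0$, pick $Y\in\mathcal{K}_F$ with $\langle Y,X\rangle=0$; then $Y\neq 0$ and $F(X+\epsilon Y)\ge\epsilon\|Y\|^2>0$, so $X\notin\operatorname{int}C_{\mathcal{K}_F}^\circ$), and likewise for $G$; the standard fact $A\subseteq B\iff\operatorname{int}A\subseteq\operatorname{int}B$ for closed convex sets with nonempty interior then gives $\{G\le 0\}\subseteq\{F\le 0\}\iff\{G<0\}\subseteq\{F<0\}$, which is the pointwise sub-implication you need.
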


\begin{proof}
We only prove the equivalence of the first and second claim. The proof of the equivalence of the first and third claim is analogous.

Assume that \(C_{\mathcal{K}_F}\subseteq C_{\mathcal{K}_G}\) and that \(u\) is a supersolution to \(G(\mathcal{H}w)=0\) in \(\Omega\). Let \(\phi\) be a test function to \(u\) from below at some point \(x_0\in\Omega\) and write \(X:=\mathcal{H}\phi(x_0)\). Then \(G(X) \leq 0\). That is, \(X\in C_{\mathcal{K}_G}^\circ\subseteq C_{\mathcal{K}_F}^\circ\) by \eqref{equivsolcone} and Lemma \ref{duallem}. Thus \(F(X)\leq 0\) and \(u\) is a viscosity supersolution to \(F(\mathcal{H}w) = 0\) in \(\Omega\).

Now assume that \(C_{\mathcal{K}_F}\) is not contained in \(C_{\mathcal{K}_G}\). We have to find a supersolution to \(G(\mathcal{H}w) = 0\) that is not a supersolution to \(F(\mathcal{H}w)=0\). By Lemma \ref{duallem}, \(C_{\mathcal{K}_G}^\circ\) is not contained in \(C_{\mathcal{K}_F}^\circ\) and we may therefore find a matrix \(X_0\) such that
\[C_{\mathcal{K}_G}^\circ\ni X_0\notin C_{\mathcal{K}_F}^\circ.\]
Let \(\phi\) be the quadratic \(\phi(x) := \frac{1}{2}x^T X_0 x\) in \(\Omega\). Then \(\mathcal{H}\phi = X_0\) and it follows from \eqref{equivsolcone} that
\[G(X_0)\leq 0 < F(X_0).\]
\end{proof}

The associated convex body to the Laplace operator is the singleton \(\{I\}\). The next result is therefore an immediate corollary.
\begin{corollary}\label{supharmcor}
Every supersolution to a sublinear and degenerate elliptic equation is superharmonic if and only if the associated convex body contains a positively scaled identity matrix.
\end{corollary}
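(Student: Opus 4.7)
The plan is to apply Theorem \ref{nesting} with the Laplacian playing the role of one of the two operators. Since $\Delta u = \tr(\mathcal{H}u) = \langle I, \mathcal{H}u\rangle$, the Laplacian is sublinear (indeed linear) with associated convex body $\{I\}$, and a viscosity supersolution to $\tr(\mathcal{H}w) = 0$ is precisely a superharmonic function. So for any sublinear degenerate elliptic $G$ with convex body $\mathcal{K}_G$, taking $F = \tr$ in Theorem \ref{nesting} reduces the claim to the set-theoretic equivalence
\[
C_{\{I\}} \subseteq C_{\mathcal{K}_G} \quad\Longleftrightarrow\quad \alpha I \in \mathcal{K}_G \text{ for some } \alpha > 0.
\]

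The remaining work is to verify this equivalence directly from Definition \ref{bodycone}. The body cone of $\{I\}$ is the ray $\{tI : t \geq 0\}$, which sits inside $C_{\mathcal{K}_G}$ as soon as $I$ itself does, since $C_{\mathcal{K}_G}$ is closed under non-negative scaling. Writing $I = sZ$ with $Z \in \mathcal{K}_G$ and $s \geq 0$ forces $s > 0$ (as $I \neq 0$), so $Z = s^{-1}I$ is a positive multiple of $I$ in $\mathcal{K}_G$. Conversely, if $\alpha I \in \mathcal{K}_G$ with $\alpha > 0$, then $tI = (t/\alpha)(\alpha I) \in C_{\mathcal{K}_G}$ for every $t \geq 0$, so the full ray is captured.

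I do not anticipate a substantive obstacle: the corollary is a clean specialization of Theorem \ref{nesting} to the extremely simple body $\{I\}$. The only thing to be careful about is the trivial but essential step of excluding the degenerate case $s = 0$ when pulling a positive multiple of $I$ out of $C_{\mathcal{K}_G}$.
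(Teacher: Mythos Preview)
Your proof is correct and follows exactly the approach the paper intends: the text immediately preceding the corollary notes that the Laplacian has associated convex body $\{I\}$ and declares the result an immediate consequence of Theorem \ref{nesting}. You have simply spelled out the one-line verification that $C_{\{I\}}\subseteq C_{\mathcal{K}_G}$ is equivalent to $\mathcal{K}_G$ containing some $\alpha I$ with $\alpha>0$, which the paper leaves to the reader.
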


\begin{example}[Nesting property in the dominative \(p\)-Laplace equation]
If \(2\leq p\leq q<\infty\), one may confirm from \eqref{dombody} that
\[\frac{1}{n}\mathcal{K}_2\subseteq \frac{1}{n+p-2}\mathcal{K}_p\subseteq \frac{1}{n+q-2}\mathcal{K}_q\subseteq \mathcal{K}_\infty.\]
Thus
\[C_{\mathcal{K}_2}\subseteq C_{\mathcal{K}_p}\subseteq C_{\mathcal{K}_q}\subseteq C_{\mathcal{K}_\infty}\]
and Proposition 5 (4) in \cite{bru18} follows from Theorem \ref{nesting}.
\end{example}

\section{Comparison principles}\label{sec:compprin}
In order to have any hope to prove a \emph{comparison principle} -- which, for example, is needed to show uniqueness in the Dirichlet problem -- some kind of \emph{non-degeneracy condition} of the operator is necessary. (One may think of the sublinear and degenerate elliptic operator \(F\equiv 0\) in which \emph{every} continuous function is a solution to the equation \(F(\mathcal{H}u) = 0\).)

Uniform ellipticity will do, but it turns out that a much less restrictive requirement on \(F\) is sufficient:
\begin{equation}
\exists X\in S(n)\qquad\text{such that}\qquad F(X) < 0.
\label{nondegcond}
\end{equation}
Since \(F(0) = 0\), it is clear from ellipticity that such an \(X = \sum_{i=1}^n\lambda_i\xi_i\xi_i^T\) is not positive semidefinite. That is, \(\lambda_1<0\). As \(\lambda_1 I\leq X\), positive homogenicity and ellipticity gives \(-\lambda_1 F(-I) = F(\lambda_1 I) \leq F(X) < 0\). Thus \eqref{nondegcond} is equivalent to
\begin{equation}
F(-I) < 0.
\label{Fneg}
\end{equation}

Furthermore, if we consider a negative definite matrix \(X = \sum_{i=1}^n\lambda_i\xi_i\xi_i^T\), \(\lambda_n<0\). Then \(X\leq\lambda_nI\) and \(F(X) \leq F(\lambda_nI) = -\lambda_nF(-I)<0\) and \eqref{nondegcond} is actually equivalent to
\begin{equation}
X<0\qquad\Rightarrow\qquad F(X)<0.
\label{negdefcond}
\end{equation}

In terms of the associated convex body of \(F\), we shall prove that \(F\geq 0\) if and only if the body contains the zero-matrix. That is,
\begin{equation}
\eqref{nondegcond}\qquad\iff\qquad 0\notin\mathcal{K}.
\label{zerocond}
\end{equation}
\begin{proof}
If \(0\in\mathcal{K}\) then
\[F(X) = \max_{Y\in\mathcal{K}}\langle Y,X\rangle \geq \langle 0,X\rangle = 0\]
for every \(X\in S(n)\).

Assume \(0\notin\mathcal{K}\). Since \(\mathcal{K}\) is closed, the distance from 0 to \(\mathcal{K}\) is positive. Moreover, by part 4 of Proposition \ref{extconvbody}, there is a matrix \(Y_0\in\mathcal{K}\) so that
\[0<\dist(0,\mathcal{K}) = \|0-Y_0 \|\]
and
\[\langle Y-Y_0,0-Y_0\rangle \leq 0\]
for all \(Y\in \mathcal{K}\). Writing the left-hand side as \(\|Y_0\|^2 - \langle Y,Y_0\rangle\), it follows that
\[F(-Y_0) = \max_{Y\in\mathcal{K}}\Big(-\langle Y,Y_0\rangle\Big) = -\|Y_0\|^2 < 0.\]
\end{proof}

From \eqref{zerocond}, we see that the \(\infty\)-Pucci operator \eqref{infinitypucci} is the only operator we have encountered so far that does not satisfy the non-degeneracy condition.

The condition \eqref{nondegcond} is also optimal in the sense that if it does \emph{not} hold, then no comparison principle is possible: If \eqref{nondegcond} is not true, then every \(C^2\) function will be a subsolution. One may therefore compare the subsolution \(v(x) = 1-|x|^2\) to the supersolution \(u = 1/2\) in the unit ball to reach a counter example.

In the proof of the comparison principle, we regularize the subsolutions and the supersolutions by sup- and inf-convolutions, respectively. See Definition \ref{recal}. Unfortunately, this will \emph{not} produce \(C^2\)-functions, but functions that are merely \emph{semiconvex} and \emph{semiconcave}. We then employ the classical result of Alexandrov, stating that such functions are \emph{twice differentiable} almost everywhere.

\begin{definition}\label{semicondef}
A function is said to be \textbf{semiconcave} (with concavity constant \(\mu\geq 0\)) if it at each point in its domain it can be touched above by a paraboloid \(\phi\) with \(\mathcal{H}\phi = \mu I\). A function is semiconvex if its negative is semiconcave.
\end{definition}

\begin{definition}
A function \(u\) in \(\Omega\) is \textbf{twice differentiable at} \(x_0\in\Omega\) if there exists a \(q\in\mathbb{R}^n\) and a \(X\in S(n)\) such that
\begin{equation}
u(x) = u(x_0) + q^T(x-x_0) + \frac{1}{2}(x-x_0)^TX(x-x_0) + o(|x-x_0|^2)
\label{alexder}
\end{equation}
as \(x\to x_0\).
We shall call \(q\) and \(X\) for the Alexandrov derivatives of \(u\) at \(x_0\) and even write \(q^T=\nabla u(x_0)\), \(X = \mathcal{H}u(x_0)\).
\end{definition}

\begin{theorem}[Alexandrov]\label{alexandrov}
A semiconcave function is twice differentiable at almost every point in open domains.
\end{theorem}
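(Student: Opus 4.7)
The plan is to reduce the claim to convex functions and then combine Rademacher's theorem with BV-theory for the gradient, using convexity at the end to pass from an approximate second-order derivative to the genuine pointwise Taylor expansion \eqref{alexder}.

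\emph{Reduction.} If \(u\) is semiconcave on \(\Omega\) with concavity constant \(\mu\geq 0\), then \(v(x):=\tfrac{\mu}{2}|x|^2-u(x)\) is convex on \(\Omega\): touching \(u\) from above at an arbitrary \(x_0\in\Omega\) by a paraboloid \(\phi\) with \(\mathcal{H}\phi=\mu I\) produces an affine minorant of \(v\) at \(x_0\), and the existence of such a support at every point characterises convex functions. Because \(v\) and \(u\) differ by a smooth quadratic, \(u\) is twice differentiable at a point iff \(v\) is, so it suffices to prove Alexandrov's theorem for convex \(v\).

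\emph{BV-regularity and a.e.\ Hessian.} A convex function on an open set of \(\mathbb{R}^n\) is locally Lipschitz, so by Rademacher's theorem \(\nabla v\) exists almost everywhere. The monotonicity of the subdifferential \(\partial v\) forces the components of \(\nabla v\) to lie in \(BV_{\mathrm{loc}}\), and hence the distributional Hessian \(D^2 v\) is a symmetric, positive semidefinite, matrix-valued Radon measure on \(\Omega\). Its Lebesgue decomposition \(D^2 v=\mathcal{H}v\,\mathcal{L}^n+\mu_s\) yields a locally integrable symmetric matrix field \(\mathcal{H}v\), and by the Lebesgue-Besicovitch differentiation theorem almost every \(x_0\in\Omega\) is simultaneously a Lebesgue point of \(\nabla v\) and of \(\mathcal{H}v\) and satisfies \(\mu_s(B_r(x_0))=o(r^n)\) as \(r\downarrow 0\).

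\emph{From approximate to pointwise expansion — the main obstacle.} Fix a good point \(x_0\) and set \(q^T:=\nabla v(x_0)\), \(H:=\mathcal{H}v(x_0)\), and
\[r(x):=v(x)-v(x_0)-q^T(x-x_0)-\tfrac12(x-x_0)^T H(x-x_0).\]
For a generic BV function approximate differentiability of the gradient does \emph{not} imply \(r(x)=o(|x-x_0|^2)\) pointwise; closing this gap is the heart of Alexandrov's argument, and is precisely where convexity is indispensable. For any \(p(x)\in\partial v(x)\) the convexity inequalities
\[q^T(x-x_0)\ \leq\ v(x)-v(x_0)\ \leq\ p(x)^T(x-x_0)\]
sandwich \(r(x)\) between quantities built from the error \(\nabla v(\,\cdot\,)-q-H(\,\cdot\,-x_0)\), which has arbitrarily small \(L^1\)-mean on \(B_r(x_0)\) by the Lebesgue-point property. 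A standard convex-analytic step — controlling the \(L^\infty\)-oscillation of a convex function by the \(L^1\)-mean of its subgradient, combined with the vanishing of \(\mu_s(B_r)/r^n\) — then upgrades this to a uniform \(o(|x-x_0|^2)\) remainder, yielding \eqref{alexder}. Executing this upgrade carefully, so that the estimate is genuinely pointwise and uniform in \(x\to x_0\), is the only delicate point of the proof; a complete treatment is carried out in, e.g., Evans-Gariepy, \emph{Measure Theory and Fine Properties of Functions}.
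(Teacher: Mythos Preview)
The paper does not prove Theorem~\ref{alexandrov} at all: it is stated as a classical result and used as a black box, so there is no ``paper's own proof'' to compare against. Your outline follows the standard route (reduce semiconcave to convex by subtracting a quadratic, use Rademacher and the fact that the subdifferential of a convex function is a monotone map to get \(D^2v\) as a matrix-valued Radon measure, then invoke Lebesgue--Besicovitch differentiation and finally use convexity to upgrade the approximate second-order expansion to a genuine pointwise one). This is exactly the Evans--Gariepy argument you cite, and as a high-level sketch it is correct.

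One honest remark: the only place where your write-up is not self-contained is the very last paragraph, where you correctly identify the crux --- turning the \(L^1\)-smallness of \(\nabla v(\cdot)-q-H(\cdot-x_0)\) into a uniform \(o(|x-x_0|^2)\) bound on \(r(x)\) --- but then defer the actual estimate to the literature. That step genuinely needs a quantitative argument (e.g.\ comparing \(v\) on \(B_r(x_0)\) with the affine function determined by a subgradient at a nearby point and using the monotonicity of \(\partial v\), or the blow-up/rescaling argument), and without it the proof is a sketch rather than a proof. Since the paper itself does not attempt a proof, this is perfectly adequate for the present purposes; just be aware that if you were asked to produce a complete proof, that final step is where all the work lies.
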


A very important fact about the Alexandrov derivatives, is that they behave correctly as arguments in an elliptic operator (see Proposition 2.6 in \cite{MR2084272}):
\begin{lemma}\label{twicedifflem}
Suppose that \(u\) is a supersolution to an elliptic equation \(F(x,w,\nabla w,\mathcal{H}w)=0\). If \(u\) is twice differentiable at a point \(x_0\), that is, there is a \(q\in\mathbb{R}^n\) and an \(X\in S(n)\) satisfying \eqref{alexder}, then
\[F(x_0,u(x_0),q,X)\leq 0.\]
The analogous result also holds for subsolutions.
\end{lemma}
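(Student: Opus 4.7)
The plan is the standard approximation argument: build a quadratic test function that touches $u$ from below at $x_0$ whose Hessian is slightly less than $X$, invoke the viscosity supersolution inequality, and pass to the limit. This way the $o(|x-x_0|^2)$ remainder in the Alexandrov expansion \eqref{alexder} is absorbed by a strict quadratic margin.

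Concretely, for each $\epsilon>0$ I would define the quadratic
\[
\phi_\epsilon(x) := u(x_0) + q^T(x-x_0) + \tfrac{1}{2}(x-x_0)^T(X-2\epsilon I)(x-x_0).
\]
Then $\phi_\epsilon\in C^2$, $\phi_\epsilon(x_0)=u(x_0)$, $\nabla\phi_\epsilon(x_0)^T = q$, and $\mathcal{H}\phi_\epsilon(x_0) = X - 2\epsilon I$. Subtracting $\phi_\epsilon$ from the Alexandrov expansion \eqref{alexder} gives
\[
u(x) - \phi_\epsilon(x) = \epsilon|x-x_0|^2 + o(|x-x_0|^2) \quad\text{as } x\to x_0,
\]
and since $o(|x-x_0|^2)/|x-x_0|^2\to 0$, the right-hand side is non-negative in a sufficiently small ball around $x_0$, with equality at $x_0$. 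Hence $\phi_\epsilon$ touches $u$ from below at $x_0$, and the viscosity supersolution definition yields
\[
F\bigl(x_0,u(x_0),q,X-2\epsilon I\bigr)\leq 0.
\]

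To finish, I would let $\epsilon\to 0^+$. For the sublinear operators at the heart of this text, $F$ is convex and finite on $S(n)$, hence continuous; in the general elliptic framework continuity of $F$ in the Hessian argument is assumed, so either way the limit transfers the inequality to $F(x_0,u(x_0),q,X)\leq 0$. The subsolution case then follows by applying the same argument to $-u$ (or reading the construction with a test function from above and the reverse perturbation $X+2\epsilon I$).

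The only real subtlety is the choice of perturbation: a naive use of the bare Taylor polynomial fails because the Alexandrov remainder has no definite sign, so the polynomial need not lie below $u$. Including the strict margin $-\epsilon I$ in the Hessian is what makes the touching valid; this is the single non-routine step, and it is precisely the role of Alexandrov differentiability rather than mere pointwise second-order behaviour.
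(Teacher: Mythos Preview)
The paper does not prove this lemma itself; it simply cites Proposition~2.6 in \cite{MR2084272}. Your argument is the standard one and is correct. Two small remarks: first, ``applying the same argument to $-u$'' is not valid for a general $F$, since $u$ being a supersolution of $F=0$ says nothing about $-u$; your parenthetical fix (touch from above with the quadratic having Hessian $X+2\epsilon I$) is the right route for subsolutions. Second, you are right to flag that the passage to the limit $\epsilon\to 0^+$ uses continuity of $F$ in the Hessian slot --- degenerate ellipticity alone gives $F(\,\cdot\,,X-2\epsilon I)\leq F(\,\cdot\,,X)$, which is the wrong direction --- and this continuity is part of the standard viscosity framework (and automatic for the sublinear operators in this paper).
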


We shall make use of \emph{Jensen's Lemma} which again relies on the \emph{convex envelope} and the \emph{contact set} as defined below.

\begin{definition}
Let \(D\subseteq\mathbb{R}^n\) be bounded and let \(w\colon D\to\mathbb{R}\). The \textbf{convex envelope} of \(w\) is the largest convex function \(\Gamma\) defined in the convex hull of \(D\) that is smaller or equal to \(w\) in \(D\):
\[\Gamma(x) := \sup\{c + q^Tx\;|\; c\in\mathbb{R},\; q\in\mathbb{R}^n,\; c + q^Ty \leq w(y)\;\forall y\in D\}.\]
\end{definition}

\begin{definition}
The \textbf{contact set} \(\mathcal{A} = \mathcal{A}(w)\) is the set of points where a function \(w\) agrees with its convex envelope \(\Gamma\):
\[\mathcal{A}(w) := \{x\in\dom(w)\;|\; \Gamma(x) = w(x)\}.\]
\end{definition}

\begin{lemma}[Jensen]\label{jensen}
Let \(D\subseteq\mathbb{R}^n\) be open and bounded and let \(w\colon\overline{D}\to\mathbb{R}\) be semiconcave.
If the minimum of \(w\) is \emph{not} on the boundary, i.e.
\[\min_{\overline{D}} w < \min_{\partial D} w,\]
then the contact set \(\mathcal{A}(w)\) has positive measure.
\end{lemma}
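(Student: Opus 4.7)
The plan is to exhibit a large family of points in $\mathcal{A}(w)$ by perturbing $w$ with small linear functions, then to use semiconcavity, Alexandrov's theorem, and the area formula to bound $|\mathcal{A}(w)|$ from below. Concretely, set $\delta := \min_{\partial D} w - \min_{\overline D} w > 0$, $d := \text{diam}(D)$, and fix $r \in (0,\delta/d)$. For each $q\in\mathbb{R}^n$ with $|q|<r$, consider $w_q(x) := w(x) - q\cdot x$. If $x_0$ is any interior minimizer of $w$, then for every $y \in \partial D$,
\[
w_q(y) - w_q(x_0) \geq \big(w(y)-w(x_0)\big) - |q|\cdot|y-x_0| \geq \delta - rd > 0,
\]
so $w_q$ attains its minimum over $\overline D$ at some interior $x_q \in D$. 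The minimality of $x_q$ rearranges to $w(y) \geq w(x_q) + q\cdot(y-x_q)$ for every $y \in \overline D$, which exhibits $\ell_q(x) := w(x_q) + q\cdot(x-x_q)$ as a competitor in the supremum defining the convex envelope. Hence $\Gamma(x_q) \geq \ell_q(x_q) = w(x_q)$, and combined with $\Gamma \leq w$ we get $x_q \in \mathcal{A}(w)\cap D$. This produces a (generally multivalued) correspondence $q \mapsto x_q$ from $B_r(0)$ into $\mathcal{A}(w)\cap D$.

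To turn this correspondence into a lower bound on $|\mathcal{A}(w)|$, I would invoke Alexandrov's Theorem~\ref{alexandrov}: $w$ is twice differentiable at almost every point of $D$. Restrict to the good set $\mathcal{A}^* := \mathcal{A}(w)\cap D\cap\{w\text{ twice differentiable}\}$. On $\mathcal{A}^*$, semiconcavity forces $\mathcal{H}w(x) \leq \mu I$, while the contact relation $\Gamma\leq w$ with equality at $x$ (together with the convexity of $\Gamma$) supplies the matching lower bound $\mathcal{H}w(x)\geq 0$, so $0 \leq \det \mathcal{H}w(x) \leq \mu^n$ there. Whenever $x_q\in\mathcal{A}^*$, twice differentiability allows the affine inequality to be upgraded to $\nabla w(x_q) = q$, so the Alexandrov gradient map $g\colon \mathcal{A}^*\to\mathbb{R}^n$, $g(x) := \nabla w(x)$, covers $B_r(0)$ up to a null set, and the area formula yields
\[
|B_r(0)| \leq |g(\mathcal{A}^*)| \leq \int_{\mathcal{A}^*} \det \mathcal{H}w(x)\dd x \leq \mu^n|\mathcal{A}(w)|,
\]
so $|\mathcal{A}(w)| \geq |B_r(0)|/\mu^n > 0$.

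The delicate step is the claim that $g(\mathcal{A}^*)$ covers $B_r(0)$ up to a null set: a priori, for some $q\in B_r(0)$ \emph{every} minimizer $x_q$ might live in the Lebesgue-null singular set of $w$, and this must be ruled out for almost every $q$. The standard remedy is to pass to the Legendre conjugate $w^*(q) := \sup_{x\in\overline D}(q\cdot x - w(x))$, which is convex on $\mathbb{R}^n$ and hence differentiable at almost every $q$ with $\nabla w^*(q) = x_q$ the unique minimizer there; a further application of Alexandrov, this time to $w^*$, combined with the duality between the Hessian bounds on $w$ and on $w^*$, forces $x_q$ to lie in the twice-differentiability set of $w$ for almost every $q\in B_r(0)$. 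Once that measure-theoretic gap is plugged, the chain of inequalities above closes and Jensen's Lemma follows.
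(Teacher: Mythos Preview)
The paper does not actually prove Jensen's Lemma; it simply refers the reader to the Appendix of \cite{MR2084272}. So there is no in-paper argument to compare against, and your attempt must be judged on its own merits.

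Your overall strategy---perturb by small linear functions, show the interior minimizers land in the contact set, and then use a change-of-variables estimate to bound $|\mathcal{A}(w)|$ from below---is the standard one. Steps~1--3 (the construction of $x_q$ and the verification that $x_q\in\mathcal{A}(w)$) are correct and cleanly written. The trouble begins with the area-formula step.

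You write $|g(\mathcal{A}^*)|\leq\int_{\mathcal{A}^*}\det\mathcal{H}w$, but the area formula requires $g=\nabla w$ to be Lipschitz (or at least to admit a Lipschitz extension), and you never establish this. Semiconcavity alone only gives a one-sided Lipschitz bound on $\nabla w$. The usual remedy is \emph{not} to work with $\nabla w$ but with $\nabla\Gamma$: one first proves that the convex envelope $\Gamma$ is $C^{1,1}$ with $\|D^2\Gamma\|_{L^\infty}\leq\mu$ (this uses that $\Gamma$ is convex and is touched from above, at every contact point, by a paraboloid of opening $\mu$), so that $\nabla\Gamma$ is globally Lipschitz and the area formula is legitimate. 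On $\mathcal{A}(w)$ one has $\nabla\Gamma=\nabla w$, and off $\mathcal{A}(w)$ one shows $\det D^2\Gamma=0$ a.e., so the integral localizes to the contact set. Your argument bypasses this regularity lemma, and that is the real gap.

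Your proposed fix via the Legendre transform is also incomplete. It is true that $w^*$ is convex and hence differentiable a.e., and that at differentiability points $\nabla w^*(q)=x_q$ is the unique minimizer. But the further claim---that Alexandrov's theorem applied to $w^*$ ``forces $x_q$ to lie in the twice-differentiability set of $w$''---does not follow without additional work: twice differentiability of $w^*$ at $q$ gives you $D^2w^*(q)$, not $D^2w(x_q)$, and the formal duality $D^2w^*(q)=[D^2w(x_q)]^{-1}$ presupposes exactly the regularity you are trying to prove. In any case, this detour is unnecessary once you route the argument through $\nabla\Gamma$ as above.
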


See Appendix in \cite{MR2084272}.

\begin{theorem}[Comparison Principle I]\label{compp}
Let \(F\colon S(n)\to\mathbb{R}\) be degenerate elliptic and sublinear and assume that \eqref{nondegcond} holds. Assume that \(u\) and \(v\) are viscosity super- and subsolutions to the PDE
\[F(\mathcal{H}w) = 0\qquad\text{in \(\Omega\)},\]
respectively. Let \(D\subset\subset\Omega\). Then
\[v|_{\partial D}\leq u|_{\partial D}\qquad\Rightarrow\qquad v \leq u\quad\text{in \(D\)}.\]
\end{theorem}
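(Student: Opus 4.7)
The plan is a contradiction argument using sup-/inf-convolution regularization together with Jensen's Lemma~\ref{jensen} and Alexandrov's Theorem~\ref{alexandrov}; the non-degeneracy condition~\eqref{nondegcond} enters precisely to upgrade $v$ to a \emph{strict} viscosity subsolution before regularizing. Write $c := -F(-I) > 0$. Suppose, toward a contradiction, that $M := \max_{\overline D}(v-u) > 0$; pick $R$ with $R^2 > \sup_{\overline D}|x|^2$ and consider
\[v_\delta(x) := v(x) + \delta\bigl(|x|^2 - R^2\bigr), \qquad \delta>0\text{ small}.\]
On $\partial D$ one has $v_\delta \leq u - \delta\gamma$ with $\gamma := \inf_{\partial D}(R^2 - |x|^2) > 0$, while $v_\delta - u$ still attains a positive maximum in $D$. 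If $\tilde\phi$ touches $v_\delta$ from above at some $x_0$, then $\tilde\phi - \delta(|x|^2 - R^2)$ touches $v$ from above at $x_0$, so $F(\mathcal{H}\tilde\phi(x_0) - 2\delta I) \geq 0$; subadditivity in the form $F(Y) - F(X) \leq F(Y-X)$ together with $F(-2\delta I) = -2\delta c$ then yields $F(\mathcal{H}\tilde\phi(x_0)) \geq 2\delta c$. Hence $v_\delta$ is a viscosity subsolution of $F(\mathcal{H}w) = 2\delta c > 0$.

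I would then regularize: form the sup-convolution $v_\delta^\epsilon$ (semiconvex, inheriting the strict subsolution property) and the inf-convolution $u_\epsilon$ (semiconcave, inheriting the supersolution property), both on an interior subdomain $D' \subset\subset D$ whose boundary lies at distance $O(\sqrt\epsilon)$ from $\partial D$. For $\epsilon$ small the strict separation $v_\delta^\epsilon < u_\epsilon$ on $\partial D'$ and the positive interior maximum of $v_\delta^\epsilon - u_\epsilon$ both persist, so $w := u_\epsilon - v_\delta^\epsilon$ is semiconcave and satisfies $\min_{\overline{D'}} w < 0 < \min_{\partial D'} w$. Jensen's lemma then supplies a positive-measure contact set on which $w$ agrees with its convex envelope $\Gamma$; intersecting this with the full-measure sets of twice-differentiability of $u_\epsilon$ and $v_\delta^\epsilon$ still leaves positive measure, and we select a point $x_*$ in the intersection.

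At $x_*$, any linear support of $\Gamma$ also supports $w$ from below, and matching it against the Alexandrov expansion of $w$ forces the Alexandrov Hessian inequality $\mathcal{H}u_\epsilon(x_*) \geq \mathcal{H}v_\delta^\epsilon(x_*)$. Write $A := \mathcal{H}v_\delta^\epsilon(x_*)$ and $B := \mathcal{H}u_\epsilon(x_*)$. Lemma~\ref{twicedifflem} gives $F(B) \leq 0$ and $F(A) \geq 2\delta c$, while subadditivity yields $F(A) - F(B) \leq F(A-B)$. Since $A - B \leq 0$ and every element of $\mathcal{K}_F$ is positive semidefinite by Proposition~\ref{definite}, $F$ is non-decreasing in its matrix argument and $F(A-B) \leq F(0) = 0$. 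Combining,
\[2\delta c \;\leq\; F(A) - F(B) \;\leq\; F(A - B) \;\leq\; 0,\]
the desired contradiction.

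The step I expect to demand the most care is the regularization stage: one must verify that the sup-/inf-convolutions genuinely inherit the strict subsolution / supersolution property on the retracted domain $D'$, and that the strict boundary ordering $v_\delta < u$ on $\partial D$ passes uniformly to $v_\delta^\epsilon < u_\epsilon$ on $\partial D'$ as $\epsilon \to 0$. These are classical facts about semiconcave envelopes, but they are precisely what allows the Jensen--Alexandrov machinery to be applied on a domain large enough to retain the interior maximum, and they are also where the bookkeeping between the test-function formulation and the Alexandrov formulation of (sub/super)-solutions must be reconciled carefully.
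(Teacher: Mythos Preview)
Your argument is correct and uses the same Jensen--Alexandrov--convolution machinery as the paper. The one structural difference is where the paraboloid perturbation enters: you add $\delta(|x|^2-R^2)$ to $v$ \emph{before} regularizing, so that $v_\delta$ becomes a strict subsolution of $F(\mathcal{H}w)=2\delta c$ and the final contradiction is $2\delta c\le 0$; the paper instead regularizes first and only then subtracts a paraboloid $\tfrac{\alpha}{2}|x-\hat x|^2$ from the difference $\hat u_\epsilon-\hat v^\epsilon$, so that the non-degeneracy $F(-I)<0$ appears only in the closing chain $0\le F(Y)\le F(X-\alpha I)\le F(X)+\alpha F(-I)<0$. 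Both orderings are standard. One technical point you omit but the paper handles explicitly: since $u$ is merely l.s.c.\ it may take the value $+\infty$ and need not be locally bounded above, which is required in Proposition~\ref{infthm}(4) for $u_\epsilon$ to inherit the supersolution property; the paper remedies this by first passing to the cut-offs $\hat u:=\min\{u,M_v\}$ and $\hat v:=\max\{v,m_u\}$ (still super-/subsolutions by Lemma~\ref{minlem}), and you should insert the same step.
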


\begin{proof}
Let \(v\) be a subsolution and let \(u\) be a supersolution. Fix \(D\subset\subset\Omega\) and assume \((u-v)|_{\partial D}\geq 0\). For the sake of contradiction, we shall assume that \(u-v\) is negative at some point in \(D\). Since the function \(u-v\) is l.s.c. and non-negative on the boundary, it obtain its minimum in \(D\):
\[m := \min_{\overline{D}} (u-v) = u(\hx) - v(\hx) < 0,\qquad \hx\in D.\]
The function \(w := u-v-m\) in \(\overline{D}\) then satisfies
\[w\geq 0,\qquad w(\hx) = 0,\qquad w|_{\partial D} \geq - m > 0.\]

Let
\[m_u := \min_{\overline{D}} u,\qquad M_v := \max_{\overline{D}}v.\]
Observe that
\[M_v - m_u \geq -m > 0.\]
The cut-off functions
\[\hat{u}(x) := \min\{u(x),M_v\},\qquad \hat{v}(x) := \max\{v(x),m_u\}\]
are again super- and subsolutions, respectively (Lemma \ref{minlem}), but now they are bounded in \(\overline{D}\):
\[m_u \leq \hat{u},\hat{v} \leq M_v.\]
Since \(u\) is less than \(v\) at \(\hx\), we must have \(u(\hx)<M_v\) and \(v(\hx)>m_u\). Thus
\[\hat{u}(\hx) - \hat{v}(\hx) = u(\hx) - v(\hx) = m.\]
Going through all the cases, we find that
\[\hat{u} - \hat{v} = \min\{u,M_v\} - \max\{v,m_u\} =
\begin{cases}
u - v \geq m\\
u - m_u \geq 0\\
M_v - v \geq 0\\
M_v - m_u \geq 0
\end{cases}
\]
and is non-negative on the boundary. Like \(w\),
the function \(\hat{w} := \hat{u} - \hat{v} - m\) then satisfies
\[\hat{w}\geq 0,\qquad \hat{w}(\hx) = 0,\qquad \hat{w}|_{\partial D} \geq - m > 0.\]

For \(\epsilon>0\), the inf-convolution
\[\hat{u}_\epsilon(x) := \inf_{y\in D}\left\{\hat{u}(y) + \frac{|x-y|^2}{2\epsilon}\right\}\]
and the sup-convolution
\[\hat{v}^\epsilon(x) := \sup_{y\in D}\left\{\hat{v}(y) - \frac{|x-y|^2}{2\epsilon}\right\}\]
are super- and subsolutions in the slightly smaller domain
\[D_\epsilon := \left\{x\in D\;|\; \dist(x,\partial D) > \sqrt{2\epsilon(M_v-m_u)}\right\}.\]
They satisfy
\[\hat{u}_\epsilon \nearrow \hat{u}\qquad\text{and}\qquad \hat{v}^\epsilon \searrow \hat{v}\]
pointwise as \(\epsilon\to 0\). Moreover, they are semiconcave and semiconvex, respectively, and thus twice differentiable almost everywhere in \(D\). This is Proposition \ref{infthm}, \ref{supconvthm} and Theorem \ref{alexandrov}.

By compactness of the boundary and semicontinuity, we can fix \(\epsilon>0\) so small so that
\[(\hat{u}_\epsilon - \hat{v}^\epsilon)|_{\partial D_\epsilon} \geq \frac{m}{2}.\]
Define \(\hat{w}_\epsilon := \hat{u}_\epsilon - \hat{v}^\epsilon - m\).\footnote{\(\hat{w}_\epsilon\) is \emph{not} the inf-convolution of \(\hat{w}\).} in \(D_\epsilon\). Then
\[\hat{w}_\epsilon(\hx) \leq \hat{w}(\hx) = 0 \qquad\text{and}\qquad \hat{w}_\epsilon|_{\partial D_\epsilon} \geq -\frac{m}{2} > 0.\]

Let \(\psi\) be the paraboloid
\[\psi(x) := \frac{\alpha}{2}|x-\hx|^2\]
where
\[\alpha := -\frac{m}{2d^2} > 0,\qquad d := \operatorname{diam}(D).\]
We see that \(\psi|_{\partial D_\epsilon} \leq -\frac{m}{4}\).
Finally, define the function \(f\colon D_\epsilon\to\mathbb{R}\) as
\[f(x) := \hat{w}_\epsilon(x) - \psi(x).\]
Again, \(f\) is semiconcave and
\[f(\hx) \leq 0 \qquad\text{and}\qquad f|_{\partial D_\epsilon} \geq -\frac{m}{2} + \frac{m}{4} = -\frac{m}{4} > 0.\]

By Jensen's lemma (Lemma \ref{jensen}) the contact set \(\mathcal{A}(f)\) has positive measure. It must therefore be a point \(x_1\in\mathcal{A}\) where the Alexandrov Hessian matrices
\[X := \mathcal{H}\hat{u}_\epsilon(x_1),\qquad Y := \mathcal{H}\hat{v}^\epsilon(x_1)\]
both exist. Also, \(f\) is touched below by a plane at \(x_1\). This means that
\begin{equation}
\begin{aligned}
0 &\leq \mathcal{H}f(x_1)\\
  &= \mathcal{H}\hat{w}_\epsilon(x_1) - \mathcal{H}\psi\\
	&= X - Y - \alpha I
\end{aligned}
\label{XYI}
\end{equation}
and the contradiction follows:
\begin{align*}
0 &\leq F(Y) &&\text{since \(\hat{v}^\epsilon\) is a subsolution,}\\
  &\leq F(X-\alpha I)&&\text{by \eqref{XYI} and ellipticity,}\\
	&\leq F(X) + \alpha F(-I)&&\text{by sublinearity,}\\
	&\leq \alpha F(-I)&&\text{since \(\hat{u}_\epsilon\) is a supersolution,}\\
	&< 0&&\text{by \eqref{Fneg}}.
\end{align*}
\end{proof}

\begin{theorem}[Comparison Principle II]\label{comppII}
Let \(F\colon S(n)\to\mathbb{R}\) be degenerate elliptic and sublinear and assume that the non-degeneracy condition \eqref{nondegcond} holds. Consider the equation
\begin{equation}
F(\mathcal{H}u) = 0\qquad\text{in \(\Omega\)}.
\label{pdeII}
\end{equation}
Let \(u\colon\Omega\to (-\infty,\infty]\) be l.s.c and finite on a dense subset. If
\begin{equation}
D\subset\subset\Omega,\quad \text{\(h\in C(\overline{D})\) is a solution to \eqref{pdeII}},\footnote{The proof shows that the Theorem can be sharpened. We only need to consider balls \(D\) and quadratics \(h\).}\quad h|_{\partial D}\leq u|_{\partial D}
\label{IIcond}
\end{equation}
implies \(h\leq u\) in \(D\). Then
\[\text{\(u\) is a supersolution to \eqref{pdeII} in \(\Omega\).}\]
\end{theorem}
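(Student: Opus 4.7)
The plan is to argue by contraposition: assume $u$ is \emph{not} a viscosity supersolution to $F(\mathcal{H}w)=0$, and then construct, on a small ball, a quadratic classical solution $h$ which satisfies $h\leq u$ on the boundary but $h(x_0)>u(x_0)$, directly contradicting the hypothesis \eqref{IIcond}. So let $x_0\in\Omega$ with $u(x_0)<\infty$ and a $C^2$ test function $\phi\leq u$ near $x_0$ with $\phi(x_0)=u(x_0)=:c_0$ be such that $F(X_0)>0$, where $X_0:=\mathcal{H}\phi(x_0)$ and $q_0^T:=\nabla\phi(x_0)$.

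The key trick is to exploit sublinearity to lower $F$ from $F(X_0)>0$ down to $0$ by subtracting a multiple of $I$. By subadditivity and positive homogeneity, $F(X_0-\alpha I)\leq F(X_0)+\alpha F(-I)$, and $F(-I)<0$ by \eqref{Fneg}. Since $F$ is sublinear on the finite-dimensional space $S(n)$ it is continuous, so the intermediate value theorem supplies some $\alpha^*>0$ with $F(X_0-\alpha^* I)=0$. Define the quadratic
\[h(x) := c_0 + \epsilon + q_0^T(x-x_0) + \tfrac{1}{2}(x-x_0)^T(X_0-\alpha^* I)(x-x_0),\]
with $\epsilon>0$ to be chosen. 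Because $\mathcal{H}h\equiv X_0-\alpha^* I$ is constant with $F(X_0-\alpha^* I)=0$, $h$ is a classical, hence viscosity, solution of \eqref{pdeII} on any domain we pick.

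It remains to calibrate $r$ and $\epsilon$. Since $\phi$ is $C^2$ and $\phi\leq u$ near $x_0$,
\[u(x)\geq c_0 + q_0^T(x-x_0) + \tfrac{1}{2}(x-x_0)^T X_0(x-x_0) - \rho(|x-x_0|)\,|x-x_0|^2,\]
with $\rho(r)\to 0$ as $r\to 0$. On $\partial B_r(x_0)$ this gives
\[u(x) - h(x) \geq -\epsilon + \bigl(\tfrac{\alpha^*}{2}-\rho(r)\bigr)r^2.\]
First pick $r>0$ so small that $B_r(x_0)\subset\subset\Omega$ and $\rho(r)<\alpha^*/4$; then set $\epsilon:=\alpha^* r^2/8$. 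This secures $h\leq u$ on $\partial B_r(x_0)$, while $h(x_0)=c_0+\epsilon>c_0=u(x_0)$. The hypothesis \eqref{IIcond}, applied to the ball $D=B_r(x_0)$ and this $h$, would force $h\leq u$ throughout $B_r(x_0)$, which is a contradiction. Note that this proof only uses balls and quadratic $h$, in agreement with the footnote.

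The main obstacle — and the only place sublinearity is essential — is the production of the ``tilted'' quadratic: one needs a Hessian close enough to $X_0$ to still lie above $\phi$'s quadratic by an $O(r^2)$ margin, yet with $F=0$. Sublinearity together with \eqref{Fneg} is precisely the tool that lets a uniform subtraction $\alpha^* I$ bring $F(X_0)$ down to $0$ and keeps $\alpha^*$ finite. The density hypothesis on the finiteness of $u$ is not used in this contradiction; it is needed earlier to guarantee that the supersolution property has nonvacuous content (e.g., that $u\not\equiv+\infty$ on open sets), so that the set of candidate test points $x_0$ with $u(x_0)<\infty$ is nonempty wherever one needs it.
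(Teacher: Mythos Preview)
Your proof is correct and follows essentially the same approach as the paper's: both argue by contraposition, find $\alpha^*>0$ with $F(X_0-\alpha^* I)=0$ via sublinearity and $F(-I)<0$, build a quadratic solution with this Hessian, and shift it slightly upward to violate comparison on a small ball. The only cosmetic difference is that the paper first invokes Lemma~\ref{quadstrict} to replace $\phi$ by a quadratic test function, thereby avoiding the Taylor-remainder term $\rho$ that you handle explicitly.
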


\begin{proof}
We shall assume that \(u\) is \emph{not} a supersolution. The Theorem is proved if we can find a \(D\) and a \(h\) as in \eqref{IIcond} so that \(h > u\) at some point in \(D\).

As \(u\) is not a supersolution, there is a test functions \(\phi\) touching \(u\) from below at some \(x_0\in\Omega\),
\[\phi(x_0) = u(x_0),\qquad \phi(x) \leq u(x)\quad\text{in, say, \(B_r := B(x_0,r)\subset\subset\Omega\),}\]
with
\[F(\mathcal{H}\phi(x_0))>0.\]
We may assume that \(\phi\) is a quadratic (Lemma \ref{quadstrict}). That is, \(\mathcal{H}\phi\) is constant.

Define the quadratic \(h_\alpha\) with parameter \(\alpha\geq 0\) as
\[h_\alpha(x) := \phi(x) - \frac{\alpha}{2}|x-x_0|^2.\]
Now, \(\mathcal{H}h_\alpha = \mathcal{H}\phi - \alpha I\) and \(F(\mathcal{H}h_\alpha)>0\) if \(\alpha = 0\). On the other hand,
\[F(\mathcal{H}h_\alpha) \leq F(\mathcal{H}\phi) + \alpha F(-I) < 0\]
for large \(\alpha\) by sublinearity and \eqref{Fneg}. Therefore, by continuity, there must be an \(\alpha > 0\) so that \(F(\mathcal{H}h_\alpha) = 0\). It follows that
\[h_\alpha(x_0) = \phi(x_0) = u(x_0),\qquad h_\alpha(x) < \phi(x) \leq u(x)\quad\text{at \(\partial B_r\)}\]
and it is possible to shift \(h_\alpha\) upwards by a small amount so that \(h_\alpha(x_0) > u(x_0)\) even though \eqref{IIcond} still holds.
\end{proof}

\section{Superposition principles}\label{sec:suppos}
By subadditivity, the result below is obviously true for \(C^2\)-functions. But the proof in the viscosity setting seems to require the same machinery as was used in the proof of the comparison principle. There is, however, a shortcut when the equation is uniformly elliptic. See Corollary \ref{unifsuppos}.

\begin{theorem}\label{sublin_superpos}
Let
\begin{equation}
F(\mathcal{H}w) = 0
\label{pdesublin}
\end{equation}
be a degenerate elliptic and sublinear equation.
If \(u\) and \(v\) are supersolutions
in \(\Omega\), then the sum \[u+v\] is also a supersolution to \eqref{pdesublin} in \(\Omega\).
\end{theorem}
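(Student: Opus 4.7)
The plan is to parallel the strategy of the Comparison Principle (Theorem \ref{compp}): regularize $u$ and $v$ by inf-convolution to obtain semiconcave supersolutions, invoke Alexandrov's theorem to secure pointwise twice differentiability almost everywhere, combine the two pointwise PDE inequalities via subadditivity, upgrade this a.e.\ statement back to a viscosity one, and finally pass to the limit.

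First, let $u_\epsilon$ and $v_\epsilon$ denote the inf-convolutions of $u$ and $v$. As in the proof of Theorem \ref{compp}, these are continuous, semiconcave viscosity supersolutions of $F(\mathcal{H}w)=0$ in a slightly smaller subdomain $\Omega_\epsilon\subset\subset\Omega$, and they converge pointwise and monotonically up to $u$ and $v$ respectively as $\epsilon\searrow 0$. The sum $w_\epsilon := u_\epsilon + v_\epsilon$ is then also semiconcave, so by Alexandrov's Theorem \ref{alexandrov} it is twice differentiable at almost every $x\in\Omega_\epsilon$, and so are $u_\epsilon$ and $v_\epsilon$ individually. At a common point of twice differentiability the expansion \eqref{alexder} is linear in the function, so Alexandrov Hessians add; Lemma \ref{twicedifflem} applied to each factor yields $F(\mathcal{H}u_\epsilon(x)),\,F(\mathcal{H}v_\epsilon(x)) \leq 0$, and subadditivity \eqref{subadd} then forces
\[F(\mathcal{H}w_\epsilon(x))\leq F(\mathcal{H}u_\epsilon(x))+F(\mathcal{H}v_\epsilon(x))\leq 0\]
for almost every $x\in\Omega_\epsilon$.

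The main obstacle is to promote this a.e.\ pointwise inequality into the viscosity supersolution property for $w_\epsilon$. Suppose, towards a contradiction, that $w_\epsilon$ is not a supersolution. By Lemma \ref{quadstrict} there is a point $x_1\in\Omega_\epsilon$ and a quadratic $\phi$ strictly touching $w_\epsilon$ from below at $x_1$ with $F(\mathcal{H}\phi)>0$. For each small $p\in\mathbb{R}^n$ the semiconcave function $w_\epsilon(x)-\phi(x)-p^T(x-x_1)$ attains an interior minimum at some $x_p$, with $x_p\to x_1$ as $p\to 0$. A Jensen-type argument (in the spirit of Lemma \ref{jensen}, applied to the Alexandrov gradient map) shows that the set of $p$ for which $x_p$ lies \emph{simultaneously} in the full-measure twice-differentiability set of $w_\epsilon$ and in the full-measure set where $F(\mathcal{H}w_\epsilon)\leq 0$ has positive measure. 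At such a point the minimum condition gives $\mathcal{H}w_\epsilon(x_p)\geq\mathcal{H}\phi$, so by ellipticity $F(\mathcal{H}w_\epsilon(x_p))\geq F(\mathcal{H}\phi)>0$, contradicting the a.e.\ bound. This Jensen-type upgrade is the delicate step: unlike in the proof of the comparison principle, where a single Alexandrov-twice-differentiable point suffices, here we must ensure the perturbed minimum hits both full-measure sets simultaneously.

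Finally, each $w_\epsilon$ is a viscosity supersolution in $\Omega_\epsilon$ and $w_\epsilon\nearrow u+v$ pointwise as $\epsilon\searrow 0$. A standard stability argument for viscosity supersolutions under pointwise monotone limits, combined with the lower semicontinuity of $u+v$, concludes that $u+v$ is a viscosity supersolution of $F(\mathcal{H}w)=0$ in $\Omega$.
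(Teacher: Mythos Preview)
Your approach is essentially the paper's: inf-convolution to gain semiconcavity, Jensen's Lemma plus Alexandrov to locate a good point, subadditivity to combine the two Hessian inequalities, then monotone stability to pass to the limit. The paper's execution is slightly leaner: instead of first recording the a.e.\ inequality $F(\mathcal{H}w_\epsilon)\le 0$ and then ``upgrading'' it, the paper takes the strictly touching quadratic $\phi$, applies Jensen's Lemma directly to $w:=u_\epsilon+v_\epsilon-\phi$ to get a contact set of positive measure, picks any point $x_2$ in that set where \emph{both} Alexandrov Hessians $X:=\mathcal{H}u_\epsilon(x_2)$ and $Y:=\mathcal{H}v_\epsilon(x_2)$ exist, and reads off $\mathcal{H}\phi\le X+Y$ from the contact condition, whence $F(\mathcal{H}\phi)\le F(X+Y)\le F(X)+F(Y)\le 0$ in one stroke. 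This bypasses your intermediate a.e.\ statement and the somewhat vague bookkeeping about ``both full-measure sets simultaneously'' (note, incidentally, that twice differentiability of $w_\epsilon$ alone does not give you $\mathcal{H}w_\epsilon=\mathcal{H}u_\epsilon+\mathcal{H}v_\epsilon$; you really want the common differentiability set of $u_\epsilon$ and $v_\epsilon$).

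One genuine omission: you invoke the fact that the inf-convolutions are supersolutions, but Proposition~\ref{infthm}(4) requires $u,v$ to be \emph{locally bounded}, which a general l.s.c.\ supersolution need not be. The paper handles this by first truncating, $u\mapsto\min\{u,k\}$ and $v\mapsto\min\{v,k\}$ (still supersolutions by Lemma~\ref{minlem}), running the whole argument on the bounded functions, and then letting $k\to\infty$ via Lemma~\ref{inclemma}. You should add this reduction.
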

That sub- and supersolutions also are preserved by a non-negative scaling of the function, can easily be verified by using the positive homogenicity of the operator.

One may observe that the theorem is analogous to Theorem 5.8 in \cite{MR1351007}. However, \cite{MR1351007} only deals with \emph{uniformly} elliptic equations and \emph{continuous} sub- and supersolutions.

\begin{proof}

Assume first that \(u\) and \(v\) are locally bounded.

It is clear that it suffices to show that every point in \(\Omega\) has a neighbourhood where \(u+v\) is a supersolution.
To this end, let \(x_0\in\Omega\).
By Proposition \ref{infthm} part 4, we may choose domains \(D,D'\) and an \(\epsilon'>0\) so that
\(x_0\in D\subseteq D'\subseteq \Omega\) and so that (the restricted) infimal convolutions \(u_\epsilon\), \(v_\epsilon\) are supersolutions in \(D\) for every \(0<\epsilon<\epsilon'\). Fix \(\epsilon<\epsilon'\). We show that the sum \(u_\epsilon + v_\epsilon\) is a supersolution in \(D\).

Let \(\phi\) be a test function touching \(u_\epsilon + v_\epsilon\) from below at some point \(x_1\in D\). We may assume that \(\phi\) is a quadratic and that the touching is strict (Lemma \ref{quadstrict}). Choose \(R>0\) so small so that \(B_R := B_R(x_1) \subset\subset D\) and so that
\[\phi < u_\epsilon + v_\epsilon\qquad\text{in \(\overline{B}_R\setminus\{x_1\}\)}.\]
By part 1 of Proposition \ref{infthm}, the function
\[w := u_\epsilon + v_\epsilon - \phi\]
is semiconcave in \(B_R\). Since it has a strict minimum at \(x_1\), it follows from Lemma \ref{jensen} that the contact set
\[\mathcal{A}(w) = \{x\in B_R\;|\; \Gamma(x) = w(x)\}\]
with its convex envelope \(\Gamma\) has positive measure.

Also, since \(u_\epsilon\) and \(v_\epsilon\) are semiconcave, they are twice differentiable a.e. in \(B_R\) (Theorem \ref{alexandrov}). In particular, there must be a point \(x_2\in\mathcal{A}(w)\subseteq B_R\) where both the Alexandrov Hessians \(X:=\Hu_\epsilon(x_2)\) and \(Y:=\mathcal{H}v_\epsilon(x_2)\) exist.
Therefore,
\[F(X) \leq 0\qquad\text{and}\qquad F(Y)\leq 0\]
by Lemma \ref{twicedifflem}.
Furthermore, in \(\mathcal{A}(w)\), \(w\) is touched below by hyperplanes. This means that \(X + Y - \mathcal{H}\phi =: \mathcal{H}w(x_2)\geq 0\) and it follows that
\[\mathcal{H}\phi \leq X + Y.\]
Thus
\begin{align*}
F(\mathcal{H}\phi) &\leq F(X + Y)\\
                   &\leq F(X) + F(Y)\\
									&\leq 0
\end{align*}
by degenerate ellipticity and sublinearity, and \(u_\epsilon + v_\epsilon\) is a supersolution in \(D\).

We are now able to construct the increasing sequence of supersolutions \((u_{\epsilon/k} + v_{\epsilon/k})\) in \(D\). It converges pointwise to \(u+v\) (Proposition \ref{infthm} part 2) and it follows that \(u+v\) is a supersolution in \(D\) by Lemma \ref{inclemma}.

The initial assumption on local boundednes, can be removed in the same manner by applying Lemma \ref{inclemma} to the increasing sequence
\[(w_k)\qquad\text{where}\qquad w_k := \min\{u(x),k\} + \min\{v(x),k\}.\]
The sequence converges pointwise to \(u+v\) and we have shown that each \(w_k\) is a supersolution in \(\Omega\) since it is locally bounded by semi-continuity, and since each of the two terms in \(w_k\) are supersolutions by Lemma \ref{minlem}.
\end{proof}

An equation \(F(\mathcal{H}w) = 0\) is uniformly elliptic with ellipticity constants \(0<\lambda\leq\Lambda\) if and only if
\begin{equation}
F(X+Y) \leq F(X) + P_{\lambda,\Lambda}(Y)
\label{altunidef}
\end{equation}
for all \(X,Y\in S(n)\). See Proposition \ref{altelldef} in Appendix \ref{ch:visc_ell}.

Here, \(P_{\lambda,\Lambda}\) denotes the Pucci operator. If \(F(X)\leq 0\) and \(F(Y)\geq 0\), then \eqref{altunidef} implies
\begin{equation}
P_{\lambda,\Lambda}(Y-X) \geq F(Y) - F(X)\geq 0.
\label{subineq}
\end{equation}

Our second superposition principle extends the inequality \eqref{subineq} to the viscosity sense. One may also replace the Pucci operator with an arbitrary sublinear and degenerate elliptic operator \(G\). It is a generalization of Theorem 5.3 in \cite{MR1351007}.

\begin{theorem}\label{subsupsub}
Let \(G\colon S(n)\to\mathbb{R}\) be sublinear and degenerate elliptic, and assume that an operator \(\D u = F(\mathcal{H}u)\)
 satisfies the ellipticity condition
\begin{equation}
F(X+Y) \leq F(X) + G(Y)
\label{altunidefII}
\end{equation}
for all \(X,Y\in S(n)\).

Then \(F\) is degenerate elliptic. Moreover,
if \(u\) is a supersolution and \(v\) is a subsolution to \(F(\mathcal{H}w) = 0\) in \(\Omega\), then \(v-u\) is a subsolution to \(G(\mathcal{H}w) = 0\) in \(\Omega\).
\end{theorem}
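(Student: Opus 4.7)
My plan has two parts: first verify that $F$ is itself degenerate elliptic, then mimic the inf/sup-convolution scheme of Theorem \ref{sublin_superpos}, combining the inf-convolution of the supersolution $u$ with the sup-convolution of the subsolution $v$.

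For the ellipticity of $F$, take $Y \geq 0$ and apply \eqref{altunidefII} to the pair $(X, -Y)$:
\[F(X - Y) \leq F(X) + G(-Y).\]
Since $G$ is sublinear we have $G(0) = 0$, and since $G$ is degenerate elliptic and $-Y \leq 0$, we get $G(-Y) \leq 0$. Therefore $F(X - Y) \leq F(X)$, which is the monotonicity statement.

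For the main claim, after the usual reduction to locally bounded $u$ and $v$ via truncation (the sequence $\min\{v,k\} - \max\{u,-k\}$ increases to $v - u$, and each truncated term remains a sub- or super-solution by Lemma \ref{minlem}), fix $x_0 \in \Omega$ and pick a neighborhood $D \subset\subset \Omega$ of $x_0$ such that, for all sufficiently small $\epsilon > 0$, the inf-convolution $u_\epsilon$ is a semiconcave supersolution of $F(\mathcal{H}w) = 0$ in $D$ and the sup-convolution $v^\epsilon$ is a semiconvex subsolution. I will show that $v^\epsilon - u_\epsilon$ is a subsolution to $G(\mathcal{H}w) = 0$ in $D$. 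Granting this, $v^\epsilon - u_\epsilon$ decreases pointwise to $v - u$ as $\epsilon \to 0$, and the standard upper-semicontinuous stability of subsolutions under decreasing pointwise limits (the dual of Lemma \ref{inclemma}) delivers the theorem.

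The heart of the argument is showing the claim for fixed $\epsilon$. Let $\phi$ touch $v^\epsilon - u_\epsilon$ from above at $x_1 \in D$; by Lemma \ref{quadstrict} we may assume $\phi$ is a quadratic and the touching is strict on some closed ball $\overline{B}_R \subset\subset D$. The function
\[g := \phi + u_\epsilon - v^\epsilon\]
is then semiconcave (a sum of the quadratic $\phi$, the semiconcave $u_\epsilon$, and the semiconcave $-v^\epsilon$), with $g(x_1) = 0$ and $g > 0$ elsewhere on $\overline{B}_R$. Choose $\alpha > 0$ so small that $f(x) := g(x) - \tfrac{\alpha}{2}|x-x_1|^2$ still satisfies $f(x_1) = 0 < \min_{\partial B_R} f$. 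Jensen's Lemma applied to the semiconcave function $f$ on $B_R$ then yields that the contact set $\mathcal{A}(f)$ has positive measure. Since by Alexandrov's theorem $u_\epsilon$ and $v^\epsilon$ are twice differentiable at a.e.\ point of $B_R$, we may pick $x_2 \in \mathcal{A}(f)$ at which both $X := \mathcal{H}u_\epsilon(x_2)$ and $Y := \mathcal{H}v^\epsilon(x_2)$ exist. At $x_2$, $f$ is touched below by a supporting hyperplane of its convex envelope, so $\mathcal{H}f(x_2) \geq 0$, giving
\[\mathcal{H}\phi + X - Y - \alpha I \geq 0,\qquad\text{hence}\qquad \mathcal{H}\phi(x_1) = \mathcal{H}\phi(x_2) \geq Y - X.\]

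Finally, Lemma \ref{twicedifflem} gives $F(X) \leq 0$ (from $u_\epsilon$ being a supersolution) and $F(Y) \geq 0$ (from $v^\epsilon$ being a subsolution). Applying \eqref{altunidefII} with arguments $(X, Y-X)$,
\[0 \leq F(Y) = F(X + (Y - X)) \leq F(X) + G(Y - X) \leq G(Y - X).\]
Degenerate ellipticity of $G$ together with $\mathcal{H}\phi(x_1) \geq Y - X$ then yields $G(\mathcal{H}\phi(x_1)) \geq G(Y - X) \geq 0$, which is precisely the viscosity subsolution condition for $G$ at $x_1$. The principal technical hurdle is that both $F(X) \leq 0$ and $F(Y) \geq 0$ must be available at the \emph{same} point; Jensen's Lemma applied to the semiconcave combination $g - \tfrac{\alpha}{2}|\cdot - x_1|^2$ of $u_\epsilon$ and $-v^\epsilon$ is exactly what produces a common point of Alexandrov twice differentiability together with the crucial second-order inequality $\mathcal{H}\phi \geq Y - X$.
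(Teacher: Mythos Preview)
Your core argument---inf/sup-convolutions, Jensen's Lemma applied to the semiconcave function $\phi + u_\epsilon - v^\epsilon$, then the inequality $G(Y-X) \geq F(Y) - F(X) \geq 0$ combined with the degenerate ellipticity of $G$---is correct and is exactly the paper's approach.

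The truncation step, however, is garbled. You truncate in the wrong direction: $v$ is u.s.c.\ and hence already locally bounded \emph{above}; it needs bounding \emph{below}. Likewise $u$ is l.s.c., already bounded below, and needs bounding above. The correct cut-offs are therefore $\max\{v,-k\}$ and $\min\{u,k\}$, which are a subsolution and a supersolution respectively by Lemma~\ref{minlem}. Your choice $\min\{v,k\}$ is \emph{not} a subsolution in general (Lemma~\ref{minlem} preserves subsolutions under \emph{max}, not min), and similarly $\max\{u,-k\}$ is not a supersolution. With the correct truncation the sequence $\max\{v,-k\} - \min\{u,k\}$ is \emph{decreasing} to $v-u$, which is precisely what the dual of Lemma~\ref{inclemma} requires---this also resolves the inconsistency between your claim that the sequence ``increases'' and your appeal to stability under ``decreasing pointwise limits''.

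A minor remark: the extra term $-\tfrac{\alpha}{2}|x-x_1|^2$ is unnecessary. Since $\phi$ touches $v^\epsilon - u_\epsilon$ strictly from above, the function $g := \phi + u_\epsilon - v^\epsilon$ already has a strict interior minimum on $\overline{B}_R$, so Jensen's Lemma applies directly to $g$ and yields $\mathcal{H}\phi \geq Y - X$ without the detour.
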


\begin{remark}
Note the special case obtained by setting \(F\) equal to \(G\).
\end{remark}
\begin{remark}
If \(G\) is uniformly elliptic, then \(G\leq P_{\lambda,\Lambda}\) for some \(0<\lambda\leq \Lambda\) and \(F\) is uniformly elliptic by \eqref{altunidef} as well.
\end{remark}

\begin{proof}
Observe first that if \(X\leq Y\) then
\[F(X) = F(Y+X-Y) \leq F(Y) + G(X-Y) \leq F(Y)\]
by \eqref{altunidefII} and since \(X-Y\leq 0\) and \(G(X-Y)\leq G(0)=0\). Thus \(F\) is degenerate elliptic.

The rest of the proof is almost identical to the proof of Theorem \ref{sublin_superpos} and may be omitted. 
The differences lies in that we use the inequality
\[G(Y-X) \geq F(Y) - F(X)\geq 0\]
instead of the sublinearity, and that we also employ the corresponding results for subsolutions in the various lemmas.

\end{proof}

\chapter{Rotational invariance}\label{ch:rot}

In this chapter, we shall be concerned with operators that are invariant under isometric mappings of the domain:

\begin{definition}\label{rotinvdef}
An operator \(\D u = F(u,\nabla u,\mathcal{H}u)\) is \textbf{rotationally invariant} if
\[\D[u\circ T] = (\D u)\circ T\]
for every isometry \(T\colon\mathbb{R}^n\to \mathbb{R}^n\).
\end{definition}
By the equivalent definition below, one may verify that the \(p\)-Laplacian, as well as the dominative \(p\)-Laplacian, and the Pucci operator are all rotationally invariant.

\begin{proposition}\label{Frot}
An operator \(\D u = F(u,\nabla u,\mathcal{H}u)\) is rotationally invariant if and only if the function
\[F\colon \mathbb{R}\times \mathbb{R}^n\times S(n)\to \mathbb{R}\]
satisfies
\begin{equation}
F(s,q,X) = F(s,Qq,QXQ^T)
\label{rotinvF}
\end{equation}
for all \((s,q,X)\in \mathbb{R}\times \mathbb{R}^n\times S(n)\) and all \(Q\in O(n)\).
\end{proposition}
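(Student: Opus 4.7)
The plan is to prove both directions by direct computation, reducing everything to the chain rule for $\nabla(u\circ T)$ and $\mathcal{H}(u\circ T)$. Recall that any isometry of $\mathbb{R}^n$ has the form $T(x)=Qx+b$ with $Q\in O(n)$ and $b\in\mathbb{R}^n$. Under the paper's convention that gradients are row vectors (but the middle slot of $F$ takes column vectors, since $\mathbb{R}^n=\mathbb{R}^{n\times 1}$), the chain rule gives
\[
(\nabla(u\circ T)(x))^T = Q^T(\nabla u(Tx))^T,\qquad \mathcal{H}(u\circ T)(x) = Q^T\,\mathcal{H}u(Tx)\,Q.
\]

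For the implication $(\Leftarrow)$, I would assume \eqref{rotinvF} and, given an arbitrary isometry $T$, simply substitute the chain-rule formulas into $\mathcal{D}[u\circ T](x)$:
\[
\mathcal{D}[u\circ T](x) = F\bigl(u(Tx),\; Q^T(\nabla u(Tx))^T,\; Q^T\mathcal{H}u(Tx)Q\bigr).
\]
Applying \eqref{rotinvF} with the orthogonal matrix $Q^T$ in place of $Q$ removes the $Q^T$'s, leaving $F(u(Tx),(\nabla u(Tx))^T,\mathcal{H}u(Tx)) = (\mathcal{D}u)(Tx)$, as desired.

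For the implication $(\Rightarrow)$, the idea is to ``realise'' any prescribed triple $(s,q,X)$ and any rotation $Q$ by a judicious choice of $u$ and $T$. Given $(s,q,X)\in\mathbb{R}\times\mathbb{R}^n\times S(n)$ and $Q\in O(n)$, I would take the quadratic
\[
u(x) := s + q^T x + \tfrac{1}{2} x^T X x,
\]
so that $u(0)=s$, $(\nabla u(0))^T=q$, $\mathcal{H}u(0)=X$, together with the linear isometry $T(x):=Q^T x$, which fixes the origin. The chain-rule formulas above at $x=0$ give $(\nabla(u\circ T)(0))^T=Qq$ and $\mathcal{H}(u\circ T)(0)=QXQ^T$. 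Evaluating the rotational invariance identity $\mathcal{D}[u\circ T]=(\mathcal{D}u)\circ T$ at $x=0$ then yields
\[
F(s,\,Qq,\,QXQ^T) = \mathcal{D}[u\circ T](0) = (\mathcal{D}u)(T(0)) = \mathcal{D}u(0) = F(s,q,X),
\]
which is \eqref{rotinvF}.

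There is no real mathematical obstacle here; the only thing to watch is the bookkeeping between $Q$ and $Q^T$ induced by the row-vector gradient convention, which is why it is convenient to note that $Q\mapsto Q^T$ is a bijection of $O(n)$ so that quantifying \eqref{rotinvF} over all $Q$ or over all $Q^T$ gives the same condition.
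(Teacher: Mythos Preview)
Your proof is correct and follows essentially the same approach as the paper's own proof: both directions are handled by the chain rule, with a quadratic $u$ realising an arbitrary $(s,q,X)$ at the origin for $(\Rightarrow)$ and a direct substitution for $(\Leftarrow)$. The only cosmetic difference is that the paper parametrises isometries as $T(x)=Q^T(x-x_0)$ rather than $T(x)=Qx+b$, which swaps $Q$ and $Q^T$ in the intermediate formulas --- exactly the bookkeeping point you already flag at the end.
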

Here, \(O(n)\) denotes the set of \(n\times n\) orthogonal matrices.

\begin{proof}
Let \(F\) be rotationally invariant and let \((s,q,X)\in \mathbb{R}\times \mathbb{R}^n\times S(n)\) and \(Q\in O(n)\).
Define \(u(x) := s + q^Tx + \frac{1}{2}x^TXx\). Then, at \(x=0\), \(u = s, \nabla u^T = q\) and \(\mathcal{H}u = X\).
If we set \(v(x) := u(Q^Tx)\), then \(v = u\circ T\) for the isometry \(T(x) = Q^Tx\) and, at \(x=0\), \(T(0) = 0\) and \(v = s, \nabla v^T = Qq\) and \(\mathcal{H}v = QXQ^T\). It follows that
\begin{align*}
F(s,q,X) &= (\D u)(T(0))\\
				 &= \D[u\circ T](0),\qquad\text{by assumption,}\\
				 &= \D v(0)\\
				 &= F(s,Qq,Q^TXQ).
\end{align*}

Now assume that \eqref{rotinvF} holds and let \(T\colon\mathbb{R}^n\to \mathbb{R}^n\) be an isometry. It is in the form \(T(x) = Q^T(x-x_0)\) for some \(Q\in O(n)\) and some \(x_0\in\mathbb{R}^n\).

Let \(u\) be \(C^2\) and define \(v := u\circ T\). Then \(v(x) = u(T(x))\), \(\nabla v(x) = \nabla u(T(x))Q^T\) and
\(\mathcal{H}v(x) = Q\mathcal{H}u(T(x))Q^T\). Thus
\begin{align*}
\D[u\circ T](x) &= \D v(x)\\
                &= F(v(x), \nabla v^T(x),\mathcal{H}v(x))\\
								&= F(u(T(x)), Q\nabla u(T(x)), Q\mathcal{H}u(T(x))Q^T)\\
								&= F(u(T(x)), \nabla u(T(x)), \mathcal{H}u(T(x))),\qquad\text{by assumption,}\\
								&= (\D u)(T(x)).
\end{align*}
\end{proof}

We now return to sublinear operators. First, we show that a symmetry in \(F\) is reflected in a corresponding symmetry in the associated convex body:

If \(T\) is a linear function from \(S(n)\) to \(S(n)\), its \emph{adjoint} \(T^*\) is defined by the formula \(\langle TY,X\rangle = \langle Y, T^*X\rangle\). Of course, \(T\) can be identified with an \(N\times N\) matrix \((N:=\frac{n(n+1)}{2})\) and \(T^*\) is then the transposed.

Denote the image set of \(T\) on \(\mathcal{K}\) by \(T(\mathcal{K}) := \{TY\;|\; Y\in\mathcal{K}\}\).

\begin{lemma}\label{Kinv}
Let \(F(X) = \max_{Y\in\mathcal{K}}\langle Y,X\rangle\) be a sublinear operator and let \(T\) be a linear and invertible function from \(S(n)\) to \(S(n)\). Then
\[F\circ T^* = F\qquad\iff\qquad T(\mathcal{K}) = \mathcal{K}.\]
\end{lemma}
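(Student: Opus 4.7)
The plan is to translate the equation $F \circ T^{*} = F$ into an identity of support functions, and then invoke the uniqueness part of Theorem \ref{repsupthm}. The key observation is the adjoint identity
\[
\langle TY, X\rangle = \langle Y, T^{*}X\rangle,
\]
which lets us rewrite the support function of the image set $T(\mathcal{K})$ as $F\circ T^{*}$.

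First I would dispatch the easy implication. Assuming $T(\mathcal{K}) = \mathcal{K}$, for any $X\in S(n)$ I would write
\[
F(T^{*}X) = \max_{Y\in\mathcal{K}}\langle Y, T^{*}X\rangle = \max_{Y\in\mathcal{K}}\langle TY, X\rangle = \max_{Z\in T(\mathcal{K})}\langle Z, X\rangle = F(X),
\]
so $F\circ T^{*} = F$. Nothing beyond the definition of the adjoint and a change of index in the maximum is used here.

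For the converse, the central fact I need is that $T(\mathcal{K})$ is itself a convex body. This is routine: $T$ is linear (hence continuous), so $T(\mathcal{K})$ inherits compactness; linearity of $T$ together with convexity of $\mathcal{K}$ yields convexity of $T(\mathcal{K})$; and nonemptyness is automatic. Therefore $T(\mathcal{K})$ is a convex body. The same adjoint computation as above now reads
\[
F_{T(\mathcal{K})}(X) = \max_{Z\in T(\mathcal{K})}\langle Z, X\rangle = \max_{Y\in\mathcal{K}}\langle Y, T^{*}X\rangle = F(T^{*}X).
\]
If we assume $F\circ T^{*} = F$, this gives $F_{T(\mathcal{K})} = F = F_{\mathcal{K}}$ as sublinear functions on $S(n)$.

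Finally I would invoke the uniqueness clause of Theorem \ref{repsupthm}: the convex body representing a given sublinear function is unique, so $T(\mathcal{K}) = \mathcal{K}$. I do not expect any real obstacle; the only thing to be careful about is verifying that $T(\mathcal{K})$ qualifies as a convex body so that the uniqueness statement applies. Note that invertibility of $T$ is not actually needed for either direction of the argument — linearity alone suffices — so the hypothesis is slightly stronger than required.
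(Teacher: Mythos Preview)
Your proof is correct and follows essentially the same route as the paper: both arguments compute $F\circ T^* = F_{T(\mathcal{K})}$ via the adjoint identity and then invoke the uniqueness clause of Theorem~\ref{repsupthm}. Your observation that invertibility of $T$ is not actually needed is accurate; the paper's proof writes the substitution as $T^{-1}Z\in\mathcal{K}$, but this is only notational and the argument goes through for any linear $T$.
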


\begin{proof}
\begin{align*}
F(T^*X) &= \max_{Y\in\mathcal{K}}\langle Y,T^*X\rangle\\
        &= \max_{Y\in\mathcal{K}}\langle TY,X\rangle\\
				&= \max_{T^{-1}Z\in\mathcal{K}}\langle Z,X\rangle\\
				&= \max_{Z\in T(\mathcal{K})}\langle Z,X\rangle
\end{align*}
which equals the function \(F(X)\) if and only if \(T(\mathcal{K}) = \mathcal{K}\). This follows from the fact that \(T(\mathcal{K})\) is convex and the uniqueness of the convex body.
\end{proof}

\begin{proposition}\label{rotsymequiv}
A sublinear operator is rotationally invariant if and only if the associated convex body \(\mathcal{K}\) satisfies
\begin{equation}
\mathcal{K} = \left\{Q^TYQ\;|\; Y\in\mathcal{K}\right\}\qquad\text{for all \(Q\in O(n)\).}
\label{QisY2}
\end{equation}
\end{proposition}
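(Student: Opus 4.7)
The plan is to reduce the proposition to Lemma \ref{Kinv} by exhibiting, for each \(Q\in O(n)\), an appropriate linear invertible self-map of \(S(n)\) whose adjoint realises the conjugation \(X\mapsto QXQ^T\) appearing in Proposition \ref{Frot}.

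First I would record that, since the operators in this chapter depend only on the Hessian, Proposition \ref{Frot} specialises to: a sublinear \(F\colon S(n)\to\mathbb{R}\) is rotationally invariant if and only if
\[F(X)=F(QXQ^T)\qquad\text{for every \(X\in S(n)\) and every \(Q\in O(n)\).}\]
Next, for each fixed \(Q\in O(n)\) I would introduce the map
\[T_Q\colon S(n)\to S(n),\qquad T_Q(Y) := Q^TYQ.\]
The image indeed lies in \(S(n)\) (conjugation preserves symmetry), \(T_Q\) is clearly linear, and it is invertible with inverse \(T_{Q^T}\), since \(Q^T(QYQ^T)Q = Y\) when \(Q\in O(n)\).

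Then I would compute the adjoint of \(T_Q\) with respect to the inner product \(\langle\cdot,\cdot\rangle=\tr(\cdot\,\cdot)\). Using the cyclic property of the trace,
\[\langle T_QY,X\rangle = \tr(Q^TYQX) = \tr(Y\,QXQ^T) = \langle Y, QXQ^T\rangle,\]
so \(T_Q^*(X) = QXQ^T\). In particular the condition \(F\circ T_Q^* = F\) is exactly the identity \(F(QXQ^T)=F(X)\) for all \(X\in S(n)\).

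Finally I would assemble the equivalence. Applying Lemma \ref{Kinv} to the linear invertible map \(T_Q\) yields, for each \(Q\in O(n)\),
\[F(QXQ^T)=F(X)\ \forall X\in S(n)\quad\iff\quad \{Q^TYQ\;|\;Y\in\mathcal{K}\}=\mathcal{K}.\]
Since this equivalence holds for every \(Q\in O(n)\), and since \(Q\mapsto Q^T\) is a bijection of \(O(n)\) onto itself, quantifying over all \(Q\) gives precisely the characterisation of rotational invariance from Proposition \ref{Frot} on the left and condition \eqref{QisY2} on the right, completing the proof. There is no real obstacle here; the only thing to be careful about is the small computation identifying \(T_Q^*\) with the conjugation \(X\mapsto QXQ^T\), so that the adjoint side of Lemma \ref{Kinv} matches exactly the formulation given by Proposition \ref{Frot}.
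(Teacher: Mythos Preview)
Your proof is correct and follows essentially the same route as the paper: define \(T_Q Y = Q^T Y Q\), compute its adjoint \(T_Q^* X = Q X Q^T\), and then invoke Lemma \ref{Kinv} together with Proposition \ref{Frot}. The remark about \(Q\mapsto Q^T\) being a bijection of \(O(n)\) is harmless but unnecessary, since for each fixed \(Q\) the equivalence already matches the same \(Q\) on both sides.
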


\begin{proof}
For \(Q\in O(n)\) define \(T_Q : S(n)\to S(n)\) by 
\[T_Q Y := Q^TYQ.\]
Then \(T_Q\) is a linear isometry with adjoint (and inverse) \(T_Q^*X = QXQ^T\). From Proposition \ref{Frot}, \(F\) is rotationally invariant if and only if \(F\circ T_Q^* = F\;\forall Q\in O(n)\) which again is equivalent to \eqref{QisY2} by Lemma \ref{Kinv}.
\end{proof}

\section{Symmetric bodies}

Let \(\oll\colon S(n)\to \mathbb{R}^n\) and \(\Lambda\colon S(n)\to S(n)\) be the functions
\[\oll(Y) := (\lambda_1(Y),\dots,\lambda_n(Y))^T,\qquad \Lambda(Y) := \diag\oll(Y),\]
where \(\lambda_1(Y)\leq\cdots\leq\lambda_n(Y)\) are the eigenvalues of \(Y\) labeled in increasing order.
Notice that \(\Lambda(Y)\) is a diagonalization of \(Y\). That is,
\[\Lambda(Y) = Q^TYQ\]
for some \(Q\in O(n)\).

Since
\[F(X) = F(\Lambda(X)),\]
a rotationally invariant operator only depends on the eigenvalues of the argument.
\(F\) is essentially a function in \(\mathbb{R}^n\), and it is reasonable to expect that the dimension of the associated convex body also can be reduced from \(\frac{(n+1)n}{2}\) to \(n\) without any loss of information.
We shall see that the mapping
\begin{equation}
\Phi(\mathcal{E}) := \left\{P\oll(Y)\;|\; P\in\mathcal{P}(n),\, Y\in\mathcal{E}\right\},
\label{Phidef}
\end{equation}
from subsets of \(S(n)\) to subsets of \(\mathbb{R}^n\) provides this identification.
Here \(\mathcal{P}(n)\subseteq O(n)\) is the set of permutation matrices.


\begin{definition}\label{symbodies}
A subset \(\mathcal{E}\subseteq S(n)\) is \textbf{symmetric} if
\[\mathcal{E} = \left\{Q^TYQ\;|\; Y\in\mathcal{E}\right\}\qquad\forall Q\in O(n).\]
A subset \(\varepsilon\subseteq \mathbb{R}^n\) is \textbf{symmetric} if
\[\varepsilon = \left\{Py\;|\; y\in\varepsilon\right\}\qquad\forall P\in \mathcal{P}(n).\]
We let \(\mathcal{E}(n)\) and \(\varepsilon(n)\) denote the set of symmetric subsets of \(S(n)\) and \(\mathbb{R}^n\), respectively.
\end{definition}

We introduce another diagonal operator as well. The function \(\diag\colon S(n)\to\mathbb{R}^n\) gives the vector consisting of the diagonal elements of \(Y\):
\[\diag Y := \sum_{i= 1}^n Y_{ii}\cdot \bfe_i,\qquad Y_{ii} := \bfe_i^TY\bfe_i.\]
If \(x\in\mathbb{R}^n\) and \(Y\in S(n)\), notice that \(\diag\diag x = x\), and that
\[\tr(Y\diag x) = \tr\left(Y\sum_{i=1}^nx_i\bfe_i\bfe_i^T\right) = \sum_{i=1}^nx_i\tr\left(Y\bfe_i\bfe_i^T\right) = \sum_{i=1}^nx_i \bfe_i^TY\bfe_i.\]
That is,
\[\langle Y,\diag x\rangle_{S(n)} = \langle\diag Y,x\rangle_{\mathbb{R}^n}.\]

If \(F\colon S(n)\to\mathbb{R}\) is sublinear, we can define a function \(f\colon\mathbb{R}^n\to\mathbb{R}\) as
\begin{equation}
f(x) := F(\diag x).
\label{fFdef}
\end{equation}
It is easy to check that \(f\) inherits the sublinearity from \(F\) and \(f\) is therefore in the form
\begin{equation}
f(x) = \max_{y\in\kappa}y^Tx
\label{fdef}
\end{equation}
for some unique convex body \(\kappa\subseteq\mathbb{R}^n\) by Theorem \ref{repsupthm}. In fact, as
\begin{align*}
f(x) &= F(\diag x)\\
     &= \max_{Y\in\mathcal{K}}\langle Y,\diag x\rangle\\
		 &= \max_{Y\in\mathcal{K}}(\diag Y)^Tx,
\end{align*}
we get
\begin{equation}
\kappa = \left\{\diag Y\;|\; Y\in\mathcal{K}\right\}
\label{kappafbody}
\end{equation}
since the right-hand side is compact, and convex by the linearity of \(\diag\).

A symmetry of \(\mathcal{K}\) is equivalent to \(F\) being rotationally invariant by Proposition \ref{rotsymequiv}. And in those cases, as we might suspect, we actually have \(\kappa = \Phi(\mathcal{K})\). The proof is non-trivial and requires the introduction of some new tools.

Let \(\mathcal{K}(n)\) and \(\kappa(n)\) denote the sets of convex bodies in \(S(n)\) and \(\mathbb{R}^n\), respectively.

\begin{proposition}\label{Phibijective}
The mapping
\[\Phi(\mathcal{E}) := \left\{P\oll(Y)\;|\; P\in\mathcal{P}(n),\, Y\in\mathcal{E}\right\}\]
is a one-to-one correspondence between the symmetric subsets of \(S(n)\) and \(\mathbb{R}^n\). Its inverse is
\begin{equation}
\Phi^{-1}(\varepsilon) = \left\{Q(\diag y)Q^T\;|\; Q\in O(n),\, y\in\varepsilon\right\}.
\label{Phiinvdef}
\end{equation}
Moreover, the restriction of \(\Phi\) to \(\mathcal{E}(n)\cap\mathcal{K}(n)\) is given by
\[\Phi(\mathcal{K}) = \left\{\diag Y\;|\;Y\in\mathcal{K}\right\}\]
and is a one-to-one correspondence between the symmetric convex bodies of \(S(n)\) and \(\mathbb{R}^n\)
\end{proposition}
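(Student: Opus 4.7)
The plan has four parts: show that $\Phi$ and the proposed inverse $\Psi(\varepsilon):=\{Q(\diag y)Q^T\mid Q\in O(n),\,y\in\varepsilon\}$ both send symmetric sets to symmetric sets; verify the identities $\Psi\circ\Phi=\mathrm{id}$ and $\Phi\circ\Psi=\mathrm{id}$ on symmetric subsets; establish the diagonal formula $\Phi(\mathcal K)=\{\diag Y\mid Y\in\mathcal K\}$ for symmetric convex bodies $\mathcal K$ (which in particular shows $\Phi(\mathcal K)$ is a convex body in $\mathbb R^n$); and finally verify that $\Psi(\kappa)$ is a convex body whenever $\kappa$ is, which is the nontrivial half of the convex-body bijection.

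The symmetry preservation is immediate: if $\mathcal E$ is symmetric and $P'\in\mathcal P(n)$, then $P'\Phi(\mathcal E)=\Phi(\mathcal E)$ since $P\mapsto P'P$ permutes $\mathcal P(n)$; similarly $O(n)$-conjugation leaves $\Psi(\varepsilon)$ fixed. For the inverse identities I would use three basic facts: the eigen-decomposition $Y=Q_Y(\diag\oll(Y))Q_Y^T$, the commutation rule $\diag(Pz)=P(\diag z)P^T$ for $P\in\mathcal P(n)$, and that $\oll(Q(\diag y)Q^T)=y^\uparrow$ is a permutation of $y$. A direct computation gives
\[
\Psi(\Phi(\mathcal E))=\{(QP)\Lambda(Y)(QP)^T\mid Q\in O(n),\,P\in\mathcal P(n),\,Y\in\mathcal E\}=\{\tilde Q\Lambda(Y)\tilde Q^T\mid\tilde Q\in O(n),\,Y\in\mathcal E\},
\]
and substituting $\Lambda(Y)=Q_Y^TYQ_Y$ together with the symmetry of $\mathcal E$ collapses this to $\mathcal E$; the reverse inclusion is trivial by taking $\tilde Q=Q_Y$. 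Analogously $\Phi(\Psi(\varepsilon))=\{Py\mid P\in\mathcal P(n),\,y\in\varepsilon\}=\varepsilon$ by symmetry of $\varepsilon$.

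For the diagonal formula, the inclusion $\Phi(\mathcal K)\subseteq\{\diag Y\mid Y\in\mathcal K\}$ is straightforward: for $Y'\in\mathcal K$ and $P\in\mathcal P(n)\subseteq O(n)$, symmetry places $P\Lambda(Y')P^T$ in $\mathcal K$, and its diagonal equals $P\oll(Y')$. The reverse containment is where convexity of $\mathcal K$ is used essentially. Given $Y\in\mathcal K$, averaging over the $2^n$ diagonal sign matrices $D$ with $D_{ii}=\pm1$ yields
\[
\frac{1}{2^n}\sum_{D}DYD=\diag(\diag Y),
\]
since every off-diagonal entry flips sign for exactly half of the $D$'s. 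Each $DYD$ belongs to $\mathcal K$ by symmetry, so convexity places $\diag(\diag Y)$ in $\mathcal K$; its eigenvalues are the entries of $\diag Y$ in some order, hence $\diag Y$ is a permutation of $\oll(\diag(\diag Y))$ and lies in $\Phi(\mathcal K)$. This identifies $\Phi(\mathcal K)$ as the linear image of $\mathcal K$ under $\diag$, so it is automatically a convex body.

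The main obstacle is the convex-body surjectivity: showing that for symmetric convex body $\kappa\subseteq\mathbb R^n$, the set $\Psi(\kappa)=\{Y\in S(n)\mid\oll(Y)\in\kappa\}$ (the equality following from symmetry of $\kappa$) is itself convex. Compactness is clear from continuity of $(Q,y)\mapsto Q(\diag y)Q^T$. For convexity I would invoke the Lidskii–Wielandt majorisation inequality
\[
\oll\bigl(tY_1+(1-t)Y_2\bigr)\prec t\oll(Y_1)+(1-t)\oll(Y_2),
\]
valid for $t\in[0,1]$ and $Y_1,Y_2\in S(n)$. If $\oll(Y_i)\in\kappa$, then the right-hand side lies in $\kappa$ by convexity. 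The symmetry and convexity of $\kappa$ imply Schur-convexity: any vector majorised by an element of $\kappa$ is a convex combination of permutations of that element (Hardy–Littlewood–Pólya), and each permutation lies in $\kappa$ by symmetry. Hence $\oll(tY_1+(1-t)Y_2)\in\kappa$, proving convexity of $\Psi(\kappa)$. Combined with $\Phi(\Psi(\kappa))=\kappa$ from the earlier step, this closes the bijection between the two classes of symmetric convex bodies.
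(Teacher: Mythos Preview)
Your proof is correct and covers all parts of the statement. The overall architecture matches the paper's: both treat the set-level bijection by direct computation with $\Psi$, and both handle convexity of $\Psi(\kappa)$ via the majorisation inequality $\oll(tY_1+(1-t)Y_2)\prec t\oll(Y_1)+(1-t)\oll(Y_2)$ together with the Hardy--Littlewood--P\'olya description of majorisation as convex hulls of permutations.

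There is one genuine difference worth noting, in the proof of the reverse inclusion $\{\diag Y\mid Y\in\mathcal{K}\}\subseteq\Phi(\mathcal{K})$. The paper argues dually: it writes $f(x)=F(\diag x)$, observes that $f$ is symmetric and convex hence Schur-convex, and then uses Schur's theorem $\diag X\prec\oll(X)$ to show $\langle\diag y,X\rangle\leq f(\oll(X))=F(X)$ for every $X$, whence $\diag y\in\mathcal{K}$ by the support-function characterisation of $\mathcal{K}$. Your argument instead averages $DYD$ over the $2^n$ diagonal sign matrices to obtain $\diag(\diag Y)$ directly as a convex combination of $O(n)$-conjugates of $Y$. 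This is more elementary: it avoids Schur's theorem and the support-function duality entirely, and it makes transparent that only the orthogonal symmetry and convexity of $\mathcal{K}$ are used. The paper's route, on the other hand, ties the inclusion back to the operator $F$ and so fits more naturally into the support-function framework developed elsewhere in the chapter.
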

Notice that \(\Phi\) and \(\Phi^{-1}\) clearly maps sets into symmetric sets.

The main properties of \(\Phi\) are gathered in Theorem \ref{Phithm} below. Recall the \emph{Minkowski} sum and scaling of subsets.

\begin{theorem}\label{Phithm}
Let \(\mathcal{E},\mathcal{E}'\in\mathcal{E}(n)\), \(\mathcal{K},\mathcal{K}'\in\mathcal{E}(n)\cap\mathcal{K}(n)\), and let \(\alpha,\beta\in\mathbb{R}\). Then
\begin{enumerate}
	\item \label{partextsym}\(\ext\mathcal{K}\in\mathcal{E}(n)\).
	\item \label{partextext}\(\ext\Phi(\mathcal{K}) = \Phi(\ext\mathcal{K})\).
	\item \label{suminvsym}\[\alpha\mathcal{E} + \beta\mathcal{E}'\in\mathcal{E}(n)\qquad\text{and}\qquad \alpha\mathcal{K} + \beta\mathcal{K}'\in\mathcal{E}(n)\cap\mathcal{K}(n).\]
	\item \label{suminvsymII}\[\Phi\big(\alpha\{I\}+\beta\mathcal{E}\big) = \alpha\{\mathbbm{1}\} + \beta\Phi(\mathcal{E})\quad\text{and}\quad \Phi\left(\alpha\mathcal{K}+\beta\mathcal{K}'\right) = \alpha\Phi(\mathcal{K}) + \beta\Phi(\mathcal{K}').\]
	\item \label{part_subset}\[\mathcal{E}'\subseteq\mathcal{E}\qquad\Rightarrow\qquad \Phi(\mathcal{E}')\subseteq\Phi(\mathcal{E}).\]
\end{enumerate}
Furthermore, every corresponding claim for \(\varepsilon,\varepsilon'\in\varepsilon(n)\), \(\kappa,\kappa'\in\varepsilon(n)\cap\kappa(n)\) and \(\Phi^{-1}\) is also true.
\end{theorem}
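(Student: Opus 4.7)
My plan is to leverage the representation $\Phi(\mathcal{K}) = \{\diag Y : Y \in \mathcal{K}\}$ for symmetric convex bodies (Proposition \ref{Phibijective}), which reduces most of the theorem to linear algebra. Throughout, let $T_Q Y := Q^T Y Q$ denote the linear isometry of $S(n)$ induced by $Q \in O(n)$, and similarly $T_P x := P x$ for $P \in \mathcal{P}(n)$.

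Parts \ref{partextsym}, \ref{suminvsym}, and \ref{part_subset} are essentially formal. For part \ref{partextsym}, symmetry means $T_Q(\mathcal{K}) = \mathcal{K}$, and any linear bijection sends extreme points to extreme points, so $T_Q(\ext\mathcal{K}) = \ext\mathcal{K}$. Part \ref{suminvsym} follows from the linearity of $T_Q$ and $T_P$ together with Proposition \ref{mincsum}. Part \ref{part_subset} is immediate from the definition of $\Phi$. In part \ref{suminvsymII}, the second identity follows by applying the linearity of $\diag$ to $\Phi(\mathcal{K}) = \diag\mathcal{K}$; for the first, I compute $\oll(\alpha I + \beta Y) = \alpha\mathbbm{1} + \beta\oll(Y)$ (or a reversal thereof when $\beta<0$), and note that $P\mathbbm{1} = \mathbbm{1}$ and that ranging $P$ over all of $\mathcal{P}(n)$ absorbs any such reordering.

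The main obstacle is part \ref{partextext}. For the inclusion $\ext\Phi(\mathcal{K}) \subseteq \Phi(\ext\mathcal{K})$, I pick $y \in \ext\Phi(\mathcal{K})$, write $y = \diag Y$ for some $Y \in \mathcal{K}$, and expand $Y = \sum_i \alpha_i E_i$ with $E_i \in \ext\mathcal{K}$ via Minkowski's theorem (Proposition \ref{extconvbody} part 2). Then $y = \sum_i \alpha_i \diag E_i$ is a convex combination of elements of $\Phi(\mathcal{K})$, and extremeness of $y$ forces all $\diag E_i$ to equal $y$; in particular $y \in \Phi(\ext\mathcal{K})$.

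The reverse inclusion is the hardest step. Given $E \in \ext\mathcal{K}$, part \ref{partextsym} ensures $\Lambda(E) = T_Q E \in \ext\mathcal{K}$ for a diagonalizing $Q$; since the symmetry of $\Phi(\mathcal{K})$ makes permutations act as bijections on $\ext\Phi(\mathcal{K})$, it suffices to show that $\oll(E) = \diag\Lambda(E)$ is extreme. Suppose $\oll(E) = t y_1 + (1-t) y_2$ with $y_i = \diag Y_i$, $Y_i \in \mathcal{K}$. The key trick is a \emph{sign-flip average}: for each $s \in \{\pm 1\}^n$ the matrix $Q_s := \diag s$ is orthogonal, so $T_{Q_s} Y_i \in \mathcal{K}$ by symmetry. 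Averaging over all $2^n$ sign patterns annihilates the off-diagonal entries of $Y_i$ (since $s_j s_k$ averages to $0$ for $j \neq k$) while preserving the diagonal, producing $\diag y_i \in \mathcal{K}$ by convexity of $\mathcal{K}$. Replacing each $Y_i$ by $\diag y_i$, I may assume both are diagonal; then $tY_1 + (1-t)Y_2$ is a diagonal matrix with diagonal $\oll(E)$, hence equal to $\Lambda(E)$. Extremeness of $\Lambda(E)$ forces $Y_1 = Y_2$, so $y_1 = y_2$. The claims for $\Phi^{-1}$ then follow at once from the bijection in Proposition \ref{Phibijective}.
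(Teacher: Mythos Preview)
Your treatment of parts \ref{partextsym}, \ref{suminvsym}, \ref{suminvsymII}, and \ref{part_subset} matches the paper. For part \ref{partextext} you take a different route, and one direction contains a gap.

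In the argument for $\ext\Phi(\mathcal{K}) \subseteq \Phi(\ext\mathcal{K})$ you reach $y = \diag E_1$ with $E_1 \in \ext\mathcal{K}$ and then assert ``in particular $y \in \Phi(\ext\mathcal{K})$''. This last step is not justified: the representation $\Phi(\mathcal{A}) = \{\diag Y : Y \in \mathcal{A}\}$ from Proposition~\ref{Phibijective} requires $\mathcal{A}$ to be a symmetric \emph{convex body}, whereas $\ext\mathcal{K}$ is typically not convex. For a general symmetric set one only has $\Phi(\mathcal{A}) \subseteq \{\diag Y : Y \in \mathcal{A}\}$ (Claim~\ref{Kdiagprop}), and the containment can be strict: with $\mathcal{K} = \mathcal{K}_\infty$ and $E_1 = \xi\xi^T$, $\xi = 2^{-1/2}(\bfe_1 + \bfe_2)$, one gets $\diag E_1 = \tfrac12(\bfe_1+\bfe_2) \notin \Phi(\ext\mathcal{K}_\infty) = \{\bfe_1,\dots,\bfe_n\}$. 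The gap is easy to close using the extremeness of $y$ once more: by Lemma~\ref{schurthm}, $y = \diag E_1 \prec \oll(E_1)$, so by \eqref{maylem} $y \in \con\{P\oll(E_1) : P \in \mathcal{P}(n)\} \subseteq \Phi(\mathcal{K})$; since $y$ is extreme in $\Phi(\mathcal{K})$ this forces $y = P\oll(E_1)$ for some $P$, and then $y \in \Phi(\ext\mathcal{K})$ because $E_1 \in \ext\mathcal{K}$.

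Your sign-flip averaging for the inclusion $\Phi(\ext\mathcal{K}) \subseteq \ext\Phi(\mathcal{K})$ is correct and is more direct than the paper's argument. The paper handles that direction via the auxiliary Lemma~\ref{generatesing} (a single $O(n)$-orbit is exactly the set of extreme points of its convex hull), and handles the reverse direction by comparing $\Phi(\mathcal{K}^\circ)$ with $\Phi(\mathcal{K})^\circ$ through the set identity $\Phi(\mathcal{E}\setminus\mathcal{E}') = \Phi(\mathcal{E})\setminus\Phi(\mathcal{E}')$ of Lemma~\ref{syminv}. Your approach avoids both detours at the price of the sign-flip trick and, after the fix above, one appeal to Schur's theorem.
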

Here, \(\mathbbm{1}\) is the vector \((1,\dots,1)^T\) in \(\mathbb{R}^n\).

The various claims of Proposition \ref{Phibijective} and Theorem \ref{Phithm} shall be proved in the order of increasing difficulty. We start with part \ref{suminvsym} and part \ref{suminvsymII} of the theorem. No proof of part \ref{part_subset} is needed as it follows directly from the definition.
Also, once the proposition is established, the last assertion of the theorem is achieved by a straightforward use of the bijectivity of \(\Phi\). Except, perhaps, the corresponding part 3 which proof is most easily done by a direct calculation.

\begin{proof}[Proof of part \ref{suminvsym} of Theorem \ref{Phithm}]
Let \(\mathcal{E}, \mathcal{E}'\in\mathcal{E}(n)\) and let \(\alpha,\beta\in\mathbb{R}\). Then for any \(Q\in O(n)\),
\begin{align*}
\left\{Q^TYQ\;|\; Y\in \alpha\mathcal{E}+\beta\mathcal{E}'\right\}
	&= \left\{Q^T(\alpha E + \beta E')Q\;|\; E\in \mathcal{E}, E'\in\mathcal{E}'\right\}\\
	&= \left\{\alpha Q^TEQ + \beta Q^TE'Q\;|\; E\in \mathcal{E}, E'\in\mathcal{E}'\right\}\\
	&= \left\{\alpha E + \beta E'\;|\; E\in \mathcal{E}, E'\in\mathcal{E}'\right\}\\
	&= \alpha\mathcal{E}+\beta\mathcal{E}'
\end{align*}
and \(\alpha\mathcal{E}+\beta\mathcal{E}'\in\mathcal{E}(n)\). The proof regarding symmetric convex bodies follows from this and Proposition \ref{mincsum}.
\end{proof}

\begin{proof}[Proof of part \ref{suminvsymII} of Theorem \ref{Phithm}]
Let \(\mathcal{E}\in\mathcal{E}(n)\) and let \(\alpha,\beta\in\mathbb{R}\). Clearly \(\{I\}\in\mathcal{E}(n)\), and part \ref{suminvsym} shows that
\(\mathcal{E}' := \alpha\{I\} + \beta\mathcal{E}\in\mathcal{E}(n)\). Since \(\lambda_i(\alpha I+Y) = \alpha + \lambda_i(Y)\), \(P\oll(I) = P\mathbbm{1} = \mathbbm{1}\), and \(\oll(\beta E) = \beta P'\oll(E)\) where \(P'\in\mathcal{P}(n)\) is \(I\) if \(\beta\geq 0\) and reverses the order if \(\beta<0\), we get
\begin{align*}
\Phi(\mathcal{E}') &= \left\{P\oll(Y)\;|\; P\in\mathcal{P}(n),\, Y\in \mathcal{E}'\right\}\\
                   &= \left\{P\oll\left(\alpha I + \beta E\right)\;|\; P\in\mathcal{P}(n),\, E\in \mathcal{E}\right\}\\
									 &= \left\{P\left(\alpha\oll(I) + \oll(\beta E)\right)\;|\; P\in\mathcal{P}(n),\, E\in \mathcal{E}\right\}\\
									 &= \left\{\alpha \mathbbm{1} + \beta P\oll(E)\;|\; P\in\mathcal{P}(n),\, E\in \mathcal{E}\right\}\\
									 &= \alpha\{\mathbbm{1}\} + \beta\Phi(\mathcal{E}).
\end{align*}

Now let \(\mathcal{K}\) and \(\mathcal{K}'\) be symmetric convex bodies. Then \(\alpha\mathcal{K} + \beta\mathcal{K}'\) is again a symmetric convex body by part \ref{suminvsym}. From Proposition \ref{Phibijective} (which is not yet proved, but its proof does not depend on this result) we immediately get
\begin{align*}
\Phi\left(\alpha\mathcal{K}+\beta\mathcal{K}'\right)
	&= \left\{\diag Y\;|\; Y\in \alpha\mathcal{K}+\beta\mathcal{K}'\right\}\\
	&= \left\{\diag \left(\alpha Y+\beta Y'\right)\;|\; Y\in \mathcal{K}, Y'\in\mathcal{K}'\right\}\\
	&= \left\{ \alpha\diag Y+\beta \diag Y'\;|\; Y\in \mathcal{K}, Y'\in\mathcal{K}'\right\}\\
	&= \alpha\Phi(\mathcal{K})+\beta\Phi(\mathcal{K}')
\end{align*}
\end{proof}

\begin{lemma}\label{1_1lemma}
Let \(\mathcal{E},\mathcal{E}'\in\mathcal{E}(n)\) and let \(P\in\mathcal{P}(n)\). Then
\[Y\in\mathcal{E}\setminus\mathcal{E}'\qquad\iff\qquad P\oll(Y)\in\Phi(\mathcal{E})\setminus\Phi(\mathcal{E}').\]
\end{lemma}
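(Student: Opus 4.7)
My plan is to reduce the lemma to a cleaner intermediate statement that encapsulates the key idea: for any symmetric set $\mathcal{E}\in\mathcal{E}(n)$ and any fixed $P\in\mathcal{P}(n)$,
\[
Y\in\mathcal{E}\qquad\iff\qquad P\oll(Y)\in\Phi(\mathcal{E}).
\]
Once this equivalence is established, the lemma is immediate, since it says exactly that $Y\in\mathcal{E}\setminus\mathcal{E}'$ and $P\oll(Y)\in\Phi(\mathcal{E})\setminus\Phi(\mathcal{E}')$ are both equivalent to the conjunction ``$Y\in\mathcal{E}$ and $Y\notin\mathcal{E}'$'' after applying the intermediate statement once to $\mathcal{E}$ and once to $\mathcal{E}'$.

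The forward direction of the intermediate statement is immediate from the definition \eqref{Phidef} of $\Phi$: given $Y\in\mathcal{E}$, the vector $P\oll(Y)$ lies in $\Phi(\mathcal{E})$ by construction. For the backward direction I would argue as follows. Assume $P\oll(Y)\in\Phi(\mathcal{E})$, so there exist $Y'\in\mathcal{E}$ and $P'\in\mathcal{P}(n)$ with $P\oll(Y)=P'\oll(Y')$, hence $\oll(Y)=P^{-1}P'\oll(Y')$. The vector $P^{-1}P'\oll(Y')$ is a permutation of the already increasingly ordered vector $\oll(Y')$; since $\oll(Y)$ is also increasingly ordered and the increasing rearrangement of a vector is unique, we conclude $\oll(Y)=\oll(Y')$. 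Thus $Y$ and $Y'$ share the same eigenvalues, so by the spectral theorem $Y=Q^T Y' Q$ for some $Q\in O(n)$. Since $\mathcal{E}$ is symmetric (Definition \ref{symbodies}), this forces $Y\in\mathcal{E}$.

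Combining the two equivalences, we obtain
\[
Y\in\mathcal{E}\setminus\mathcal{E}'\ \iff\ (Y\in\mathcal{E})\wedge(Y\notin\mathcal{E}')\ \iff\ (P\oll(Y)\in\Phi(\mathcal{E}))\wedge(P\oll(Y)\notin\Phi(\mathcal{E}'))\ \iff\ P\oll(Y)\in\Phi(\mathcal{E})\setminus\Phi(\mathcal{E}'),
\]
which proves the lemma. The only slightly subtle point is the uniqueness-of-sorting argument that deduces $\oll(Y)=\oll(Y')$ from $\oll(Y)=P''\oll(Y')$ when both sides have non-decreasing components; this is the real content of the proof and is what makes the symmetry hypothesis on $\mathcal{E}$ usable through the spectral theorem. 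Everything else is bookkeeping with the definitions of $\Phi$ and symmetric sets.
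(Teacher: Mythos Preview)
Your proof is correct and follows essentially the same approach as the paper: both reduce the lemma to the equivalence \(Y\in\mathcal{E}\iff P\oll(Y)\in\Phi(\mathcal{E})\), establish the forward direction from the definition of \(\Phi\), and for the backward direction use that two increasingly ordered vectors which are permutations of each other must coincide, so that \(Y\) and the witnessing matrix share eigenvalues and are therefore orthogonally conjugate, whence \(Y\in\mathcal{E}\) by symmetry. Your write-up is in fact slightly more explicit than the paper's, which leaves the passage from the intermediate equivalence to the set-difference statement implicit.
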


\begin{proof}[Proof of Lemma]
If \(Y\in\mathcal{E}\), then \(P\oll(Y) \in\Phi(\mathcal{E})\) by definition.

If \(P\oll(Y) \in\Phi(\mathcal{E})\), then \(\oll(Y) \in\Phi(\mathcal{E})\) by symmetry and \(\oll(Y) = \oll(Z)\) for some
\(Z\in \mathcal{E}\) (no permutation since the components of \(\oll(Z)\) already are in increasing order). This means that
\[Q^TYQ = \Lambda(Y) = \Lambda(Z) = Q'^TZQ'\]
for some diagonalizing orthogonal matrices \(Q,Q'\) and \(Y = Q''^TZQ''\in\mathcal{E}\).
\end{proof}

For a convex body \(\mathcal{K}\), recall the definition \eqref{Kcircdef} of \(\mathcal{K}^\circ\).
Since \(\ext\mathcal{K} = \mathcal{K}\setminus\mathcal{K}^\circ\), part \ref{partextsym} of Theorem \ref{Phithm} is proved by the next Lemma.


\begin{lemma}\label{syminv}
Let \(\mathcal{K}, \mathcal{E},\mathcal{E}'\in\mathcal{E}(n)\) where \(\mathcal{K}\) is a convex body. Then the sets
\[\mathcal{K}^\circ,\quad\mathcal{E}\setminus\mathcal{E}',\quad \con\mathcal{E}\]
are again symmetric in \(S(n)\). Moreover, \(\Phi(\mathcal{E}\setminus\mathcal{E}') = \Phi(\mathcal{E})\setminus\Phi(\mathcal{E}')\).
\end{lemma}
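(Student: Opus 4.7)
My plan is to dispatch the three symmetry assertions by exploiting that, for each $Q \in O(n)$, the conjugation map $T_Q \colon Y \mapsto Q^T Y Q$ is a linear bijection on $S(n)$; the $\Phi$-equality will then drop out of Lemma \ref{1_1lemma}.

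First I would handle $\mathcal{K}^\circ$. Given an arbitrary $X = tY + (1-t)Z \in \mathcal{K}^\circ$ with $Y \neq Z$ in $\mathcal{K}$ and $t \in (0,1)$, I compute $T_Q(X) = t T_Q(Y) + (1-t) T_Q(Z)$. Symmetry of $\mathcal{K}$ places $T_Q(Y), T_Q(Z) \in \mathcal{K}$, and injectivity of $T_Q$ preserves $T_Q(Y) \neq T_Q(Z)$, so $T_Q(X) \in \mathcal{K}^\circ$. For $\mathcal{E} \setminus \mathcal{E}'$, the fact that $T_Q$ maps $\mathcal{E}$ bijectively onto itself and $\mathcal{E}'$ bijectively onto itself shows that it maps the set difference onto itself. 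For $\con\mathcal{E}$, any convex combination $\sum_i \alpha_i Y_i$ pushes forward under $T_Q$ to the convex combination $\sum_i \alpha_i T_Q(Y_i)$ of points again in $\mathcal{E}$, witnessing membership in $\con\mathcal{E}$.

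To finish, I establish $\Phi(\mathcal{E} \setminus \mathcal{E}') = \Phi(\mathcal{E}) \setminus \Phi(\mathcal{E}')$ by two inclusions, both routed through Lemma \ref{1_1lemma}. Forward: if $z = P \oll(Y)$ with $Y \in \mathcal{E} \setminus \mathcal{E}'$ and $P \in \mathcal{P}(n)$, the lemma yields $z \in \Phi(\mathcal{E}) \setminus \Phi(\mathcal{E}')$. Reverse: given $z \in \Phi(\mathcal{E}) \setminus \Phi(\mathcal{E}')$, I first use $z \in \Phi(\mathcal{E})$ to write $z = P\oll(Y)$ for some $P \in \mathcal{P}(n)$ and $Y \in \mathcal{E}$; then applying Lemma \ref{1_1lemma} to this specific representation (using that $z \notin \Phi(\mathcal{E}')$) gives $Y \in \mathcal{E} \setminus \mathcal{E}'$, whence $z \in \Phi(\mathcal{E} \setminus \mathcal{E}')$.

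The main subtlety I anticipate is the reverse inclusion: one must use Lemma \ref{1_1lemma} as an ``if and only if'' for the particular pair $(P,Y)$ representing $z$, rather than for an arbitrary $Y$ whose eigenvalue vector happens to coincide with $z$ after some permutation. Since the lemma is phrased exactly this way, however, this is only a matter of invoking it cleanly and should not present a real obstacle.
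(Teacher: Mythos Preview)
Your proposal is correct and follows essentially the same approach as the paper: both use the linearity and bijectivity of conjugation $Y\mapsto Q^TYQ$ to verify the three symmetry claims directly, and both reduce the $\Phi$-equality to Lemma~\ref{1_1lemma}. The paper compresses the last step into a single set-builder chain, but your two-inclusion argument (including the care you take over which representation $z=P\oll(Y)$ to feed back into the biconditional) is exactly the same content unpacked.
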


\begin{proof}
Let \(Q\in O(n)\). Then

\begin{align*}
\left\{Q^TXQ\;|\; X\in\mathcal{K}^\circ\right\}
	&= \left\{Q^T(tY + (1-t)Z)Q\;|\; t\in(0,1),\,Y\neq Z\in\mathcal{K}\right\}\\
	&= \left\{tQ^TYQ + (1-t)Q^TZQ\;|\; t\in(0,1),\,Y\neq Z\in\mathcal{K}\right\}\\
	&= \left\{tY + (1-t)Z\;|\; t\in(0,1),\,Y\neq Z\in\mathcal{K}\right\}\\
	&= \mathcal{K}^\circ
\end{align*}
and thus \(\mathcal{K}^\circ\in\mathcal{E}(n)\). Next, 

\begin{align*}
\left\{Q^TYQ\;|\;Y\in\mathcal{E}\setminus\mathcal{E}'\right\}
	&= \left\{Z\;|\;Q^TZQ\in\mathcal{E}\setminus\mathcal{E}'\right\}\\
	&= \left\{Z\;|\;Z\in\mathcal{E}\setminus\mathcal{E}'\right\}\\
	&= \mathcal{E}\setminus\mathcal{E}'
\end{align*}
and \(\mathcal{E}\setminus\mathcal{E}'\in\mathcal{E}(n)\). Also,

\begin{align*}
\left\{Q^TEQ\;|\; E\in\con\mathcal{E}\right\}
	&= \left\{Q^T\sum_i\alpha_i E_iQ\;|\; E_i\in\mathcal{E}\right\}\\
	&= \left\{\sum_i\alpha_iQ^TE_iQ\;|\; E_i\in\mathcal{E}\right\}\\
	&= \left\{\sum_i\alpha_iE_i\;|\; E_i\in\mathcal{E}\right\}\\
	&= \con\mathcal{E}
\end{align*}
and \(\con\mathcal{E}\in\mathcal{E}(n)\).

Finally, by Lemma \ref{1_1lemma}, we get
\begin{align*}
\Phi(\mathcal{E}\setminus\mathcal{E}')
	&= \left\{P\oll(Y)\;|\; P\in \mathcal{P}(n),\, Y\in\mathcal{E}\setminus\mathcal{E}'\right\}\\
	&= \left\{P\oll(Y)\;|\; P\oll(Y)\in\Phi(\mathcal{E})\setminus\Phi(\mathcal{E}')\right\}\\
	&= \Phi(\mathcal{E})\setminus\Phi(\mathcal{E}').
\end{align*}
\end{proof}

To show that \(\Phi\colon\mathcal{E}(n)\to\varepsilon(n)\) is a bijection, is also relatively straight forward:
As for the injectivity, let \(\mathcal{E}\) and \(\mathcal{E}'\) be two different symmetric subsets of \(S(n)\) where, say, \(\mathcal{E}\setminus\mathcal{E}'\) is nonempty. Then \(\Phi(\mathcal{E})\setminus\Phi(\mathcal{E}')\) is 
also nonempty by Lemma \ref{1_1lemma} and thus \(\Phi(\mathcal{E})\neq\Phi(\mathcal{E}')\).

Now let \(\varepsilon\) be a symmetric subset of \(\mathbb{R}^n\). Define \(\Psi\colon\varepsilon(n)\to \mathcal{E}(n)\) by \eqref{Phiinvdef} and set
\[\mathcal{E} := \Psi(\varepsilon).\]
Then
\begin{align*}
\Phi(\mathcal{E}) &= \Phi(\Psi(\varepsilon))\\
	&= \left\{P\oll(Y)\;|\; P\in\mathcal{P}(n),\, Y\in\Psi(\varepsilon)\right\}\\
	&= \left\{P\oll\left(Q(\diag y)Q^T\right)\;|\; P\in\mathcal{P}(n),\, Q\in O(n),\,y\in\varepsilon\right\}\\
	&= \left\{P\oll\left(\diag y\right)\;|\; P\in\mathcal{P}(n),\,y\in\varepsilon\right\}\\
	&= \left\{Py\;|\; P\in\mathcal{P}(n),\,y\in\varepsilon\right\}\\
	&= \varepsilon.
\end{align*}
and we have proven that \(\Phi\) is also surjective. The inverse therefore exists and is given by \(\Phi^{-1}(\varepsilon) = \Phi^{-1}\left(\Phi(\Psi(\varepsilon))\right) = \Psi(\varepsilon)\).

\begin{definition}
Let \(x,y\in\mathbb{R}^n\) and let \(x^\uparrow, y^\uparrow\in\mathbb{R}^n\) be the vectors with the same components as \(x\) and \(y\), respectively, but permuted so that they are in increasing order. We say that \(y\) \textbf{majorizes} \(x\), written \(x\prec y\), 
if
\begin{align}
\sum_{i=k}^n x^\uparrow_i &\leq \sum_{i=k}^n y^\uparrow_i\qquad \forall k = 2,\dots, n,\quad\text{and}\label{majk}\\
\sum_{i=1}^n x^\uparrow_i &= \sum_{i=1}^n y^\uparrow_i.\label{maj}
\end{align}
Notice that, by using \eqref{maj}, one can replace condition \eqref{majk} with \(\sum_{i=1}^k x^\uparrow_i \geq \sum_{i=1}^k y^\uparrow_i\) for all \(k = 1,\dots n-1\).
\end{definition}

Ordering the components maximizes the inner product (Corollary II.4 \cite{MR1477662}):
\begin{equation}
x^Ty \leq (x^\uparrow)^Ty^\uparrow
\label{ordermax}
\end{equation}

\begin{lemma}[Schur's Theorem]\label{schurthm}
Let \(X\in S(n)\). Then
\[\diag X\prec \oll(X).\]
\end{lemma}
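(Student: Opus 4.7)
The plan is to combine the spectral theorem with a variational characterization of partial sums of eigenvalues. Write $d := \diag X$. The equality in the definition of majorization is immediate: $\sum_{i=1}^n d_i = \tr X = \sum_{i=1}^n \lambda_i(X)$. So the real content is the inequality on the partial sums, which (as noted in the remark following the definition) is equivalent to
\[\sum_{i=1}^k d^\uparrow_i \;\geq\; \sum_{i=1}^k \lambda_i(X)\qquad\text{for }k=1,\dots,n-1,\]
i.e.\ the sum of the $k$ smallest diagonal entries of $X$ is at least the sum of the $k$ smallest eigenvalues.

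The key input I would use is the Ky Fan / Courant--Fischer identity
\[\sum_{i=1}^k\lambda_i(X) \;=\; \min_{\substack{U\in\mathbb{R}^{n\times k}\\ U^TU=I_k}}\tr\!\left(U^TXU\right),\]
which follows from the spectral theorem $X=Q\Lambda(X)Q^T$ and a standard argument: writing $V := Q^TU$, the constraint $U^TU=I_k$ becomes $V^TV=I_k$, and $\tr(U^TXU)=\tr(V^T\Lambda(X)V) = \sum_{j=1}^n \lambda_j(X)\|V_j\|^2$ where $V_j$ is the $j$-th row of $V$. The numbers $s_j:=\|V_j\|^2$ satisfy $0\le s_j\le 1$ and $\sum_j s_j=k$, so the sum is minimised by concentrating weight on the $k$ smallest eigenvalues, giving the stated equality.

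With this in hand, let $i_1<\cdots<i_k$ be the indices of the $k$ smallest diagonal entries of $X$, and take $U := (\bfe_{i_1},\dots,\bfe_{i_k})$. Then $U^TU=I_k$ and
\[\tr(U^TXU)=\sum_{\ell=1}^k \bfe_{i_\ell}^TX\bfe_{i_\ell} = \sum_{\ell=1}^k X_{i_\ell i_\ell} = \sum_{i=1}^k d^\uparrow_i,\]
so the Ky Fan identity gives
\[\sum_{i=1}^k \lambda_i(X) \;\leq\; \sum_{i=1}^k d^\uparrow_i,\]
which is the desired inequality. Together with the trace identity this establishes $\diag X \prec \oll(X)$.

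The only step requiring real care is the Ky Fan identity itself, since everything else is bookkeeping; depending on the reference policy I would either cite it from a standard matrix-analysis text or include the short computation above. An entirely alternative route (which I would fall back on if a cleaner citation were preferred) is to write $X_{ii}=\sum_j q_{ij}^2\lambda_j(X)$ from the spectral decomposition $X=Q\Lambda(X)Q^T$, observe that $S:=(q_{ij}^2)$ is doubly stochastic, and invoke the Birkhoff--Hardy--Littlewood--Polya theorem to conclude $\diag X = S\,\oll(X)\prec \oll(X)$; this is essentially equivalent but shifts the burden to a different classical result.
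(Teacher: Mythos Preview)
Your proof is correct. Note, however, that the paper does not actually supply a proof of this lemma: it is stated as a named classical result (Schur's Theorem) and used without further justification, in the same way that the neighbouring Lemma~\ref{maysublin} is deferred to \cite{MR1477662}. Both routes you outline --- the Ky Fan variational identity for $\sum_{i=1}^k\lambda_i(X)$ and the doubly-stochastic argument via $X_{ii}=\sum_j q_{ij}^2\lambda_j(X)$ --- are the standard textbook proofs, and either would be an appropriate citation or inline justification here. There is nothing to compare against in the paper itself.
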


We also have a sublinearity of the function \(\oll\) with respect to \(\prec\).
\begin{lemma}\label{maysublin}
Let \(Y,Z\in S(n)\) and let \(t\geq 0\). Then
\begin{align*}
\oll(tY) &= t\oll(Y),\\
\oll(Y+Z) &\prec \oll(Y) + \oll(Z).
\end{align*}
\end{lemma}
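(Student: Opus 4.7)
The plan is to handle the two claims separately. The first is a direct consequence of the spectral theorem; the second reduces to a Ky Fan-type identity analogous to the one implicit in the computation of \(\mathcal{K}_\infty\) in Section \ref{sec:example_bodies}.

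For the first claim, I write \(Y = Q\Lambda(Y)Q^T\) with \(Q\in O(n)\). Then \(tY = Q\bigl(t\Lambda(Y)\bigr)Q^T\), so \(tY\) has eigenvalues \(t\lambda_1(Y),\dots, t\lambda_n(Y)\). Because \(t\geq 0\), these remain in increasing order, and hence \(\oll(tY) = t\oll(Y)\).

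For the second claim, I verify the two defining conditions of \(\prec\). Both \(\oll(Y+Z)\) and \(\oll(Y)+\oll(Z)\) are already increasing, so the arrow \((\cdot)^\uparrow\) acts trivially on them. The total-sum identity is immediate: since the trace equals the sum of eigenvalues,
\[
\sum_{i=1}^n \lambda_i(Y+Z) = \tr(Y+Z) = \tr Y + \tr Z = \sum_{i=1}^n\lambda_i(Y) + \sum_{i=1}^n\lambda_i(Z).
\]
For the inequality part, it suffices to show that for each \(k\in\{2,\dots,n\}\), the function
\[
\phi_k(X) := \sum_{i=k}^n \lambda_i(X)
\]
is subadditive on \(S(n)\). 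The key step is to exhibit \(\phi_k\) as a support function, namely
\[
\phi_k(X) = \max_{P\in\mathcal{L}_k}\langle P, X\rangle, \qquad \mathcal{L}_k := \{P\in S(n)\;|\; 0\leq P\leq I,\ \tr P = n-k+1\}.
\]
Since \(\mathcal{L}_k\) is nonempty, convex and compact, this max is a sublinear function of \(X\), and in particular
\[
\phi_k(Y+Z) \leq \phi_k(Y) + \phi_k(Z),
\]
which is exactly the required inequality.

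The main obstacle is establishing the identity for \(\phi_k\), but this is a direct generalization of the computation of \(\mathcal{K}_\infty\). The ``\(\geq\)'' direction is realized by \(P_0 := \sum_{i=k}^n \xi_i\xi_i^T\) where \(\xi_k,\dots,\xi_n\) are orthonormal eigenvectors for the top \(n-k+1\) eigenvalues of \(X\); then \(P_0\in\mathcal{L}_k\) and \(\langle P_0, X\rangle = \phi_k(X)\). For the ``\(\leq\)'' direction, I diagonalize \(P = \sum_{i=1}^n \mu_i \eta_i\eta_i^T\) with \(\mu_i\in[0,1]\) and \(\sum_i \mu_i = n-k+1\), and bound \(\langle P, X\rangle = \sum_i \mu_i \eta_i^T X \eta_i\) by rearranging: among all weights in \([0,1]\) summing to \(n-k+1\) weighted against a decreasing sequence of Rayleigh-quotient-bounded values, the maximum is attained by putting mass \(1\) on the \(n-k+1\) largest eigenvalues, yielding the bound \(\phi_k(X)\). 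This last step is essentially the von Neumann trace inequality in a very restricted form and should be the only non-trivial calculation.
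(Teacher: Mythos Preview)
Your proof is correct in strategy and supplies considerably more detail than the paper, which simply cites Bhatia (Corollary III.4.2) for the majorization inequality. Writing \(\phi_k(X)=\sum_{i=k}^n\lambda_i(X)\) as a support function over \(\mathcal{L}_k=\{P:0\leq P\leq I,\ \tr P=n-k+1\}\) and then invoking subadditivity of support functions is a clean, self-contained route that also fits the spirit of the paper (it is exactly the Ky Fan maximum principle, and \(\mathcal{L}_k\) is the natural generalization of the body \(\mathcal{K}_\infty\) computed in Section~\ref{sec:example_bodies}).

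There is, however, a genuine gap in your ``\(\leq\)'' direction. After diagonalizing \(P=\sum_i\mu_i\eta_i\eta_i^T\) you write \(\langle P,X\rangle=\sum_i\mu_i a_i\) with \(a_i:=\eta_i^TX\eta_i\), and then claim the maximum is attained by putting mass on the \(n-k+1\) largest \emph{eigenvalues} of \(X\). But the \(a_i\) are not eigenvalues of \(X\); individually one only has \(\lambda_1(X)\leq a_i\leq\lambda_n(X)\), which is not enough. The optimization over \(\mu\) gives \(\sum_i\mu_i a_i\leq\) (sum of the \(n-k+1\) largest \(a_i\)), and you still owe the inequality
\[
\sum_{i=k}^n (a^\uparrow)_i \;\leq\; \sum_{i=k}^n \lambda_i(X).
\]
This is precisely Schur's theorem (Lemma~\ref{schurthm}): \((a_i)=\diag(Q^TXQ)\prec\oll(X)\), so the partial sums of the top entries of \(a\) are dominated by those of \(\oll(X)\). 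With that one line inserted, your argument is complete and non-circular (Lemma~\ref{schurthm} is stated before Lemma~\ref{maysublin} and is independent of it). An equivalent fix is to note that the extreme points of \(\mathcal{L}_k\) are the rank-\((n-k+1)\) orthogonal projections (by the same argument as for \(\mathcal{K}_\infty\)), reduce to \(P=VV^T\) with \(V^TV=I\), and apply Cauchy interlacing to \(V^TXV\). Avoid invoking the paper's Corollary~\ref{vonNeu} for this step, as that corollary is proved downstream of Lemma~\ref{maysublin}.
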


\begin{proof}
The positive homogenicity is immediate. The second part is a special case of Corollary III.4.2 in \cite{MR1477662}.
\end{proof}

The following observation is also needed:

\begin{lemma}\label{perpdiag}
Let \(P\in\mathcal{P}(n)\) and let \(x\in\mathbb{R}^n\). Then
\[\diag(Px) = P(\diag x)P^T.\]
\end{lemma}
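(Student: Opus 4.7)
The plan is to reduce both sides to an explicit expansion in terms of the rank-one projections \(\bfe_j\bfe_j^T\). Using the notational identity \(\diag x = \sum_{i=1}^n x_i\bfe_i\bfe_i^T\), distributing \(P\) and \(P^T\) gives
\[
P(\diag x)P^T = \sum_{i=1}^n x_i (P\bfe_i)(P\bfe_i)^T.
\]

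The next step is to use the fact that a permutation matrix sends standard basis vectors to standard basis vectors. Writing \(P = (\bfe_{\pi(1)},\dots,\bfe_{\pi(n)})^T\) as in the notation, a short check of entries yields \(P\bfe_i = \bfe_{\pi^{-1}(i)}\); reindexing the sum by \(j = \pi^{-1}(i)\) produces
\[
P(\diag x)P^T = \sum_{j=1}^n x_{\pi(j)}\, \bfe_j\bfe_j^T.
\]

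For the other side, reading off components gives \((Px)_j = \bfe_j^T P x = \bfe_{\pi(j)}^T x = x_{\pi(j)}\), so by the definition of \(\diag\),
\[
\diag(Px) = \sum_{j=1}^n x_{\pi(j)}\, \bfe_j\bfe_j^T,
\]
and the two expressions coincide. There is no genuine obstacle here; the only care required is to remain consistent with the convention for \(P\) and to track the appearance of \(\pi\) versus \(\pi^{-1}\) when pulling \(P\) past \(\bfe_i\).
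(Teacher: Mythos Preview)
Your proof is correct and essentially the same as the paper's: both fix the convention \(P=(\bfe_{\pi(1)},\dots,\bfe_{\pi(n)})^T\) and verify that each side equals the diagonal matrix with entries \(x_{\pi(j)}\). The only cosmetic difference is that the paper computes the \((i,j)\)-entry of \(P(\diag x)P^T\) directly as \(\bfe_{\pi(i)}^T(\diag x)\bfe_{\pi(j)}=x_{\pi(i)}\delta_{ij}\), whereas you arrive at the same conclusion by expanding \(\diag x=\sum_i x_i\bfe_i\bfe_i^T\) and reindexing.
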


\begin{proof}
A permutation matrix is in the form \(P = (\bfe_{\pi(1)}, \dots ,\bfe_{\pi(n)})^T\) for some permutation \(\pi\colon\{1,\dots,n\}\to\{1,\dots,n\}\). If \(x = (x_i)\in\mathbb{R}^n\), then \(Px = (x_{\pi(i)})\) and \(\diag Px = (x_{\pi(i)}\delta_{ij})\). But the \(ij\)-component of \(P(\diag x)P^T\) is also
\begin{align*}
\bfe_i^TP(\diag x)P^T\bfe_j &= \bfe_{\pi(i)}^T(\diag x)\bfe_{\pi(j)}\\
	&= x_{\pi(i)}\bfe_{\pi(i)}^T\bfe_{\pi(j)}\\
	&= x_{\pi(i)}\delta_{ij}.
\end{align*}

\end{proof}

\begin{definition}
A function \(f:\mathbb{R}^n\to\mathbb{R}\) is \textbf{Schur-convex} if \(x\prec y\) implies \(f(x)\leq f(y)\).
\end{definition}

We shall use the following facts. (Theorem II1.10 and Remark II.3.7 in \cite{MR1477662})
\begin{itemize}
	\item \begin{equation}
x\prec y\qquad\iff\qquad x\in \con\{Py\;|\;P\in\mathcal{P}(n)\}.
\label{maylem}
\end{equation}
	\item \begin{equation}
	\text{A convex and symmetric function is Schur-convex.}
	\label{consym}
	\end{equation}
\end{itemize}

By a \emph{symmetric} function \(f:\mathbb{R}^n\to\mathbb{R}\), we mean a function that does not depend on the order of the components. The point is that \(f\) given by \eqref{fFdef} is Schur-convex whenever \(F\) is rotationally invariant; it is convex by sublinearity, and for any permutation \(P\) we have
\[f(Px) = F(\diag Px) = F(P(\diag x)P^T) = F(\diag x) = f(x)\]
by Lemma \ref{perpdiag} and since \(P\in O(n)\).

We are now ready to complete the proof of Proposition \ref{Phibijective}. The following claim suffices.

\begin{claim}\label{Kdiagprop}
Let \(\mathcal{E}\in\mathcal{E}(n)\). Then
\[\Phi(\mathcal{E})\subseteq\left\{\diag Y\;|\;Y\in\mathcal{E}\right\}.\]
If \(\mathcal{K}\in\mathcal{E}(n)\cap\mathcal{K}(n)\), then
\[\Phi(\mathcal{K}) = \left\{\diag Y\;|\;Y\in\mathcal{K}\right\}.\]
Moreover, if \(\kappa\in\varepsilon(n)\cap\kappa(n)\), then \(\Phi^{-1}(\kappa)\in\mathcal{E}(n)\cap\mathcal{K}(n)\).
\end{claim}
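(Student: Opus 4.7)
The plan is to prove the three assertions in turn, with Schur's theorem (Lemma \ref{schurthm}), the majorisation characterisation \eqref{maylem}, and Lemma \ref{perpdiag} serving as the main workhorses, together with symmetry and convexity of the sets involved.

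First, I would dispatch \(\Phi(\mathcal{E}) \subseteq \{\diag Y \mid Y \in \mathcal{E}\}\) as follows. An element \(P\oll(Y)\) with \(P \in \mathcal{P}(n)\) and \(Y \in \mathcal{E}\) equals \(\diag(P\Lambda(Y)P^T)\) by Lemma \ref{perpdiag}. Since \(\Lambda(Y) = Q^T Y Q\) for some \(Q \in O(n)\), symmetry of \(\mathcal{E}\) yields \(\Lambda(Y) \in \mathcal{E}\), and a second application (using \(P^T \in O(n)\)) gives \(P\Lambda(Y)P^T \in \mathcal{E}\). This inclusion is then immediate.

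The main obstacle is the reverse inclusion \(\{\diag Y \mid Y \in \mathcal{K}\} \subseteq \Phi(\mathcal{K})\) in the convex-body case. Given \(Y \in \mathcal{K}\), I would write \(d := \diag Y\) and let \(d^\uparrow\) be its increasing rearrangement. Schur's theorem gives \(d \prec \oll(Y)\), hence \(d^\uparrow \prec \oll(Y)\), so by \eqref{maylem} there is a convex combination \(d^\uparrow = \sum_i \alpha_i P_i\oll(Y)\). By symmetry of \(\mathcal{K}\) each \(P_i\Lambda(Y)P_i^T \in \mathcal{K}\), and Lemma \ref{perpdiag} identifies this matrix with \(\diag(P_i\oll(Y))\). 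Linearity of \(\diag\) and convexity of \(\mathcal{K}\) then produce
\[ Z := \sum_i \alpha_i \diag(P_i\oll(Y)) = \diag(d^\uparrow) \in \mathcal{K}. \]
Since \(d^\uparrow\) is already sorted, \(\oll(Z) = d^\uparrow\), and choosing a permutation \(P_0\) with \(P_0 d^\uparrow = d\) gives \(d = P_0\oll(Z) \in \Phi(\mathcal{K})\). The crucial insight is that the symmetry-plus-convexity hypothesis converts the purely combinatorial majorisation \(d^\uparrow \prec \oll(Y)\) into genuine membership of a matrix in \(\mathcal{K}\) with prescribed diagonal.

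For the last assertion, \(\Phi^{-1}(\kappa)\) is symmetric by inspection, nonempty because \(\kappa \neq \emptyset\), and compact as the continuous image of the compact set \(O(n) \times \kappa\) under \((Q,y) \mapsto Q(\diag y)Q^T\). The nontrivial point is convexity. Given \(X_i = Q_i(\diag y_i)Q_i^T\) with \(y_i \in \kappa\) and \(t \in [0,1]\), set \(X := tX_1 + (1-t)X_2\). Then \(\oll(X_i) = y_i^\uparrow\), and Lemma \ref{maysublin} yields \(\oll(X) \prec t y_1^\uparrow + (1-t) y_2^\uparrow =: \mu\). Symmetry of \(\kappa\) places \(y_i^\uparrow\) in \(\kappa\), convexity puts \(\mu\) in \(\kappa\), and applying \eqref{maylem} once more together with symmetry and convexity of \(\kappa\) gives \(\oll(X) \in \kappa\). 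Diagonalising \(X = Q(\diag\oll(X))Q^T\) then exhibits \(X \in \Phi^{-1}(\kappa)\), finishing the argument.
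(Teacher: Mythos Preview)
Your proof is correct. Parts 1 and 3 match the paper's arguments essentially line for line: the paper also reduces Part 1 to the identity \(\diag(P\oll(Y)) = P\Lambda(Y)P^T\in\mathcal{E}\) via symmetry, and handles convexity of \(\Phi^{-1}(\kappa)\) exactly as you do, invoking Lemma \ref{maysublin} and then \eqref{maylem} together with symmetry and convexity of \(\kappa\) to place \(\oll(X)\) in \(\kappa\).

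For the reverse inclusion in Part 2 you take a genuinely different route from the paper. The paper introduces the support function \(F = F_{\mathcal{K}}\) and its diagonal restriction \(f(x) = F(\diag x)\), observes that \(f\) is convex and symmetric (hence Schur-convex by \eqref{consym}), and then uses the dual description \(\mathcal{K} = \{Z : \langle Z,X\rangle \le F(X)\ \forall X\}\) together with Schur's theorem to verify \(\diag y\in\mathcal{K}\) for every \(y\in\kappa\). Your argument bypasses the support function entirely: you apply Schur's theorem to get \(d^\uparrow\prec\oll(Y)\), invoke \eqref{maylem} directly to write \(d^\uparrow\) as a convex combination of permutations of \(\oll(Y)\), and then use symmetry and convexity of \(\mathcal{K}\) to realise \(\diag(d^\uparrow)\) as an element of \(\mathcal{K}\). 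This is more self-contained and avoids appealing to \eqref{consym} and the representation \eqref{conbII}; the paper's approach, on the other hand, makes the role of \(f\) and the dual description of \(\mathcal{K}\) explicit, which ties in more naturally with the surrounding development (cf.\ \eqref{kappafbody}). Both arguments ultimately rest on Schur's theorem and the equivalence \eqref{maylem}.
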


\begin{proof}
Write \(\kappa := \left\{\diag Y\;|\;Y\in\mathcal{K}\right\}\). Assume that \(y\in\Phi(\mathcal{K})\). i.e., \(y = P\oll(Y)\) for some permutation \(P\) and some \(Y\in\mathcal{K}\). By symmetry, \(\diag\oll(Y) = \Lambda(Y)\in\mathcal{K}\) and also \(\diag P\oll(Y) = P(\diag\oll(Y)P^T\in\mathcal{K}\). Therefore
\[y = P\oll(Y) = \diag\diag P\oll(Y)\in\kappa,\]
and we have not used the convexity.

Now assume \(y\in\kappa\). Define \(F := F_\mathcal{K}\) to be the sublinear operator with convex body \(\mathcal{K}\) and let \(f\) be given by \(f(x) = F(\diag x)\). Then \(f(x) = \max_{y\in\kappa}y^Tx\) by \eqref{kappafbody} and,
by Theorem \ref{repsupthm}, the convex bodies \(\kappa\) and \(\mathcal{K}\) are
\begin{align}
\kappa &= \left\{z\in \mathbb{R}^n\;|\; z^Tx  \leq f(x)\quad\forall x\in \mathbb{R}^n\right\},\label{conbI}\\
\mathcal{K} &= \left\{Z\in S(n)\;|\; \langle Z,X\rangle  \leq F(X)\quad\forall X\in S(n)\right\}.\label{conbII}
\end{align}
Let \(X\in S(n)\). Then
\begin{align*}
\langle \diag y, X\rangle &= y^T\diag X\\
													&\leq f(\diag X),\qquad &&\text{by \eqref{conbI}},\\
													&\leq f(\oll(X)), &&\text{by Lemma \ref{schurthm} and since \(f\) is Schur-convex},\\
													&= F(\diag\oll(X))\\
													&= F(\Lambda(X)) = F(X)
\end{align*}
and \(\diag y\in\mathcal{K}\) by \eqref{conbII}.

Since \(y = P\oll(\diag y)\) for some permutation \(P\), it follows that \(y\in\Phi(\mathcal{K})\).
We have proved that 
\[\left\{\diag Y\;|\;Y\in\mathcal{K}\right\} =: \kappa = \Phi(\mathcal{K}) := \left\{P\oll(Y)\;|\;P\in\mathcal{P}(n),\, Y\in\mathcal{K}\right\}.\]

Finally, let \(\kappa\) be a symmetric convex body in \(\mathbb{R}^n\). We only have to prove that
\[\Phi^{-1}(\kappa) = \left\{Q(\diag y)Q^T\;|\; Q\in O(n),\, y\in\kappa\right\}\]
is convex. To this end, let \(Y,Y'\in\Phi^{-1}(\kappa)\), let \(t\in[0,1]\), and set \(Z := tY + (1-t)Y'\). Then there are orthogonal matrices \(Q,Q'\) and points \(y,y'\) in \(\kappa\) so that
\[Z = tQ(\diag y)Q^T + (1-t)Q'(\diag y)Q'^T.\]
Taking \(\oll\) on both sides, Lemma \ref{maysublin} yields
\begin{align*}
\oll(Z) &= \oll\left(tQ(\diag y)Q^T + (1-t)Q'(\diag y')Q'^T\right)\\
        &\prec t\oll\left(Q(\diag y)Q^T\right) + (1-t)\oll\left(Q'(\diag y')Q'^T\right)\\
				&= ty^\uparrow + (1-t)y'^\uparrow.
\end{align*}
By \eqref{maylem}, we may write \(\oll(Z)\) as
\[\oll(Z) = \sum_{i=1}^M\alpha_iP_i\left(ty^\uparrow + (1-t)y'^\uparrow\right) = t\sum_{i=1}^M\alpha_iP_iy^\uparrow + (1-t)\sum_{i=1}^M\alpha_iP_iy'^\uparrow\]
and \(\oll(Z)\in\kappa\) since \(\kappa\) is symmetric and convex.

Since
\[Z = Q''\Lambda(Z)Q''^T = Q''(\diag\oll(Z))Q''^T\]
for some \(Q''\in O(n)\), it follows that \(Z\in \Phi^{-1}(\kappa)\).
\end{proof}

\begin{lemma}\label{generatesing}
Let \(E\in S(n)\) and \(e\in\mathbb{R}^n\). Then the orbits
\[\mathcal{E} := \left\{Q^TEQ\;|\; Q\in O(n)\right\},\qquad \varepsilon := \left\{Pe\;|\;P\in\mathcal{P}(n)\right\},\]
are the extreme points of their convex hull.
\end{lemma}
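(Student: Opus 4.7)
The plan is to prove the two equalities $\ext(\con\mathcal{E})=\mathcal{E}$ and $\ext(\con\varepsilon)=\varepsilon$ by the same scheme. One inclusion comes for free: since the orbit $\mathcal{E}$ is the continuous image of the compact group $O(n)$ under $Q\mapsto Q^TEQ$, it is compact, and since $\mathcal{E}$ generates $\con\mathcal{E}$, part 3 of Proposition \ref{extconvbody} yields $\ext(\con\mathcal{E})\subseteq\mathcal{E}$; the set $\varepsilon$ is finite, so the same reasoning gives $\ext(\con\varepsilon)\subseteq\varepsilon$. The substantive task is the reverse: show that every point of the orbit is an extreme point of its convex hull. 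I would use the criterion that a point which is the \emph{unique} maximizer of some linear functional over a convex body is automatically extreme, together with the observation that strict uniqueness over the generating set propagates to strict uniqueness over the convex hull, since any convex combination attaining the maximum must itself be a combination of maximizers.

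For the vector orbit, I first note that conjugation by permutations preserves $\con\varepsilon$ and hence permutes extreme points, so it suffices to handle the sorted representative $e^\uparrow\in\varepsilon$. I would then pick $w\in\mathbb{R}^n$ with strictly increasing components and apply the classical rearrangement inequality: $w^T(Pe)\leq w^Te^\uparrow$ for every $P\in\mathcal{P}(n)$, with equality forcing $Pe=e^\uparrow$ as a vector (several permutations may realize it, but the vector itself is unique). Hence $e^\uparrow$ is the unique maximizer of $z\mapsto w^Tz$ over $\varepsilon$, so it lies in $\ext(\con\varepsilon)$, and the permutation symmetry transports the conclusion to every $Pe\in\varepsilon$.

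For the matrix orbit the strategy is analogous. The $O(n)$-symmetry of $\con\mathcal{E}$ reduces the claim to showing $\Lambda(E)=\diag\oll(E)\in\ext(\con\mathcal{E})$. The role of the rearrangement inequality is played by Von Neumann's trace inequality for symmetric matrices,
\[
\tr(XY)\leq\sum_{i=1}^n\lambda_i(X)\lambda_i(Y),
\]
with equality iff $X$ and $Y$ admit a common orthonormal eigenbasis in compatible order. Choosing $X=\diag(\mu_1,\ldots,\mu_n)$ with $\mu_1<\cdots<\mu_n$ pins down the eigenbasis of $X$ as the standard basis (unique up to signs), which forces any maximizing $Y\in\mathcal{E}$ to be diagonal with eigenvalues increasingly ordered, i.e.\ $Y=\Lambda(E)$. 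Uniqueness of the maximizer on $\mathcal{E}$ extends to $\con\mathcal{E}$ as in the vector case, giving $\Lambda(E)\in\ext(\con\mathcal{E})$, and the $O(n)$-symmetry finishes the proof.

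The hard part will be invoking the \emph{equality case} of Von Neumann's trace inequality crisply enough; once that is in hand, the extreme-point deduction is immediate. A secondary point worth checking is that repeated eigenvalues of $E$ (or repeated components of $e$) do not spoil uniqueness, precisely because the equality case identifies a single matrix (respectively vector), regardless of how many orthogonal matrices (respectively permutations) realize the diagonalization.
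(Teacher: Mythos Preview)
Your argument is correct, but it is a genuinely different route from the paper's. The paper obtains the nontrivial inclusion $\mathcal{E}\subseteq\ext(\con\mathcal{E})$ purely from symmetry: it has already established (Lemma \ref{syminv} and Theorem \ref{Phithm} part \ref{partextsym}) that the extreme-point set of a symmetric convex body is itself symmetric, i.e.\ $O(n)$-invariant. Since $\mathcal{E}$ is a \emph{single} orbit, the only nonempty symmetric subset of $\mathcal{E}$ is $\mathcal{E}$ itself; Minkowski's theorem guarantees $\ext(\con\mathcal{E})$ is nonempty, and the easy inclusion puts it inside $\mathcal{E}$, so it must equal $\mathcal{E}$. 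The same argument handles $\varepsilon$.

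Your approach instead exhibits each orbit representative as an \emph{exposed} point via a separating linear functional with strictly ordered entries/eigenvalues, invoking the equality cases of the rearrangement and Von Neumann trace inequalities. This is perfectly valid and in fact yields a slightly stronger conclusion (exposed rather than merely extreme). Two remarks are worth making. First, within the paper's logical order Von Neumann's inequality (Corollary \ref{vonNeu}) is \emph{derived from} this lemma, so your proof must appeal to an independent (classical) proof of Von Neumann's inequality and its equality case to avoid circularity; that is of course available. Second, the paper's proof is shorter and uses only the group-theoretic structure already in place, whereas yours trades that for a more explicit, constructive witness of extremality.
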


\begin{proof} 
The following argument apply to both claims: Set \(\mathcal{K} := \con\mathcal{E}\). Then \(\ext\mathcal{K}\subseteq\mathcal{E}\) by Proposition \ref{extconvbody}. But \(\ext\mathcal{K} = \ext\con\mathcal{E}\) is symmetric by Lemma \ref{syminv} and part \ref{partextsym} of Theorem \ref{Phithm}, so \(\mathcal{E}\subseteq\ext\mathcal{K}\) since \(\mathcal{E}\) is symmetric and obviously is the smallest symmetric set containing any of its subsets.
\end{proof}

\begin{remark}\label{remarkgenerated}
This simple result enable us to find the extreme points of the dominative body \(\mathcal{K}_p\) without computation. From section \ref{sec:example_bodies} we know that \(\mathcal{K}_p\) is the convex hull of the set
\[\mathcal{E}_p := \left\{I + (p-2)\xi\xi^T\colon |\xi|=1\right\},\qquad \mathcal{E}_\infty := \left\{\xi\xi^T\colon |\xi|=1\right\}.\]
Clearly, we can write \(\mathcal{E}_\infty = \left\{Q^T\bfe_n\bfe_n^TQ\;|\;Q\in O(n)\right\}\) and
\[\mathcal{E}_p = \left\{Q^T\left(I + (p-2)\bfe_n\bfe_n^T\right)Q\;|\;Q\in O(n)\right\},\]
so it follows that \(\mathcal{E}_p = \ext\mathcal{K}_p\) by Lemma \ref{generatesing}.

Intuitively, this is partially the reason why \(\D_p\) is the minimal rotationally invariant operator: Its associated convex body is ``generated'' by a singleton. Namely, by \(E_p := I + (p-2)\bfe_n\bfe_n^T = \diag(1,\dots,1,p-1)\).

In \(\mathbb{R}^n\) we get
\[\varepsilon_p := \Phi(\mathcal{E}_p) = \left\{P\oll(E)\;|\; P\in\mathcal{P}(n),\, E\in\mathcal{E}_p\right\} = \left\{P\bfp\;|\; P\in\mathcal{P}(n)\right\}\]
where \(\bfp = (1,\dots,p-1)^T\) for \(1\leq p<\infty\) and \(\bfp = \bfe_n\) if \(p=\infty\).
\end{remark}

It only remains to prove part \ref{partextext} of Theorem \ref{Phithm}. Namely that
\[\Phi(\ext\mathcal{K}) = \ext\Phi(\mathcal{K}).\]


\begin{proof}[Proof of part \ref{partextext} of Theorem \ref{Phithm}]
Let \(e\in\Phi(\ext\mathcal{K})\). That is, \(e = P\oll(E)\) for some permutation \(P\) and some \(E\in\ext\mathcal{K}\).
We want to show that \(e\in\ext\Phi(\mathcal{K})\). To this end, let \(y,z\in\Phi(\mathcal{K})\setminus\{e\}\), \(0<t<1\), and assume for the sake of contradiction that \(e = ty + (1-t)z\). This implies that
\[P\oll(E) = e = tP'\oll(Y) + (1-t)P''\oll(Z)\]
for some \(P',P''\in\mathcal{P}(n)\) and \(Y,Z\in\mathcal{K}\). Taking \(\diag\) yields
\[E = tQ'^TYQ' + (1-t)Q''^TZQ''\]
for some \(Q',Q''\in O(n)\). This is a convex combination of points in \(\mathcal{K}\). Since \(E\) is an extreme point it follows that 
\[E = Q'^TYQ' = Q''^TZQ''\]
because if \(Q'^TYQ' \neq Q''^TZQ''\), then \(E\in\mathcal{K}^\circ = \mathcal{K}\setminus\ext\mathcal{K}\).

We now have \(P^Te = \oll(E) = \oll(Y) = \oll(Z)\) which means that \(y=P'\oll(Y)\) and \(z = P''\oll(Z)\) are members of the orbit
\[\varepsilon := \left\{Pe\;|\;P\in\mathcal{P}(n)\right\}.\]
Set \(\kappa := \con\varepsilon\). If \(y\neq z\) we get the contradiction
\[e = ty+(1-t)z\in \kappa^\circ = \kappa\setminus\ext\kappa = \kappa\setminus\varepsilon\]
by Lemma \ref{generatesing}. It follows that \(y = z = e\) and we have proved that \(\Phi(\mathcal{K})\setminus\{e\}\) is convex and that
\[\Phi(\ext\mathcal{K}) \subseteq \ext\Phi(\mathcal{K}).\]

%
%
%
%

On the other hand,
\begin{align*}
\Phi(\mathcal{K}^\circ)
	&= \Phi(\mathcal{K}\setminus\ext\mathcal{K})\\
	&= \Phi(\mathcal{K})\setminus\Phi(\ext\mathcal{K}), &&\text{Lemma \ref{syminv},}\\
	&\supseteq \Phi(\mathcal{K})\setminus\left\{\diag E\;|\;E\in\ext\mathcal{K}\right\}, &&\text{Claim \ref{Kdiagprop},}\\
	&= \left\{\diag Y\;|\;Y\in\mathcal{K}\setminus\ext\mathcal{K}\right\}, &&\text{Claim \ref{Kdiagprop},}\\
	&= \left\{\diag Y\;|\;Y\in\mathcal{K}^\circ\right\}\\
	&= \left\{\diag (tY+(1-t)Z)\;|\;t\in(0,1),\,Y\neq Z\in\mathcal{K}\right\}
\end{align*}
and
\begin{align*}
\Phi(\mathcal{K})^\circ
	&= \left\{ty+(1-t)z)\;|\;t\in(0,1),\,y\neq z\in\Phi(\mathcal{K})\right\}\\
	&= \left\{t\diag Y+(1-t)\diag Z\;|\;t\in(0,1),\,\diag Y\neq \diag Z,\, Y,Z\in\mathcal{K}\right\}\\
	&\subseteq \left\{\diag(tY+(1-t)Z)\;|\;t\in(0,1),\, Y\neq Z\in\mathcal{K}\right\}.
\end{align*}
Thus \(\Phi(\mathcal{K})^\circ\subseteq \Phi(\mathcal{K}^\circ)\) and
\begin{align*}
\Phi(\ext\mathcal{K}) &= \Phi(\mathcal{K}\setminus\mathcal{K}^\circ)\\
                      &= \Phi(\mathcal{K})\setminus\Phi(\mathcal{K}^\circ)\\
											&\supseteq \Phi(\mathcal{K})\setminus\Phi(\mathcal{K})^\circ\\
											&= \ext\Phi(\mathcal{K}).
\end{align*}
\end{proof}

\begin{example}[The bodies of Pucci and dominative \(p\)-Laplace revisited]
The extreme points in \(S(n)\) of the dominative body \(\mathcal{K}_p\) and the Pucci body \(\mathcal{K}_{\lambda,\Lambda}\) are
\begin{align*}
\mathcal{E}_p &= \left\{I + (p-2)\xi\xi^T\;|\; \xi\in\mathbb{R}^n,\,|\xi|=1\right\} &&\\
              &= \{I\} + (p-2)\mathcal{E}_\infty, &&1\leq p<\infty,\\
\mathcal{E}_{\lambda,\Lambda} &= \left\{\lambda I + (\Lambda-\lambda)P\;|\; P\in Pr(n)\right\} &&\\
              &= \lambda\{I\} + (\Lambda-\lambda)\mathcal{E}_{0,1}, &&0<\lambda\leq\Lambda,
\end{align*}
respectively. Here,
\[\mathcal{E}_\infty = \left\{\xi\xi^T\;|\; \xi\in\mathbb{R}^n,\,|\xi|=1\right\},\]
and
\[\mathcal{E}_{0,1} = Pr(n)\]
is the set of symmetric \(n\times n\) projection matrices. See Remark \ref{remarkgenerated} and Appendix \ref{ch:visc_ell}.

These are all symmetric sets, and by Proposition \ref{Phibijective} and Theorem \ref{Phithm} we find that the corresponding extreme points in \(\mathbb{R}^n\) are
\begin{align*}
\varepsilon_p &:= \Phi(\mathcal{E}_p) = \{\mathbbm{1}\} + (p-2)\Phi(\mathcal{E}_\infty),\\
\varepsilon_{\lambda,\Lambda} &:= \Phi(\mathcal{E}_{\lambda,\Lambda}) = \lambda\{\mathbbm{1}\} + (\Lambda-\lambda)\Phi(\mathcal{E}_{0,1}).
\end{align*}
We can then compute the corresponding convex bodies in \(\mathbb{R}^n\) as
\begin{align*}
\kappa_p &:= \{\mathbbm{1}\} + (p-2)\kappa_\infty,\\
\kappa_{\lambda,\Lambda} &:= \lambda\{\mathbbm{1}\} + (\Lambda-\lambda)\kappa_{0,1},
\end{align*}
where \(\kappa_\infty := \con\Phi(\mathcal{E}_\infty)\) and \(\kappa_{0,1} := \con\Phi(\mathcal{E}_{0,1})\).

First, we find that
\begin{align*}
\varepsilon_\infty &:= \Phi(\mathcal{E}_\infty)\\
	&= \left\{P\oll(Y)\;|\; P\in\mathcal{P}(n),\, Y\in\mathcal{E}_\infty\right\}\\
	&= \left\{P\bfe_n\;|\; P\in\mathcal{P}(n)\right\}\\
	&= \left\{\bfe_1,\dots,\bfe_n\right\}
\end{align*}
which should be viewed as the vertices of the standard  \((n-1)\)-\emph{simplex} \(\Delta^{n-1}\) in \(\mathbb{R}^n\). Next, we have
\[\varepsilon_{0,1} := \Phi(\mathcal{E}_{0,1}) = \left\{(\delta_1,\dots,\delta_n)^T\in\mathbb{R}^n\;|\; \delta_i = \text{0 or 1}\right\}\]
which are the vertices of the unit cube \([0,1]^n\) in \(\mathbb{R}^n\). See Appendix \ref{ch:visc_ell}.

This yields \(\kappa_\infty = \Delta^{n-1}\), \(\kappa_{0,1} = [0,1]^n\) and by a scaling and a translation along the \(\mathbbm{1}\)-axis, we arrive at the following identifications between our rotationally invariant sublinear operators and their symmetric convex bodies in \(\mathbb{R}^n\).
\begin{align*}
&\text{Dominative operator \(\D_p\)} &&\leftrightarrow && \{\mathbbm{1}\} + (p-2)\Delta^{n-1}.\\
&\text{Pucci operator \(\D_{\lambda,\Lambda}\)} &&\leftrightarrow && \lambda\{\mathbbm{1}\} + (\Lambda-\lambda)[0,1]^n.
\end{align*}
See Figure \ref{fig:bodyproj}.

\begin{figure}[ht]
    \centering
    \begin{subfigure}[b]{0.45\textwidth}
        \includegraphics[width=\textwidth]{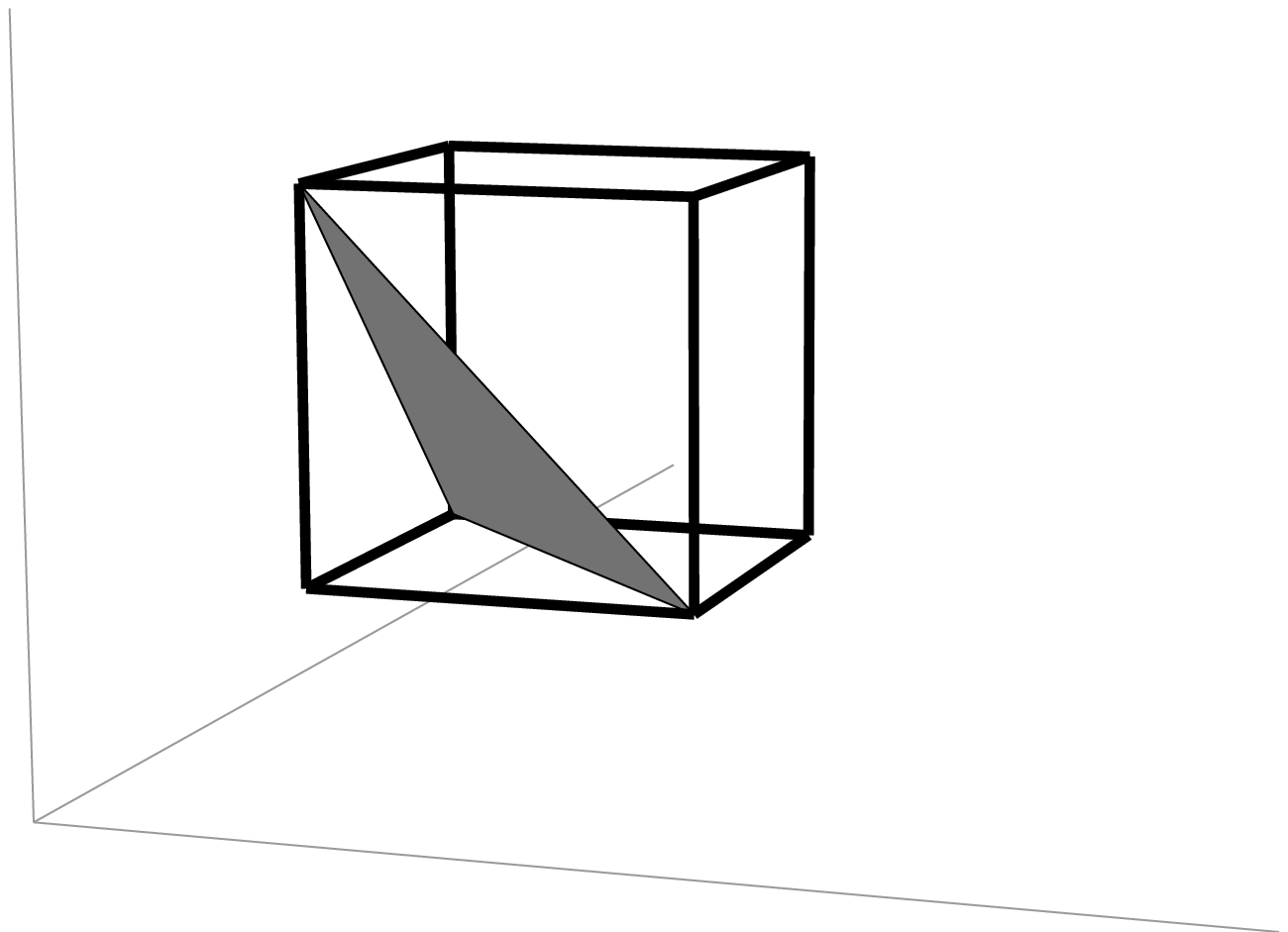}
        \caption{}
        \label{fig:gull}
    \end{subfigure}
    ~ 
    \begin{subfigure}[b]{0.45\textwidth}
        \includegraphics[width=\textwidth]{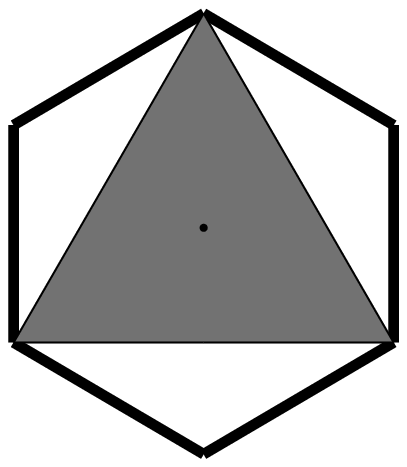}
        \caption{}
        \label{fig:proj}
    \end{subfigure}
    \caption{The corresponding convex bodies of \(\D_p\) (gray) and \(\D_{1,p-1}\) (outlined) in \(\mathbb{R}^3\). Figure (a) reveals their position in the first octant. In (b) the bodies are projected onto the hyperplane given by the simplex, i.e., it shows a cross section of the body cones. The dot in the middle is the Laplacian!}\label{fig:animals}
		\label{fig:bodyproj}
\end{figure}

\end{example}

\section{The body cone aperture}

Given a rotationally invariant elliptic operator with associated convex body \(\mathcal{K}\), we define
\begin{equation}
\alpha = \alpha(\mathcal{K}) := \min_{Z\in\mathcal{K}}\frac{\tr Z}{\lambda_{\max}(Z)}.
\label{aperture}
\end{equation}
For reasons to be explained later, the number \(\alpha\) shall be called the \textbf{solution cone aperture} of the body.

Note first that \(\alpha\) is well-defined whenever \(\mathcal{K}\neq\{0\}\).
Indeed, as the operator is degenerate elliptic, the associated convex body contains only non-negative matrices. Therefore \(\lambda_{\max}(Z)>0\) when \(0\neq Z\in\mathcal{K}\).
Also, \(\alpha\) lies in the interval \([1,n]\), because if \(Z^*\) is a matrix in \(\mathcal{K}\) that minimizes the expression in \eqref{aperture},
then
\[\alpha\]
\[||\]
\[1 = \frac{\lambda_n(Z^*)}{\lambda_n(Z^*)} \leq \frac{\lambda_1(Z^*) + \cdots + \lambda_n(Z^*)}{\lambda_n(Z^*)}\leq \frac{n\lambda_n(Z^*)}{\lambda_n(Z^*)} = n.\]

We may also observe that the fraction in \eqref{aperture} is unchanged by a positive scaling of the body. Thus \(\alpha\) only depends on the \emph{body cone} \(C_\mathcal{K}\) (Definition \ref{bodycone}) and not explicitly of \(\mathcal{K}\) itself. In fact, if we let
\begin{equation}
|Y| := \max_{|\xi|=1}|Y\xi| = \max\left\{|\lambda_1(Y)|, |\lambda_n(Y)|\right\}
\label{opnorm}
\end{equation}
denote the \emph{operator norm} on \(S(n)\), then
\begin{equation}
\alpha = \min_{\substack{Z\in C_{\mathcal{K}}\\ |Z| = 1}}\langle I,Z\rangle.
\label{opening}
\end{equation}

Using Proposition \ref{Kdiagprop} we find, after some calculations, that we also have
\[\alpha = \min_{y\in\kappa}\frac{y^T\mathbbm{1}}{y^\uparrow_n}\]
where \(\kappa = \Phi(\mathcal{K})\) is the corresponding convex body of the operator in \(\mathbb{R}^n\).

\begin{definition}
The \textbf{body cone aperture} of a symmetric convex body \(\mathcal{K}\) is
\begin{equation}
p =p(\mathcal{K}) :=
\begin{cases}
\frac{n+\alpha -2}{\alpha-1}, &\text{if \(1<\alpha \leq n\),}\\
\infty, & \text{if \(\alpha = 1\),}
\end{cases}
\label{pdef}
\end{equation}
where \(\alpha\) is the solution cone aperture given by \eqref{aperture}.
\end{definition}
Notice that \(p\) ranges from 2 to \(\infty\) as \(\alpha\) goes from \(n\) down to 1. They are duals in the sense that
\[(\alpha-1)(p-1) = n-1.\]

If we imagine the ray \(\{tI,t\geq 0\}\) to be the ``axis'' of the cone -- which, as we have seen, is not entirely unjustified --
one may regard the number \(p\) as a measure of the maximal \emph{opening} of the body cone. A small \(p\) implies a narrow body cone. In the extreme case \(p=2\), \((\alpha = n)\), we find that \(\lambda_1(Z) = \cdots = \lambda_n(Z)\) for every \(Z\in \mathcal{K}\) and \(C_\mathcal{K} = C_{\mathcal{K}_2} = \{tI,t\geq 0\}\) is reduced to the axis. At the other end, \(p=\infty\) \((\alpha = 1)\), we get \(\lambda_1(Z^*) = \cdots = \lambda_{n-1}(Z^*) = 0\). This means that \(Z^*\) is a 1-rank matrix and the body cone is as wide as possible for an elliptic operator.

Since the solution cone and the body cone are duals, one shrinks as the other opens and \(\alpha\) is therefore a measure of the aperture of the former.

The choice of using the letter p for the body cone aperture is explained by the fact that \(p(\mathcal{K}_p) = p\) for \(p\geq 2\).

\section{The minimal operator}

The dominative \(p\)-Laplacian \(\D_p u = F_p(\mathcal{H}u)\) satisfies\footnote{for \(2\leq p\leq\infty\) in the pointwise \(C^2\)-sense, and if one reads \(\infty-2\) as 2.}
\[\Delta_p u \leq |\nabla u|^{p-2}\D_p u.\]
Hence the name.

On the other hand, we now show that it provides a \emph{lower} bound amongst the rotationally invariant sublinear elliptic operators.

\begin{theorem}\label{invhol}
Let \(F\colon S(n)\to\mathbb{R}\) be a non-trivial\footnote{We exclude the trivial operator \(F\equiv 0\). i.e. the case \(\mathcal{K}_F = \{0\}\).} rotationally invariant sublinear elliptic operator and
let \(p\in[2,\infty]\) be its body cone aperture \eqref{pdef}.
Then there exists a constant \(c>0\) such that
\[cF_p(X) \leq F(X)\qquad\forall X\in S(n).\]
\end{theorem}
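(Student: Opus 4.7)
The plan is to translate the inequality to an inclusion of convex bodies in $\mathbb{R}^n$ via the bijection $\Phi$ of Proposition \ref{Phibijective}, and then produce this inclusion by symmetrizing an aperture-minimizer. Since $F$ is rotationally invariant, $\mathcal{K}_F$ is symmetric (Proposition \ref{rotsymequiv}), and likewise $\mathcal{K}_p$; write $\kappa_F := \Phi(\mathcal{K}_F)$ and $\kappa_p := \Phi(\mathcal{K}_p)$. Uniqueness of the associated convex body gives $cF_p \leq F$ if and only if $c\mathcal{K}_p \subseteq \mathcal{K}_F$, and by the bijectivity of $\Phi$ together with part \ref{suminvsymII} of Theorem \ref{Phithm} this is in turn equivalent to the inclusion
\[c\kappa_p \subseteq \kappa_F \qquad \text{in } \mathbb{R}^n.\]
So the entire problem can be attacked in $\mathbb{R}^n$.

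Next, I would let $y^* \in \kappa_F$ be a nonzero minimizer of the aperture ratio $y \mapsto y^T\mathbbm{1}/y^\uparrow_n$, which exists by scale-invariance and the compactness of $\kappa_F$. The symmetry of $\kappa_F$ lets me order the coordinates so that $y^* = y^{*\uparrow} = (a_1,\dots,a_{n-1},M)$ with $0 \leq a_1 \leq \cdots \leq a_{n-1} \leq M$, $M>0$, and $\sum_{i=1}^{n-1}a_i + M = \alpha M$. Averaging $y^*$ over the $(n-1)!$ permutations that fix the last coordinate yields
\[\bar y := (b,\dots,b,M), \qquad b := \frac{M(\alpha-1)}{n-1},\]
and $\bar y \in \kappa_F$ by symmetry and convexity of $\kappa_F$. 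Using the identity $(\alpha-1)(p-1) = n-1$ noted after the definition of the aperture, I get $b = M/(p-1)$ when $p<\infty$, so that $\bar y = c\bfp$ with $\bfp = (1,\dots,1,p-1)^T$ and $c := M/(p-1) > 0$. When $p=\infty$, $\alpha = 1$ forces $a_1 = \cdots = a_{n-1} = 0$, and then $\bar y = y^* = M\bfe_n = M\bfp$, so $c := M > 0$ works as well.

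In either case $c\bfp \in \kappa_F$, hence by symmetry $cP\bfp = P(c\bfp) \in \kappa_F$ for every $P \in \mathcal{P}(n)$, and by convexity
\[c\kappa_p = c\con\{P\bfp : P \in \mathcal{P}(n)\} \subseteq \kappa_F,\]
using that $\kappa_p = \con\{P\bfp\}$ by Remark \ref{remarkgenerated} and Minkowski's theorem. Transferring back through $\Phi$ gives $c\mathcal{K}_p \subseteq \mathcal{K}_F$, and therefore $cF_p \leq F$. The only nontrivial step is the symmetrization: one has to notice that averaging the aperture-minimizer over permutations fixing its maximum coordinate both preserves membership in $\kappa_F$ and automatically collapses the vector onto the ray $\{t\bfp : t\geq 0\}$. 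Everything else is routine bookkeeping with the correspondence $\Phi$ and the definition of $p(\mathcal{K}_F)$.
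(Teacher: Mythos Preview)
Your proof is correct and follows the same overall architecture as the paper's: reduce via $\Phi$ to the inclusion $c\kappa_p\subseteq\kappa_F$ in $\mathbb{R}^n$, pick an aperture-minimizer $y^*\in\kappa_F$, and show that the point $c\bfp$ lies in $\kappa_F$. The constant $c=M/(p-1)$ you obtain coincides with the paper's.

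The difference is in how you place $c\bfp$ inside $\kappa_F$. The paper proves a separate majorization lemma (Lemma~\ref{ppreclemma}) establishing $c\bfp\prec y^*$, then invokes the Hardy--Littlewood--P\'olya equivalence \eqref{maylem}/\eqref{transequiv} to conclude $c\bfp\in\con\{Py^*\}\subseteq\kappa_F$. You instead observe that averaging $y^*$ over the permutations fixing its largest coordinate already \emph{produces} $c\bfp$ as an explicit convex combination of symmetric copies of $y^*$. This is the same fact underneath---your averaging exhibits the doubly stochastic matrix implicit in the majorization---but your route is more direct and avoids the detour through Lemma~\ref{ppreclemma} and the general theory of majorization. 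The paper's approach, on the other hand, makes the connection to majorization explicit, which may be useful elsewhere. Two minor remarks: the equivalence $cF_p\le F\iff c\mathcal{K}_p\subseteq\mathcal{K}_F$ is a general property of support functions rather than ``uniqueness'' per se, and the transfer of inclusions through $\Phi$ uses part~\ref{part_subset} of Theorem~\ref{Phithm} (in both directions) in addition to part~\ref{suminvsymII}.
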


\begin{lemma}\label{ppreclemma}
For \(0\neq z\in\mathbb{R}^n\), where \(z_i\geq 0\), set \(\alpha := z^T\mathbbm{1}/z^\uparrow_n\) and define the vector \(\bfp\) in \(\mathbb{R}^n\) as
\begin{equation}
\bfp := 
\begin{cases}
(1,\dots,1,p-1)^T,\qquad &\text{if \(\alpha\neq 1\)},\\
\bfe_n,&\text{if \(\alpha= 1\)},
\end{cases}
\label{pvecdef}
\end{equation}
where \(p\) is the dual of \(\alpha\). i.e., \((\alpha-1)(p-1) = n-1\).

Write \(c := z^\uparrow_n>0\) if \(\alpha= 1\) and
\[c := \frac{z^T\mathbbm{1}}{\bfp^T\mathbbm{1}}>0\]
otherwise.
Then
\[c\bfp\prec z.\]
\end{lemma}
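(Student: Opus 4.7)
The plan is to verify the two majorization conditions \eqref{majk} and \eqref{maj} directly, after handling the degenerate case $\alpha=1$ separately. First, if $\alpha=1$ then $z^T\mathbbm{1}=z_n^\uparrow$ forces all but the largest component of $z$ to vanish, so $z^\uparrow = z_n^\uparrow\bfe_n = c\bfe_n = c\bfp$, making the majorization trivial. So the real content is the case $1<\alpha\leq n$, where $\bfp^\uparrow=\bfp=(1,\dots,1,p-1)^T$ (using $p\geq 2$, hence $p-1\geq 1$).

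Next I would dispose of the equality \eqref{maj}: by the very definition $c:=z^T\mathbbm{1}/\bfp^T\mathbbm{1}$, we have $c\bfp^T\mathbbm{1}=z^T\mathbbm{1}$, so both vectors have the same total. Then I would exploit the duality $(\alpha-1)(p-1)=n-1$ together with $z^T\mathbbm{1}=\alpha z_n^\uparrow$ and $\bfp^T\mathbbm{1}=n+p-2=(n-1)\alpha/(\alpha-1)$ to compute
\[c=\frac{(\alpha-1)z_n^\uparrow}{n-1},\qquad c(p-1)=z_n^\uparrow.\]
In particular $(c\bfp)_n^\uparrow=z_n^\uparrow$, which handles the tail-sum condition \eqref{majk} at $k=n$ with equality.

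The remaining conditions, for $k=2,\dots,n-1$, read
\[(n-k)c + z_n^\uparrow \;\leq\; \sum_{i=k}^{n-1} z_i^\uparrow + z_n^\uparrow,\]
i.e.\ $(n-k)(\alpha-1)z_n^\uparrow/(n-1)\leq \sum_{i=k}^{n-1} z_i^\uparrow$. Set $S_k:=\sum_{i=k}^{n-1} z_i^\uparrow$; then $S_1=z^T\mathbbm{1}-z_n^\uparrow=(\alpha-1)z_n^\uparrow$, so the inequality is equivalent to $S_k/(n-k)\geq S_1/(n-1)$. This is the heart of the argument, and it is the statement that the average of a \emph{tail} of the ordered, non-negative sequence $z_1^\uparrow\leq\cdots\leq z_{n-1}^\uparrow$ is at least the average of the whole sequence, which is a one-line consequence of the ordering.

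The only subtlety I anticipate is keeping track of the signs and the duality formula carefully; once $c(p-1)=z_n^\uparrow$ is pinned down, the rest is an elementary tail-average comparison. No deep convex-geometric input is needed beyond the bookkeeping in the definition of $\bfp$, $c$, and $p$.
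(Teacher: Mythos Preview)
Your proposal is correct and follows essentially the same path as the paper's proof: both verify $c\bfp^T\mathbbm{1} = z^T\mathbbm{1}$ and $c(p-1) = z_n^\uparrow$, and then use the ordering of $z^\uparrow$ to dispose of the remaining partial-sum inequalities. The only cosmetic difference is that the paper works with the initial-sum form of \eqref{majk} and argues by contradiction, while you phrase the same ordering fact as the direct tail-average comparison $S_k/(n-k)\geq S_1/(n-1)$.
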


\begin{proof}
Assume first that \(\alpha\neq 1\). Then
\[c\bfp^T\mathbbm{1} = z^T\mathbbm{1}.\]
Since
\[c = \frac{z^T\mathbbm{1}}{\bfp^T\mathbbm{1}} = \frac{\alpha z^\uparrow_n}{n+p-2} = \frac{z^\uparrow_n}{p-1},\]
we get \(c\bfp_n = z^\uparrow_n\) and thus also
\[\sum_{i=1}^{n-1}z^\uparrow_i = \sum_{i=1}^{n-1}c\bfp_i = c(n-1).\]
We need to show that \(\sum_{i=1}^k z^\uparrow_i \leq c\sum_{i=1}^k\bfp_i = ck\) for all \(k=1,\dots,n-2\). Assume to the contrary that there is a \(k=1,\dots,n-2\) so that \(\sum_{i=1}^k z^\uparrow_i > ck\). Then, \(z^\uparrow_k > c\) and
\begin{align*}
c(n-1) &= \sum_{i=1}^{n-1} z^\uparrow_i\\
       &= \sum_{i=1}^{k}z^\uparrow_i + \sum_{i=k+1}^{n-1}z^\uparrow_i\\
		   &> ck + \sum_{i=k+1}^{n-1}z^\uparrow_i.
\end{align*}
Thus \(\sum_{i=k+1}^{n-1}z^\uparrow_i < c(n-k-1)\) and \(z^\uparrow_{k+1}<c\). But this is a contradiction since \(z^\uparrow_k>c\).

If \(\alpha = 1\) then \(z^\uparrow = (0,\dots,0,z^\uparrow_n)^T\) and \(c\bfp = c\bfe_n = z\).
\end{proof}

We next observe that \eqref{maylem} and the transitivity of \(\prec\) implies the equivalence
\begin{equation}
x\prec y\qquad\iff\qquad \con\left\{Px\;|\;P\in\mathcal{P}(n)\right\}\subseteq\con\left\{Py\;|\;P\in\mathcal{P}(n)\right\}.
\label{transequiv}
\end{equation}

\begin{proof}[Proof of Theorem \ref{invhol}]
We intend to show that \(c\mathcal{K}_p \subseteq \mathcal{K}\) for some scaling \(c>0\), where \(\mathcal{K}_p\) is the dominative body \eqref{dombody} and where \(\mathcal{K}\) is the associated convex body to \(F\). This will imply that
\[cF_p(X) = \max_{Y\in c\mathcal{K}_p}\langle Y,X\rangle \leq \max_{Y\in \mathcal{K}}\langle Y,X\rangle = F(X).\]

Write
\[\alpha = \min_{Y\in\mathcal{K}}\frac{\tr Y}{\lambda_{\max}(Y)} = \min_{y\in\kappa}\frac{y^T\mathbbm{1}}{y^\uparrow_n}.\]
Then \(\alpha = z^T\mathbbm{1}/z^\uparrow_n\) for some \(z\in\kappa\) where \(\kappa = \Phi(\mathcal{K})\) is the corresponding convex body of the operator in \(\mathbb{R}^n\). From Lemma \ref{ppreclemma} there is a \(c>0\) so that \(c\bfp\prec z\), and it follows that
\begin{align*}
c\kappa_p &= c\con\left\{P\bfp\;|\;P\in\mathcal{P}(n)\right\}, && \text{Remark \ref{remarkgenerated},}\\
          &= \con\left\{P(c\bfp)\;|\;P\in\mathcal{P}(n)\right\}\\
	        &\subseteq \con\left\{Pz\;|\;P\in\mathcal{P}(n)\right\}, && \text{by Lemma \ref{ppreclemma} and \eqref{transequiv}},\\
	        &\subseteq \con\left\{Py\;|\;P\in\mathcal{P}(n),\,y\in\kappa\right\}, &&\text{since \(z\in\kappa\),}\\
	        &= \con\kappa = \kappa, &&\text{by symmetry and convexity.}
\end{align*}
That is,
\[c\mathcal{K}_p = \Phi^{-1}(c\kappa_p) \subseteq \Phi^{-1}(\kappa) = \mathcal{K}\]
by Theorem \ref{Phithm}.

\end{proof}

The inclusion \(c\mathcal{K}_p \subseteq \mathcal{K}\) implies \(C_{\mathcal{K}_p} \subseteq C_{\mathcal{K}}\) and Theorem \ref{nesting} is thus at disposal.
By using the Domination and the Nesting property of the dominative operator (Proposition 5 in \cite{bru18}), we get

\begin{corollary}\label{minopcor}
Assume that \(u\) is a supersolution to a non-trivial rotationally invariant sublinear elliptic equation \(F(\mathcal{H}u) = 0\). Let \(p = p(\mathcal{K}_F)\in[2,\infty]\) be the the body cone aperture \eqref{pdef}.
Then the following holds.
\begin{enumerate}
	\item \(u\) is dominative \(p\)-superharmonic.
	\item \(u\) is \(p\)-superharmonic.
	\item \label{partsupminopcpr}\(u\) is superharmonic.
\end{enumerate}
In particular,
\begin{enumerate}\setcounter{enumi}{3}
	\item if \(p>n\), then \(u\) is continuous. \hfill (Thm. 16 \cite{MR2953377})
	\item If \(p=\infty\), then \(u\) is locally concave.
\end{enumerate}
\end{corollary}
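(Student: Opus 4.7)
The plan is to deduce part 1 directly from Theorem \ref{invhol} together with the nesting property (Theorem \ref{nesting}), and then to get parts 2--5 from known properties of the dominative \(p\)-Laplacian. First I would observe that Theorem \ref{invhol} produces a constant \(c>0\) with \(c\mathcal{K}_p\subseteq\mathcal{K}_F\). Body cones are invariant under positive scaling, so this gives \(C_{\mathcal{K}_p}=C_{c\mathcal{K}_p}\subseteq C_{\mathcal{K}_F}\). Applying Theorem \ref{nesting} (with the roles ``\(F\)''\(=F_p\) and ``\(G\)''\(=F\)), any supersolution of \(F(\mathcal{H}w)=0\) is a supersolution of \(F_p(\mathcal{H}w)=0\). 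This is exactly part 1.

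For part 2 I would invoke the Domination property from Proposition 5 of \cite{bru18}, namely \(\Delta_p u \leq |\nabla u|^{p-2}\mathcal{D}_p u\), upgraded to the viscosity sense: if \(\phi\) is a quadratic test function touching \(u\) from below at \(x_0\) then \(\mathcal{D}_p\phi(x_0)\leq 0\) by part 1, hence \(\Delta_p\phi(x_0)\leq 0\), so \(u\) is \(p\)-superharmonic. Part 3 is the Nesting property (also in Proposition 5 of \cite{bru18}): since \(2\leq p\), we have \(C_{\mathcal{K}_2}\subseteq C_{\mathcal{K}_p}\) (as already recorded in the example following Theorem \ref{nesting}), so another application of Theorem \ref{nesting} turns an \(F_p\)-supersolution into a supersolution of \(F_2(\mathcal{H}w)=\Delta w=0\). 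Part 4 then follows immediately from part 2 by citing Theorem 16 of \cite{MR2953377}, no new work needed.

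The only step that needs a separate remark is part 5. When \(p=\infty\) the operator is \(F_\infty(X)=\lambda_{\max}(X)\), so by part 1 the function \(u\) satisfies \(\lambda_{\max}(\mathcal{H}u)\leq 0\) in the viscosity sense. I would argue, in a convex open subset \(\Omega'\subseteq\Omega\), that this forces \(u\) to be concave: for any affine function \(\ell\), the function \(u-\ell\) is again a viscosity supersolution of \(\lambda_{\max}(\mathcal{H}w)=0\), so by the comparison principle (Theorem \ref{compp}, applicable since \(F_\infty(-I)=-1<0\)) it attains its minimum on \(\partial\Omega'\); this ``minimum principle for \(u-\ell\) against affines'' is the standard characterization of concavity. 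The main obstacle in writing the proof is being careful here to verify the viscosity nondegeneracy condition \eqref{nondegcond} for \(F_\infty\) so that Theorem \ref{compp} really applies, and to phrase the concavity conclusion as a \emph{local} statement (picking balls inside \(\Omega\)). Everything else is a direct invocation of previously established results.
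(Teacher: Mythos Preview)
Your treatment of parts 1--4 is correct and is exactly the paper's route: the inclusion \(c\mathcal{K}_p\subseteq\mathcal{K}_F\) obtained in the proof of Theorem \ref{invhol} gives \(C_{\mathcal{K}_p}=C_{c\mathcal{K}_p}\subseteq C_{\mathcal{K}_F}\), Theorem \ref{nesting} then yields part 1, and the Domination and Nesting properties of \(\mathcal{D}_p\) from Proposition 5 of \cite{bru18} together with the cited regularity theorem give parts 2--4.

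Your argument for part 5, however, has a genuine gap. The assertion that the minimum principle for \(u-\ell\) over all affine \(\ell\) on balls (or convex subdomains) is ``the standard characterization of concavity'' is false in dimension \(n\geq 2\). Every superharmonic function enjoys this same property, since \(u-\ell\) is again superharmonic (\(\Delta\ell=0\)) and hence obeys the minimum principle on every ball; yet superharmonic functions need not be concave. Concretely, \(u(x)=-\ln|x|\) in \(\mathbb{R}^2\setminus\{0\}\) is harmonic, so \(u-\ell\) satisfies the minimum principle for every affine \(\ell\) and every ball, but \(u\) is strictly convex in the radial direction. Comparison with affines on \(n\)-dimensional domains is simply too coarse to force concavity, which is a one-dimensional condition along each segment.

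What does work is to read the viscosity condition directly: \(F_\infty(X)=\lambda_{\max}(X)\leq 0\) is equivalent to \(X\leq 0\), so part 1 says that every \(C^2\) function touching \(u\) from below has nonpositive Hessian at the contact point, and this is a standard characterization of local concavity for l.s.c.\ functions. If you prefer to stay with Theorem \ref{compp}, compare not merely with affines but with the richer family of subsolutions \(v(y)=\ell(y)+M\bigl|(I-\xi\xi^T)(y-x_0)\bigr|^2\); these have \(\lambda_{\max}(\mathcal{H}v)=2M\geq 0\), and sending \(M\to\infty\) collapses the comparison onto the line through \(x_0\) in direction \(\xi\), where affine comparison \emph{does} characterize concavity.
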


The initial result that eventually became generalized to Theorem \ref{invhol} was part \ref{partsupminopcpr} of Corollary \ref{minopcor}.
I thank Fredrik Arbo Høeg who found the key ingredient to that proof. Namely, permutations.

\section{Fundamental solutions}
Not only is \(cF_p \leq F\), \(p = p(\mathcal{K}_F)\), for any rotationally invariant sublinear elliptic \(F\). The following shows that there are rays in \(S(n)\) at which the two operators are both equal to zero. The points on these rays are Hessians of a most important function.

Recall the \emph{fundamental solution} to the (dominative) \(p\)-Laplace equation.

\begin{equation}
w_{n,p}(x) :=
\begin{cases}
-\frac{p-1}{p-n}|x|^\frac{p-n}{p-1}, & p\neq n,\\
-\ln|x|, & p = n,\\
-|x|, & p=\infty.
\end{cases}
\label{sublin_fundsol}
\end{equation}

\begin{theorem}[Existence of fundamental solutions]\label{alpha_fundsol}
Let \(F(\mathcal{H}u) = 0\) be a non-trivial rotationally invariant sublinear elliptic equation and
let \(p = p(\mathcal{K}_F)\in[2,\infty]\) be the the body cone aperture.

Then \(w_{n,p}\)
is a solution in \(\mathbb{R}^n\setminus\{0\}\) and a supersolution in \(\mathbb{R}^n\).
\end{theorem}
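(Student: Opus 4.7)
The plan is to prove the solution claim on $\mathbb{R}^n\setminus\{0\}$ by sandwiching $F(\mathcal{H}w_{n,p})$ between two matching inequalities — the upper one coming directly from the definition of the aperture $\alpha$, and the lower one from the minimal-operator bound $cF_p\leq F$ of Theorem \ref{invhol} — and then to check separately that the viscosity supersolution condition is vacuous at the origin.

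First I would compute the eigenvalues of $\mathcal{H}w_{n,p}(x)$ at an arbitrary $x\neq 0$. Since $w_{n,p}(x) = \varphi(|x|)$ is radial, the Hessian has the radial eigenvalue $\varphi''(r)$ and the tangential eigenvalue $\varphi'(r)/r$ with multiplicity $n-1$. A direct computation from \eqref{sublin_fundsol} gives, for $p<\infty$,
\[
\varphi''(r) \;=\; \tfrac{n-1}{p-1}\,s(r),\qquad \frac{\varphi'(r)}{r} \;=\; -s(r), \qquad s(r) := r^{(1-n)/(p-1)-1}>0,
\]
and the $p=\infty$ case follows by continuity, giving $s(r)=1/r$ and $\varphi''=0$. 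Using $\alpha-1 = (n-1)/(p-1)$, the sorted eigenvalue vector reads
\[
\oll(\mathcal{H}w_{n,p}(x)) \;=\; s(r)\,(-1,-1,\dots,-1,\alpha-1)^T.
\]

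A straightforward check (or a glance at $\kappa_p = \{\mathbbm{1}\} + (p-2)\Delta^{n-1}$) confirms $F_p(\mathcal{H}w_{n,p}) = 0$ on $\mathbb{R}^n\setminus\{0\}$. By Theorem \ref{invhol} there is a $c>0$ with $cF_p\leq F$, and so $F(\mathcal{H}w_{n,p})\geq 0$ away from the origin. For the opposite inequality I exploit rotational invariance by writing $F(X)=f(\oll(X))$ with $f(y) = \max_{z\in\kappa}z^Ty$, where $\kappa = \Phi(\mathcal{K}_F)$ is symmetric. Using the eigenvalue vector above and the positive homogeneity of $f$,
\[
F(\mathcal{H}w_{n,p}) \;=\; s(r)\,\max_{z\in\kappa}\Bigl[(\alpha-1)z_n - \sum_{i=1}^{n-1}z_i\Bigr].
\]
Every $z\in\kappa$ has nonnegative components because $\mathcal{K}_F$ consists of positive semidefinite matrices (Proposition \ref{definite}); the symmetry of $\kappa$ lets me permute any given $z$ so that its largest component $z_n^{\uparrow}$ sits in position $n$, turning the bracket into $\alpha z_n^{\uparrow} - z^T\mathbbm{1}$. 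The defining identity $\alpha = \min_{z\in\kappa} z^T\mathbbm{1}/z_n^{\uparrow}$ says $z^T\mathbbm{1}\geq \alpha z_n^{\uparrow}$ for every $z\in\kappa$, so the maximum is $\leq 0$ (with equality at the minimiser of \eqref{aperture}). Combining both inequalities gives $F(\mathcal{H}w_{n,p}) = 0$ on $\mathbb{R}^n\setminus\{0\}$, proving the solution claim.

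For the supersolution statement on all of $\mathbb{R}^n$ only the origin remains. When $p\leq n$, $w_{n,p}(0) = +\infty$ and no test function can touch from below, making the viscosity condition vacuous. When $p>n$ (including $p=\infty$), $w_{n,p}(0)=0$ but $w_{n,p}(x) = -C|x|^{\gamma}$ for $x$ near $0$ with $0<\gamma\leq 1$, so any $C^2$ function $\phi\leq w_{n,p}$ with $\phi(0)=0$ would have to dominate a H\"older or linear cusp by a quadratic; this forces $\nabla\phi(0)=0$ and then produces a contradiction between the $|x|^{\gamma}$ and $|x|^2$ orders at the origin, so again the condition is vacuous. The main obstacle in the whole argument is the supersolution direction: one has to read off that the correct scalar attached to $\mathcal{H}w_{n,p}$ is precisely the defining ratio of the aperture $\alpha$, which requires combining the sign pattern of $\oll(\mathcal{H}w_{n,p})$ with the symmetry of $\kappa$ — the subsolution direction, by contrast, is essentially a one-line consequence of Theorem \ref{invhol}.
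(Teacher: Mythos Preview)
Your proof is correct and follows essentially the same route as the paper's: compute the eigenvalues of $\mathcal{H}w_{n,p}$ to obtain the sorted vector $s(r)\,(-1,\dots,-1,\alpha-1)^T$, get the lower bound $F(\mathcal{H}w_{n,p})\geq 0$ from Theorem \ref{invhol} together with $F_p(\mathcal{H}w_{n,p})=0$, and get the upper bound from the definition of $\alpha$ via $\max_{z\in\kappa}\bigl(\alpha z_n^{\uparrow}-z^T\mathbbm{1}\bigr)\leq 0$. Your treatment of the origin is more explicit than the paper's one-line remark that ``there is no test function touching from below'', but the content is the same.
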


\begin{proof}
First we note that \(-\infty<w_{n,p}\leq\infty\) where \(x=0\) is the only possible pole. In any case, there is no test function touching from below at the origin. Therefore, if \(w_{n,p}\) is a solution in \(\mathbb{R}^n\setminus\{0\}\), then it is automatically a supersolution in \(\mathbb{R}^n\).

Writing \(w_{n,p}(x) = W_{n,p}(|x|)\) yields
\begin{align*}
\mathcal{H}w_{n,p}(x) &= W_{n,p}''(r)\hx\hx^T + \frac{W_{n,p}'(r)}{r}\left(I - \hx\hx^T\right)\\
	&= r^{-\alpha}\left[(\alpha-1)\hx\hx^T -\left(I - \hx\hx^T\right)\right],\qquad r:=|x|>0,
\end{align*}
where \(\alpha = \alpha(\mathcal{K}_F)\in[1,n]\) is the solution cone aperture \eqref{aperture}.
Let \(\Lambda_\alpha := \diag\ola\) where
\[\ola := (-1,\dots,-1,\alpha-1)^T = \alpha\bfe_n - \mathbbm{1}.\]
Then \(|x|^{-\alpha}\Lambda_\alpha = \Lambda(\mathcal{H}w_{n,p}(x))\) is a diagonal matrix consisting of the eigenvalues of \(\mathcal{H}w_{n,p}\).  By the rotational invariance and the positive homogenicity,
\[F(\mathcal{H}w_{n,p}(x)) = |x|^{-\alpha}F(\Lambda_\alpha),\qquad x\neq 0,\]
and vanishes independently of \(x\) if \(F(\Lambda_\alpha) = 0\).

Now let \(\kappa := \Phi(\mathcal{K})\) be the corresponding convex body in \(\mathbb{R}^n\).
From Theorem \ref{invhol} and using the fact that \(F_p(\Lambda_\alpha) = 0\),
we have
\begin{equation}
\begin{aligned}
0 &\leq F(\Lambda_\alpha)\\
  &= \max_{Y\in\mathcal{K}}\langle Y,\Lambda_\alpha\rangle\\
	&= \max_{y\in\kappa}y^T\ola\\
	&= \max_{y\in\kappa}\alpha y^\uparrow_n - y^T\mathbbm{1}, &&\text{Lemma \ref{ordermax},}\\
	&= \max_{y\in\kappa}\left(\min_{z\in\kappa}\frac{z^T\mathbbm{1}}{z^\uparrow_n}\, y^\uparrow_n - y^T\mathbbm{1}\right)\\
	&\leq \max_{y\in\kappa}\left(\frac{y^T\mathbbm{1}}{y^\uparrow_n}\, y^\uparrow_n - y^T\mathbbm{1}\right) = 0.
\end{aligned}
\label{lamalpleq}
\end{equation}

It follows that
\(F(\Lambda_\alpha)=0\) and \(w_{n,p}\) is a smooth solution away from the origin.
\end{proof}

\begin{remark}
Theorem \ref{invhol} and Theorem \ref{alpha_fundsol}, together with Theorem \ref{sublin_superpos}, shows that we may have used \emph{any} rotationally invariant sublinear elliptic operator with body cone aperture \(p\) to prove the superposition principle for \(p\)-superharmonic functions in \cite{bru18}. In particular, we could have used the Pucci operator \(\D_{1,p-1}\).
\end{remark}

\begin{remark}[Extremal operators]
Given a rotationally invariant sublinear elliptic operator \(\D u\), let \(\lambda\) and \(\Lambda\) be the smallest and largest eigenvalue found among the matrices in the associated convex body \(\mathcal{K}\), respectively. Then, obviously, \(\mathcal{K}\subseteq\mathcal{K}_{\lambda,\Lambda}\) and by Theorem \ref{invhol}, there is a \(c>0\) and a \(p\in[2,\infty]\) so that
\[c\D_pu \leq \D u \leq \D_{\lambda,\Lambda}u.\]
Moreover, \(w_{n,p}\) is a fundamental solution to \(\D u = 0\).

To illustrate with an example, let \(0\leq\delta\leq 1\) and consider the rotationally invariant sublinear elliptic operator \(\D u = F(\mathcal{H}u)\) defined by the associated convex body
\[\mathcal{K} := \left\{Y\in S(n)\,|\; \|Y - I\|\leq \delta\right\}.\]
Its corresponding convex body in \(\mathbb{R}^3\) is the ball \(\Phi(\mathcal{K}) = \overline{B}(\mathbbm{1},\delta)\)
shown in Figure \ref{fig:spherebody}. The sphere is inscribed in the Pucci cube \(\Phi(\mathcal{K}_{1-\delta,1+\delta})\) and it circumscribes the dominative triangle \(\Phi(c\mathcal{K}_p)\) in such a way that the line passing through the origin and any of the three vertices, is tangent to the sphere. Here \(c>0\) is the constant from Theorem \ref{invhol}
and \(p\) is the body cone aperture of \(\mathcal{K}\). By a little exercise in geometry, we find that
\[c = \frac{1}{3}\left(3 - \delta^2 - \delta\sqrt{\frac{3-\delta^2}{2}}\right),\qquad p = \frac{3 - \delta^2 + 2\delta\sqrt{\frac{3-\delta^2}{2}}}{3 - \delta^2 - \delta\sqrt{\frac{3-\delta^2}{2}}} + 1.\]

In order to give the explicit representation of \(\D u\), we note that 
\[\kappa := \Phi(\mathcal{K}) = \overline{B}(\mathbbm{1},\delta) = \mathbbm{1} + \delta\overline{B}(0,1)\]
and that \(\max_{|y|=1}y^Tx = \frac{x^T}{|x|}x = |x|\) for \(x\in\mathbb{R}^n\). This yields
\begin{align*}
F(X) &= \max_{y\in\ext\kappa}y^T\oll(X)\\
     &= \mathbbm{1}^T\oll(X) + \delta\max_{|y|=1}y^T\oll(X)\\
		 &= \tr X + \delta |\oll(X)|,
\end{align*}
or
\[\D u = \Delta u + \delta\sqrt{\lambda_1^2(\mathcal{H}u) + \lambda_2^2(\mathcal{H}u) + \lambda_3^2(\mathcal{H}u)}.\]

\begin{figure}[ht]%
\centering\includegraphics{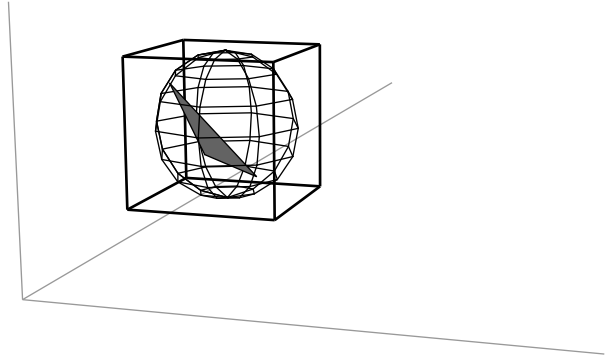}%
\caption{\(c\mathcal{K}_p\subseteq \mathcal{K}\subseteq\mathcal{K}_{\lambda,\Lambda}\)}%
\label{fig:spherebody}%
\end{figure}
\end{remark}

\begin{example}[Singleton-generated sublinear operators]
As noted in Remark \ref{remarkgenerated}, the dominative body is generated by a single matrix \(E_p = \diag\bfp\). Since \(E_p\) only has two different eigenvalues, \(1\) and \(p-1\), it is the simplest generator that is not just a scaling of the identity matrix. Of course,
\[F_p(X) = \bfp^T\oll(X).\]
An analogous formula also hold for arbitrary singleton-generated operators:

\begin{proposition}
Let \(A\in S(n)\) and set
\[\mathcal{E} := \left\{Q^TAQ\;|\;Q\in O(n)\right\}\in\mathcal{E}(n),\qquad \mathcal{K} := \con\mathcal{E}\in \mathcal{E}(n)\cap\mathcal{K}(n).\]
Then the support function of \(\mathcal{K}\) is the rotationally invariant sublinear operator \(F\colon S(n)\to\mathbb{R}\) given by
\[F(X) = \oll(A)^T\oll(X).\]
\end{proposition}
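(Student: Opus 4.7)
The plan is to translate the problem into $\mathbb{R}^n$ via the correspondence $\Phi$ of Proposition \ref{Phibijective}, where the claim becomes a one-line consequence of the rearrangement inequality \eqref{ordermax}.

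First I would note that $\mathcal{E}$ is symmetric in the sense of Definition \ref{symbodies} and compact, so by Lemma \ref{syminv} together with part 1 of Proposition \ref{extconvbody}, $\mathcal{K} = \con\mathcal{E}$ is a symmetric convex body. Proposition \ref{rotsymequiv} then guarantees that the support function $F$ is rotationally invariant. Since orthogonal conjugation preserves the spectrum, a direct computation gives
\[\Phi(\mathcal{E}) = \{P\oll(Q^TAQ)\;|\;P\in\mathcal{P}(n),\, Q\in O(n)\} = \{P\oll(A)\;|\;P\in\mathcal{P}(n)\}.\]

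Next I would show that the convex body $\kappa := \Phi(\mathcal{K})$ in $\mathbb{R}^n$ equals $\con\Phi(\mathcal{E})$. The inclusion $\con\Phi(\mathcal{E})\subseteq\kappa$ follows from part \ref{part_subset} of Theorem \ref{Phithm} applied to $\mathcal{E}\subseteq\mathcal{K}$, together with the convexity of $\kappa$ granted by Claim \ref{Kdiagprop}. For the reverse inclusion, part \ref{partextext} of Theorem \ref{Phithm} and part 3 of Proposition \ref{extconvbody} yield $\ext\kappa = \Phi(\ext\mathcal{K})\subseteq\Phi(\mathcal{E})$, and Minkowski's theorem (part 2 of Proposition \ref{extconvbody}) then gives $\kappa = \con\ext\kappa \subseteq \con\Phi(\mathcal{E})$.

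To conclude, I would combine rotational invariance with Proposition \ref{convprop}:
\[F(X) = F(\Lambda(X)) = \max_{y\in\kappa}y^T\oll(X) = \max_{P\in\mathcal{P}(n)}(P\oll(A))^T\oll(X).\]
Since both $\oll(A)$ and $\oll(X)$ already have their components in increasing order, the rearrangement inequality \eqref{ordermax} says the maximum is attained at $P=I$ and equals $\oll(A)^T\oll(X)$. I do not anticipate a genuine obstacle: the whole argument is essentially a bookkeeping exercise in the $\Phi$-dictionary built up earlier in the chapter, with the only computational content being the rearrangement step.
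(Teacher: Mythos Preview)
Your proof is correct and follows essentially the same route as the paper: reduce to $\mathbb{R}^n$ via $\Phi$, identify the relevant body with $\con\{P\oll(A)\,|\,P\in\mathcal{P}(n)\}$, and finish with the rearrangement inequality \eqref{ordermax}. The only cosmetic difference is that the paper invokes Lemma \ref{generatesing} to get $\ext\mathcal{K}=\mathcal{E}$ outright and then passes through Theorem \ref{Phithm} (2) and Proposition \ref{convprop}, whereas you establish $\kappa=\con\Phi(\mathcal{E})$ by a two-sided inclusion using only $\ext\mathcal{K}\subseteq\mathcal{E}$; the substance is the same.
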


\begin{proof}
We have \(\ext\mathcal{K} = \mathcal{E}\) by Lemma \ref{generatesing} and \(\Phi(\mathcal{E}) = \left\{P\oll(A)\;|\; P\in\mathcal{P}(n)\right\}\). It follows that
\begin{align*}
F(X) &= F(\Lambda(X))\\
     &= \max_{Y\in\mathcal{K}}\langle Y,\Lambda(X)\rangle\\
		 &= \max_{Y\in\mathcal{K}}(\diag Y)^T\oll(X)\\
		 &= \max_{y\in\Phi(\mathcal{K})}y^T\oll(X), && \text{by Proposition \ref{Phibijective},}\\
		 &= \max_{y\in\Phi(\mathcal{E})}y^T\oll(X), && \text{by Prop. \ref{convprop} and Thm. \ref{Phithm} (2),}\\
		 &= \max_{P\in\mathcal{P}(n)}(P\oll(A))^T\oll(X),\\
		 &= \oll(A)^T\oll(X), && \text{by \eqref{ordermax}.}\\
\end{align*}
\end{proof}

\begin{corollary}[von Neumann's trace inequality]\label{vonNeu}
Let \(A,B\in S(n)\). Then
\[\tr(AB) \leq \oll(A)^T\oll(B).\]
\end{corollary}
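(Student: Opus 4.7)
The plan is to read the inequality off the preceding Proposition directly. Set $\mathcal{E} := \{Q^T A Q \mid Q \in O(n)\}$ and $\mathcal{K} := \con\mathcal{E}$. By the Proposition, the support function of $\mathcal{K}$ is the rotationally invariant sublinear operator
\[F(X) = \oll(A)^T \oll(X).\]

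Next I would observe that $A$ itself lies in $\mathcal{K}$: taking $Q = I \in O(n)$ gives $A = I^T A I \in \mathcal{E} \subseteq \con\mathcal{E} = \mathcal{K}$. Therefore, by the very definition of the support function together with the inner product on $S(n)$,
\[\tr(AB) = \langle A, B \rangle \leq \max_{Y \in \mathcal{K}} \langle Y, B \rangle = F(B) = \oll(A)^T \oll(B),\]
which is the desired inequality.

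There is really no obstacle here — the corollary is a one-line consequence of the Proposition once one notices the membership $A \in \mathcal{K}$ and recalls $\langle A, B \rangle = \tr(AB)$. The only mildly subtle point is the symmetry of the roles of $A$ and $B$: although the argument singles out $A$ as the generator of $\mathcal{K}$, the resulting bound $\oll(A)^T \oll(B)$ is symmetric in $A$ and $B$, as it must be, since one could equally well have used $B$ to generate the orbit.
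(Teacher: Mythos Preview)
Your proof is correct and matches the paper's own argument essentially line for line: both invoke the preceding Proposition to identify $F(B)=\oll(A)^T\oll(B)$ and then bound $\langle A,B\rangle$ by $\max_{Y\in\mathcal{K}}\langle Y,B\rangle$ using $A\in\mathcal{K}$. Your version is slightly more explicit about the membership $A\in\mathcal{K}$ (via $Q=I$), but there is no substantive difference.
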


\begin{proof}
Define \(\mathcal{K}\) and \(F\) as above. Then
\[\langle A,B\rangle \leq \max_{Y\in\mathcal{K}}\langle Y,B\rangle = F(B) = \oll(A)^T\oll(B).\] 
\end{proof}

In advance, it was not immediately obvious that operators in the form
\[F_\bfa(X) := \bfa^T\oll(X),\qquad \bfa = (a_1,\dots,a_n)^T\in\mathbb{R}^n,\, a_1\leq\cdots\leq a_n,\]
are sublinear. It is, however, crucial that \(\bfa\) is ordered, i.e., \(\bfa = \bfa^\uparrow\).
Of course, \(F\) is elliptic if and only if \(a_1\geq 0\).

The associated convex body \(\mathcal{K}_\bfa\) of a singleton-generated operator \(F_\bfa\), \(\bfa = \oll(A)\), is \emph{flat} in the sense that it is a subset of the hyperplane \(\{Y\;|\; \tr Y = \tr A\}\) in \(S(n)\). This is also a consequence of the fact that \(F_\bfa(I) = -F_\bfa(-I)\).

The solution cone aperture to \(F_\bfa\) is
\[\alpha = \min_{Y\in\mathcal{K}_\bfa}\frac{\tr Y}{\lambda_{\max}(Y)} = \frac{a_1 + \cdots + a_n}{a_n}\]
and the body cone aperture is then
\[p = (n-1)\frac{a_n}{a_1 + \cdots + a_{n-1}} + 1.\]
The convex body \(\mathcal{K}_\bfa\) contains the scaled dominative body \(c\mathcal{K}_p\) where
\[c = \frac{a_1 + \cdots + a_{n-1}}{n-1}.\]

\begin{figure}[ht]
    \centering
    \begin{subfigure}[b]{0.45\textwidth}
        \includegraphics[width=\textwidth]{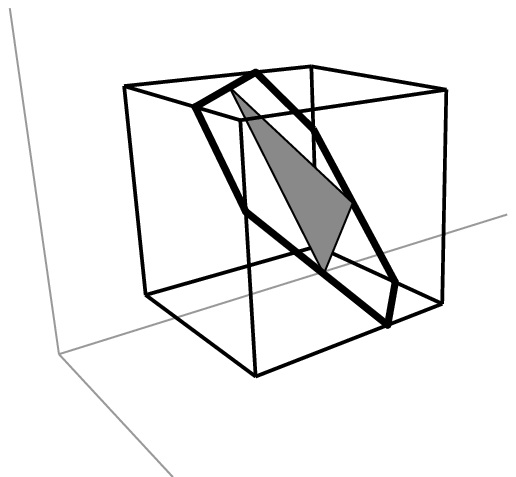}
        \caption{}
        \label{fig:flatbodya}
    \end{subfigure}
    ~ 
    \begin{subfigure}[b]{0.45\textwidth}
        \includegraphics[width=\textwidth]{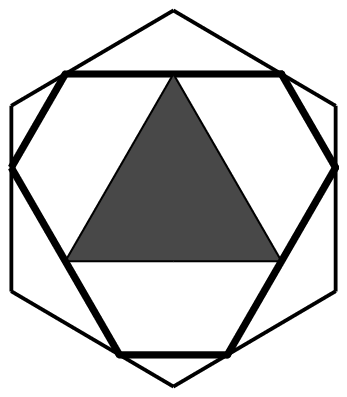}
        \caption{}
        \label{fig:flatbodyb}
    \end{subfigure}
    \caption{\(2\mathcal{K}_3\subseteq \mathcal{K}_{(1,3,4)}\subseteq\mathcal{K}_{1,4}\)}
		\label{fig:flatbody}
\end{figure}

The non-regular hexagon in Figure \ref{fig:flatbody} is the corresponding convex body in \(\mathbb{R}^3\) to the operator
\[F_{(1,3,4)}(X) = \lambda_1(X) + 3\lambda_2(X) + 4\lambda_3(X).\]
It is inscribed in the Pucci cube \(\Phi(\mathcal{K}_{1,4})\) and it circumscribes the dominative triangle \(\Phi(c\mathcal{K}_p)\). Here, \(p = 2\cdot\frac{4}{4}+1 = 3\) and \(c = \frac{1+3}{2} = 2\). It follows, for example,  that \(-\ln|x|\) is a solution to the equation \(F_{(1,3,4)}(\mathcal{H}u) = 0\)
in \(\mathbb{R}^3\setminus\{0\}\).
\end{example}

A natural next thing to consider is \emph{finitely} generated rotationally invariant sublinear operators. They will have a convex body on the form
\[\con\bigcup_{i=1}^M\mathcal{K}_{\bfa_i} = \con\left\{Q(\diag\bfa_i)Q^T\;|\; Q\in O(n),\, i = 1,\dots,M\right\}.\]
In \(\mathbb{R}^3\) the corresponding body will be the convex hull of \(M\) hexagons (possibly non-regular, or degenerate like triangles and points) of different sizes situated along the \(\mathbbm{1}\)-axis. We observe that the Pucci cube \(\Phi(\mathcal{K}_{1,4})\) in Figure \ref{fig:flatbody} is generated by two points and two triangles:
\[(1,1,1),\quad (1,1,4),\quad (1,4,4),\quad (4,4,4).\]


\chapter{Uniform ellipticity}\label{chp:unell}
We now consider sublinear and \emph{uniformly} elliptic equations
\begin{equation}
F(\mathcal{H}u) = 0.
\label{unifeq}
\end{equation}
According to Theorem \ref{repsupthm} and Proposition \ref{definite}, the function \(F\colon S(n)\to\mathbb{R}\) is on the form
\[F(X) = \max_{Y\in\mathcal{K}}\langle Y,X\rangle = \max_{Y\in\mathcal{K}}\tr(YX)\]
for some nonempty compact and convex subset \(\mathcal{K}\subseteq S(n)\) containing only positive definite matrices -- the \emph{associated convex body} to \(F\).

With this stronger notion of ellipticity, more can be said about the operators. This is largely due to the fact that every \(Z\in\mathcal{K}\) now can be assumed to be \emph{invertible}.

As examples, one may again think of the dominative \(p\)-Laplacian, the Pucci operators and the linear operators \(L(X) = \tr(AX)\). But now we must restrict \(p\) to the open interval \((1,\infty)\) and \(A\) must be positive definite.

\section{Supersolutions and superharmonicity}
\begin{definition}
For a function \(u\) in \(\Omega\subseteq\mathbb{R}^n\) and a matrix \(0< Z\in S(n)\), we define the function \(u_Z\) as
\begin{equation}
u_Z(x) := u\left(\sqrt{Z}\,x\right)
\label{uzdef}
\end{equation}
in the corresponding domain
\[\Omega_Z := \left\{x\in\mathbb{R}^n\;|\; \sqrt{Z}\,x\in\Omega\right\}.\]
\end{definition}
By the square root, we mean the unique symmetric matrix \(\sqrt{Z}> 0\) satisfying \(\sqrt{Z}\sqrt{Z} = Z\).

\begin{theorem}\label{supsupthm}
Let \(F(\mathcal{H}u) = 0\) be a sublinear and uniformly elliptic equation with associated convex body \(\mathcal{K}\subseteq S(n)\). Let \(u\) be a function in \(\Omega\subseteq\mathbb{R}^n\). Then
\center
\(u\) is a supersolution in \(\Omega\)
\[\iff\]
\(u_Z\) is superharmonic in \(\Omega_Z\) for every \(Z\in\mathcal{K}\).
\end{theorem}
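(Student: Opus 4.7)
The plan is to move the statement to the level of test functions via the chain rule and then invoke the characterization $F(X)\leq 0 \iff \langle Z,X\rangle\leq 0 \text{ for all } Z\in\mathcal{K}$, which is immediate from the representation $F(X)=\max_{Z\in\mathcal{K}}\langle Z,X\rangle$.

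First I would record the basic computation. Uniform ellipticity, via Proposition \ref{definite}, ensures every $Z\in\mathcal{K}$ is positive definite, so $\sqrt{Z}$ exists, is symmetric and positive definite, and is invertible. For a $C^2$ function $w$, the chain rule gives
\[\mathcal{H}w_Z(x) = \sqrt{Z}\,\mathcal{H}w(\sqrt{Z}\,x)\,\sqrt{Z},\]
and taking the trace yields the key identity
\[\Delta w_Z(x) = \tr\!\left(Z\,\mathcal{H}w(\sqrt{Z}\,x)\right) = \langle Z,\mathcal{H}w(\sqrt{Z}\,x)\rangle.\]

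Next I would set up the bijection of test functions under the change of variables $y=\sqrt{Z}\,x$. Because $\sqrt{Z}$ is a linear homeomorphism, lower semicontinuity of $u$ is equivalent to lower semicontinuity of $u_Z$, and $\phi$ touches $u$ from below at $y_0\in\Omega$ if and only if $\tilde\phi(x):=\phi(\sqrt{Z}\,x)$ touches $u_Z$ from below at $x_0:=\sqrt{Z}^{-1}y_0\in\Omega_Z$. The chain rule above applies to the test functions.

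For the forward direction, fix $Z\in\mathcal{K}$ and let $\phi$ be a test function for $u_Z$ from below at $x_0\in\Omega_Z$. Then $\tilde\phi(y):=\phi(\sqrt{Z}^{-1}y)$ is a test function for $u$ from below at $y_0:=\sqrt{Z}\,x_0$, so by the supersolution property $F(\mathcal{H}\tilde\phi(y_0))\leq 0$, which by the chain rule identity (and cyclicity of trace, using $Z\sqrt{Z}^{-1}=\sqrt{Z}$) gives $\Delta\phi(x_0)=\langle Z,\mathcal{H}\tilde\phi(y_0)\rangle\leq F(\mathcal{H}\tilde\phi(y_0))\leq 0$. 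Hence $u_Z$ is superharmonic. For the reverse direction, suppose every $u_Z$ is superharmonic, let $\phi$ touch $u$ from below at $y_0$, and fix $Z\in\mathcal{K}$. Then $\tilde\phi(x):=\phi(\sqrt{Z}\,x)$ touches $u_Z$ from below at $x_0:=\sqrt{Z}^{-1}y_0$, and superharmonicity gives
\[0\geq\Delta\tilde\phi(x_0)=\langle Z,\mathcal{H}\phi(y_0)\rangle.\]
Since this holds for every $Z\in\mathcal{K}$, taking the supremum yields $F(\mathcal{H}\phi(y_0))\leq 0$ and $u$ is a viscosity supersolution.

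There is no genuine analytical obstacle here; the main (mild) care is bookkeeping the test-function correspondence and making sure the linear change of variables $y=\sqrt{Z}\,x$ carries smoothness and the "touching from below" relation in both directions. The essential input beyond routine chain rule is that uniform ellipticity promotes every $Z\in\mathcal{K}$ to something invertible, which is precisely what allows the transformation to be inverted; degenerate ellipticity would not suffice for this equivalence.
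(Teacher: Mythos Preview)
Your proof is correct and follows essentially the same approach as the paper: transfer test functions via the linear change of variables $y=\sqrt{Z}\,x$, use the chain rule identity $\Delta w_Z(x)=\langle Z,\mathcal{H}w(\sqrt{Z}\,x)\rangle$, and exploit $\langle Z,X\rangle\leq F(X)$ for every $Z\in\mathcal{K}$. The only cosmetic difference is that the paper proves the reverse implication by contrapositive (choosing the $Z$ attaining the maximum), whereas you prove it directly by taking the supremum over $Z$; the content is the same.
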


Note the tautology if \(F\) is the Laplacian: \(\mathcal{K} = \{I\}\).

\begin{proof}
We first observe that lower semi-continuity and finiteness on dense subsets are carried over in either direction.

Let \(u\) be a supersolution to \eqref{unifeq} in \(\Omega\) and let \(Z\in\mathcal{K}\).
Assume that \(\phi\) is a test function to \(u_Z\) from below a point \(x_0\in\Omega_Z\):
\[\phi(x_0) = u_Z(x_0) = u\left(\sqrt{Z}\,x_0\right),\qquad \phi(x) \leq u_Z(x) = u\left(\sqrt{Z}\,x\right)\]
for \(x\) near \(x_0\). Write \(y_0 := \sqrt{Z}\,x_0\in\Omega\). Then
\[\phi_{Z^{-1}}(y_0) = \phi\left(\sqrt{Z}^{-1}y_0\right) = \phi(x_0) = u(y_0),\]
and when \(y\) is near \(y_0\), then \(\sqrt{Z}^{-1}\,y\) is near \(\sqrt{Z}^{-1}\,y_0 = x_0\) and
\[\phi_{Z^{-1}}(y) = \phi\left(\sqrt{Z}^{-1}y\right) \leq u_Z\left(\sqrt{Z}^{-1}y\right) = u(y).\]
Thus \(\phi_{Z^{-1}}\) is a test function to \(u\) from below at \(y_0 := \sqrt{Z}\,x_0 \in\Omega\). This means that
\begin{equation}
F\left(\mathcal{H}\phi_{Z^{-1}}(y_0)\right) \leq 0.
\label{testres}
\end{equation}
Now, \(\phi(x) = \phi_{Z^{-1}}\left(\sqrt{Z}\,x\right)\) and
\[\nabla\phi(x_0) = \nabla\phi_{Z^{-1}}(y_0)\sqrt{Z},\qquad \mathcal{H}\phi(x_0) = \sqrt{Z}\,\mathcal{H}\phi_{Z^{-1}}(y_0)\sqrt{Z}.\]
It follows that
\begin{align*}
\Delta\phi(x_0) &= \tr\left(\sqrt{Z}\,\mathcal{H}\phi_{Z^{-1}}(y_0)\sqrt{Z}\right)\\
                &= \tr\left(Z\,\mathcal{H}\phi_{Z^{-1}}(y_0)\right)\\
								&\leq \max_{Y\in\mathcal{K}}\tr\left(Y\,\mathcal{H}\phi_{Z^{-1}}(y_0)\right)\\
								&= F\left(\mathcal{H}\phi_{Z^{-1}}(y_0)\right) \leq 0
\end{align*}
and we conclude that \(u_Z\) is superharmonic in \(\Omega_Z\).

For the other direction, assume that \(u\) is not a supersolution. Then there exists a test function \(\phi\) from below at some point \(y_0\in\Omega\) and a matrix \(Z\in\mathcal{K}\) such that
\[0 < F(\mathcal{H}\phi(y_0)) = \max_{Y\in\mathcal{K}}\tr(Y\,\mathcal{H}\phi(y_0)) = \tr(Z\,\mathcal{H}\phi(y_0)).\]
As above, \(\phi_Z\) will be a test function to \(u_Z\) from below at \(x_0 := \sqrt{Z}^{-1}y_0\in\Omega_Z\) and
\[\Delta\phi_Z(x_0) = \tr (Z\,\mathcal{H}\phi(y_0)) > 0.\]
Thus there exists a \(Z\in\mathcal{K}\) such that \(u_Z\) is \emph{not} superharmonic in \(\Omega_Z\).
\end{proof}

\begin{remark}
The proof shows that the Theorem can be strengthened: If \(u\) is a supersolution, then \(u_Z\) is superharmonic for every \(Z\in C_{\mathcal{K}}\setminus\{0\}\). For the other direction, it is sufficient to require that \(u_Z\) is superharmonic for every \(Z\in\ext\mathcal{K}\).
\end{remark}

Of course, Corollary \ref{unifsuppos} below follows from the general superposition principle (Theorem \ref{sublin_superpos}). But if one is willing to accept that a sum of superharmonic functions is superharmonic, it is perhaps interesting to see that superposition in the uniformly elliptic case is an immediate consequence of Theorem \ref{supsupthm}.

\begin{corollary}\label{unifsuppos}
Let \(F(\mathcal{H}w) = 0\) be a sublinear and uniformly elliptic equation and let \(\Omega\subseteq\mathbb{R}^n\). Then
\[\text{\(u\) and \(v\) are supersolutions in \(\Omega\)}\qquad\Rightarrow\qquad
\text{\(u+v\) is a supersolution in \(\Omega\).}\]
\end{corollary}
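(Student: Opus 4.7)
The plan is to reduce the statement to the analogous fact for superharmonic functions by exploiting Theorem \ref{supsupthm} in both directions. The entire proof should amount to little more than checking how the transformation $u \mapsto u_Z$ interacts with addition.

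First, I would observe that, directly from Definition \eqref{uzdef}, the map $u \mapsto u_Z$ is linear: if $u$ and $v$ are functions on $\Omega$, then for every $x \in \Omega_Z$,
\[(u+v)_Z(x) = (u+v)\bigl(\sqrt{Z}\,x\bigr) = u\bigl(\sqrt{Z}\,x\bigr) + v\bigl(\sqrt{Z}\,x\bigr) = u_Z(x) + v_Z(x).\]
In particular, the domains $\Omega_Z$ coincide for all three functions, so no compatibility issue arises.

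Next, let $Z \in \mathcal{K}$ be arbitrary. Because $u$ and $v$ are both supersolutions to $F(\mathcal{H}w)=0$ in $\Omega$, Theorem \ref{supsupthm} implies that $u_Z$ and $v_Z$ are both superharmonic in $\Omega_Z$. Accepting that the sum of two superharmonic functions is superharmonic (which is the classical one-line consequence of subadditivity of the Laplacian for $C^2$ functions, plus the standard viscosity argument via the minimum principle on test functions), the identity above gives that $(u+v)_Z = u_Z + v_Z$ is superharmonic in $\Omega_Z$. Since $Z \in \mathcal{K}$ was arbitrary, the reverse implication in Theorem \ref{supsupthm} applied to the function $u+v$ yields that $u+v$ is a supersolution in $\Omega$.

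The only potential obstacle is the preservation of lower semi-continuity and of finiteness on a dense subset needed to invoke Theorem \ref{supsupthm} for $u+v$. Lower semi-continuity of a sum of l.s.c.\ functions is standard; finiteness on a dense set for $u+v$ follows from the fact that the two dense sets given for $u$ and $v$ have a dense intersection in $\Omega$ (both are complements of meager sets where the respective function is $+\infty$, but in our viscosity framework we may take them to be $G_\delta$-dense, so Baire applies). Apart from this minor bookkeeping, the corollary is essentially immediate once Theorem \ref{supsupthm} is in hand.
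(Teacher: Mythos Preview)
Your approach is exactly the one the paper takes: apply Theorem \ref{supsupthm} to $u$ and $v$, use $(u+v)_Z = u_Z + v_Z$ together with the fact that a sum of superharmonic functions is superharmonic, and then apply Theorem \ref{supsupthm} in the reverse direction.

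The only soft spot is your bookkeeping for ``finite on a dense subset'': for an l.s.c.\ function the set $\{u<\infty\}$ is $F_\sigma$, not $G_\delta$, so Baire does not apply as stated (two dense $F_\sigma$ sets can intersect in a meager set). The clean fix in this uniformly elliptic setting is that each $u_Z$ is superharmonic and hence locally integrable, so $u$ and $v$ are finite almost everywhere; thus $u+v$ is finite a.e.\ and in particular on a dense set. The paper's proof simply glosses over this point.
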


\begin{proof}
Let \(\mathcal{K}\subseteq S(n)\) be the associated convex body to the operator. Then \(u_Z\) and \(v_Z\) are superharmonic for all \(Z\in\mathcal{K}\) by Theorem \ref{supsupthm}. Since a sum of superharmonic functions is superharmonic, it follows that \((u+v)_Z = u_Z + v_Z\) is superharmonic for all \(Z\in\mathcal{K}\) and thus \(u+v\) is a supersolution.
\end{proof}

\section{Mean value}\label{sec:meanvalue}

Given a positive definite matrix \(Z = \sum_{i=1}^n\lambda_i\xi_i\xi_i^T\), a point \(y_0\in\mathbb{R}^n\) and a number \(\epsilon>0\), we define the solid ellipsoid
\begin{equation}
\begin{aligned}
\mathcal{E}_Z(y_0,\epsilon) &:= \left\{y\in\mathbb{R}^n\;|\; (y-y_0)^T Z^{-1}(y-y_0) < \epsilon^2\right\}\\
	& = \left\{y\in\mathbb{R}^n\;\colon\; |\sqrt{Z}^{-1}(y-y_0)| < \epsilon\right\}
\end{aligned}
\label{ezdef}
\end{equation}
in \(\mathbb{R}^n\).
The ellipsoid is centered at \(y_0\) and has semi axes of length \(\epsilon\sqrt{\lambda_i}>0\) in the corresponding directions \(\xi_i\).

The integrals below are Lebesgue integrals and notice that every semicontinuous function is measurable by part \ref{lsc_open} of Proposition \ref{lsceqapp}. We define \(\fint_D f\dd x := \frac{1}{|D|}\int_D f\dd x\) where \(|D|\) is the Lebesgue measure of the subset \(D\subseteq\mathbb{R}^n\). That is, \(\fint_D f\dd x\) is the average, or the \emph{mean value}, of \(f\) in \(D\). Superharmonic functions are exactly those functions that are larger than its mean value in any ball of its domain:

\begin{center}
\(u\) is superharmonic in \(\Omega\)
\[\iff\]
\(u\not\equiv\infty\) is l.s.c. and for each \(y_0\in\Omega\) and every \(\epsilon>0\) so that \(B(y_0,\epsilon)\subseteq\Omega\),
\[\fint_{B(y_0,\epsilon)} u(y)\dd y \leq u(y_0).\]
\end{center}
The lower part has often been used as a definition of superharmonicity, but it is not trivial that it actually is equivalent to the more modern notion of a viscosity supersolution.
See e.g. \cite{MR1871417}.

\begin{theorem}\label{mvthm}
Let \(F(\mathcal{H}w) = 0\) be a sublinear and uniformly elliptic equation with associated convex body \(\mathcal{K}\subseteq S(n)\). Then
\center
\(u\) is a supersolution in \(\Omega\)
\[\iff\]
\(u\not\equiv\infty\) is l.s.c. and for each \(y_0\in\Omega\) and every \(\epsilon>0\) so that \(\bigcup_{Z\in\mathcal{K}}\mathcal{E}_Z(y_0,\epsilon)\subseteq\Omega\), we have
\[\sup_{Z\in\mathcal{K}}\fint_{\mathcal{E}_Z(y_0,\epsilon)} u(y)\dd y \leq u(y_0).\]
\end{theorem}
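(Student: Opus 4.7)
The plan is to reduce to the Laplacian case via Theorem \ref{supsupthm} and then perform a change of variables. The key observation is that for $Z\in\mathcal{K}$ positive definite, the linear map $y=\sqrt{Z}\,x$ sends the ball $B(x_0,\epsilon)$ bijectively onto the ellipsoid $\mathcal{E}_Z(y_0,\epsilon)$, where $y_0:=\sqrt{Z}\,x_0$. This map has constant Jacobian $\sqrt{\det Z}$, so both $|B(x_0,\epsilon)|$ and $|\mathcal{E}_Z(y_0,\epsilon)|$ scale by the same factor, and a direct change-of-variables computation shows
\[
\fint_{B(x_0,\epsilon)} u_Z(x)\dd x = \fint_{\mathcal{E}_Z(y_0,\epsilon)} u(y)\dd y,\qquad u_Z(x_0)=u(y_0).
\]
Moreover $B(x_0,\epsilon)\subseteq\Omega_Z$ if and only if $\mathcal{E}_Z(y_0,\epsilon)\subseteq\Omega$. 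Hence the classical mean value characterization of superharmonicity for $u_Z$ translates precisely into the statement $\fint_{\mathcal{E}_Z(y_0,\epsilon)} u\dd y \le u(y_0)$ whenever $\mathcal{E}_Z(y_0,\epsilon)\subseteq\Omega$. Finiteness on a dense set and lower semicontinuity transfer between $u$ and each $u_Z$ since $\sqrt{Z}$ is a homeomorphism.

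For the forward direction, assume $u$ is a supersolution. By Theorem \ref{supsupthm}, $u_Z$ is superharmonic in $\Omega_Z$ for every $Z\in\mathcal{K}$. Fix $y_0\in\Omega$ and $\epsilon>0$ with $\bigcup_{Z\in\mathcal{K}}\mathcal{E}_Z(y_0,\epsilon)\subseteq\Omega$; then for every single $Z\in\mathcal{K}$ the classical mean value inequality for $u_Z$ applies, and the translation above gives $\fint_{\mathcal{E}_Z(y_0,\epsilon)} u\dd y \le u(y_0)$. Taking the supremum over $Z\in\mathcal{K}$ yields the desired inequality.

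For the backward direction, suppose the mean value property in the statement holds; I want to show each $u_Z$ is superharmonic, since this implies $u$ is a supersolution by Theorem \ref{supsupthm}. Fix $Z\in\mathcal{K}$ and $x_0\in\Omega_Z$, and set $y_0=\sqrt{Z}\,x_0\in\Omega$. By compactness of $\mathcal{K}\subseteq S(n)$ there is a finite constant $M:=\sup_{Z'\in\mathcal{K}}|\sqrt{Z'}|$, so $\mathcal{E}_{Z'}(y_0,\epsilon)\subseteq B(y_0,M\epsilon)$ for every $Z'\in\mathcal{K}$. Thus for all $\epsilon\in(0,\delta)$ with $\delta:=\dist(y_0,\partial\Omega)/M$, the hypothesis applies, and in particular for the chosen $Z$ it yields $\fint_{\mathcal{E}_Z(y_0,\epsilon)} u\dd y \le u(y_0)$. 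Translating back produces $\fint_{B(x_0,\epsilon)} u_Z\dd x \le u_Z(x_0)$ for all sufficiently small $\epsilon$.

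The main obstacle is precisely this asymmetry: the hypothesis only provides the mean value inequality for $\epsilon$ small enough that \emph{every} ellipsoid fits in $\Omega$, but the classical characterization of superharmonicity seems to demand the inequality on arbitrary balls contained in $\Omega_Z$. This is overcome by the well-known fact that the \emph{local} mean value inequality (holding for a sequence of radii shrinking to zero at each point) is enough to force superharmonicity for a lower semicontinuous function -- e.g.\ one verifies directly that $u_Z$ is a viscosity supersolution of $\Delta w=0$ at any test-function contact point, or equivalently invokes the standard local-to-global extension of the mean value property. With that step in hand, $u_Z$ is superharmonic for every $Z\in\mathcal{K}$ and Theorem \ref{supsupthm} closes the argument.
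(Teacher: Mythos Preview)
Your proof is correct and follows essentially the same approach as the paper: reduce to the Laplacian via Theorem \ref{supsupthm}, then transport the classical mean value characterization through the change of variables $y=\sqrt{Z}\,x$. In fact you are more careful than the paper on one point: the paper's proof silently passes from the mean value inequality on \emph{small} balls to superharmonicity of $u_Z$ on all of $\Omega_Z$, whereas you explicitly flag and resolve this local-to-global step (e.g.\ via the viscosity test-function argument).
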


\begin{proof}
For \(Z\in\mathcal{K}\) the substitution \(y(x) = \sqrt{Z}\,x\) yields
\[y\in\mathcal{E}_Z(y_0,\epsilon)\qquad\iff\qquad x\in B(x_0,\epsilon)\]
where \(y_0 = \sqrt{Z}\,x_0\). Also, the Jacobian determinant is the constant \(\sqrt{\det Z}\) and thus the volume of the ellipsoid is
\[|\mathcal{E}_Z(y_0,\epsilon)| = \sqrt{\det Z}|B(x_0,\epsilon)|.\]
Therefore, if \(u_Z\) is integrable over the ball,
\[\fint_{\mathcal{E}_Z(y_0,\epsilon)}u(y)\dd y = \frac{\sqrt{\det Z}}{|\mathcal{E}_Z(y_0,\epsilon)|}\int_{B(x_0,\epsilon)}u\left(\sqrt{Z}\,x\right)\dd x = \fint_{B(x_0,\epsilon)}u_Z(x)\dd x.\]
Since superharmonic functions are locally integrable, this shows that \emph{supersolutions to uniformly elliptic equations are locally integrable} as well.
See Theorem 1.25 in \cite{MR0435422}.

Assume that \(u\) is a supersolution to \(F(\mathcal{H}u)=0\) in \(\Omega\). Then \(u_Z\) is superharmonic in \(\Omega_Z\) for every \(Z\in\mathcal{K}\) by Theorem \ref{supsupthm}. Thus, for each \(y_0\in\Omega\) and \(\epsilon>0\) small we get that
\[\fint_{\mathcal{E}_Z(y_0,\epsilon)}u(y)\dd y = \fint_{B(x_0,\epsilon)}u_Z(x)\dd x \leq u_Z(x_0) = u(y_0).\]
Taking the supremum over \(Z\in\mathcal{K}\) yields the conclusion.

Now assume that for each \(y_0\in\Omega\) and every \(\epsilon>0\) sufficiently small,
\(\sup_{Z\in\mathcal{K}}\fint_{\mathcal{E}_Z(y_0,\epsilon)} u(y)\dd y \leq u(y_0).\)
Let \(Z\in\mathcal{K}\), \(x_0\in\Omega_Z\), and write \(y_0 := \sqrt{Z}\,x_0\in\Omega\). Then \(\fint_{\mathcal{E}_Z(y_0,\epsilon)} u(y)\dd y \leq u(y_0)\) and
\begin{align*}
\fint_{B(x_0,\epsilon)}u_Z(x)\dd x
	&= \fint_{\mathcal{E}_Z(y_0,\epsilon)}u(y)\dd y\\
	&\leq u(y_0)\\
	&= u_Z(x_0).
\end{align*}
We conclude that \(u_Z\) is superharmonic in \(\Omega_Z\) and \(u\) is a supersolution in \(\Omega\) by Theorem \ref{supsupthm}.
\end{proof}

For \(C^2\) functions \(\phi\), a Taylor expansion and the standard identity
\[\fint_{B(0,\epsilon)} x^TAx\dd x = \frac{\epsilon^2}{n+2}\tr A,\]
yields the estimate
\[\fint_{B(x_0,\epsilon)} \phi(x)\dd x = \phi(x_0) + \frac{\epsilon^2}{2(n+2)}\Delta\phi(x_0) + o(\epsilon^2)\]
as \(\epsilon\to 0\). The linear term in the expansion vanishes since it is an odd function in the ball.
It follows that
\begin{align*}
\sup_{Z\in\mathcal{K}}\fint_{\mathcal{E}_Z(y_0,\epsilon)} \phi(y)\dd y
					&= \sup_{Z\in\mathcal{K}}\fint_{B(x_0,\epsilon)} \phi_Z(x)\dd x\\
					&= \sup_{Z\in\mathcal{K}}\left(\phi_Z(x_0) + \frac{\epsilon^2}{2(n+2)}\Delta\phi_Z(x_0) + o(\epsilon^2)\right)\\
					&= \sup_{Z\in\mathcal{K}}\left(\phi(y_0) + \frac{\epsilon^2}{2(n+2)}\tr\left(\sqrt{Z}\mathcal{H}\phi(y_0)\sqrt{Z}\right) + o(\epsilon^2)\right)\\
					&= \phi(y_0) + \frac{\epsilon^2}{2(n+2)}F(\mathcal{H}\phi(y_0)) + o(\epsilon^2)
\end{align*}
and thus
\begin{equation}
\lim_{\epsilon\to 0}\frac{1}{\epsilon^2}\left[\sup_{Z\in\mathcal{K}}\fint_{\mathcal{E}_Z(y_0,\epsilon)} \phi(y)\dd y - \phi(y_0)\right]
 = \frac{1}{2(n+2)}F(\mathcal{H}\phi(y_0)).
\label{limc2}
\end{equation}

This is in fact the proof of Theorem \ref{mvthmsub} below when applied to the test functions. It is a stronger version of the upward direction of Theorem \ref{mvthm}, for which also the corresponding claim for subsolutions is true.

\begin{theorem}\label{mvthmsub}
Let \(F(\mathcal{H}w) = 0\) be a sublinear and uniformly elliptic equation with associated convex body \(\mathcal{K}\subseteq S(n)\). Let
\begin{itemize}
	\item \(u\colon\Omega\to(-\infty,\infty]\) be l.s.c.,
	\item \(v\colon\Omega\to[-\infty,\infty)\) be u.s.c.,
\end{itemize}
and let both be finite on a dense subset. Assume that, for each \(y_0\in\Omega\),
\[\sup_{Z\in\mathcal{K}}\fint_{\mathcal{E}_Z(y_0,\epsilon)} u(y)\dd y \leq u(y_0) + o(\epsilon^2),\quad\sup_{Z\in\ext\mathcal{K}}\fint_{\mathcal{E}_Z(y_0,\epsilon)} v(y)\dd y \geq v(y_0) + o(\epsilon^2)\]
as \(\epsilon\to 0\).
Then \(u\) is a supersolution and \(v\) is a subsolution in \(\Omega\).
\end{theorem}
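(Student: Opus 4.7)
The plan is to use the viscosity test-function characterization of sub- and supersolutions together with the $C^2$ expansion derived just before the theorem. Recall that for a $C^2$ function $\phi$ one has
\[
\fint_{\mathcal{E}_Z(y_0,\epsilon)}\phi(y)\dd y = \phi(y_0) + \frac{\epsilon^2}{2(n+2)}\tr\bigl(Z\mathcal{H}\phi(y_0)\bigr) + o(\epsilon^2)
\]
and the essential point is that this $o(\epsilon^2)$ is uniform in $Z$ whenever $Z$ ranges over a compact set. Taking $\sup_{Z\in\mathcal{K}}$ or $\sup_{Z\in\ext\mathcal{K}}$ both yield $F(\mathcal{H}\phi(y_0))$ in the coefficient of $\epsilon^2$, since the maximum of a linear functional over a convex body is attained on its extreme points.

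For the supersolution claim, I would take a $C^2$ test function $\phi$ touching $u$ from below at $y_0\in\Omega$ and pick $\epsilon>0$ so small that $\phi\leq u$ on $\bigcup_{Z\in\mathcal{K}}\mathcal{E}_Z(y_0,\epsilon)$; this is possible because $\mathcal{K}$ is compact and thus the diameters of these ellipsoids are uniformly $O(\epsilon)$. Monotonicity of the integral then gives
\[
\sup_{Z\in\mathcal{K}}\fint_{\mathcal{E}_Z(y_0,\epsilon)}\phi(y)\dd y \leq \sup_{Z\in\mathcal{K}}\fint_{\mathcal{E}_Z(y_0,\epsilon)}u(y)\dd y \leq u(y_0)+o(\epsilon^2) = \phi(y_0)+o(\epsilon^2).
\]
Applying the $C^2$ expansion above to the left-hand side, cancelling $\phi(y_0)$, dividing by $\epsilon^2$, and sending $\epsilon\to 0$ yields $F(\mathcal{H}\phi(y_0))\leq 0$, which is what is required.

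For the subsolution claim, the argument is completely analogous but uses a test function $\phi$ touching $v$ from above at $y_0$, so that $\phi\geq v$ near $y_0$ and the inequality in the hypothesis is reversed. The only subtlety is that the hypothesis for $v$ restricts the supremum to $\ext\mathcal{K}$, but this is harmless: since $Z\mapsto\tr(Z\mathcal{H}\phi(y_0))$ is linear, $\sup_{Z\in\ext\mathcal{K}}\tr(Z\mathcal{H}\phi(y_0))=\sup_{Z\in\mathcal{K}}\tr(Z\mathcal{H}\phi(y_0))=F(\mathcal{H}\phi(y_0))$, so the $C^2$ expansion again produces the coefficient $F(\mathcal{H}\phi(y_0))$. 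Passing to the limit gives $F(\mathcal{H}\phi(y_0))\geq 0$.

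The main (minor) obstacle is the uniformity of the Taylor remainder in the $C^2$ expansion as $Z$ varies over $\mathcal{K}$ or $\ext\mathcal{K}$. This is controlled because $\mathcal{K}$ is compact, so $\|\sqrt{Z}\|$ is uniformly bounded; consequently $|y-y_0|\leq C\epsilon$ on every $\mathcal{E}_Z(y_0,\epsilon)$, and the standard $C^2$ remainder estimate $|r(y,y_0)|\leq\omega(|y-y_0|)|y-y_0|^2$ gives a $Z$-independent $o(\epsilon^2)$. With this uniformity observed, the entire argument is essentially the computation producing \eqref{limc2}, applied once to $\phi$ from below and once to $\phi$ from above.
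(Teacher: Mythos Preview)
Your proposal is correct and follows exactly the approach the paper intends: the paper itself says that the computation \eqref{limc2} ``is in fact the proof of Theorem \ref{mvthmsub} below when applied to the test functions,'' and you have carried this out precisely, including the observation that the supremum over $\ext\mathcal{K}$ agrees with the supremum over $\mathcal{K}$ for linear functionals and the remark on uniformity of the Taylor remainder via compactness of $\mathcal{K}$.
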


%
%

Finally, we state Theorem \ref{mvthmeq}. It is true by Theorem \ref{mvthmsub} and the fact that solutions of uniformly elliptic sublinear equations are \(C^{2,\alpha}\).

\begin{theorem}\label{mvthmeq}
Let \(F(\mathcal{H}w) = 0\) be a sublinear and uniformly elliptic equation with associated convex body \(\mathcal{K}\). Assume that \(w\) is a continuous function in \(\Omega\). Then
\[\sup_{Z\in\mathcal{K}}\fint_{\mathcal{E}_Z(y_0,\epsilon)} w(y)\dd y = w(y_0) + o(\epsilon^2)\]
\center
as \(\epsilon\to 0\) for all \(y_0\in\Omega\)
\[\iff\]
\(w\) is a solution in \(\Omega\).
\end{theorem}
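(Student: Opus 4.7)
The statement is an ``if and only if.'' My plan is to handle the two implications separately, with the harder direction being the converse (where a direct appeal to Theorem \ref{mvthmsub} is obstructed, see below).

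For ``$w$ solution $\Rightarrow$ the mean value identity,'' I would invoke the regularity result recorded in the Abstract: a viscosity solution to a sublinear uniformly elliptic equation is $C^{2,\alpha}$. In particular $w$ is a classical $C^2$ function with $F(\mathcal{H}w(y_0))=0$ pointwise. Expanding by Taylor,
\[w(y)=w(y_0)+\nabla w(y_0)(y-y_0)+\tfrac12(y-y_0)^T\mathcal{H}w(y_0)(y-y_0)+R(y),\qquad |R(y)|\leq C|y-y_0|^{2+\alpha}\]
near $y_0$, I would change variables $y=\sqrt{Z}\,x+y_0$ to turn the integral over $\mathcal{E}_Z(y_0,\epsilon)$ into one over $B(0,\epsilon)$. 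The linear term integrates to zero by the odd symmetry of the ball, and the quadratic term is handled by the identity $\fint_{B(0,\epsilon)}x^TAx\,\dd x=\frac{\epsilon^2}{n+2}\tr A$ recorded just before the theorem. This gives
\[\fint_{\mathcal{E}_Z(y_0,\epsilon)}w(y)\,\dd y=w(y_0)+\frac{\epsilon^2}{2(n+2)}\tr(Z\,\mathcal{H}w(y_0))+o(\epsilon^2)\]
with the $o(\epsilon^2)$ remainder uniform over $Z\in\mathcal{K}$ because $\mathcal{K}$ is compact and $\mathcal{H}w$ is H\"older continuous. Taking the supremum over $\mathcal{K}$ turns the middle term into $\frac{\epsilon^2}{2(n+2)}F(\mathcal{H}w(y_0))=0$ and yields the identity.

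For the converse I would argue by contradiction with quadratic test functions, establishing the supersolution and subsolution properties of $w$ separately. Suppose first that $w$ is not a subsolution; by a quadratic reduction in the spirit of Lemma \ref{quadstrict} there is a quadratic $\phi$ touching $w$ strictly from above at some $y_0\in\Omega$ with $F(\mathcal{H}\phi)<0$. Since $\mathcal{K}$ is compact and $Z\mapsto\tr(Z\mathcal{H}\phi)$ is continuous, $\tr(Z\,\mathcal{H}\phi)\leq-c<0$ for some $c>0$ and all $Z\in\mathcal{K}$. Pick $r>0$ so that $\phi>w$ on $\overline{B(y_0,r)}\setminus\{y_0\}$, and $\epsilon_0>0$ so small that $\mathcal{E}_Z(y_0,\epsilon)\subset B(y_0,r)$ for all $Z\in\mathcal{K}$ and $\epsilon<\epsilon_0$ (possible because the semi-axes $\epsilon\sqrt{\lambda_i(Z)}$ stay uniformly bounded on $\mathcal{K}$). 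The exact quadratic computation then yields, for every $Z\in\mathcal{K}$,
\[\fint_{\mathcal{E}_Z(y_0,\epsilon)}w\leq\fint_{\mathcal{E}_Z(y_0,\epsilon)}\phi=\phi(y_0)+\frac{\epsilon^2}{2(n+2)}\tr(Z\,\mathcal{H}\phi)\leq w(y_0)-\frac{c\,\epsilon^2}{2(n+2)},\]
so $\sup_{Z\in\mathcal{K}}\fint w\leq w(y_0)-c'\epsilon^2$, contradicting $\sup=w(y_0)+o(\epsilon^2)$. If instead $w$ fails to be a supersolution, the symmetric argument produces a quadratic $\phi$ touching $w$ strictly from below at $y_0$ together with a single $Z_0\in\mathcal{K}$ with $\tr(Z_0\,\mathcal{H}\phi)>0$; this gives $\fint_{\mathcal{E}_{Z_0}(y_0,\epsilon)}w\geq w(y_0)+c'\epsilon^2$ and hence $\sup_{Z\in\mathcal{K}}\fint w\geq w(y_0)+c'\epsilon^2$, again a contradiction.

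The principal obstacle I anticipate is the gap between $\sup_{Z\in\mathcal{K}}$ and $\sup_{Z\in\ext\mathcal{K}}$: the map $Z\mapsto\fint_{\mathcal{E}_Z(y_0,\epsilon)}w$ is continuous but in general neither linear nor convex in $Z$, so for a merely continuous $w$ one cannot expect these two suprema to coincide. This is precisely why Theorem \ref{mvthmsub} phrases the subsolution hypothesis over $\ext\mathcal{K}$ (where, applied to a quadratic test function, $Z\mapsto\fint$ \emph{is} affine in $Z$, and the two suprema agree), and why a naive plug-in of $u=v=w$ into Theorem \ref{mvthmsub} does not close the converse. The direct test function comparison above sidesteps the difficulty by working from the outset with the quadratic $\phi$, for which the relevant functional of $Z$ is affine.
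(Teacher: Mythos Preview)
Your proof is correct and follows the same two-ingredient strategy the paper invokes in its one-sentence justification: \(C^{2,\alpha}\) regularity for the implication ``solution \(\Rightarrow\) mean value identity'' (this is exactly \eqref{limc2} applied to \(\phi=w\)), and the test-function comparison underlying Theorem~\ref{mvthmsub} for the converse.

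Your remark about the \(\mathcal{K}\) versus \(\ext\mathcal{K}\) discrepancy is a fair observation at the level of formality. Since \(\ext\mathcal{K}\subseteq\mathcal{K}\), the subsolution hypothesis in Theorem~\ref{mvthmsub} (stated over \(\ext\mathcal{K}\)) is \emph{stronger} than the \(\sup_{\mathcal{K}}\) lower bound that the equality in Theorem~\ref{mvthmeq} actually gives you, so a black-box citation does not literally close the argument. But this is not a genuine obstacle: the paper's own proof of Theorem~\ref{mvthmsub} is nothing more than \eqref{limc2} applied to test functions, and that computation works verbatim with \(\mathcal{K}\) in place of \(\ext\mathcal{K}\) (indeed yielding a stronger statement). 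Your direct contradiction argument with quadratic test functions \emph{is} that proof. So you have not taken a different route; you have simply written out what the paper's citation leaves implicit, and correctly flagged the small bookkeeping gap along the way.
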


\section{Convolution with positive measures}\label{sec:convpos}
\begin{theorem}\label{convthm}
Let \(u\) be a supersolution to a sublinear and uniformly elliptic equation in \(\mathbb{R}^n\) and let \(\mu\geq 0\) be a Radon measure. Define the convolution
\[w(x) := (u*\mu)(x) = \int_{\mathbb{R}^n}u(x-y)\mu(\dd y)\]
and assume that one of the following conditions hold.
\begin{enumerate}
	\item \(w\not\equiv\infty\) and \(\forall R>0\) there exists a \(\mu\)-integrable function \(C_R\) such that \(u^-(x-y)\leq C_R(y)\) for all \(x\in B_R := B(0,R)\).
	\item \(w\not\equiv\infty\) and \(u\geq 0\).
	\item \(u\) is a fundamental function, \(u(x) = c|x|^\frac{p-n}{p-1}\), \(c\geq 0\), \(2\leq p< n\), and \(\int_{|y|\geq 1}|y|^\frac{p-n}{p-1}\mu(\dd y)<\infty\).
	\item \(w\not\equiv\infty\) and \(\mu\) has compact support. 
\end{enumerate}
Then \(w\) is a supersolution in \(\mathbb{R}^n\).
\end{theorem}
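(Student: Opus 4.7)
The strategy is to exploit the mean value characterization of Theorem \ref{mvthm}, which reduces the task to checking an integral inequality. For fixed $x_0 \in \mathbb{R}^n$, $\epsilon > 0$, and $Z \in \mathcal{K}$, the translation invariance of Lebesgue measure combined with Fubini's theorem yields
\[
\fint_{\mathcal{E}_Z(x_0,\epsilon)} w(x)\,\dd x \;=\; \int_{\mathbb{R}^n}\fint_{\mathcal{E}_Z(x_0-y,\epsilon)} u(z)\,\dd z\;\mu(\dd y).
\]
Since $u$ is a supersolution on all of $\mathbb{R}^n$, Theorem \ref{mvthm} bounds the inner average by $u(x_0 - y)$ for every $y$. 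Taking the supremum over $Z \in \mathcal{K}$ and using the elementary inequality $\sup_Z \int f_Z\,\dd\mu \leq \int \sup_Z f_Z\,\dd\mu$ then gives
\[
\sup_{Z \in \mathcal{K}}\fint_{\mathcal{E}_Z(x_0,\epsilon)} w(x)\,\dd x \;\leq\; \int_{\mathbb{R}^n} u(x_0 - y)\,\mu(\dd y) \;=\; w(x_0).
\]
Once it is verified that $w$ is lower semicontinuous and $w \not\equiv \infty$, the converse direction of Theorem \ref{mvthm} concludes that $w$ is a supersolution.

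The role of the four hypotheses is precisely to justify Fubini above and to deliver the lower semicontinuity of $w$ via Fatou's lemma: one needs, for every $R > 0$, a $\mu$-integrable function $C_R(y)$ dominating $u^-(x - y)$ uniformly for $x \in \overline{B_R}$, after which Fatou applied to $u(x_k - y) + C_R(y) \geq 0$ gives $\liminf_{x_k \to x} w(x_k) \geq w(x)$. Case 1 assumes such a $C_R$ outright. In case 2 we have $u \geq 0$ and may take $C_R \equiv 0$. In case 4 the set $\{x - y \colon x \in \overline{B_R},\, y \in \operatorname{supp}\mu\}$ is compact, on which the l.s.c.\ function $u$ is bounded below by some constant, and that constant is $\mu$-integrable because $\mu$ is compactly supported.

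Case 3 is the subtlest, and I expect the moment bookkeeping there to be the main obstacle. Here $u(x) = c|x|^\beta$ with $\beta := (p-n)/(p-1) \in (-n, 0)$, so $u \geq 0$ (hence Fatou and Fubini use the trivial minorant $C_R \equiv 0$), but the finiteness $w \not\equiv \infty$ has to be extracted. The Radon-ness of $\mu$ together with the local Lebesgue-integrability of $u$ (which holds because $\beta > -n$ whenever $p \geq 2$) implies by an interchange of integration that $w$ is locally Lebesgue-integrable; in particular $w < \infty$ almost everywhere. The tail is controlled by the decay bound $|x-y|^\beta \leq 2^{|\beta|}|y|^\beta$ valid for $|y| \geq 2|x|$ together with the assumed moment hypothesis $\int_{|y|\geq 1}|y|^\beta\mu(\dd y) < \infty$, while the near-diagonal piece $\{|y-x| \leq 1\}$ is handled by the local integrability of $u$ against the finite Radon measure $\mu$ restricted to a compact set. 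Once this bookkeeping is carried out, the single calculation of the first paragraph finishes the proof in all four cases uniformly.
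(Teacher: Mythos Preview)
Your proof is correct and follows essentially the same route as the paper: Fatou's lemma (with the $\mu$-integrable minorant $C_R$) for lower semicontinuity, then Fubini plus the mean value characterization of Theorem~\ref{mvthm} for the supersolution inequality, with cases~2--4 reduced to case~1. The only difference is in case~3: the paper simply cites a potential-theory reference for $w\not\equiv\infty$, whereas you sketch a direct moment estimate---your split is slightly mis-phrased (the relevant decomposition is $\{|y|\le 2R\}$ versus $\{|y|>2R\}$ when bounding $\int_{B_R}w$, not a ``near-diagonal'' piece), but the idea is sound and easily made precise.
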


We denote the positive and negative part of a function \(f\) by \(f^+(x) := \max\{f(x),0\}\geq 0\) and \(f^-(x) := -\min\{f(x),0\}\geq 0\), respectively.

\begin{proof}[Proof]
%
Assume condition 1. Fix \(R>0\). For \(k = 1,2,\dots\) write \(B_k := B(0,k)\) and define the functions \(f_k\) in \(B_R\) as
\[f_k(x) := \int_{B_k} \left(u^+(x-y) - u^-(x-y) + C_R(y)\right)\mu(\dd y).\]
By assumption, the integrand is non-negative for all \(y\in\mathbb{R}^n\) when \(x\in B_R\).
Thus \((f_k)\) is an increasing sequence converging pointwise in \(B_R\) to
\[f(x) := \int_{\mathbb{R}^n} \left(u(x-y) + C_R(y)\right)\mu(\dd y) = w(x) + S_R\]
where \(S_R := \int_{\mathbb{R}^n}C_R(y)\mu(\dd y)<\infty\). Also, for each \(k\),
\begin{align*}
\liminf_{x\to x_0}f_k(x)
	&\geq \int_{B_k} \liminf_{x\to x_0}\left(u(x-y) + C_R(y)\right)\mu(\dd y)\\
	&\geq \int_{B_k} \left(u(x_0-y) + C_R(y)\right)\mu(\dd y) = f_k(x_0),
\end{align*}
by Fatou's Lemma (Theorem 1.24 \cite{MR0435422}) and since \(u\) is l.s.c. This means that \(f\) is l.s.c (Proposition \ref{semicontsup})  and that \(w|_{B_R}\) is as well.

Next, for \(Z\) in the associated convex body, \(\epsilon>0\), and \(x_0\in\mathbb{R}^n\), the mean value property and Fubini's Theorem yields
\begin{align*}
\fint_{\mathcal{E}_Z(x_0,\epsilon)} f(x)\dd x
	&= \fint_{\mathcal{E}_Z(x_0,\epsilon)} \int_{\mathbb{R}^n}\left(u(x-y) + C_R(y)\right)\mu(\dd y)\dd x\\
	&= \int_{\mathbb{R}^n}\fint_{\mathcal{E}_Z(x_0,\epsilon)} \left(u(x-y) + C_R(y)\right)\dd x\mu(\dd y)\\
	&\leq \int_{\mathbb{R}^n}\left(u(x_0-y) + C_R(y)\right)\mu(\dd y), && \text{by Thm \ref{mvthm},}\\
	&= f(x_0)
\end{align*}
and \(f\) is a supersolution in \(B_R\) by Theorem \ref{mvthm}. Since \(w(x) = f(x) - S_R\) and \(R>0\) was arbitrary, we conclude that \(w\) is a supersolution in \(\mathbb{R}^n\).

Condition 2 is sufficient by condition 1: Set \(C_R(y)\equiv 0\).

Assume condition 3. By part 2, we only need to show that \(w\not\equiv\infty\). But that is exactly what is insured by Theorem 3.4 in \cite{MR0435422}. 

Assume condition 4. Let \(K\) be a compact set containing the support of \(\mu\). i.e. \(\mu(\mathbb{R}^n) = \mu(K)<\infty\). Since \(u\) is l.s.c., it obtains its minimum on compact sets (Proposition \ref{lscmin}). Set therefore
\[C_R(y) := \max_{x\in\overline{B}_R}u^-(x-y) < \infty.\]
Then \(C_R(y) \geq u^-(x-y)\) for all \(x\in B_R\). It is \(\mu\)-integrable, since for \(y\in K\) we have
\[C_R(y) \leq \max_{z\in\overline{B}_R-K}u^-(z) =: M_R < \infty\]
and thus
\[\int_{\mathbb{R}^n}C_R(y)\mu(\dd y) = \int_K C_R(y)\mu(\dd y) \leq M_R\mu(K)<\infty.\]
By part 1, it follows that \(w\) is a supersolution in \(\mathbb{R}^n\).
\end{proof}

\begin{corollary}
Suppose that \(u\) is a supersolution to a sublinear and uniformly elliptic equation in \(\mathbb{R}^n\).
\begin{enumerate}
	\item Let \(Z\) be a matrix in the operators associated convex body. Then
	\[w_{Z,\epsilon}(x) := \fint_{\mathcal{E}_Z(x,\epsilon)} u(y)\dd y\]
	is a continuous supersolution in \(\mathbb{R}^n\) converging pointwise to \(u\) from below as \(\epsilon\to 0\).
	\item Let \(\delta_\epsilon\in C^\infty_0(\mathbb{R}^n)\) be the standard mollifier and set
	\[w_\epsilon(x) := \int_{\mathbb{R}^n} \delta_\epsilon(y) u(x-y)\dd y.\]
	Then \(w_\epsilon\) is a \(C^\infty\) supersolution in \(\mathbb{R}^n\) that converges almost everywhere to \(u\) as \(\epsilon\to 0\).
\end{enumerate}
\end{corollary}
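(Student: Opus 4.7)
The plan is to realize both constructions as convolutions $u * \mu$ with a nonnegative compactly supported Radon measure, so that Theorem \ref{convthm} (condition 4) supplies the supersolution conclusion as soon as we verify $w \not\equiv \infty$. For part 1, let $\mu_{Z,\epsilon}$ be the normalized Lebesgue measure on the ellipsoid $\mathcal{E}_Z(0,\epsilon)$; since this ellipsoid is symmetric about the origin, $(u*\mu_{Z,\epsilon})(x)$ coincides with $\fint_{\mathcal{E}_Z(x,\epsilon)} u(y)\dd y = w_{Z,\epsilon}(x)$. Finiteness everywhere comes from the local integrability of supersolutions (shown en route to Theorem \ref{mvthm}). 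For part 2, $w_\epsilon = u*\delta_\epsilon$ is literally a convolution with a compactly supported nonnegative smooth density, and local integrability again gives $w_\epsilon \not\equiv \infty$. In both cases Theorem \ref{convthm} then yields the supersolution property directly.

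For the remaining claims in part 1, continuity of $w_{Z,\epsilon}$ is handled by dominated convergence: for $x_k \to x$, the characteristic functions $\chi_{\mathcal{E}_Z(x_k,\epsilon)}$ tend to $\chi_{\mathcal{E}_Z(x,\epsilon)}$ pointwise outside the null boundary, and all are supported inside a common compact set on which $u$ is integrable (with $u^-$ bounded above by lower semi-continuity). The pointwise bound $w_{Z,\epsilon}(x)\le u(x)$ is immediate from the mean-value inequality of Theorem \ref{mvthm} because $Z\in\mathcal{K}$. For the convergence from below as $\epsilon\to 0$, lower semi-continuity supplies, given $\eta>0$, a radius $r>0$ with $u \ge u(x)-\eta$ on $B(x,r)$; for $\epsilon$ small enough that $\mathcal{E}_Z(x,\epsilon)\subseteq B(x,r)$, averaging yields $w_{Z,\epsilon}(x)\ge u(x)-\eta$. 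Combined with the upper bound, this forces $w_{Z,\epsilon}(x) \nearrow u(x)$ (up to a liminf, which together with the pointwise bound pins the limit).

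For part 2, smoothness of $w_\epsilon$ follows from differentiation under the integral, which is justified because $\delta_\epsilon \in C^\infty_0$ and $u \in L^1_{\rm loc}$; each derivative is a convolution $u * \partial^\alpha\delta_\epsilon$. Almost-everywhere convergence $w_\epsilon(x)\to u(x)$ is the standard Lebesgue differentiation theorem, valid at every Lebesgue point of the locally integrable function $u$. The only mild subtlety throughout is bookkeeping the two-sided finiteness needed to pass $u$ under the various integrals — but lower semi-continuity makes $u^-$ locally bounded and the supersolution property makes $u^+$ locally integrable, so every interchange of limit and integral reduces to dominated convergence on a common compact set. No single step is hard; the main care is just matching the hypotheses of Theorem \ref{convthm} (condition 4) before invoking it.
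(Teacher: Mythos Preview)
Your proof is correct and follows essentially the same route as the paper: both parts are reduced to Theorem~\ref{convthm} (condition 4) by exhibiting the appropriate compactly supported nonnegative measure, with $w\not\equiv\infty$ coming from the local integrability established in the proof of Theorem~\ref{mvthm}. The only difference is cosmetic: where the paper cites external references for the continuity of $w_{Z,\epsilon}$, the pointwise convergence $w_{Z,\epsilon}\to u$, and the smoothness and a.e.\ convergence of $w_\epsilon$, you sketch these standard arguments directly via dominated convergence and lower semi-continuity.
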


\begin{proof}[Proof of 1]
By a change of variables, \(w_{Z,\epsilon}\) is continuous by Theorem 2.35 in \cite{MR0435422} since \(u\) is locally integrable. Also \(\lim_{\epsilon\to 0}w_{Z,\epsilon}(x) = u(x)\) by Theorem 2.30 in the same reference. Furthermore, \(w_{Z,\epsilon}(x)\leq u(x)\) by Theorem \ref{mvthm} and \(w_{Z,\epsilon}\) is a supersolution by part 4 of Theorem \ref{convthm} by setting
\[\mu_{Z,\epsilon}(E) := \frac{|E\cap\mathcal{E}_Z(0,\epsilon)|}{|\mathcal{E}_Z(0,\epsilon)|}.\]
\end{proof}

\begin{proof}[Proof of 2]
\(w_\epsilon\) is a supersolution by part 4 of Theorem \ref{convthm} by setting \(\mu_\epsilon(E) := \int_E \delta_\epsilon(y)\dd y\). The rest of the claims follow from Theorem 1.29 in \cite{MR0435422}. 
\end{proof}


\section{The strong comparison principle}\label{sec:strongcomp}

In Appendix \ref{ch:visc_ell} it is shown that an operator \(\D u = F(x,\mathcal{H}u)\) is uniformly elliptic if and only if there exists constants \(0<\lambda\leq \Lambda\) so that
\begin{equation}
F(x,A+B) - F(x,A) \leq P_{\lambda,\Lambda}(B)
\label{xunifdef}
\end{equation}
for all \(x\in\mathbb{R}^n\) and all \(A,B\in S(n)\).

Assume in addition that \(F(x,0)\equiv 0\).
Taking \(B=X, A=0\) and then \(B=-X, A=X\), \eqref{xunifdef} implies
\[-P_{\lambda,\Lambda}(-X)\leq F(x,X) \leq P_{\lambda,\Lambda}(X)\]
for all \(x\in\mathbb{R}^n\) and all \(X\in S(n)\). We have proved

\begin{proposition}\label{subsubpuc}
Let \(\D w = F(x,\mathcal{H}w) = 0\) be a uniformly elliptic equation with ellipticity constants \(0<\lambda\leq\Lambda\) and where \(F(x,0)\equiv 0\). Denote by \(\D_{\lambda,\Lambda} w := P_{\lambda,\Lambda}(\mathcal{H}w)\) the Pucci operator. Then the following hold in the viscosity sense in \(\Omega\).
\begin{enumerate}
	\item If \(\D v \geq 0\), then \(\D_{\lambda,\Lambda}v \geq 0\).
	\item If \(\D u \leq 0\), then \(\D_{\lambda,\Lambda}[-u] \geq 0\).
\end{enumerate}
\end{proposition}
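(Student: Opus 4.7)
The plan is to lift the two pointwise bounds
\[-P_{\lambda,\Lambda}(-X)\ \leq\ F(x,X)\ \leq\ P_{\lambda,\Lambda}(X),\]
which have already been derived from \eqref{xunifdef} in the paragraph just before the proposition, to the viscosity sense by evaluating them on Hessians of test functions. No new machinery is needed; the content is purely bookkeeping.

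For part 1, I would take a viscosity subsolution $v$ of $\D w=0$ and a $C^2$ test function $\phi$ touching $v$ from above at some $x_0\in\Omega$. By definition $F(x_0,\mathcal{H}\phi(x_0))\geq 0$, and plugging $X:=\mathcal{H}\phi(x_0)$ into the right-hand pointwise bound gives $P_{\lambda,\Lambda}(\mathcal{H}\phi(x_0))\geq F(x_0,\mathcal{H}\phi(x_0))\geq 0$. Since this holds for every such $\phi$ and $x_0$, $v$ is a viscosity subsolution of $\D_{\lambda,\Lambda}w=0$, i.e.\ $\D_{\lambda,\Lambda}v\geq 0$.

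For part 2, let $u$ be a viscosity supersolution of $\D w=0$. The goal is to show that $-u$ is a viscosity subsolution of $\D_{\lambda,\Lambda}w=0$. Given a test function $\phi$ touching $-u$ from above at $x_0$, the function $-\phi$ touches $u$ from below at $x_0$, so the supersolution property of $u$ yields $F(x_0,-\mathcal{H}\phi(x_0))=F(x_0,\mathcal{H}(-\phi)(x_0))\leq 0$. I would then invoke the left-hand pointwise bound with $X:=-\mathcal{H}\phi(x_0)$ to obtain
\[-P_{\lambda,\Lambda}(\mathcal{H}\phi(x_0))\ \leq\ F(x_0,-\mathcal{H}\phi(x_0))\ \leq\ 0,\]
so $P_{\lambda,\Lambda}(\mathcal{H}\phi(x_0))\geq 0$, as required.

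There is no real obstacle in this argument: the analytic work -- turning uniform ellipticity into the sandwich between $\pm P_{\lambda,\Lambda}$ -- has already been done pointwise just above the statement. The one conceptual step worth stating carefully is the sign-flip in part 2, namely that a test function from above for $-u$ is exactly a test function from below for $u$, with the opposite Hessian. Once that observation is in place, each part reduces to a one-line application of the relevant pointwise inequality.
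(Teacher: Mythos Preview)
Your argument is correct and is exactly the approach the paper takes: the paper derives the pointwise sandwich \(-P_{\lambda,\Lambda}(-X)\leq F(x,X)\leq P_{\lambda,\Lambda}(X)\) just above the proposition and then declares the result proved, leaving the viscosity bookkeeping implicit. You have simply made that bookkeeping explicit, including the sign-flip observation for part 2, so there is nothing to add.
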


Thus if \(w\) is a \emph{solution} to \(\D w = 0\), then both \(w\) and \(-w\) are subsolutions to \(\D_{\lambda,\Lambda}w=0\).

Given \(\Omega\subseteq\mathbb{R}^n\), we define the classes
\begin{equation}
\begin{aligned}
\uls_{\lambda,\Lambda} &:= \left\{v\;|\; \text{\(v\) is a subsolution to \(\D_{\lambda,\Lambda}w=0\) in \(\Omega\)}\right\},\\
\ols_{\lambda,\Lambda} &:= \left\{u\;|\; \text{\(-u\) is a subsolution to \(\D_{\lambda,\Lambda}w=0\) in \(\Omega\)}\right\},\\
S_{\lambda,\Lambda} &:= \uls_{\lambda,\Lambda}\cap \ols_{\lambda,\Lambda}.
\end{aligned}
\label{Sclass}
\end{equation}
This is the definition given in \cite{MR1351007}, except that they use some other constants in \eqref{xunifdef} and also require the functions to be continuous. \eqref{Sclass} are essential constructions for the derivation of the regularity results, as well as \emph{the strong maximum principle}, in non-linear uniformly elliptic equations.

\begin{figure}[ht]
\centering
\includegraphics{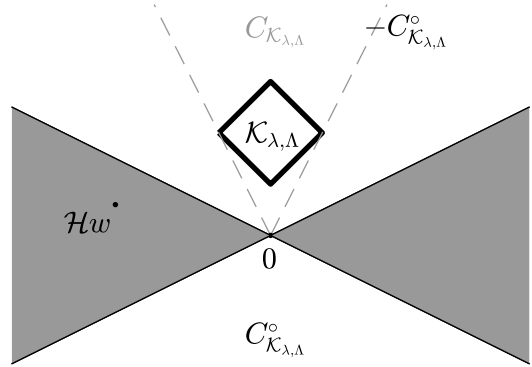}%
\caption{The Hessian of a solution \(w\) to a uniformly elliptic equation is confined to the region in \(S(n)\) marked in gray.}%
\label{fig:strong_comp}%
\end{figure}

There is a geometric interpretation of these classes, at least for \(C^2\) members: Recall the \emph{body cone} \(C_{\mathcal{K}_{\lambda,\Lambda}}\) and the dual \emph{solution cone} \(C_{\mathcal{K}_{\lambda,\Lambda}}^\circ\) in \(S(n)\) to the Pucci operator. We have
\[X\in C_{\mathcal{K}_{\lambda,\Lambda}}^\circ\qquad\iff\qquad P_{\lambda,\Lambda}(X) \leq 0\]
and thus
\[X\in -C_{\mathcal{K}_{\lambda,\Lambda}}^\circ\qquad\iff\qquad P_{\lambda,\Lambda}(-X) \leq 0.\]
This means that a \(C^2\) function \(v\) is a subsolution to the Pucci equation if and only if, at each point, its Hessian lies in the compliment \(\overline{S(n)\setminus C_{\mathcal{K}_{\lambda,\Lambda}}}\). Likewise, \(-v\) is a subsolution if and only if the Hessian lays in \(\overline{S(n)\setminus -C_{\mathcal{K}_{\lambda,\Lambda}}}\).
By Proposition \ref{subsubpuc}, the Hessian of a \(C^2\) solution to a uniformly elliptic equation \(F(x,\mathcal{H}w) = 0\) with \(F(x,0)\equiv 0\) is therefore trapped between the two solution cones \(C_{\mathcal{K}_{\lambda,\Lambda}}^\circ\) and \(-C_{\mathcal{K}_{\lambda,\Lambda}}^\circ\). See Figure \ref{fig:strong_comp}.

The picture literally complements our survey: As we have investigated supersolutions to sublinear equations \(F(\mathcal{H}w)=0\) (or equivalently, subsolutions to \emph{superlinear} equations \(-F(-\mathcal{H}w)=0\)) and thus have been concerned with the double cone \(C_{\mathcal{K}_F}^\circ\cup -C_{\mathcal{K}_F}^\circ\), Caffarelli and Cabré study its exterior.

We state the strong maximum principle valid for supersolutions to uniformly elliptic equations.
\begin{proposition}[Caffarelli, Cabré]\label{strongmax}
Let \(u\in\ols_{\lambda,\Lambda}\) in a connected open subset \(\Omega\subseteq\mathbb{R}^n\). Assume that \(u\geq 0\) and \(u(x_0) = 0\) for some \(x_0\in\Omega\). Then \(u\equiv 0\) in \(\Omega\).
\end{proposition}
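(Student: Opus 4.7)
The plan is to show that $Z := u^{-1}(0) \subseteq \Omega$ is both open and closed; since it is nonempty (it contains $x_0$) and $\Omega$ is connected, this forces $Z = \Omega$. Closedness is immediate: because $u \geq 0$, we have $Z = \{u \leq 0\}$, which is closed in $\Omega$ by the lower semi-continuity of $u$. Everything therefore reduces to proving that $Z$ is open. For this I would pass to $v := -u$, which by the definition of $\ols_{\lambda,\Lambda}$ is an upper semi-continuous viscosity subsolution of the sublinear, uniformly elliptic Pucci equation $P_{\lambda,\Lambda}(\mathcal{H}w) = 0$, with $v \leq 0$ throughout $\Omega$ and $v(x_0) = 0$.

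Assume toward contradiction that $Z$ is not open. Elementary geometry then produces a point $y \in \Omega \setminus Z$ and a radius $r > 0$ with $\overline{B_r(y)} \subset \Omega$, $B_r(y) \cap Z = \emptyset$, and a touching point $x_2 \in \partial B_r(y) \cap Z$; upper semi-continuity of $v$ on the compact set $\partial B_{r/2}(y) \subset \Omega \setminus Z$ gives $-M := \max_{\partial B_{r/2}(y)} v < 0$. On the annulus $A := B_r(y) \setminus \overline{B_{r/2}(y)}$ I would introduce the classical Hopf barrier
\[
g(x) := \epsilon \bigl( e^{-\alpha r^2} - e^{-\alpha |x-y|^2} \bigr),
\]
choosing $\alpha$ large enough that a direct eigenvalue computation (the radial eigenvalue of $\mathcal{H}g$ becomes strongly negative while the $n-1$ tangential eigenvalues remain positive) yields $P_{\lambda,\Lambda}(\mathcal{H}g) < 0$ classically throughout $\overline{A}$, and calibrating $\epsilon > 0$ so that $g \equiv -M$ on the inner sphere $\partial B_{r/2}(y)$. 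Then $v \leq g$ on $\partial A$, and Comparison Principle I (Theorem \ref{compp}), whose nondegeneracy hypothesis \eqref{nondegcond} is met since $P_{\lambda,\Lambda}(-I) = -n\lambda < 0$, extends this inequality to $v \leq g$ throughout $A$.

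To close the argument I would observe that $g > 0$ outside $\overline{B_r(y)}$ while $v \leq 0$ on all of $\Omega$; together with the comparison on $A$ and the equality $v(x_2) = g(x_2) = 0$, this yields $v \leq g$ in a full neighborhood of $x_2$ in $\Omega$, with equality at $x_2$. Hence $g$ is a legitimate smooth test function from above for $v$ at the interior point $x_2 \in \Omega$, forcing $P_{\lambda,\Lambda}(\mathcal{H}g(x_2)) \geq 0$ by the viscosity subsolution property---contradicting the strict inequality $P_{\lambda,\Lambda}(\mathcal{H}g(x_2)) < 0$ built into the barrier. The main obstacle is arranging that the Hopf barrier serves as a test function in a \emph{full} neighborhood of the boundary touching point $x_2 \in \partial B_r(y)$, rather than only on the annulus where comparison is performed; this is what the careful sign management $v \leq 0 < g$ outside the ball accomplishes. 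Alternatively, one may bypass the explicit barrier entirely and cite the weak Harnack inequality for the class $\ols_{\lambda,\Lambda}$ proved in \cite{MR1351007}, which immediately yields $u \equiv 0$ on a ball about $x_0$ from $u(x_0) = 0$ via the vanishing of $L^{p_0}$-averages and lower semi-continuity.
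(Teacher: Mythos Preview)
Your argument is correct and follows a genuinely different route from the paper's. The paper does not reprove the Hopf barrier: it regularises. Concretely, it sets \(v := \max\{-u,-1\}\), takes the sup-convolution \(v^\epsilon\) (continuous, still a Pucci subsolution on a small ball \(D\ni x_0\), with \(v\leq v^\epsilon\leq 0\) and \(v^\epsilon(x_0)=0\)), and then invokes the \emph{continuous} strong maximum principle of Caffarelli--Cabr\'e as a black box to force \(v^\epsilon\equiv 0\) on \(D\); letting \(\epsilon\to 0\) gives \(u\equiv 0\) on \(D\), hence the zero set is open.

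Your proof is more self-contained: you carry out the Hopf barrier directly and, crucially, the semi-continuous vs.\ continuous issue is absorbed by the paper's own Comparison Principle~I (Theorem~\ref{compp}), which already works for l.s.c./u.s.c.\ functions. So no regularisation step is needed. The price is the explicit calibration of the barrier and the careful ``full neighbourhood'' argument at the touching point \(x_2\), which you handle correctly via \(v\leq 0< g\) outside \(\overline{B_r(y)}\). A minor remark: when invoking Theorem~\ref{compp} on the annulus \(A\), note that \(g\) is only a supersolution on a neighbourhood of \(\overline A\), not on all of \(\Omega\); this is harmless (apply the theorem with the ambient domain taken to be that neighbourhood), but worth stating to match the hypotheses literally. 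Your closing remark about the weak Harnack alternative is also valid and is in fact closer in spirit to how \cite{MR1351007} proves its Proposition~4.9.
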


\begin{proof}
This is Proposition 4.9 in \cite{MR1351007}, except that we now also allow the supersolution to be only lower semi-continuous.
We bridge the gap by sup-convolutions.

By the definition of \(\ols_{\lambda,\Lambda}\) and Lemma \ref{minlem},
\[v(x) := \max\{-u(x),-1\}\]
is a locally bounded subsolution to the Pucci equation in \(\Omega\). We have \(-1\leq v\leq 0\). Assume \(u(x_0) = 0\), i.e. \(v(x_0) = 0\), and choose a neighbourhood \(D\) of \(x_0\) and an \(\epsilon'>0\), according to Proposition \ref{supconvthm}, so that the restricted sup-convolution
\[v^\epsilon(x) := \sup_{y\in D}\left\{v(y) - \frac{|x-y|^2}{2\epsilon}\right\}\]
is a subsolution in \(D\) for \(0<\epsilon\leq\epsilon'\). We know that \(v^\epsilon\geq v\), but it is easily verified that we also have \(v^\epsilon\leq 0\) and \(v^\epsilon(x_0) = 0\).

Since \(-v^\epsilon\in\ols_{\lambda,\Lambda}\) and \(v^\epsilon\) is continuous, it follows from \cite{MR1351007} that \(v^\epsilon\equiv 0\) in \(D\) and  
\[\max\{-u(x),-1\} = v(x) = \lim_{\epsilon\to 0}v^\epsilon(x) = 0\]
as well.

This means that \(u\equiv 0\) in \(D\), and we have shown that the set
\[N:=\{x\in\Omega\;|\; u(x)=0\} = \{x\in\Omega\;|\; u(x)\leq 0\},\]
which is closed in \(\Omega\) by lower semi-continuity (Proposition \ref{lsceqapp}), also is open. That is, \(N=\Omega\).
\end{proof}

\begin{theorem}[The strong comparison principle]
Let \(u\) be a supersolution and let \(v\) be a subsolution to a uniformly elliptic equation \(F(\mathcal{H}w) = 0\) in a connected open subset \(\Omega\subseteq\mathbb{R}^n\). If \(u\geq v\) and \(u(x_0) = v(x_0)\) at some point \(x_0\in\Omega\) then \(u\equiv v\).
\end{theorem}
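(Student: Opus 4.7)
The plan is to reduce the strong comparison principle to the strong maximum principle (Proposition \ref{strongmax}) by showing that the difference $u-v$ lies in the class $\ols_{\lambda,\Lambda}$ for some ellipticity constants $0<\lambda\leq\Lambda$. Once this is done, the nonnegativity of $u-v$ together with the vanishing at $x_0$ will force $u-v\equiv 0$ throughout the connected domain $\Omega$.

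First, I would invoke the characterization of uniform ellipticity \eqref{altunidef}: since $F$ is sublinear and uniformly elliptic, there exist $0<\lambda\leq\Lambda$ such that
\[F(X+Y) \leq F(X) + P_{\lambda,\Lambda}(Y)\qquad\text{for all } X,Y\in S(n).\]
This is precisely the hypothesis \eqref{altunidefII} of Theorem \ref{subsupsub} with $G=P_{\lambda,\Lambda}$, which is sublinear and (uniformly, hence) degenerate elliptic. Applying that theorem to the supersolution $u$ and the subsolution $v$ of $F(\mathcal{H}w)=0$ yields that $v-u$ is a viscosity subsolution of the Pucci equation $P_{\lambda,\Lambda}(\mathcal{H}w)=0$ in $\Omega$. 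By the definition \eqref{Sclass}, this exactly says $u-v\in\ols_{\lambda,\Lambda}$.

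Now the hypotheses of Proposition \ref{strongmax} are in place: $u-v\in\ols_{\lambda,\Lambda}$, $u-v\geq 0$ in the connected open set $\Omega$, and $(u-v)(x_0)=0$. The strong maximum principle for supersolutions of uniformly elliptic equations therefore forces $u-v\equiv 0$ in $\Omega$, which is the desired conclusion $u\equiv v$.

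The only genuine step is the first one; after that the result is immediate. The potential subtlety I would watch out for is that Theorem \ref{subsupsub} is stated for general sublinear $F$ satisfying \eqref{altunidefII} and does not require $u$ or $v$ to be continuous -- only l.s.c. and u.s.c. respectively -- which is exactly what we need since Proposition \ref{strongmax} was extended here to merely l.s.c.\ supersolutions. Thus no additional regularization of $u$ and $v$ is needed at this outer level, the sup-convolution argument having already been absorbed into the proof of the strong maximum principle itself.
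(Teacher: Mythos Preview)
Your proof is correct and follows essentially the same route as the paper: apply Theorem \ref{subsupsub} with \(G=P_{\lambda,\Lambda}\) to conclude that \(v-u\) is a Pucci subsolution, i.e.\ \(u-v\in\ols_{\lambda,\Lambda}\), and then invoke the strong maximum principle (Proposition \ref{strongmax}). One small remark: sublinearity of \(F\) itself is not needed here, since Theorem \ref{subsupsub} only requires \(G\) to be sublinear and the condition \eqref{altunidefII}, which holds for any uniformly elliptic \(F\) by \eqref{altunidef}.
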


\begin{proof}
By Theorem \ref{subsupsub}, the function \(v-u\) is a subsolution to the Pucci equation \(P_{\lambda,\Lambda}(\mathcal{H}w) = 0\) for some ellipticity constants \(0<\lambda\leq \Lambda\). Thus
\[h := u-v = -(v-u) \in \ols_{\lambda,\Lambda}\]
and satisfies \(h\geq 0\) and \(h(x_0) = 0\). Therefore \(h\equiv 0\) and \(u\equiv v\) in \(\Omega\) by Proposition \ref{strongmax}.
\end{proof}

\appendix

\chapter{Appendix}

\section{Semi-continuity}
\begin{definition}
A function \(u\colon\Omega\to(-\infty,\infty]\) is \textbf{lower semi-continuous} (l.s.c.) at \(x_0\in\Omega\) if
\begin{itemize}
	\item \(u(x_0)<\infty\) and given \(\epsilon>0\) there exists a \(\delta>0\) such that
	\[u(x) \geq u(x_0) - \epsilon\qquad\text{whenever}\qquad |x-x_0|<\delta.\]
	\item \(u(x_0)=\infty\) and \(\lim_{x\to x_0}u(x) = \infty\).
\end{itemize}
We say that \(u\) is l.s.c. if it is l.s.c. for all \(x_0\in\Omega\).
\end{definition}

Likewise, a function \(v\colon\Omega\to[-\infty,\infty)\) is \textbf{upper semi-continuous} (u.s.c.) if \(-v\) is l.s.c.

\begin{proposition}\label{lsceqapp}
Let \(u\colon\Omega\to(-\infty,\infty]\). Then the following are equivalent.
\begin{enumerate}
	\item \(u\) is l.s.c.
	\item \begin{equation}
\liminf_{x\to x_0}u(x) := \lim_{r\to 0}\inf_{\substack{x\in B_r(x_0)\\ x\neq x_0}}u(x) \geq u(x_0)
\label{liminfcond}
\end{equation}
at each \(x_0\in\Omega\).
	\item \label{lsc_open}The suplevelset
	\[\Omega_\alpha := \left\{x\in\Omega\;|\; u(x) > \alpha\right\}\]
	is open for every \(\alpha\in\mathbb{R}\).
\end{enumerate}
\end{proposition}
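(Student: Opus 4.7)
The plan is to establish the equivalence by proving the cycle (1) $\Rightarrow$ (2) $\Rightarrow$ (3) $\Rightarrow$ (1), handling the finite and infinite values of $u(x_0)$ as parallel cases at each step. Throughout, the only tools needed are the $\epsilon$-$\delta$ definition itself and the fact that $\liminf_{x\to x_0} u(x)$ is monotone in the ball radius.

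For (1) $\Rightarrow$ (2), I fix $x_0\in\Omega$. If $u(x_0)<\infty$, then for every $\epsilon>0$ the l.s.c.\ condition furnishes $\delta>0$ with $u(x)\geq u(x_0)-\epsilon$ on $B_\delta(x_0)$, so the infimum defining $\liminf$ over any $B_r(x_0)$ with $r<\delta$ is at least $u(x_0)-\epsilon$; letting $r\to 0$ and then $\epsilon\to 0$ yields (2). If $u(x_0)=\infty$, then $\lim_{x\to x_0}u(x)=\infty$ directly gives $\liminf_{x\to x_0}u(x)=\infty\geq u(x_0)$.

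For (2) $\Rightarrow$ (3), I fix $\alpha\in\mathbb{R}$ and $x_0\in\Omega_\alpha$, so $u(x_0)>\alpha$. Pick $\epsilon$ with $0<\epsilon<u(x_0)-\alpha$ (when $u(x_0)<\infty$; otherwise any $\epsilon>0$ works after first reducing to the case $u(x_0)=\alpha+1$, say). By (2), there exists $r>0$ such that $\inf_{x\in B_r(x_0)\setminus\{x_0\}} u(x)\geq u(x_0)-\epsilon > \alpha$, and since $u(x_0)>\alpha$ as well, the whole ball $B_r(x_0)$ lies in $\Omega_\alpha$. Hence $\Omega_\alpha$ is open.

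For (3) $\Rightarrow$ (1), I fix $x_0$ and distinguish cases. If $u(x_0)<\infty$, given $\epsilon>0$ set $\alpha:=u(x_0)-\epsilon$; then $x_0\in\Omega_\alpha$, and openness yields $\delta>0$ with $B_\delta(x_0)\subseteq\Omega_\alpha$, i.e.\ $u(x)>u(x_0)-\epsilon$ whenever $|x-x_0|<\delta$, which is the l.s.c.\ condition. If $u(x_0)=\infty$, then for every $M\in\mathbb{R}$ the point $x_0$ lies in the open set $\Omega_M$, so $u>M$ on some ball around $x_0$, forcing $\lim_{x\to x_0}u(x)=\infty$.

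There is no serious obstacle here; the only minor subtlety is the bookkeeping between the finite and infinite branches of the definition, and in step (2) $\Rightarrow$ (3) the need to ensure $x_0$ itself (not just $B_r(x_0)\setminus\{x_0\}$) lies in $\Omega_\alpha$, which is automatic from the hypothesis $x_0\in\Omega_\alpha$.
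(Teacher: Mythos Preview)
Your proof is correct and essentially the same as the paper's: both rely on the same elementary $\epsilon$-$\delta$ manipulations with a finite/infinite case split. The only cosmetic difference is that the paper proves the two equivalences (1)$\iff$(2) and (1)$\iff$(3) separately, whereas you close the cycle (1)$\Rightarrow$(2)$\Rightarrow$(3)$\Rightarrow$(1); your (2)$\Rightarrow$(3) step replaces the paper's direct (1)$\Rightarrow$(3), but the argument is of the same kind.
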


\begin{proof}[Proof of 1\(\iff\) 2]
Let \(u\) be l.s.c. at \(x_0\) and assume first that \(u(x_0)<\infty\). Let \(\epsilon>0\) and choose \(r>0\) so that \(u(x)\geq u(x_0) -\epsilon\) when \(|x-x_0|<r\). Then
\[\inf_{\substack{x\in B_r(x_0)\\ x\neq x_0}}u(x) \geq \inf_{\substack{x\in B_r(x_0)\\ x\neq x_0}}u(x_0)-\epsilon = u(x_0)-\epsilon.\]
Thus \(\liminf_{x\to x_0}u(x) \geq u(x_0) -\epsilon\) for an arbitrary \(\epsilon\) and \eqref{liminfcond} follows.

Let \(M\) be arbitrarily big. Assuming l.s.c. and \(u(x_0) = \infty\), choose \(r>0\) so that \(u(x) \geq M\) when \(|x-x_0|<r\). Then
\[\inf_{\substack{x\in B_r(x_0)\\ x\neq x_0}}u(x) \geq \inf_{\substack{x\in B_r(x_0)\\ x\neq x_0}}M = M\]
and thus \(\liminf_{x\to x_0}u(x) = \infty\).

Now, assume \eqref{liminfcond}. Let \(\epsilon>0\) and \(x_0\in\Omega\) be given and assume first that \(u(x_0)<\infty\). By \eqref{liminfcond} we can choose an \(r>0\) so that \(\inf_{\substack{x\in B_r(x_0)\\ x\neq x_0}}u(x) \geq u(x_0) - \epsilon\).
It follows that
\[u(x_0) - \epsilon \leq \inf_{\substack{y\in B_r(x_0)\\ y\neq x_0}}u(y) \leq u(x)\]
whenever \(0<|x-x_0|<\delta := r\) and obviously also when \(x=x_0\).

Finally, if \(u(x_0) = \infty\) then \(\lim_{x\to x_0}u(x) \geq \lim\inf_{x\to x_0}u(x) \geq u(x_0) = \infty\).
\end{proof}

\begin{proof}[Proof of 1\(\iff\) 3]
Assume that \(u\) is l.s.c. Let \(\alpha\in\mathbb{R}\) and let \(x_0\in\Omega_\alpha\). That is, \(u(x_0)=\alpha+\epsilon\) for some \(\epsilon>0\) assuming first that \(u(x_0)<\infty\). Then we can choose \(\delta>0\) so that
\[u|_{B_\delta(x_0)} \geq u(x_0) - \epsilon/2 = \alpha + \epsilon/2 > \alpha.\]
Thus \(B_\delta(x_0)\subseteq\Omega_\alpha\) and \(\Omega_\alpha\) is open. The same reasoning applies when \(u(x_0)=\infty\).

Assume now that \(\Omega_\alpha\) is open. Let \(x_0\in\Omega\) where \(u(x_0)<\infty\) and let \(\epsilon>0\) be given. Set \(\alpha := u(x_0)-\epsilon\). Then \(x_0\in\Omega_\alpha\) and we can choose a ball \(B(x_0,\delta)\subseteq\Omega_\alpha\) and thus \(|x-x_0|<\delta\) implies \(u(x) > \alpha = u(x_0) - \epsilon\).

If \(u(x_0)=\infty\), let \(\alpha\) be arbitrarily big and choose \(B(x_0,\delta)\subseteq\Omega_\alpha\). Then \(u(x)\geq \alpha\) when \(x\) is close to \(x_0\). 
\end{proof}

\begin{proposition}\label{semicontsup}
Let \(u_i\colon\Omega\to(-\infty,\infty]\), \(i\in\mathcal{I}\) be a nonempty collection of l.s.c. functions. Then the pointwise supremum,
\[u(x) := \sup_{i\in\mathcal{I}}u_i(x),\]
is also l.s.c. in \(\Omega\).
\end{proposition}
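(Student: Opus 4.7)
The plan is to invoke the characterization of lower semi-continuity via open superlevel sets (part \ref{lsc_open} of Proposition \ref{lsceqapp}). This reduces the proposition to an elementary set-theoretic identity about suprema, sidestepping the need to handle the $\liminf$ definition and the awkward case split between finite values and $\infty$.

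First, I would fix $\alpha\in\mathbb{R}$ and consider the superlevel set
\[\Omega_\alpha := \{x\in\Omega\;|\; u(x) > \alpha\} = \left\{x\in\Omega\;\middle|\; \sup_{i\in\mathcal{I}}u_i(x) > \alpha\right\}.\]
The key observation is the identity
\[\Omega_\alpha = \bigcup_{i\in\mathcal{I}}\{x\in\Omega\;|\;u_i(x)>\alpha\}.\]
The inclusion $\supseteq$ is immediate since $u_i(x)>\alpha$ forces $u(x)\geq u_i(x) > \alpha$. For the reverse inclusion, if $u(x) = \sup_i u_i(x) > \alpha$, then by the definition of supremum some $u_i(x)$ must exceed $\alpha$, which is exactly membership in the right-hand side.

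Since each $u_i$ is l.s.c., the implication $1\Rightarrow 3$ of Proposition \ref{lsceqapp} guarantees that every set $\{u_i>\alpha\}$ is open in $\Omega$. Hence $\Omega_\alpha$ is a union of open sets and therefore open. As $\alpha$ was arbitrary, the implication $3\Rightarrow 1$ of the same proposition yields that $u$ is l.s.c. in $\Omega$.

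There is essentially no obstacle here; the only subtlety worth noting is that the equality of superlevel sets relies on the strict inequality $>\alpha$ (it would fail for $\geq\alpha$, since a supremum can be attained only in the limit). Using the open superlevel set characterization makes the proof a one-line reduction, which is why I prefer it over a direct $\liminf$ argument that would require handling the $u(x_0)=\infty$ case separately.
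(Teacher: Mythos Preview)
Your proof is correct but takes a different route from the paper. The paper uses the $\liminf$ characterization (part 2 of Proposition \ref{lsceqapp}): for fixed $x_0$ and any $i$, it observes
\[\liminf_{x\to x_0} u(x) \;\geq\; \liminf_{x\to x_0} u_i(x) \;\geq\; u_i(x_0),\]
and then takes the supremum over $i$. Your argument instead uses the open-superlevel-set characterization (part 3), rewriting $\{u>\alpha\}$ as $\bigcup_i\{u_i>\alpha\}$ and appealing to the fact that a union of open sets is open. Both proofs are short and standard; yours has the minor advantage you noted of handling the value $\infty$ uniformly without a case split, while the paper's is marginally more direct in that it never leaves the analytic definition. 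There is nothing to add or repair.
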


\begin{proof}
Let \(x_0\in\Omega\). Then
\begin{align*}
\liminf_{x\to x_0}u(x) &= \liminf_{x\to x_0}\sup_{i\in\mathcal{I}}u_i(x)\\
                       &\geq \liminf_{x\to x_0}u_i(x)\\
											 &\geq u_i(x_0),\qquad by \eqref{liminfcond},
\end{align*}
for all \(i\in\mathcal{I}\). Thus \(\liminf_{x\to x_0}u(x) \geq \sup_{i\in\mathcal{I}}u_i(x_0) = u(x_0)\) and \(u\) is l.s.c. by \eqref{liminfcond}.
\end{proof}

A lower semi-continuous function obtains its minimum on compact sets:
\begin{proposition}\label{lscmin}
Let \(u\colon\Omega\to(-\infty,\infty]\) be l.s.c. and let \(\emptyset\neq D\subseteq\Omega\) be closed and bounded. Then \(u\) is bounded below in \(D\) and there exists a \(x^*\in D\) such that
\[\inf_{x\in D}u(x) = \min_{x\in D}u(x) = u(x^*).\]
\end{proposition}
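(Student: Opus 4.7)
The plan is to combine the Heine--Borel compactness of $D$ (a closed and bounded subset of $\mathbb{R}^n$) with characterization \ref{lsc_open} from Proposition \ref{lsceqapp}, namely that every suplevel set $\Omega_\alpha := \{x\in\Omega\,|\,u(x)>\alpha\}$ is open. This gives a purely topological covering proof with no recourse to sequences or metric arguments.

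First I would show that $u$ is bounded below on $D$. For each positive integer $k$, the set $U_k := \{x\in\Omega\,|\,u(x)>-k\}$ is open by Proposition \ref{lsceqapp}(\ref{lsc_open}). Since $u$ takes values in $(-\infty,\infty]$, every point of $\Omega$ lies in some $U_k$, so $\{U_k\}_{k\in\mathbb{N}}$ is an open cover of $D$. By compactness of $D$, finitely many suffice, and since the $U_k$ are nested, a single $U_K$ already covers $D$. Hence $u>-K$ on $D$, and in particular $m := \inf_{x\in D} u(x)$ is finite or $+\infty$.

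Next I would show $m$ is attained. If $m=+\infty$, then $u\equiv\infty$ on $D$ and any $x^\ast\in D$ works. Otherwise $m\in\mathbb{R}$, and I argue by contradiction: suppose $u(x)>m$ for every $x\in D$. Then each such $x$ satisfies $u(x)>m+\tfrac{1}{k}$ for some $k\in\mathbb{N}$, so the open sets $V_k := \{y\in\Omega\,|\,u(y)>m+\tfrac{1}{k}\}$ form a (nested) open cover of $D$. Compactness of $D$ yields a single $V_K$ containing $D$, so $u>m+\tfrac{1}{K}$ on $D$. This contradicts the definition of $m$ as the infimum, so there must exist $x^\ast\in D$ with $u(x^\ast)=m$.

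There is no real obstacle here; the result is classical. The only delicate point is to exploit characterization \ref{lsc_open} of Proposition \ref{lsceqapp} rather than the sequential $\liminf$ formulation, because this makes the argument work uniformly in the cases $m\in\mathbb{R}$ and $m=+\infty$ without having to extract subsequences and appeal to lower semicontinuity at a limit point separately.
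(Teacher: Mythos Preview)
Your proof is correct and takes a genuinely different route from the paper's. The paper argues via \emph{sequential} compactness: it picks a minimizing sequence $(x_k)\subseteq D$ with $u(x_k)\to m$, extracts a convergent subsequence $x_k\to x^*\in D$, and then invokes the $\liminf$ characterization of lower semicontinuity (Proposition~\ref{lsceqapp}, part~2) at $x^*$ to get $u(x^*)\leq m$, hence $u(x^*)=m>-\infty$. You instead pair the \emph{open-cover} formulation of compactness with the open-suplevel-set characterization (Proposition~\ref{lsceqapp}, part~\ref{lsc_open}), first covering $D$ by the nested sets $\{u>-k\}$ to get a lower bound, then by the nested sets $\{u>m+1/k\}$ to force attainment. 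Your approach avoids subsequence extraction, handles the case $m=+\infty$ without a separate limit argument, and is the version that transfers verbatim to non-metrizable spaces; the paper's sequential proof is the more familiar analysis-textbook version and is marginally shorter once one is comfortable passing to subsequences without comment.
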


\begin{proof}
Let \(m := \inf_{x\in D}u(x)\) which may be \(-\infty\). By the property of \(\inf\), there exists a sequence \((x_k)\) in \(D\) such that \(u(x_k)\to m\) as \(k\to\infty\). As \(D\) is compact, there is a subsequence -- which we again denote by \((x_k)\) -- converging to a point \(x^*\in D\). By \eqref{liminfcond},
\[u(x^*) \geq \inf_{x\in D}u(x) = m = \lim_{k\to\infty}u(x_k)\geq \liminf_{k\to\infty}u(x_k) \geq u(x^*).\]
Thus \(m = u(x^*)>-\infty\).
\end{proof}

\section{Stability}
The results of this section are versions of standard stability results. See for instance Lemma 5 in \cite{MR3289084}.
However, we have chosen to state the claims in terms of \emph{pointwise} convergence instead of the usual local uniform convergence.

\begin{lemma}\label{stablemma}
Let \(\Omega\subseteq\mathbb{R}^n\) be open and let \((u_k)\) be an increasing sequence of lower semi-continuous functions converging pointwise to a l.s.c. function \(u\). That is, for all \(x\in\Omega\)
\[u_k(x)\nearrow u(x)\qquad\text{as}\qquad k\to\infty\]
including the case \(u_k(x)\to\infty\) if \(u(x)=\infty\).

Assume further that \(\psi\) is a test-function touching \(u\) strictly from below at some point \(x_0\in\Omega\):
\begin{equation}
\psi(x_0) = u(x_0),\qquad \psi(x)<u(x)\qquad\text{near \(x_0\)}.
\label{psitest}
\end{equation}
Then there exist a sequence \((x_k)\) of points in \(\Omega\) and a sequence of real numbers \((a_k)\) such that
\[x_k \to x_0,\qquad a_k \to 0\]
as \(k\to\infty\) and such that, for all \(k\) big enough,
\begin{equation}
\psi_k(x_k) = u_k(x_k),\qquad \psi_k(x) \leq u_k(x)\qquad\text{near \(x_k\)}
\label{kineq}
\end{equation}
where \(\psi_k := \psi + a_k\).
\end{lemma}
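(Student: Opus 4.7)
The plan is to construct $(x_k, a_k)$ as the argmin--min pair of $u_k-\psi$ on a small closed ball around $x_0$. By strict touching \eqref{psitest} and continuity of $\psi$, first choose $r>0$ so that $\overline{B}_r(x_0)\subseteq \Omega$ and
\[
 u(x) - \psi(x) > 0 \qquad \text{for all } x\in\overline{B}_r(x_0)\setminus\{x_0\}.
\]
The function $u_k-\psi$ is l.s.c.\ on the compact set $\overline{B}_r(x_0)$, so by Proposition \ref{lscmin} it attains its minimum at some point $x_k$. Set
\[ a_k := m_k := \min_{\overline{B}_r(x_0)}\bigl(u_k-\psi\bigr) = u_k(x_k)-\psi(x_k),\qquad \psi_k := \psi+a_k. \]
Then $\psi_k(x_k)=u_k(x_k)$ and $\psi_k\leq u_k$ on the entire ball by construction.

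Next, analyse the sequence $(m_k)$. Because $u_k\leq u_{k+1}\leq u$ pointwise, $m_k\leq m_{k+1}\leq u(x_0)-\psi(x_0)=0$, so $m_k\nearrow m$ for some $m\in(-\infty,0]$. By compactness, extract a subsequence $x_{k_j}\to x^*\in \overline{B}_r(x_0)$; it suffices to show $m=0$ and $x^*=x_0$, since then the whole sequence $x_k$ converges to $x_0$.

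The main obstacle is to rule out $m<0$ or $x^*\neq x_0$, which will rest on the standard semicontinuity fact: if $u_k\nearrow u$ pointwise with every $u_k$ l.s.c., then $\liminf_j u_{k_j}(x_{k_j})\geq u(x^*)$ whenever $x_{k_j}\to x^*$. The proof of this fact is: given $\epsilon>0$, pick $K$ so large that $u_K(x^*)\geq u(x^*)-\epsilon/2$ (or $u_K(x^*)\geq 1/\epsilon$ if $u(x^*)=\infty$); lower semi-continuity of the single function $u_K$ at $x^*$ gives $u_K(x)\geq u_K(x^*)-\epsilon/2$ for $x$ sufficiently near $x^*$; hence for all large $j$, $u_{k_j}(x_{k_j})\geq u_K(x_{k_j})\geq u(x^*)-\epsilon$. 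Combining this with continuity of $\psi$ yields
\[
 m = \lim_j m_{k_j} = \liminf_j \bigl(u_{k_j}(x_{k_j})-\psi(x_{k_j})\bigr) \geq u(x^*)-\psi(x^*).
\]
Since $u-\psi\geq 0$ on $\overline{B}_r(x_0)$ with equality only at $x_0$, this forces $m\geq 0$; combined with $m\leq 0$ we get $m=0$ and $u(x^*)=\psi(x^*)$, hence $x^*=x_0$.

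Therefore $x_k\to x_0$ and $a_k=m_k\to 0$. Because $x_0$ lies in the interior $B_r(x_0)$, for all sufficiently large $k$ the point $x_k$ is interior to the ball as well, so a small neighbourhood of $x_k$ is contained in $\overline{B}_r(x_0)$, where $\psi_k\leq u_k$ has already been verified. This establishes \eqref{kineq} and completes the proof.
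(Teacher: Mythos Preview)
Your proof is correct and follows essentially the same approach as the paper: minimise $u_k-\psi$ on a small closed ball around $x_0$, set $x_k$ to be a minimiser and $a_k$ the minimum value, then use compactness and the lower semi-continuity of each $u_k$ together with $u_k\nearrow u$ to force any subsequential limit $x^*$ to satisfy $u(x^*)=\psi(x^*)$, hence $x^*=x_0$. The only cosmetic difference is that the paper packages the key estimate $\liminf_j u_{k_j}(x_{k_j})\geq u(x^*)$ as a double limit (first $\liminf_l$ using l.s.c.\ of a fixed $u_k$, then $\lim_k$), whereas you spell it out as a separate ``semicontinuity fact'' with an $\epsilon/2$ argument; both are the same computation. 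One small remark: your sentence ``it suffices to show \ldots\ since then the whole sequence $x_k$ converges to $x_0$'' tacitly uses that the argument applies to \emph{every} convergent subsequence, so all subsequential limits equal $x_0$ and compactness of $\overline{B}_r(x_0)$ then gives convergence of the full sequence---worth a word, though the paper leaves this implicit too.
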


\begin{proof}
Fix \(R>0\) such that \(B_R := B(x_0,R)\subset\subset\Omega\) and such that \(\psi(x)<u(x)\) when
\(0<|x-x_0|\leq R\).

Let \(k\in\mathbb{N}\). By semi-continuity and compactness, we can choose a \(x_k\in\overline{B}_R\) so that
\begin{equation}
a_k := u_k(x_k) - \psi(x_k) = \min_{\overline{B}_R}\{u_k-\psi\}.
\label{kmin}
\end{equation}
By compactness, there is a subsequence -- which we denote again by \((x_k)\) -- converging to a point \(x^*\in\overline{B}_R\). Now let \(k\leq l\). Then
\begin{align*}
0 &= u(x_0) - \psi(x_0) &&\text{by \eqref{psitest}}\\
  &\geq u_l(x_0) - \psi(x_0) &&\text{since \(u_l\nearrow u\)}\\
	&\geq u_l(x_l) - \psi(x_l) &&\text{by \eqref{kmin}}\\
	&\geq u_k(x_l) - \psi(x_l) &&\text{since \(k\leq l\)}.
\end{align*}
The semi-continuity and the pointwise convergence then yields
\begin{align*}
0 &\geq \lim_{k\to\infty}\liminf_{l\to\infty}u_k(x_l) - \psi(x_l)\\
  &\geq \lim_{k\to\infty}u_k(x^*) - \psi(x^*)\\
	&= u(x^*) - \psi(x^*)\\
	&\geq 0
\end{align*}
and we have that \(u(x^*) = \psi(x^*)\).
But \(x_0\) is the unique touching point of \(u\) and \(\psi\) in \(\overline{B}_R\), so \(x^* = x_0\) and thus \(x_k\to x_0\). By definition, \(\psi_k(x_k) = \psi(x_k) + a_k = u_k(x_k)\) and for \(k\) big enough, \(x_k\) is not on the boundary \(\partial B_R\) and hence \(\psi_k(x) \leq u_k(x)\) in a neighbourhood of \(x_k\) within \(B_R\) by \eqref{kmin}.

Finally, a similar calculation will show that \((a_k)\) is a nonpositive increasing sequence with \(\liminf_{l\to\infty}a_l\geq 0\). Therefore \(a_k\to 0\).
\end{proof}

As a tool that sometimes can simplify the situation,
we note that it is not necessary to consider the whole class of \(C^2\) test functions when dealing with sub- and supersolutions to a sublinear equation. The strictly touching quadratics suffices:

\begin{lemma}\label{quadstrict}
A function \(u\colon\Omega\to(-\infty,\infty]\) is a supersolution to the sublinear elliptic equation \(F(\mathcal{H}u) = 0\) if and only if
\begin{enumerate}
	\item \(u\) is lower semi-continuous.
	\item \(u<\infty\) in a dense subset of \(\Omega\)
	\item If \(x_0\in\Omega\) and a quadratic \(\phi\) touches \(u\) strictly from below at \(x_0\), i.e.
\[\phi(x_0) = u(x_0),\qquad \phi(x)< u(x)\qquad\text{for \(x\) near \(x_0\)},\]
then
\[F(\mathcal{H}\phi(x_0)) \leq 0.\]	
\end{enumerate}
The analogous result also holds for subsolutions.
\end{lemma}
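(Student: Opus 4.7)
The forward direction is essentially immediate: quadratics are $C^2$ test functions, so a viscosity supersolution automatically satisfies condition 3 (the ``strictly'' is just a restriction on which tests we invoke, which makes the condition weaker, not stronger), while conditions 1 and 2 are part of the standard definition of a viscosity supersolution.

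For the converse, I assume conditions 1--3 and need to upgrade condition 3 to the full viscosity definition: for \emph{any} $C^2$ test function $\phi$ touching $u$ from below at any $x_0\in\Omega$, the inequality $F(\mathcal{H}\phi(x_0))\leq 0$ holds. The plan is a two--step perturbation that turns $\phi$ into a strictly touching quadratic with Hessian arbitrarily close to $\mathcal{H}\phi(x_0)$.

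First, I would replace $\phi$ by $\tilde{\phi}(x):=\phi(x)-|x-x_0|^4$, which has the same value, gradient, and Hessian as $\phi$ at $x_0$, but now touches $u$ \emph{strictly} from below at $x_0$ (since $|x-x_0|^4>0$ away from $x_0$). Next, for each $\epsilon>0$ I form the quadratic
\[
q_\epsilon(x) := \tilde{\phi}(x_0)+\nabla\tilde{\phi}(x_0)(x-x_0)+\tfrac{1}{2}(x-x_0)^T\mathcal{H}\phi(x_0)(x-x_0) - \epsilon|x-x_0|^2.
\]
Taylor's theorem gives $\tilde{\phi}(x)-q_\epsilon(x) = \epsilon|x-x_0|^2 + o(|x-x_0|^2)$, which is positive in a punctured neighbourhood of $x_0$. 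Hence $q_\epsilon$ strictly touches $\tilde{\phi}$, and therefore $u$, from below at $x_0$. Its Hessian is $\mathcal{H}\phi(x_0)-2\epsilon I$, so condition 3 gives
\[
F\bigl(\mathcal{H}\phi(x_0)-2\epsilon I\bigr)\leq 0.
\]

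Finally I pass to the limit using sublinearity. Subadditivity and positive homogeneity yield
\[
F(\mathcal{H}\phi(x_0)) \leq F(\mathcal{H}\phi(x_0)-2\epsilon I) + F(2\epsilon I) \leq 2\epsilon F(I),
\]
and letting $\epsilon\to 0$ gives $F(\mathcal{H}\phi(x_0))\leq 0$, as required. The analogous statement for subsolutions follows by reversing inequalities and replacing $-\epsilon|x-x_0|^2$ with $+\epsilon|x-x_0|^2$ (one also adds $|x-x_0|^4$ rather than subtracts). The main delicate point is simply ensuring the Taylor remainder $o(|x-x_0|^2)$ can be absorbed into the $\epsilon|x-x_0|^2$ term on a genuine neighbourhood of $x_0$; beyond this the argument is mechanical since sublinear functions on the finite--dimensional space $S(n)$ are automatically continuous.
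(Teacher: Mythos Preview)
Your proof is correct and follows essentially the same route as the paper: replace the $C^2$ test function by its second-order Taylor polynomial lowered by $\epsilon|x-x_0|^2$ to get a strictly touching quadratic with Hessian $\mathcal{H}\phi(x_0)-2\epsilon I$, apply condition~3, and conclude via subadditivity $F(\mathcal{H}\phi(x_0))\leq F(\mathcal{H}\phi(x_0)-2\epsilon I)+2\epsilon F(I)\to 0$. Your intermediate step of first passing to $\tilde\phi=\phi-|x-x_0|^4$ is harmless but redundant, since the Taylor remainder argument already shows the lowered quadratic lies strictly below the original $\phi$ near $x_0$.
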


\begin{proof}
The claim is obvious if \(u\) is a supersolution. Assume therefore that 1-3 holds and let \(\psi\) be a \(C^2\) test function to \(u\) from below at some point \(x_0\in\Omega\).

For \(\delta>0\) define the adjusted second order Taylor approximation
\[\phi_\delta(x) := \psi(x_0) + \nabla\psi(x_0)(x-x_0) + \frac{1}{2}(x-x_0)^T\mathcal{H}\psi(x_0)(x-x_0) - \frac{\delta}{2}|x-x_0|^2.\]
Then
\[\mathcal{H}\phi_\delta = \mathcal{H}\psi(x_0) - \delta I < \mathcal{H}\psi(x_0)\]
and \(\phi_\delta(x) < \psi(x) \leq u(x)\) for \(x\) near \(x_0\). Thus, by assumption, \(F(\mathcal{H}\phi_\delta)\leq 0\) and by sublinearity we get that
\[F(\mathcal{H}\psi(x_0)) = F(\mathcal{H}\phi_\delta + \delta I) \leq 0 + \delta F(I).\]
That is
\[F(\mathcal{H}\psi(x_0)) \leq \lim_{\delta\to 0}\delta F(I) = 0\]
and \(u\) is a supersolution.

A similar construction yields the result for subsolutions; we get \(F(\mathcal{H}\psi(x_0)) \geq -\lim_{\delta\to 0}\delta F(I) = 0\).
\end{proof}

\begin{lemma}\label{inclemma}
Let \(F(\mathcal{H}w) = 0\) be an elliptic equation. Assume that \((u_k)\) is an increasing sequence of supersolutions converging pointwise to a function \(u\) in \(\Omega\). Then \(u\) is also a supersolution provided it is finite on a dense subset.
\end{lemma}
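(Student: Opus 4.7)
The plan is to verify the three requirements of being a supersolution separately, using the tools assembled just before the statement.

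First, lower semi-continuity is immediate: since $(u_k)$ is increasing and converges pointwise to $u$, we have $u = \sup_k u_k$, and the pointwise supremum of a collection of lower semi-continuous functions is lower semi-continuous by Proposition \ref{semicontsup}. Finiteness on a dense subset is part of the hypothesis, so nothing is required there.

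For the test-function condition, I would invoke Lemma \ref{quadstrict} to reduce attention to quadratics touching $u$ strictly from below. So let $\phi$ be such a quadratic at a point $x_0\in\Omega$, meaning $\phi(x_0)=u(x_0)$ and $\phi(x)<u(x)$ for $x$ near $x_0$ with $x\neq x_0$. Apply Lemma \ref{stablemma} to the increasing sequence $(u_k)$ and the strict sub-touching $\phi$: we obtain a sequence $x_k\to x_0$ and real numbers $a_k\to 0$ so that, for all sufficiently large $k$, the shifted quadratic $\phi_k:=\phi+a_k$ touches $u_k$ from below at $x_k$. Because $u_k$ is a viscosity supersolution and $\phi_k$ is in particular a $C^2$ test function from below at $x_k$, we conclude
\[
F(\mathcal{H}\phi_k(x_k)) \leq 0.
\]
Since $\phi$ is a quadratic, $\mathcal{H}\phi_k \equiv \mathcal{H}\phi$ is a constant symmetric matrix, independent of both $k$ and the evaluation point. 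Therefore $F(\mathcal{H}\phi(x_0)) = F(\mathcal{H}\phi_k(x_k)) \leq 0$, and Lemma \ref{quadstrict} then delivers that $u$ is a supersolution.

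The main obstacle is really the construction behind Lemma \ref{stablemma} — ensuring that strict touching from below by $\phi$ for the limit $u$ can be perturbed into honest (non-strict) touching from below by $\phi+a_k$ for the approximants $u_k$, with both the touching points and the vertical shifts converging to their limits. But that work has already been done, so here it is invoked as a black box. Note that the \emph{quadratic} reduction of Lemma \ref{quadstrict} is essential: it allows us to use constancy of $\mathcal{H}\phi$ to transfer the inequality $F(\mathcal{H}\phi_k(x_k))\leq 0$ directly to the point $x_0$ without any continuity argument on $F$ against moving Hessians, and it also exploits the sublinear structure of $F$ assumed throughout this chapter.
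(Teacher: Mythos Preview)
Your proof is correct and follows essentially the same route as the paper's own argument: establish lower semi-continuity via the supremum, reduce to strictly touching quadratics by Lemma \ref{quadstrict}, feed the configuration into Lemma \ref{stablemma}, and then exploit constancy of the Hessian to transfer the inequality back to $x_0$. Your closing remark that the quadratic reduction (and hence implicitly the sublinearity of $F$) is what makes the transfer work without continuity hypotheses on $F$ is a fair observation.
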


\begin{proof}
It is well known that the limit of an increasing sequence of l.s.c. functions is l.s.c.

Let \(\phi\) be a test function from below to \(u\) at some point \(x_0\in\Omega\). By Lemma \ref{quadstrict}, we may assume that \(\phi\) is a quadratic and that the touching is strict. By Lemma \ref{stablemma} there is a \(k\in\mathbb{N}\) and a real number \(a_k\) so that
\[\phi_k := \phi + a_k\]
is a test function to \(u_k\) from below at some point \(x_k\in\Omega\). It follows that
\[F(\mathcal{H}\phi(x_0)) = F(\mathcal{H}\phi_k(x_k)) \leq 0\]
since \(\mathcal{H}\phi = \mathcal{H}\phi_k\) is constant.
\end{proof}

Again, the analogous result also holds for a decreasing sequence of subsolutions.

We conclude this section with a standard result. The proof is quite straightforward and omitted.

\begin{lemma}\label{minlem}
Let \(u_1,u_2\) be supersolutions and let \(v_1,v_2\) be subsolutions to an elliptic equation \(F(x,w,\nabla w,\mathcal{H}w) = 0\). Then
\begin{itemize}
	\item \(x\mapsto\min\{u_1(x),u_2(x)\}\) is a supersolution, and
	\item \(x\mapsto\max\{v_1(x),v_2(x)\}\) is a subsolution.
\end{itemize}
\end{lemma}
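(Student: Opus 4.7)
The plan is to handle the two statements symmetrically, and focus on the supersolution case. Set $u := \min\{u_1,u_2\}$. The first thing to check is that $u$ satisfies the basic regularity conditions required of a viscosity supersolution: lower semi-continuity follows from the identity $\{u > \alpha\} = \{u_1 > \alpha\} \cap \{u_2 > \alpha\}$, which is open as the intersection of two open sets by Proposition \ref{lsceqapp}. Finiteness of $u$ on a dense subset is even easier: $u(x) \leq u_1(x)$ everywhere, so $u$ is finite wherever $u_1$ is.

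Next, for the viscosity inequality: Let $\phi$ be a test function touching $u$ from below at some $x_0 \in \Omega$, so $\phi(x_0) = u(x_0)$ and $\phi(x) \leq u(x)$ for $x$ near $x_0$. Without loss of generality, assume $u(x_0) = u_1(x_0)$ (the other case is symmetric). Then $\phi(x_0) = u_1(x_0)$ and, since $u \leq u_1$ pointwise, we have $\phi(x) \leq u(x) \leq u_1(x)$ near $x_0$. Hence $\phi$ is a test function from below to $u_1$ at $x_0$, and the supersolution property of $u_1$ gives
\[F\bigl(x_0, u_1(x_0), \nabla\phi(x_0), \mathcal{H}\phi(x_0)\bigr) \leq 0.\]
Since $u_1(x_0) = u(x_0)$, this is exactly the required inequality for $u$ at $x_0$.

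The argument for $v := \max\{v_1, v_2\}$ is dual: upper semi-continuity is preserved under finite maxima (apply Proposition \ref{lsceqapp} to $-v_i$), $v(x) \geq v_1(x)$ ensures density of finiteness, and if $\phi$ touches $v$ from above at $x_0$ with $v(x_0) = v_1(x_0)$, then $\phi$ also touches $v_1$ from above there, producing $F(x_0, v_1(x_0), \nabla\phi(x_0), \mathcal{H}\phi(x_0)) \geq 0$.

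There is no real obstacle here — the argument is pointwise and uses only the definitions. The only subtlety worth flagging is the choice of which $u_i$ (resp.\ $v_i$) attains the minimum (resp.\ maximum) at $x_0$; this is a disjunction on a single point, so no compactness or selection argument is needed.
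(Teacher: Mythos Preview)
Your proof is correct and is the standard argument; the paper itself omits the proof, calling it ``quite straightforward,'' so your approach is exactly what is intended.
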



\section{sup/inf-convolutions}
\begin{definition}\label{recal}
Let \(D\subset\subset\mathbb{R}^n\) and let \(u\colon D\to(-\infty,\infty]\) be lower semi-continuous and finite on a dense subset. For \(\epsilon>0\) we define the \textbf{inf-convolution} \(u_\epsilon\) of \(u\) as
\[u_\epsilon(x) := \inf_{y\in D}\left\{u(y) + \frac{|x-y|^2}{2\epsilon}\right\}.\]
For an upper semi-continuous function \(v\colon D\to[-\infty,\infty)\) that is finite on a dense subset we define the \textbf{sup-convolution} \(v^\epsilon\) of \(v\) as
\[v^\epsilon(x) := \sup_{y\in D}\left\{v(y) - \frac{|x-y|^2}{2\epsilon}\right\}.\]
\end{definition}

\begin{proposition}\label{infthm}
Properties of the inf-convolution.
\begin{enumerate}
	\item \(u_\epsilon\) is semiconcave with concavity constant \(1/\epsilon\).
	\item \(u_\epsilon \nearrow u\) pointwise as \(\epsilon\to 0\).
	\item \(u_\epsilon\) is locally Lipschitz continuous.
	\item Let \(F(\nabla u,\mathcal{H}u) = 0\) be an elliptic equation and assume that \(u\) is a locally bounded   supersolution in \(\Omega\subseteq\mathbb{R}^n\). Then, for each \(x_0\in\Omega\) there are domains \(x_0\in D\subset D'\subset\subset\Omega\) and an \(\epsilon'>0\) so that the infimal convolution \((u|_{D'})_\epsilon\) is a supersolution in \(D\) whenever \(0<\epsilon<\epsilon'\).
\end{enumerate}
\end{proposition}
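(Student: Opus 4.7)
The key reformulation is
\[u_\epsilon(x) - \frac{|x|^2}{2\epsilon} = \inf_{y\in D}\left\{u(y) + \frac{|y|^2}{2\epsilon} - \frac{x^T y}{\epsilon}\right\},\]
which presents $u_\epsilon(x) - |x|^2/(2\epsilon)$ as an infimum of affine functions of $x$, hence concave. Part 1 follows at once: at any point the concave part admits a supporting hyperplane, and adding back $|x|^2/(2\epsilon)$ produces a paraboloid with Hessian $(1/\epsilon)I$ touching $u_\epsilon$ from above. Part 3 follows because a concave function is locally Lipschitz on the interior of its domain, and the extra smooth term preserves that regularity.

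\textbf{Part 2.} Monotonicity in $\epsilon$ is immediate from $|x-y|^2/(2\epsilon') \geq |x-y|^2/(2\epsilon)$ when $\epsilon' \leq \epsilon$, and $u_\epsilon \leq u$ follows from choosing $y = x$. For the convergence, I use that $u$ is bounded below on the compact set $\overline{D}$ by Proposition \ref{lscmin}, say by $m$. Fix $x \in D$ and $\delta > 0$; lower semi-continuity supplies $r > 0$ with $u(y) \geq u(x) - \delta$ on $B(x, r) \cap D$. For $y \in D$ with $|x - y| \geq r$, the penalty term is at least $r^2/(2\epsilon)$, which exceeds $u(x) - m$ once $\epsilon$ is small enough. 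Hence the infimum is attained within $B(x, r) \cap D$, giving $u_\epsilon(x) \geq u(x) - \delta$. The case $u(x) = \infty$ is handled by replacing $u(x) - \delta$ by an arbitrarily large constant.

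\textbf{Part 4 is the main content.} Given $x_0 \in \Omega$, I select $x_0 \in D \subset\subset D' \subset\subset \Omega$ and set $M := \sup_{\overline{D'}} |u| < \infty$, using local boundedness. I then choose $\epsilon'$ so small that $2\sqrt{\epsilon' M} < \operatorname{dist}(\overline{D}, \partial D')$. For $0 < \epsilon < \epsilon'$ and $x \in D$, the function $y \mapsto u(y) + |x-y|^2/(2\epsilon)$ attains its infimum on $D'$ at some $y^*$, and the standard estimate
\[\frac{|x - y^*|^2}{2\epsilon} \leq u(x) - u_\epsilon(x) \leq 2M\]
places $y^*$ strictly interior to $D'$, hence in $\Omega$. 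The main device is the translation identity: if $\phi$ is a $C^2$ test function touching $u_\epsilon$ from below at $x_0 \in D$ with associated minimizer $y_0^*$, then
\[\tilde\phi(y) := \phi\bigl(y + (x_0 - y_0^*)\bigr) - \frac{|x_0 - y_0^*|^2}{2\epsilon}\]
is a test function touching $u$ from below at $y_0^*$. Equality at $y_0^*$ uses $u_\epsilon(x_0) = u(y_0^*) + |x_0 - y_0^*|^2/(2\epsilon)$; the inequality for $y$ near $y_0^*$ comes from writing $x := y + (x_0 - y_0^*)$, applying $\phi(x) \leq u_\epsilon(x) \leq u(y) + |x-y|^2/(2\epsilon)$, and noting $|x - y| = |x_0 - y_0^*|$, so the penalty cancels exactly. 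Translation preserves gradient and Hessian, so the supersolution inequality for $u$ at $y_0^*$ transfers verbatim to $F(\nabla \phi(x_0), \mathcal{H}\phi(x_0)) \leq 0$.

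\textbf{Main obstacle.} The delicate point throughout part 4 is guaranteeing that $y_0^* \in D'$ strictly, so that the supersolution property of $u$ is actually available at $y_0^*$; this is exactly what forces the two-layer domain $D \subset\subset D'$ and the smallness $\epsilon < \epsilon'$. Once the geometry is in place, the translation identity renders the rest mechanical.
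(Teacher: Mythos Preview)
Your proof is correct and follows essentially the same approach as the paper. The paper proves Part~1 by exhibiting the explicit touching paraboloid $\phi(x)=u(y_0)+|x-y_0|^2/(2\epsilon)$ at each point rather than via your global concavity identity, and in Part~2 tracks the minimizer $x_\epsilon\to x$ and then applies lower semi-continuity, whereas you split the infimum into near and far $y$; but these are routine variants of the same standard arguments, and your Part~4 translation device is exactly the paper's.
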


\begin{proof}[Proof of (1)]\label{infproof}
Let \(x_0\in D\) and let \( y_0\) be the obtained minimum of the inf-convolution in \(\overline{D}\). That is,
\[u_\epsilon(x_0) = \inf_{y\in D}\left\{u(y) + \frac{|x_0-y|^2}{2\epsilon}\right\} = u( y_0) + \frac{|x_0- y_0|^2}{2\epsilon}.\]
The quadratic \(\phi(x) := u( y_0) + \frac{|x- y_0|^2}{2\epsilon}\), \(\mathcal{H}\phi = \frac{1}{\epsilon}\), will then touch \(u_\epsilon\) from above at \(x_0\):
\begin{align*}
u_\epsilon(x_0) &= \phi(x_0).\\
u_\epsilon(x) &\leq u( y_0) + \frac{|x- y_0|^2}{2\epsilon}\\
	&= \phi(x)
\end{align*}
for all \(x\in \overline{D}\).
\end{proof}

\begin{proof}[Proof of (2)]
Fix \(x\in D\). It is obvious from the definition \eqref{recal} that \(u_\epsilon(x)\leq u(x)\) and that \(u_\epsilon\) is increasing when \(\epsilon\) is decreasing. For \(\epsilon>0\) denote the obtained infimum by \(x_\epsilon\). That is,
\begin{equation}
u_\epsilon(x) = u(x_\epsilon) + \frac{|x-x_\epsilon|^2}{2\epsilon} \geq u(x_\epsilon).
\label{obtmin}
\end{equation}
Set
\[m := \inf_{y\in D}u > -\infty.\]
Assume first that \(u(x)<\infty\). Then
\begin{equation}
u(x) \geq u_\epsilon(x) = u(x_\epsilon) + \frac{|x-x_\epsilon|^2}{2\epsilon} \geq m + \frac{|x-x_\epsilon|^2}{2\epsilon}.
\label{ebnd}
\end{equation}
So
\[|x-x_\epsilon|^2 \leq 2\epsilon(u(x) - m) \to 0\]
as \(\epsilon\to 0\). That is, \(x_\epsilon\to x\). By semi-continuity and \eqref{obtmin} we get
\[\liminf_{\epsilon\to 0}u_\epsilon(x) \geq \liminf_{\epsilon\to 0}u(x_\epsilon) \geq u(x)\]
and the claim follows.

Now assume \(u(x) = \infty\). Since \(u_\epsilon(x)\) is increasing as \(\epsilon\to 0\), it either goes to infinity or converges to some \(M\in\mathbb{R}\). We assume the latter and aim for a contradiction.

Similarly as above, the bounds \eqref{ebnd} produces the inequality
\begin{equation}
|x-x_\epsilon|^2 \leq 2\epsilon(M - m)
\label{xebnd}
\end{equation}
and again \(x_\epsilon\to x\). Thus
\[M = \lim_{\epsilon\to 0}u_\epsilon(x) \geq \lim_{\epsilon\to 0}u(x_\epsilon) = \infty\]
by \eqref{obtmin} and semi-continuity.
\end{proof}

\begin{proof}[Proof of (3)]
\(u_\epsilon\) is locally Lipschitz continuous simply because \(u_\epsilon(x) - \frac{1}{2\epsilon}|x|^2\) is concave.
\end{proof}

\begin{proof}[Proof of (4)]
Let \(x_0\in\Omega\). Since \(\Omega\) is open we can choose a radius \(R>0\) so that \(D := B(x_0,R)\), \(D' := B(x_0,2R)\) satisfies \(x_0\in D\subset\overline{D'}\subseteq\Omega\).

Denote again the restricted function \(u|_{D'}\) by \(u\). Then
\[u_\epsilon(x) = \min_{y\in \overline{D'}}\left\{u(y) + \frac{|x-y|^2}{2\epsilon}\right\}.\]
By semi-continuity and since \(u\) is locally bounded above, we can find two numbers \(m<M\) so that
\[m \leq u \leq M\qquad\text{in \(\overline{D'}\).}\]
Fix
\[0<\epsilon<\epsilon' := \frac{R^2}{2(M-m)}.\]
Then, as in \eqref{xebnd}, for any \(x\in D\) the obtained minimum \(\hx\in\overline{D'}\) satisfies
\[|\hx - x| \leq \sqrt{2\epsilon(M-m)} < R\]
and \(\hx\) lays in the \emph{open} ball \(D' = B(x_0,2R)\) since
\begin{equation}
|\hx - x_0| \leq |\hx - x| + |x - x_0| < R + R.
\label{hxbnd}
\end{equation}

Now assume that \(\phi\) is a test-function to \(u_\epsilon\) from below at some point \(x_1\in D\). Then
\begin{align*}
\phi(x_1) &= u_\epsilon(x_1) = u(\hx_1) + \frac{|x_1 - \hx_1|}{2\epsilon} &&\text{for some \(\hx_1\in D'\),}\\
\phi(x) &\leq u_\epsilon(x) &&\\
        &\leq u(y) + \frac{|x - y|}{2\epsilon}
\end{align*}
for all \(x\in D\) near \(x_1\) and for all \(y\in D'\).

Define
\[\psi(x) := \phi(x + x_1 - \hx_1) - \frac{|x_1 - \hx_1|^2}{2\epsilon}.\]
Then \(\psi(\hx_1) = u(\hx_1)\). When \(x\in D'\) is near \(\hx_1\) then \(x+x_1-\hx_1\) is near \(x_1\) and
\begin{align*}
\psi(x) &= \phi(x + x_1 - \hx_1) - \frac{|x_1 - \hx_1|^2}{2\epsilon}\\
        &\leq u(y) + \frac{|x + x_1 - \hx_1 - y|}{2\epsilon} - \frac{|x_1 - \hx_1|^2}{2\epsilon}.
\end{align*}
In particular, for \(y = x\in D'\), \(\psi(x) \leq u(x)\) and \(\psi\) is a test-function to \(u\) at \(\hx_1\in D'\). Thus
\[0 \geq F(\nabla\psi(\hx_1),\mathcal{H}\psi(\hx_1)) = F(\nabla\phi(x_1),\mathcal{H}\phi(x_1)).\]
\end{proof}

Since \(v^\epsilon = -(-v)_\epsilon\), the corresponding properties of sup-convolutions also hold.
\begin{proposition}\label{supconvthm}
Properties of the sup-convolution.
\begin{enumerate}
	\item \(v^\epsilon\) is semiconvex with convexity constant \(1/\epsilon\).
	\item \(v^\epsilon \searrow u\) pointwise as \(\epsilon\to 0\).
	\item \(v^\epsilon\) is Lipschitz continuous.
	\item Let \(F(\nabla u,\mathcal{H}u) = 0\) be an elliptic equation and assume that \(v\) is a locally bounded   subsolution in \(\Omega\subseteq\mathbb{R}^n\). Then, for each \(x_0\in\Omega\) there are domains \(x_0\in D\subset D'\subset\subset\Omega\) and an \(\epsilon'>0\) so that the sup-convolution \((v|_{D'})^\epsilon\) is a subsolution in \(D\) whenever \(0<\epsilon<\epsilon'\).
\end{enumerate}
\end{proposition}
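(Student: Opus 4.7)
The key observation is the duality identity
\[v^\epsilon(x) = -(-v)_\epsilon(x),\]
which follows immediately from Definition \ref{recal} upon pulling the minus sign through the infimum. Since $v$ is upper semi-continuous and finite on a dense subset, $-v$ is lower semi-continuous and finite on a dense subset, so Proposition \ref{infthm} applies to $-v$.

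For part (1), Proposition \ref{infthm}(1) says that $(-v)_\epsilon$ can be touched above at every point by a paraboloid with Hessian $\tfrac{1}{\epsilon}I$; negating reverses the inequality and the sign of the Hessian, so $v^\epsilon$ is touched below at every point by a paraboloid with Hessian $-\tfrac{1}{\epsilon}I$, i.e. $v^\epsilon$ is semiconvex with convexity constant $1/\epsilon$. Part (2) is immediate: $(-v)_\epsilon \nearrow -v$ pointwise by Proposition \ref{infthm}(2), hence $v^\epsilon = -(-v)_\epsilon \searrow v$. Part (3) is likewise immediate, as local Lipschitz continuity is preserved under negation.

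For part (4) the duality trick does not directly apply, because the negation of a subsolution to $F(\nabla w,\mathcal{H}w)=0$ is generally not a supersolution to the same equation. Instead, I plan to mirror the proof of Proposition \ref{infthm}(4) verbatim with inf $\leftrightarrow$ sup, supersolution $\leftrightarrow$ subsolution, and ``touches from below'' $\leftrightarrow$ ``touches from above.'' Concretely, for $x_0\in\Omega$ choose $R>0$ with $D:=B(x_0,R)\subset D':=B(x_0,2R)\subset\subset\Omega$, bounds $m\leq v\leq M$ on $\overline{D'}$, and the same threshold $\epsilon' := R^2/(2(M-m))$. For $\epsilon<\epsilon'$ and $x\in D$, the supremum defining $v^\epsilon(x)$ is attained at some $\hat x\in\overline{D'}$ satisfying $|\hat x - x|\leq \sqrt{2\epsilon(M-m)} < R$ (by the same energy estimate used for the inf-convolution), so by the triangle inequality $\hat x$ lies in the open ball $D'$. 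Given a test function $\phi$ touching $v^\epsilon$ from above at $x_1\in D$, with attained maximizer $\hat x_1\in D'$, the shifted quadratic
\[\psi(x) := \phi(x + x_1 - \hat x_1) + \frac{|x_1 - \hat x_1|^2}{2\epsilon}\]
satisfies $\psi(\hat x_1) = v(\hat x_1)$ and $\psi(x)\geq v(x)$ near $\hat x_1$, so it is a valid test function to $v$ from above. Since $\nabla\psi = \nabla\phi$ and $\mathcal{H}\psi = \mathcal{H}\phi$ (a constant shift), the subsolution inequality $F(\nabla\psi(\hat x_1),\mathcal{H}\psi(\hat x_1))\geq 0$ immediately gives $F(\nabla\phi(x_1),\mathcal{H}\phi(x_1))\geq 0$, establishing that $v^\epsilon$ is a subsolution in $D$.

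The main obstacle is keeping signs and inequalities consistent in part (4): the quadratic shift now lifts $\phi$ by $+|x_1-\hat x_1|^2/(2\epsilon)$ rather than lowering it, and the ``near $\hat x_1$'' inequality must go the right way. Once the proof of part (4) in Proposition \ref{infthm} is in place, however, this is a routine bookkeeping exercise; no additional ideas are needed.
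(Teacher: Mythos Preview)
Your proof is correct and, for parts (1)--(3), coincides exactly with the paper's one-line argument via the identity \(v^\epsilon = -(-v)_\epsilon\).

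For part (4) there is a small difference worth noting. You write that ``the duality trick does not directly apply'' and then reproduce the entire argument of Proposition~\ref{infthm}(4) with all signs flipped. This works fine, but it is more than is needed. The duality \emph{does} go through once you observe that if \(v\) is a subsolution to \(F(\nabla w,\mathcal{H}w)=0\), then \(u:=-v\) is a supersolution to the equation \(G(\nabla w,\mathcal{H}w)=0\) where \(G(q,X):=-F(-q,-X)\); and \(G\) is again degenerate elliptic since \(X\leq Y\) implies \(-Y\leq -X\), hence \(F(-q,-Y)\leq F(-q,-X)\), hence \(G(q,X)\leq G(q,Y)\). Proposition~\ref{infthm}(4) applied to \(u\) and the operator \(G\) then gives that \((u|_{D'})_\epsilon\) is a \(G\)-supersolution in \(D\), and unwinding \(v^\epsilon = -(-v)_\epsilon = -u_\epsilon\) shows that \(v^\epsilon\) is an \(F\)-subsolution there. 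This is presumably what the paper's terse sentence is pointing at. Your mirror argument is of course equally valid and only marginally longer; the sign bookkeeping you carried out (in particular the \(+\tfrac{|x_1-\hat x_1|^2}{2\epsilon}\) shift and the evaluation \(\nabla\psi(\hat x_1)=\nabla\phi(x_1)\), \(\mathcal{H}\psi(\hat x_1)=\mathcal{H}\phi(x_1)\)) is correct.
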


\section{Viscosity, ellipticity, and Pucci}\label{ch:visc_ell}
\begin{definition}
An operator \(\D u = F(x,u,\nabla u,\mathcal{H} u)\) is \textbf{degenerate elliptic} in \(\Omega\subseteq\mathbb{R}^n\) if \(F\) is an increasing function with respect to the matrix argument. That is,
\[X\leq Y\qquad\Rightarrow\qquad F(x,s,q,X) \leq F(x,s,q,Y)\]
for all \((x,s,q) \in \Omega\times\mathbb{R}\times\mathbb{R}^n\).
\end{definition}

The inequality \(X\leq Y\) is a partial ordering in \(S(n)\) defined by \(\xi^TX\xi\leq \xi^TY\xi\) for all \(\xi\in\mathbb{R}^n\).

Degenerate ellipticity is a necessary condition in order to make the concept of viscosity solutions consistent with smooth solutions:
Consider a degenerate elliptic equation
\begin{equation}
F(x,w,\nabla w,\mathcal{H}w) = 0.
\label{pdeapp}
\end{equation}

\begin{definition}\label{visdefapp}
We say that \(u\colon\Omega\to(-\infty,\infty]\) is a \textbf{viscosity supersolution} of the PDE \eqref{pdeapp} if
\begin{enumerate}
	\item \(u\) is lower semi-continuous (l.s.c)
	\item \(u<\infty\) in a dense subset of \(\Omega\)
	\item If \(x_0\in\Omega\) and \(\phi\in C^2\) touches \(u\) from below at \(x_0\), i.e.
\[\phi(x_0) = u(x_0),\qquad \phi(x)\leq u(x)\qquad\text{for \(x\) near \(x_0\)},\]
we require that
\begin{equation}
F(x_0,\phi(x_0),\nabla\phi(x_0),\mathcal{H}\phi(x_0)) \leq 0.
\label{Fapp}
\end{equation}
\end{enumerate}
\end{definition}

The viscosity \emph{sub}solutions \(v\colon\Omega\to[-\infty,\infty)\) are defined in a similar way: they are upper semicontinuous, the test functions touch from above, and the inequality \eqref{Fapp} is reversed. Finally, a function \(w\colon\Omega\to\mathbb{R}\) is a \textbf{viscosity solution} if it is both a viscosity supersolution and a viscosity subsolution. Necessarily, \(w\in C(\Omega)\).

For operators of the form \(\D u = F(x,\mathcal{H} u)\), we also have the stronger notion of \emph{uniform} ellipticity:

\begin{definition}
An operator \(\D u = F(x,\mathcal{H} u)\) is \textbf{uniformly elliptic} in \(\Omega\subseteq\mathbb{R}^n\) if there exist constants \(0<\lambda\leq\Lambda\) such that for all \(X,A\in S(n)\) with \(A\geq 0\), and all \(x\in\Omega\) we have
\[\lambda\tr A \leq F(x,X+A) - F(x,X) \leq \Lambda\tr A.\]
\end{definition}

The inequality on the left shows that uniformly elliptic operators indeed are degenerate elliptic.

\begin{definition}
Let \(\lambda\) and \(\Lambda\) be two constants such that \(0<\lambda\leq\Lambda\). The \textbf{Pucci operator} \(\D_{\lambda,\Lambda} u = P_{\lambda,\Lambda}(\mathcal{H} u)\) is the rotationally invariant sublinear operator given by
\[P_{\lambda,\Lambda}(X) := \max_{Y\in\mathcal{K}_{\lambda,\Lambda}}\langle Y,X\rangle\]
where
\[\mathcal{K}_{\lambda,\Lambda} := \left\{Y\in S(n)\;|\;\lambda I\leq Y\leq \Lambda I\right\}.\]
We also define the \(\infty\)-\textbf{Pucci operator} \(\D_{0,1} u = P_{0,1}(\mathcal{H} u)\) given by
\[P_{0,1}(X) := \max_{Y\in\mathcal{K}_{0,1}}\langle Y,X\rangle\]
where
\[\mathcal{K}_{0,1} := \left\{Y\in S(n)\;|\;0\leq Y\leq I\right\}.\]
\end{definition}

As suggested in the introduction, the corresponding convex body of the Pucci operator is a cube in \(\mathbb{R}^n\).
The extreme points (Definition \ref{extremedef}) should then correspond to the \(2^n\) verticies of the cube.
This can be shown directly, but we have chosen to take a detour in order to shed some light on the underlying features.


Let \(Pr(n)\) denote the set of \(n\times n\) symmetric projection matrices. That is,
\[Pr(n) = \left\{P\in S(n)\;|\; PP = P\right\}.\]
One can easily show that every \(P\in Pr(n)\) is on the form
\[P = \sum_{i=1}^k \eta_i\eta_i^T\]
for some \(k = 0,\dots,n\) and where \(\eta_i\in\mathbb{R}^n\) satisfy \(\eta_i^T\eta_j = \delta_{ij}\).
Notice that
\begin{equation}
\oll(P) = (0,\dots,0,\underbrace{1,\dots,1}_k)^T.
\label{ollP}
\end{equation}

Given a matrix \(X = \sum_{i=1}^n\lambda_i\xi_i\xi_i^T\in S(n)\) we define the projection matrix \(P_X\in Pr(n)\) as
\[P_X := \sum_{\substack{i=1,\\ \lambda_i\geq 0}}^n\xi_i\xi_i^T.\]
When applied to \(X\), the projection \(P_X\) decomposes \(X\) into a difference of two non-negative matrices:
\[X = P_X X + (I-P_X)X = \sum_{\lambda_i\geq 0}\lambda_i\xi_i\xi_i^T - \sum_{\lambda_i< 0}-\lambda_i\xi_i\xi_i^T =: X^+ - X^-.\]
Observe that \((-X)^+ = X^-\) and \((-X)^- = X^+\).

\begin{lemma}\label{projlem}
For each \(X\in S(n)\), we have
\[\max_{P\in Pr(n)}\langle P,X\rangle = \tr X^+\qquad\text{and}\qquad \min_{P\in Pr(n)}\langle P,X\rangle = -\tr X^-.\]
\end{lemma}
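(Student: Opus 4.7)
The plan is to exploit the structure of projection matrices recorded in equation \eqref{ollP}: every $P\in Pr(n)$ has eigenvalues in $\{0,1\}$, with $k$ ones at the top of $\oll(P)$, where $k=\rank P$. First I would establish the upper bound by invoking von Neumann's trace inequality (Corollary \ref{vonNeu}): for any $P\in Pr(n)$,
\[\langle P,X\rangle=\tr(PX)\leq \oll(P)^T\oll(X)=\sum_{i=n-k+1}^n\lambda_i(X).\]
As $k$ ranges over $\{0,1,\dots,n\}$, this sum is maximized precisely when $k$ equals the number of non-negative eigenvalues of $X$, since appending any further (negative) eigenvalue strictly decreases the sum. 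Hence $\langle P,X\rangle\leq \tr X^+$.

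Next I would verify that equality is attained at the specific projection $P_X$ defined immediately before the lemma. A direct computation using orthonormality of the eigenvectors gives
\[\langle P_X,X\rangle=\tr\!\left(\sum_{\lambda_i\geq 0}\xi_i\xi_i^T\cdot\sum_{j=1}^n\lambda_j\xi_j\xi_j^T\right)=\sum_{\lambda_i\geq 0}\lambda_i=\tr X^+,\]
which completes the first identity.

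For the second identity I would reduce to the first one by linearity in $X$: since $-\langle P,X\rangle=\langle P,-X\rangle$,
\[\min_{P\in Pr(n)}\langle P,X\rangle=-\max_{P\in Pr(n)}\langle P,-X\rangle=-\tr(-X)^+=-\tr X^-,\]
using the observation $(-X)^+=X^-$ noted just above the lemma. The minimum is attained at $P_{-X}$, the projection onto the (non-positive) eigenspace of $X$.

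There is no serious obstacle here; the only subtle point is to confirm that von Neumann's inequality gives the \emph{sharp} bound in the direction used. If one prefers to avoid Corollary \ref{vonNeu} altogether, the upper bound can be obtained directly from the variational/minimax characterization of eigenvalues applied to each term in $\tr(PX)=\sum_{i=1}^k\eta_i^TX\eta_i$, where $P=\sum_{i=1}^k\eta_i\eta_i^T$ with $\eta_i^T\eta_j=\delta_{ij}$.
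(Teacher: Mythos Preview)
Your proof is correct and follows essentially the same route as the paper's: both use Corollary~\ref{vonNeu} to obtain the upper bound and then verify that $P_X$ attains it, and both derive the minimum by replacing $X$ with $-X$. The only cosmetic difference is that the paper first splits $X=X^+-X^-$ and applies von Neumann separately to discard the $X^-$ term and to bound $\langle P,X^+\rangle$, whereas you apply von Neumann directly to $\langle P,X\rangle$ and then maximize the resulting partial eigenvalue sum over the rank $k$; the conclusions and the tool are identical.
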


\begin{proof}
First we note that if \(Y\geq 0\) and \(A\leq 0\), then \(\langle Y,A\rangle\leq\oll(Y)^T\oll(A)\leq 0\) by Corollary \ref{vonNeu}.
Thus \(\langle P,X\rangle \leq \langle P,X^+\rangle\) for all \(P\in Pr(n)\).
Also,
\[\langle P,X^+\rangle \leq \oll(P)^T\oll(X^+) \leq \mathbbm{1}^T\oll(X^+) = \tr X^+\]
since \(X^+\geq 0\).
This means that
\[\max_{P\in Pr(n)}\langle P,X\rangle \leq \tr X^+.\]
On the other hand, \(P_X\in Pr(n)\) and we therefore also have
\[\max_{P\in Pr(n)}\langle P,X\rangle \geq \tr (P_X X) = \tr X^+.\]
The last claim follows from considering the matrix \(-X\).
\end{proof}

\begin{proposition}\label{extpucciprop}
Let \(0\leq \lambda\leq\Lambda\). The extreme points of \(\mathcal{K}_{\lambda,\Lambda}\) is
\[\mathcal{E}_{\lambda,\Lambda} := \left\{\lambda I + (\Lambda-\lambda)P\;|\; P\in Pr(n)\right\}\subseteq S(n).\]
\end{proposition}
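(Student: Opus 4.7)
The plan is to reduce via the shift-and-scale invariance of extreme points to the case $\lambda = 0, \Lambda = 1$ and then prove $\ext\mathcal{K}_{0,1} = Pr(n)$. By \eqref{pucscaleshift}, $\mathcal{K}_{\lambda,\Lambda} = \{\lambda I\} + (\Lambda-\lambda)\mathcal{K}_{0,1}$, and Proposition \ref{extscaleshiftinv} then gives
\[\ext\mathcal{K}_{\lambda,\Lambda} = \{\lambda I\} + (\Lambda-\lambda)\ext\mathcal{K}_{0,1}.\]
If $\lambda = \Lambda$ the conclusion is immediate since $\mathcal{K}_{\lambda,\Lambda} = \mathcal{E}_{\lambda,\Lambda} = \{\lambda I\}$; otherwise, once $\ext\mathcal{K}_{0,1} = Pr(n)$ is established, the right-hand side above is exactly $\mathcal{E}_{\lambda,\Lambda}$.

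For the inclusion $Pr(n) \subseteq \ext\mathcal{K}_{0,1}$, fix $P \in Pr(n)$ and suppose $P = tA + (1-t)B$ with $A,B \in \mathcal{K}_{0,1}$ and $t \in (0,1)$. Writing $P = \sum_{i=1}^k \eta_i\eta_i^T$ and completing the orthonormal family to a basis $\eta_1,\dots,\eta_n$ of $\mathbb{R}^n$, we have $P\eta_i = \eta_i$ for $i \leq k$ and $P\eta_i = 0$ for $i > k$. For $i > k$, the identity $\eta_i^T P \eta_i = 0$ together with $A,B \geq 0$ and $t \in (0,1)$ forces $\eta_i^T A \eta_i = \eta_i^T B \eta_i = 0$; since a positive semidefinite matrix $M$ with $\xi^T M \xi = 0$ must annihilate $\xi$, this gives $A\eta_i = B\eta_i = 0$. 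For $i \leq k$, $\eta_i^T P \eta_i = 1$ together with $A,B \leq I$ forces $\eta_i^T A \eta_i = \eta_i^T B \eta_i = 1$, and applying the same annihilation fact to $I - A, I - B \geq 0$ yields $A\eta_i = B\eta_i = \eta_i$. Hence $A$ and $B$ agree with $P$ on the basis $\{\eta_i\}$, so $A = B = P$ and $P$ is extreme.

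For the reverse inclusion $\ext\mathcal{K}_{0,1} \subseteq Pr(n)$, let $Y \in \mathcal{K}_{0,1}$ fail to be a projection. Its eigenvalues lie in $[0,1]$, and if they were all in $\{0,1\}$ we would have $Y^2 = Y$; hence some eigenvalue $\mu \in (0,1)$ must exist, with corresponding unit eigenvector $\xi$. For any $\epsilon \in (0,\min\{\mu,1-\mu\})$ the matrices $Y \pm \epsilon\xi\xi^T$ both lie in $\mathcal{K}_{0,1}$, are distinct, and have average $Y$; therefore $Y \notin \ext\mathcal{K}_{0,1}$.

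The main obstacle is the first inclusion, whose entire weight rests on the elementary but crucial fact that a positive semidefinite $M$ with $\xi^T M \xi = 0$ satisfies $M\xi = 0$. This observation is invoked twice: once for $A, B \geq 0$ on the kernel of $P$, and once for $I - A, I - B \geq 0$ on the range of $P$.
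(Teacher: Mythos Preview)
Your proof is correct. The reduction to $\lambda=0,\Lambda=1$ via \eqref{pucscaleshift} and Proposition \ref{extscaleshiftinv} is exactly what the paper does, but your treatment of the core identity $\ext\mathcal{K}_{0,1}=Pr(n)$ is genuinely different.

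The paper passes through the $\Phi$-machinery of Chapter \ref{ch:rot}: it computes $\Phi(\mathcal{K}_{0,1})=[0,1]^n$ and $\Phi(Pr(n))=\{0,1\}^n$, invokes the (known) fact that the vertices of the unit cube are its extreme points, and then uses Theorem \ref{Phithm} part \ref{partextext} ($\ext\Phi(\mathcal{K})=\Phi(\ext\mathcal{K})$) together with the injectivity of $\Phi$ to pull the result back to $S(n)$. Your argument is a direct linear-algebra proof: the annihilation fact ``$M\geq 0$ and $\xi^TM\xi=0$ imply $M\xi=0$'' does all the work for $Pr(n)\subseteq\ext\mathcal{K}_{0,1}$, and a one-eigenvalue perturbation $Y\pm\epsilon\,\xi\xi^T$ handles the reverse inclusion. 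Your route is more elementary and entirely self-contained, avoiding the rather heavy $\Phi$-theory; the paper's route, on the other hand, illustrates how that theory reduces questions about symmetric convex bodies in $S(n)$ to familiar ones in $\mathbb{R}^n$.
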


\begin{proof}
First we show that \(\ext\mathcal{K}_{0,1} = Pr(n)\). As suspected,
\begin{align*}
\Phi(\mathcal{K}_{0,1})
	&= \left\{P\oll(Y)\;|\; P\in\mathcal{P}(n),\, Y\in\mathcal{K}_{0,1}\right\}\\
	&= \left\{P(y_1,\dots,y_n)^T\;|\; P\in\mathcal{P}(n),\, 0\leq y_1\leq\cdots\leq y_n\leq 1\right\}\\
	&= \left\{(y_1,\dots,y_n)^T\in\mathbb{R}^n\;|\; 0\leq y_i\leq 1\right\}\\
	&= [0,1]^n
\end{align*}
is the unit cube in \(\mathbb{R}^n\).

The set of orthogonal projection matrices \(Pr(n)\) is clearly symmetric in \(S(n)\) (Definition \ref{symbodies}). By \eqref{ollP},
\begin{align*}
\Phi(Pr(n))
	&= \left\{P\oll(Y)\;|\; P\in\mathcal{P}(n),\, Y\in Pr(n)\right\}\\
	&= \left\{P(0,\dots,0,\underbrace{1,\dots,1}_k)^T\;|\; P\in\mathcal{P}(n),\, 0\leq k\leq n\right\}\\
	&= \left\{(\delta_1,\dots,\delta_n)^T\in\mathbb{R}^n\;|\; \text{\(\delta_i = 0\) or 1}\right\}
\end{align*}
which are the vertices of the cube. That is, \(\Phi(Pr(n)) = \ext\Phi(\mathcal{K}_{0,1}) = \Phi(\ext\mathcal{K}_{0,1})\) by part 2 of Theorem \ref{Phithm}. It follows that \(Pr(n) = \ext\mathcal{K}_{0,1}\) by the injectivity of \(\Phi\).

Finally, since the extreme points are invariant under scalings and shifts (Proposition \ref{extscaleshiftinv}), we get that
\begin{align*}
\mathcal{E}_{\lambda,\Lambda} &= \lambda\{I\} + (\Lambda-\lambda)Pr(n)\\
  &= \lambda\{I\} + (\Lambda-\lambda)\ext\mathcal{K}_{0,1}\\
	&= \ext\Big(\lambda\{I\} + (\Lambda-\lambda)\mathcal{K}_{0,1}\Big)\\
	&= \ext\mathcal{K}_{\lambda,\Lambda}, &&\eqref{pucscaleshift}.
\end{align*}
\end{proof}

We are now ready to give the ``extreme point-representation'' of the Pucci operator. It turns out to be a well-known formula. It becomes useful in the proof of the next proposition.

\begin{equation}
\begin{aligned}
P_{\lambda,\Lambda}(X) &= \max_{Y\in\mathcal{K}_{\lambda,\Lambda}}\langle Y,X\rangle\\
	&= \max_{Y\in\mathcal{E}_{\lambda,\Lambda}}\langle Y,X\rangle\\
	&= \max_{P\in Pr(n)}\tr\left(\left(\lambda I + (\Lambda-\lambda)P\right)X\right)\\
	&= \lambda\tr X + (\Lambda-\lambda)\max_{P\in Pr(n)}\tr (PX)\\
	&= \lambda\tr X + (\Lambda-\lambda)\tr X^+\\
	&= \Lambda\tr X^+ - \lambda\tr X^-.
\end{aligned}
\label{puccialtdef}
\end{equation}

\begin{proposition}[Alternative definition of uniform ellipticity]\label{altelldef}
An operator \(\D u = F(x,\mathcal{H} u)\) is uniformly elliptic in \(\Omega\subseteq\mathbb{R}^n\) with ellipticity constants \(0<\lambda\leq\Lambda\) if and only if
\begin{equation}
F(x,X+Y) \leq F(x,X) + P_{\lambda,\Lambda}(Y)
\label{altunifdef}
\end{equation}
for all \(X,Y\in S(n)\) and all \(x\in\Omega\).
\end{proposition}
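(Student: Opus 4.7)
The plan is to prove both directions using the decomposition $Y = Y^+ - Y^-$ into its positive and negative parts, together with the already established formula
\[P_{\lambda,\Lambda}(Y) = \Lambda\tr Y^+ - \lambda\tr Y^-\]
from \eqref{puccialtdef}. This formula is the bridge between the two formulations.

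For the forward direction, I would assume $F$ is uniformly elliptic with constants $\lambda,\Lambda$ and start from the identity $X + Y = (X + Y^+) - Y^-$. Since $Y^+ \geq 0$, the upper ellipticity bound gives
\[F(x, X + Y^+) \leq F(x, X) + \Lambda\tr Y^+.\]
Since $Y^- \geq 0$, applying the lower ellipticity bound to the pair of matrices $(X+Y^+-Y^-, Y^-)$ gives
\[F(x, X + Y^+) - F(x, X + Y^+ - Y^-) \geq \lambda\tr Y^-,\]
which rearranges to $F(x, X + Y) \leq F(x, X + Y^+) - \lambda\tr Y^-$. Chaining the two inequalities and invoking \eqref{puccialtdef} yields \eqref{altunifdef}.

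For the reverse direction, I would assume \eqref{altunifdef} and fix $A \geq 0$. Then $A^+ = A$ and $A^- = 0$, so $P_{\lambda,\Lambda}(A) = \Lambda\tr A$, which gives the upper bound $F(x, X+A) \leq F(x, X) + \Lambda\tr A$ immediately. For the lower bound, apply \eqref{altunifdef} to the pair $(X+A, -A)$: since $(-A)^+ = 0$ and $(-A)^- = A$, one has $P_{\lambda,\Lambda}(-A) = -\lambda\tr A$, and so
\[F(x, X) = F(x, (X+A) + (-A)) \leq F(x, X+A) - \lambda\tr A,\]
which rearranges to $F(x, X+A) \geq F(x, X) + \lambda\tr A$.

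There is no real obstacle in this argument; it is essentially bookkeeping once one has the explicit form of $P_{\lambda,\Lambda}$ in terms of $\tr Y^\pm$. The only point worth checking carefully is that the ellipticity inequalities are used with the right base point in the forward direction — namely, one must add $Y^+$ first to pass from $X$ to $X+Y^+$, and only then subtract $Y^-$, so that the added/subtracted matrix is positive semidefinite at each step, which is precisely what the standard definition of uniform ellipticity allows.
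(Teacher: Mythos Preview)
Your proof is correct and follows essentially the same approach as the paper: both directions rely on the decomposition \(Y = Y^+ - Y^-\) and the formula \(P_{\lambda,\Lambda}(Y) = \Lambda\tr Y^+ - \lambda\tr Y^-\). The only cosmetic difference is that in the forward direction the paper passes through the intermediate point \(X - Y^-\) (subtracting first, then adding), whereas you pass through \(X + Y^+\) (adding first, then subtracting); either order works and yields the same inequality.
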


\begin{proof}
Assume first that \eqref{altunifdef} holds and let \(X,A\in S(n)\), \(A\geq 0\). Then
\[F(x,X+A) - F(x,X) \leq P_{\lambda,\Lambda}(A) = \Lambda\tr A\]
and
\[\lambda\tr A = -P_{\lambda,\Lambda}(-A) \leq F(x,X+A) - F(x,X)\]
by \eqref{puccialtdef} and \(\D u\) is therefore uniformly elliptic.

Next, assume that \(\D u\) is uniformly elliptic and let \(X,Y\in S(n)\).
Then
\[F(x,X+Y) = F(x,X - Y^- + Y^+) \leq F(x,X-Y^-) + \Lambda\tr Y^+\]
and
\[F(x,X) = F(x,X - Y^- + Y^-) \geq F(x,X-Y^-) + \lambda\tr Y^-.\]
Putting it together yields
\[F(x,X+Y) \leq F(X) + \Lambda\tr Y^+ - \lambda\tr Y^-\]
which is \eqref{altunifdef} by \eqref{puccialtdef}.
\end{proof}

Proof of Proposition \ref{definite} from Section \ref{sec:conv_body}:

\begin{proof}
Let \(F(X) = \max_{Y\in\mathcal{K}}\langle Y,X\rangle\) be degenerate elliptic and assume for the sake of contradiction that there is a matrix \(Y_0 =\sum_{i=1}^n\lambda_i\xi_i\xi_i^T\in\mathcal{K}\) with a negative eigenvalue, \(\lambda_1<0\). But \(0\geq -\xi_1\xi_1^T\), so
\[0\geq F(-\xi_1\xi_1^T) = \max_{Y\in\mathcal{K}}\langle Y,-\xi_1\xi_1^T\rangle = \max_{Y\in\mathcal{K}} -\xi_1^T Y\xi_1 \geq -\xi_1^T Y_0\xi_1 = -\lambda_1 > 0.\]

Assume now that every matrix in \(\mathcal{K}\) is positive semidefinite. Let \(X\leq Z\) and write \(A := X-Z\leq 0\). By corollary \ref{vonNeu} \(\langle Y,A\rangle \leq 0\) whenever \(Y\geq 0\). It follows that
\[F(X) = F(Z+A) \leq F(Z) + F(A) \leq F(Z)\]
and \(F\) is degenerate elliptic.

By Proposition \ref{altelldef}, and since \(F(0)=0\), an operator \(F(X) = \max_{Y\in\mathcal{K}}\langle Y,X\rangle\) is uniformly elliptic with ellipticity constants \(0<\lambda\leq\Lambda\) if and only if \(F(X) \leq P_{\lambda,\Lambda}(X)\) for all \(X\in S(n)\). But that is again equivalent to the inclusion
\begin{align*}
\mathcal{K} &= \left\{Z\in S(n)\;|\; \langle Z,X\rangle  \leq F(X)\;\forall X\in S(n)\right\}\\
            &\subseteq \left\{Z\in S(n)\;|\; \langle Z,X\rangle  \leq P_{\lambda,\Lambda}(X)\;\forall X\in S(n)\right\}\\
						&= \mathcal{K}_{\lambda,\Lambda}.
\end{align*}

\end{proof}

\backmatter
\bibliographystyle{alpha}
\bibliography{references}

\end{document}